\def\namedlabel#1#2{\begingroup
    #2%
    \def\@currentlabel{#2}%
    \phantomsection\label{#1}\endgroup}
\newtheorem{thm}{Theorem}[section]
\newtheorem{cor}[thm]{Corollary}
\newtheorem{lem}[thm]{Lemma}
\newtheorem{prop}[thm]{Proposition}
\theoremstyle{definition}
\newtheorem{defn}[thm]{Definition}%[chapter]
\newtheorem{es}[thm]{Example}%[chapter]
\newtheorem{rmk}[thm]{Remark}
\newcommand{\id}{\mathrm{Id}}
\newcommand{\im}{\mathrm{Im}}
\newcommand{\Hom}{\mathrm{Hom}}
\newcommand{\cc}{\mathcal{C}}
\newcommand{\dd}{\mathcal{D}}
\newcommand{\e}{\mathcal{E}}
\newcommand{\f}{\mathcal{F}}
\newcommand{\p}{\mathcal{P}}
\newcommand{\bl}[1]{{\color{blue}{#1}}}
\newcommand{\rd}[1]{{\color{red}{#1}}}
\newcommand{\RR}{\mathbb{R}}
\newcommand{\Set}{{\sf Set}}
\newcommand{\Top}{{\sf Top}}
\newenvironment{invisible}{{\noindent\sc \colorbox{yellow}{Invisible:}\;}\color{gray}}{\medskip}
\begin{document}
\title{Semiseparable functors and conditions up to retracts}
\author{Alessandro Ardizzoni}
%\address{%
%\parbox[b]{\linewidth}{University of Turin, Department of Mathematics ``G. Peano'', via
%Carlo Alberto 10, I-10123 Torino, Italy}}
\email{alessandro.ardizzoni@unito.it}
\urladdr{\url{www.sites.google.com/site/aleardizzonihome}}

\author{Lucrezia Bottegoni}
\email{lucrezia.bottegoni@unito.it}

\address{%
\parbox[b]{\linewidth}{University of Turin, Department of Mathematics ``G. Peano'', via
Carlo Alberto 10, I-10123 Torino, Italy}}

\subjclass{Primary 18A40; Secondary 18C20; 18G80}
\thanks{This paper was written while the authors were members of the
``National Group for Algebraic and Geometric Structures and their
Applications'' (GNSAGA-INdAM). They were partially supported by MUR
within the National Research Project PRIN 2017. The authors would like to express their gratitude to Fosco Loregian, Claudia Menini, Giuseppe Rosolini, Paolo Saracco, Enrico Vitale and Joost Vercruysse for meaningful comments on the topics treated.}

\begin{abstract}
In a previous paper we introduced the concept of semiseparable functor. Here we continue our study of these functors in connection with idempotent (Cauchy) completion. To this aim, we introduce and investigate the notions of (co)reflection and bireflection up to retracts. We show that the (co)comparison functor attached to an adjunction whose associated (co)monad is separable is a coreflection (reflection) up to retracts.
This fact allows us to prove that a right (left) adjoint functor is semiseparable if and only if the  associated (co)monad is separable and the (co)comparison functor is a bireflection up to retracts, extending a characterization pursued  by X.-W. Chen in the separable case. Finally, we provide a semi-analogue of a result obtained by P. Balmer in the framework of pre-triangulated categories.
\end{abstract}
\keywords{Semiseparable functor, Idempotent Completion, Semifunctor, Eilenberg-Moore category, Kleisli category, Pre-triangulated category.}
\maketitle
\tableofcontents

\section*{Introduction}
The way a functor $F:\cc\to\dd$ acts on morphisms is encoded in the natural transformation $\f$ given on components by $\f_{X,Y} : \Hom_{\cc}(X,Y)\rightarrow \Hom_{\dd}(FX, FY),f\mapsto F(f)$, where $X$ and $Y$ are objects in $\cc$. In the literature, a functor is called \emph{separable} if there is  a natural transformation $\p$ such that $\p\circ\f=\id$ and \emph{naturally full} if one has $\f\circ\p=\id$ instead.
In \cite{AB22}, we introduced a weakening of both these notions, by naming \emph{semiseparable} a functor $F:\cc\to\dd$ such that $\f\circ\p\circ\f=\f$ for some $\p$.
Among other results, we obtained the following characterization:
Given a functor $G:\dd \to \cc$ with a left adjoint $F$, then $G$ is semiseparable if and only if the associated monad $GF$ is separable and the comparison functor $K_{GF}:\dd\to\cc_{CF}$ is naturally full. A closer inspection to the functor $K_{GF}$ in this setting reveals that it  satisfies the following extra properties
\begin{enumerate}
  \item[(\namedlabel{P1}{P1})] if it has a left adjoint, this is fully faithful; 
  \item[(\namedlabel{P2}{P2})] it has indeed a left adjoint if its source category is idempotent complete.
\end{enumerate}
The first problem we address in the present paper is to introduce a new type of functor, that we call \textbf{coreflection up to retracts}, that catches these two properties, and need to have neither an adjoint nor an idempotent complete source category a priori.  In order to give the rightful place to this notion, note that there are properties of a functor $F:\cc\to \dd$ that transfer to its (idempotent) completion $F^{\natural}:\cc^{\natural}\to \dd^{\natural}$ and vice versa (e.g. being either faithful, full, fully faithful, semiseparable, separable or naturally full, as we will see in Proposition \ref{prop:sscompletion} and Corollary \ref{cor:sep-nat-ff}). There are, however, other properties that do not share this behaviour. For instance, if $F$ is an equivalence of categories so is $F^{\natural}$ but the converse is not always true:
It is known that $F^{\natural}$ is an equivalence if and only if $F$ is fully faithful and surjective up to retracts, i.e. every $D\in\dd$ is a retract of $FC$ for some $C\in\cc$, and for this reason a functor $F$ such that $F^{\natural}$ is an equivalence is sometimes called an equivalence up to retracts in the literature. As we will see, something similar happens to a coreflection, i.e. a functor endowed with a fully faithful left adjoint: If $F$ is a coreflection so is $F^{\natural}$, but, again, the converse is not true in general. We are so prompted to define a coreflection up to retracts to be a functor $F$ whose completion $F^{\natural}$ is a coreflection.
It goes without saying that the functor $K_{GF}$ is expected  to be a coreflection  up to retracts in case $G$ is semiseparable. Since we noticed that $K_{GF}$ is also naturally full, and in \cite{AB22} we proved that a naturally full coreflection is the same as a bireflection, i.e. it has a left and right adjoint equal which is fully faithful and satisfies a suitable coherent condition,  we are also led to introduce the stronger notion of \textbf{bireflection up to retracts} which identifies a functor whose idempotent completion is a bireflection, and $K_{GF}$ is expected to be such a functor as well.
Luckily enough, in Proposition \ref{prop:corefl} and Proposition \ref{prop:idpcom-cutr} we are able to prove that each coreflection up to retracts (and a fortiori each bireflection up to retracts) verifies the properties \eqref{P1} and \eqref{P2} discussed above.

In order to go deeper into the properties of these functors, we have to deal with semifunctors, a notion studied by S. Hayashi in connection with $\lambda$-calculus, see \cite{Ha85}. A semifunctor is defined the same way as a functor, except that it needs not to preserve identities, and there is also a proper notion of semiadjunction for semifunctors. We show how to construct a semiadjunction out of a right (left) semiadjoint in the sense of \cite{MW13}.
These tools permit to pursue a characterization of (co)reflections up to retracts as part of suitable semiadjunctions, see Corollary \ref{cor:charactutr}, and to provide sufficient conditions guaranteeing that a functor is a (co)reflection up to retracts, see Proposition \ref{prop:trinat}.

Now, given a category $\cc$ and an idempotent natural transformation $e:\id_\cc\to\id_\cc$ one can consider the coidentifier category $\cc_e$ which is a suitable quotient category. In Theorem \ref{thm:H-corefl-utr} we prove that the quotient functor $H:\cc\to\cc_e$ is another instance of coreflection up to retracts, in fact a bireflection up to retracts, by means of the aforementioned characterization employing semifunctors (it is noteworthy that this functor is a bireflection if and only if $e$ splits, see Remark \ref{rmk:Hnotbiref}). Through the same characterization, exceeding the initial expectations, in Theorem \ref{thm:monutr} we find out that the (co)comparison functor attached to an adjunction whose associated (co)monad is (co)separable is always a coreflection (reflection) up to retracts.
This result allows us to obtain in Theorem \ref{thm:computr} the following semi-analogue of  \cite[Proposition 3.5]{Chen15} proved by X.-W. Chen: Given a functor $G:\dd \to \cc$ with a left adjoint $F$, then $G$ is semiseparable if and only if the associated monad $GF$ is separable and the comparison functor $K_{GF}:\dd\to\cc_{CF}$ is a bireflection up to retracts. It is well-known that $GF$ is separable if and only if the forgetful functor $U_{GF}:\cc_{GF}\to\cc$ is a separable functor so that we get the factorization $\xymatrix{\dd\ar[r]^{ K_{GF}}& {\cc_{GF}}\ar[r]^{ U_{GF}}& \cc}$ of $G$ as a bireflection up to retracts followed by a separable functor. In \cite{AB22} we proved that when $G$ is semiseparable we can associate to it an invariant that we called the associated idempotent natural transformation $e:\id_\dd\to\id_\dd$ and that $G$ admits a factorization of the form $\xymatrix{\dd\ar[r]^{{H}}& {\dd_e}\ar[r]^{{G_e}}& \cc}$ where $G_e$ is separable and $H$ is the quotient functor, that, by the foregoing, is a bireflection up to retracts. Summing up we have two factorizations of the same type and it is then natural to wonder how they are related. In Proposition \ref{prop:CoidEil}, we prove there is an equivalence up to retracts $(K_{GF})_e:\dd_e\to\cc_{GF}$ such that $\left( K_{GF}\right) _{e}\circ H=K_{GF}$ and $U_{GF}\circ \left(K_{GF}\right) _{e}=G_{e}$.  As a consequence of this result, in Proposition \ref{prop:KL-HL-equiv-utr}, we show that when $G$ is semiseparable the idempotent completions of the Kleisli category associated to the monad $GF$, of the coidentifier $\dd_e$ and of the Eilenberg-Moore category $\cc_{GF}$ are equivalent categories.

As an application of our results, we achieve for semiseparable functors in the context of pre-triangulated categories an analogue of P. Balmer's \cite[Theorem 4.1]{Balm11}. More explicitly, we introduce the notion of \textbf{stably semiseparable} functor by adapting the one of stably separable functor given in \cite[Definition 3.7]{Balm11}. Then Theorem \ref{thm:4.1} shows how, given a stably semiseparable right adjoint $G:\dd\to\cc$ with associated idempotent natural transformation $e$, under the relevant assumptions, we can transfer the pre-triangulation from $\cc$ to the coidentifier category  $\dd_e$. We  point out that the original result of Balmer requires $G$ to be stably separable and induces a pre-triangulation on $\dd$ rather than $\dd_e$. Finally, we provide conditions for the Eilenberg-Moore category $\cc_{GF}$ to inherit the pre-triangulation from the base category $\cc$, see Corollary \ref{coro:EMpretriang}.

  \medskip 
  \emph{Organization of the paper.} In Section \ref{sec:background} we recall the known results on semiseparable functors we will use. Section \ref{sec:idempcmp} deals with results involving the idempotent completion. We study how the notions of faithful, full, fully faithful, semiseparable, separable or naturally full functor behave with respect to idempotent completion. Then we introduce and investigate (co)reflections up to retracts and bireflections up to retracts.  We consider semifunctors and semiadjunctions as a tool to provide a characterization of (co)reflections up to retracts. We show that a (co)reflection up to retracts comes out to be always surjective up to retracts and we give sufficient conditions guaranteeing that a functor is a (co)reflection up to retracts.
  
  Section \ref{se:quotcom} collects the fall-outs of the results we achieved so far. First we prove that the quotient functor onto the coidentifier category is a coreflection up to retracts and that so is the comparison functor attached to an adjunction whose associated monad is separable. A similar result is obtained for the cocomparison functor in case the associated comonad is coseparable. These facts allow us to characterize a semiseparable right (left) adjoint in terms of (co)separability of the associated (co)monad and the requirement that the (co)comparison functor is a bireflection up to retracts.
We prove that two canonical factorizations attached to a semiseparable right adjoint functor, namely the one through the coidentifier category  and the one through the comparison functor, are the same up to an equivalence up to retracts. Then we relate the idempotent completions of the Kleisli category and Eilenberg-Moore category attached to a separable monad and, in case this monad is induced by an adjunction with semiseparable right adjoint, the idempotent completion of the coidentifier category is  added to the picture.
Finally, we show an analogue for semiseparable functors of a result obtained by P. Balmer in the framework of pre-triangulated categories.
\medskip 

\emph{Notations.}
Given an object $X$ in a category $\cc$, the identity morphism on $X$ will be denoted either by $\id_X$ or $X$ for short. For categories $\cc$ and $\dd$, a functor $F:\cc\to \dd$ just means a covariant functor. By $\id_{\cc}$ we denote the identity functor on $\cc$. For any functor $F:\cc\to \dd$, we denote $\id_{F}:F\to F$ (or just $F$, if there is no danger of confusion) the natural transformation defined by $(\id_{F})_X:=\id_{FX}$. By a ring we mean a unital associative ring.

\section{Background on semiseparability}\label{sec:background}
In this section we recall from \cite{AB22} some results on semiseparable functors we need. In particular, in Subsection \ref{sub:seminat} we provide a characterization of separable and naturally full functors in terms of semiseparable functors and we explain the behaviour of semiseparable functors with respect to composition.
Subsection \ref{sub:coidentifier} deals with the idempotent natural transformation associated to a semiseparable functor, that measures its distance from being separable. Then we discuss the existence of a canonical factorization of a semiseparable functor through the coidentifier category attached to this idempotent. 
Subsection \ref{sub:Eil-Kleisli} concerns a characterization of semiseparable functors having an adjoint in terms of properties of the attached (co)monad and (co)comparison functor. In Subsection \ref{sub:corefbiref} we explore  the connection with (co)reflections and bireflections.

\subsection{(Semi)separability and natural fullness}\label{sub:seminat}
Let $F: \cc \rightarrow \dd$ be a functor and consider the natural transformation
\begin{equation*}\label{nat_transf}
\f : \Hom_{\cc}(-,-)\rightarrow \Hom_{\dd}(F-, F-),
\end{equation*}
defined by setting $\f_{C,C'}(f)= F(f)$, for any $f:C\rightarrow C'$ in $\cc$.

If there is a natural transformation $\p : \Hom_{\dd}(F-, F-)\rightarrow \Hom_{\cc}(-,-)$ such that
\begin{itemize}
  \item $\p\circ\f = \id $, then $F$ is called \emph{separable} \cite{NVV89};
  \item $\f\circ\p = \id $, then $F$ is called \emph{naturally full} \cite{ACMM06};
  \item $\f\circ\p\circ\f = \f $, then $F$ is called \emph{semiseparable} \cite{AB22}.
\end{itemize}
We will write $\f^F$, $\p^F$ when needed to stress the dependence on the functor $F$ we are considering. The following results compares the notions of separable, naturally full and semiseparable functor.

\begin{prop}\label{prop:sep} \cite[Proposition 1.3]{AB22}
Let $F: \cc \rightarrow \dd$ be a functor. Then,
\begin{itemize}
\item[(i)]$F$ is separable if and only if $F$ is semiseparable and faithful;
\item[(ii)]$F$ is naturally full if and only if $F$ is semiseparable and full.
\end{itemize}
\end{prop}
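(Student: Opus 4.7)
My plan is to prove both equivalences directly from the defining identities, by observing in each case that the semiseparable equation $\f\circ\p\circ\f=\f$ reduces to the stronger one once either $\p$ admits a one-sided cancellation property (faithfulness on the source or fullness on the target).

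For (i), the forward direction is immediate: if $\p\circ\f=\id$ then postcomposing with $\f$ yields $\f\circ\p\circ\f=\f$, so $F$ is semiseparable; moreover the equality $\p_{C,C'}(F(f))=f$ shows that $F$ determines morphisms, so $F$ is faithful. For the converse, assume $F$ semiseparable via some $\p$ and faithful. I would compute, for any $f:C\to C'$,
\[
F\bigl(\p_{C,C'}(F(f))\bigr)=\f_{C,C'}\bigl(\p_{C,C'}(\f_{C,C'}(f))\bigr)=\f_{C,C'}(f)=F(f),
\]
and then invoke faithfulness of $F$ to cancel, obtaining $\p_{C,C'}(F(f))=f$, i.e.\ $\p\circ\f=\id$.

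For (ii), again the forward direction is a one-line composition: $\f\circ\p=\id$ implies $\f\circ\p\circ\f=\f$, giving semiseparability; and $\f_{C,C'}\circ\p_{C,C'}=\id$ expresses every $g\colon FC\to FC'$ as $F(\p_{C,C'}(g))$, so $F$ is full. Conversely, suppose $F$ is semiseparable via $\p$ and full. For any $g\colon FC\to FC'$, fullness gives $f:C\to C'$ with $F(f)=\f_{C,C'}(f)=g$, and the semiseparability identity rewrites
\[
\f_{C,C'}\bigl(\p_{C,C'}(g)\bigr)=\f_{C,C'}\bigl(\p_{C,C'}(\f_{C,C'}(f))\bigr)=\f_{C,C'}(f)=g,
\]
so $\f\circ\p=\id$ pointwise, hence $F$ is naturally full.

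No serious obstacle is expected: the whole argument is a manipulation of the equation $\f\circ\p\circ\f=\f$ together with the appropriate injectivity/surjectivity of the components $\f_{C,C'}$. The only point to keep track of is that the same natural transformation $\p$ witnessing semiseparability already witnesses (separability or natural fullness) in the converse directions, so no new choice of $\p$ is required.
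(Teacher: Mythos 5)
Your argument is correct and is exactly the direct manipulation one expects: the paper itself only cites this result from \cite[Proposition 1.3]{AB22} without reproducing a proof, and your use of faithfulness (injectivity of $\f_{C,C'}$) to cancel in $\f\circ\p\circ\f=\f$ on the left, and of fullness (surjectivity of $\f_{C,C'}$) to cancel on the right, together with the observation that the same $\p$ serves as witness throughout, is the standard proof. No gaps.
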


It is well-known that if $F:\cc\to\dd$ and $G:\dd\to\e$ are separable functors so is their composition $G\circ F$ and, the other way around, if the composition $G\circ F$ is separable so is $F$, see \cite[Lemma 1.1]{NVV89}. A similar result with some difference, holds for naturally full functors, see \cite[Proposition 2.3]{ACMM06}. The following result concerns the behaviour of semiseparability with respect to composition. It is proved in \cite[Lemma 1.12 and Lemma 1.13]{AB22}.

\begin{lem}\label{lem:comp}
Let $F: \cc \rightarrow \dd$ and $G:\dd\rightarrow\e$ be functors and consider the composite $G\circ F:\cc\rightarrow \e$.
\begin{itemize}
\item[(i)] If $F$ is semiseparable and $G$ is separable, then $G\circ F$ is semiseparable.
%\item[(ii)] If $G\circ F$ is semiseparable and $G$ is faithful, then $F$ is semiseparable.
\item[(ii)] If $F$ is naturally full and $G$ is semiseparable, then $G\circ F$ is semiseparable.
%\item[(iv)] If $G\circ F$ is semiseparable and $F$ is full, then $G$ is semiseparable.
\item[(iii)] If $G\circ F$ is semiseparable and $G$ is faithful, then $F$ is semiseparable.
\end{itemize}
\end{lem}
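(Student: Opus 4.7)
The plan is to exploit the factorization $\f^{GF}_{C,C'} = \f^G_{FC,FC'}\circ \f^F_{C,C'}$ on Hom-sets (which follows from $GF(f)=G(F(f))$) and, in parts (i) and (ii), to construct a witness for the composite as a composition of the given witnesses, while in part (iii) to construct a witness for $F$ by precomposing the witness for $GF$ with $G$ and then use faithfulness of $G$ to cancel it.

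For (i), assume $\p^F$ satisfies $\f^F\p^F\f^F=\f^F$ and $\p^G$ satisfies $\p^G\f^G=\id$. I would set $\p^{GF}_{C,C'}:=\p^F_{C,C'}\circ \p^G_{FC,FC'}$, which is natural in $C,C'$ by composition of natural transformations. Then
\[
\f^{GF}\p^{GF}\f^{GF}=\f^G\f^F\p^F\p^G\f^G\f^F=\f^G\f^F\p^F\f^F=\f^G\f^F=\f^{GF},
\]
using $\p^G\f^G=\id$ and then semiseparability of $F$. The argument for (ii) is symmetric: with $\p^{GF}:=\p^F\circ \p^G$, the equation $\f^F\p^F=\id$ (natural fullness of $F$) collapses the middle of $\f^{GF}\p^{GF}\f^{GF}$, after which $\f^G\p^G\f^G=\f^G$ (semiseparability of $G$) finishes the computation.

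For (iii), given $\p^{GF}$ with $\f^{GF}\p^{GF}\f^{GF}=\f^{GF}$, I would define $\p^F_{C,C'}:\Hom_{\dd}(FC,FC')\to \Hom_{\cc}(C,C')$ by $\p^F_{C,C'}(g):=\p^{GF}_{C,C'}(G(g))$. Naturality is inherited from the naturality of $\p^{GF}$ together with the functoriality of $G$. To check $\f^F\p^F\f^F=\f^F$, take $f\in \Hom_{\cc}(C,C')$: by definition $\f^F\p^F\f^F(f)=F\bigl(\p^{GF}(GF(f))\bigr)$, and applying $G$ yields $GF\bigl(\p^{GF}(GF(f))\bigr)=\f^{GF}\p^{GF}\f^{GF}(f)=\f^{GF}(f)=GF(f)$; faithfulness of $G$ then forces $F\bigl(\p^{GF}(GF(f))\bigr)=F(f)$, as required.

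The main delicate point I expect is verifying that the $\p^F$ constructed in (iii) is genuinely natural, since its definition routes through the functor $G$ and a priori could disturb naturality in the Hom-functor entries. This is resolved by observing that the naturality of $\p^{GF}$ is expressed in terms of pre- and post-composition by morphisms of $\cc$, and $G$ commutes with these on the argument side because $GF$ factors as $G\circ F$. Once naturality is in place, the remaining verifications are straightforward algebraic manipulations on Hom-sets.
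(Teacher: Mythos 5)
Your proof is correct and is the standard argument (the paper itself only cites \cite[Lemmas 1.12--1.13]{AB22} for this): in (i) and (ii) you take $\p^{GF}=\p^F\circ\p^G$ and collapse the middle using the stronger hypothesis on the appropriate factor, and in (iii) you take $\p^F=\p^{GF}\circ\f^G$ and cancel $G$ by faithfulness. All the naturality checks you flag do go through by whiskering along $F$ (resp.\ $G$), so there is nothing to add.
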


\subsection{The associated idempotent and the coidentifier}\label{sub:coidentifier}
Recall that an endomorphism $e_X:X\to X$ in a category $\cc$ is \emph{idempotent} if $e_X^2=e_X$. A natural transformation $e:\id_{\cc%
	}\rightarrow \id_{\cc}$ is idempotent if the component $e_X:X\to X$ in $\cc$ is idempotent for all $X\in\cc$. The following result uniquely attaches an idempotent natural transformation to a given semiseparable functor.

\begin{prop}\label{prop:idempotent} \cite[Proposition 1.4]{AB22}
 Let $F:\cc\rightarrow \dd$ be a semiseparable functor. Then
there is a unique idempotent natural transformation $e:\id_{\cc%
}\rightarrow \id_{\cc}$ such that $Fe=\id_{F}$ with the following universal property: if $f,g:A\to B$ are morphisms, then $Ff=Fg$ if and only if $e_B\circ f=e_B\circ g$.
 \end{prop}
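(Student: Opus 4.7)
\emph{Plan of proof.} Pick a $\p$ witnessing semiseparability, i.e.\ $\f\circ\p\circ\f = \f$, and define $e_A := \p_{A,A}(\id_{FA})$ for each $A \in \cc$. The strategy is to extract every required property of $e$ from naturality of $\p$ in each variable together with the relation $\f\p\f = \f$. Writing the naturality square of $\p$ for a morphism $h : A \to B$, once by post-composition with $h$ (second-variable naturality) and once by pre-composition (contravariance in the first variable), gives the key two-sided identity
$$h \circ e_A \;=\; \p_{A,B}(Fh \circ \id_{FA}) \;=\; \p_{A,B}(Fh) \;=\; \p_{A,B}(\id_{FB} \circ Fh) \;=\; e_B \circ h,$$
which proves at once that $e = (e_A)_A$ is a natural transformation $\id_\cc \to \id_\cc$ and supplies the working formula $\p_{A,B}(Fh) = e_B \circ h = h \circ e_A$.

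Next, applying $\f_{A,A}$ to the definition and invoking $\f\p\f = \f$ on the argument $\id_A$ yields $Fe_A = \f_{A,A}\p_{A,A}(\id_{FA}) = \f_{A,A}\p_{A,A}\f_{A,A}(\id_A) = \f_{A,A}(\id_A) = \id_{FA}$, so $Fe = \id_F$. Idempotency then drops out of the working formula at $h = e_A$: namely $e_A \circ e_A = \p_{A,A}(Fe_A) = \p_{A,A}(\id_{FA}) = e_A$. The universal property is equally direct: since $e_B \circ f = \p_{A,B}(Ff)$ and $e_B \circ g = \p_{A,B}(Fg)$, the implication $Ff = Fg \Rightarrow e_B \circ f = e_B \circ g$ is trivial, while the converse follows by applying $\f_{A,B}$ and using $\f\p\f = \f$, whence $Ff = \f\p\f(f) = \f\p\f(g) = Fg$.

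For uniqueness, suppose $e'$ is an idempotent natural transformation with $Fe' = \id_F$ enjoying the same universal property. Applying the universal property of $e$ to $f = e'_A$ and $g = \id_A$ (both with $F$-image $\id_{FA}$) gives $e_A \circ e'_A = e_A$; the symmetric argument via the universal property of $e'$ gives $e'_A \circ e_A = e'_A$; and naturality of $e$ at $h = e'_A : A \to A$ forces $e_A \circ e'_A = e'_A \circ e_A$. Chaining these equalities yields $e_A = e_A \circ e'_A = e'_A \circ e_A = e'_A$. The main technical point is to set up the two-sided formula $e_B \circ h = \p_{A,B}(Fh) = h \circ e_A$ from naturality of $\p$; once this is in hand, idempotency, the identity $Fe = \id_F$, the universal property, and uniqueness all reduce to short manipulations of the relation $\f\p\f = \f$.
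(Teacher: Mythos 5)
Your proof is correct and follows essentially the same route as the source: the paper states this result by citation to [AB22] and records exactly your construction $e_X=\p_{X,X}(\id_{FX})$, with naturality of $\p$ in both variables yielding $\p_{A,B}(Fh)=e_B\circ h=h\circ e_A$ and the relation $\f\p\f=\f$ giving $Fe=\id_F$, idempotency, the universal property, and uniqueness. No gaps.
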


  The idempotent natural transformation $e:\id_\cc\to\id_\cc$ we have attached to a semiseparable functor  $F:\cc\to\dd$ in Proposition \ref{prop:idempotent} %Theorem \ref{thm:coidentifier}
will be called the \textbf{associated idempotent natural transformation}. Explicitly, $e$ is defined on components by $e_{X}:=\mathcal{P}_{X,X}\left( \id_{FX}\right) $ where $\mathcal{P}$ is any natural transformation such that $\mathcal{F}\circ \mathcal{P}\circ \mathcal{F}=\mathcal{F}$.
It controls the separability of $F$ as follows.

\begin{cor}\label{cor:esep}\cite[Corollary 1.7]{AB22}
Let $F:\cc\to\dd$ be a semiseparable functor and let $e:\id_\cc\to\id_\cc$ be the associated idempotent natural transformation. Then $F$ is separable if and only if $e=\id$.
\end{cor}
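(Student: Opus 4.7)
The plan is to combine Proposition \ref{prop:sep}(i), which says that a functor is separable if and only if it is semiseparable and faithful, with the properties of the associated idempotent natural transformation collected in Proposition \ref{prop:idempotent}. Since $F$ is assumed semiseparable throughout, the statement reduces to proving that $F$ is faithful if and only if $e=\id$.

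For the forward implication, I would assume $F$ is separable and invoke Proposition \ref{prop:idempotent}, which gives $Fe=\id_F$, that is, $F(e_X)=\id_{FX}=F(\id_X)$ for every $X\in\cc$. Since separability implies faithfulness by Proposition \ref{prop:sep}(i), this equality of images forces $e_X=\id_X$ for all $X$, hence $e=\id$.

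For the converse, I would assume $e=\id$ and exploit the universal property from Proposition \ref{prop:idempotent}: for $f,g:A\to B$ in $\cc$, $Ff=Fg$ holds if and only if $e_B\circ f=e_B\circ g$. Under the hypothesis $e_B=\id_B$, the latter condition collapses to $f=g$, so $F$ is faithful. Combined with semiseparability, Proposition \ref{prop:sep}(i) then yields that $F$ is separable.

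I do not expect any real obstacle: the whole argument is a direct two-line application of the preceding Proposition \ref{prop:sep} and Proposition \ref{prop:idempotent}. The only subtle point worth checking is that one is genuinely allowed to read the universal property in both directions, so that $e=\id$ transfers from being a pointwise identity to the global faithfulness of $F$.
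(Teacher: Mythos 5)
Your proof is correct. The paper itself only cites this result from \cite{AB22} without reproducing an argument, but your reduction of the claim to the faithfulness of $F$ via Proposition \ref{prop:sep}(i), combined with $Fe=\id_F$ plus faithfulness in one direction and the biconditional universal property of Proposition \ref{prop:idempotent} in the other, is exactly the natural argument and every step checks out.
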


\begin{rmk}\label{rmk:idemp}
	Let $F:\cc\to\dd$, $G:\dd\to\e$ be functors. By Lemma \ref{lem:comp} we know that $G\circ F$ is semiseparable in both cases (i) and (ii). Then, in (ii) the idempotent natural transformation associated to $GF$ is given by $$e^{GF}_X=\p^{GF}_{X,X}(\id_{GFX})=\p^F_{X,X}\p^G_{FX,FX}(\id_{GFX})=\p^F_{X,X}(e^G_{FX}),$$ where $e^G:\id_\dd\to\id_\dd$ is the one associated to the semiseparable functor $G$. In particular, if $G$ is further separable as in (i), by Corollary \ref{cor:esep} the idempotent natural transformation associated to $GF$ is given by $e^{GF}_X=\p^{F}_{X,X}(e^G_{FX})=\p^{F}_{X,X}(\id_{FX})=e^F_X$, where $e^F:\id_\cc\to\id_\cc$ is the associated idempotent to the semiseparable functor $F$.  
\end{rmk}

Given a category $\cc$ and an idempotent natural transformation $e:%
\id_{\cc}\rightarrow \id_{\cc}$, the coidentifier $\cc_{e}$, see \cite[Example 17]{FOPTST99}, is the quotient category $\cc/\!\sim$ of $\cc$ where $\sim$ is the congruence relation on the hom-sets defined, for all $f,g:A\to B$, by setting $f\sim g$ if and only if $e_{B}\circ f=e_{B}\circ g$. Thus $\mathrm{Ob}\left( \cc%
_{e}\right) =\mathrm{Ob}\left( \cc\right) $ and $\Hom_{%
\cc_{e}}\left( A,B\right) =\Hom_{\cc}\left(
A,B\right) /\!\sim $. We denote by $\overline{f}$ the class of $f\in \Hom_{\cc%
}\left( A,B\right) $ in $\Hom_{\cc_{e}}\left( A,B\right) $.
It is remarkable that  the quotient functor $H:\cc\rightarrow \cc_{e}$,
acting as the identity on objects and as the canonical projection on
morphisms, is naturally full with respect to $\mathcal{P}_{A,B}:\Hom_{%
\cc_{e}}\left( A,B\right)\to \Hom_{\cc}\left(
A,B\right)$ defined by  $\mathcal{P}_{A,B}(\overline{f})=e_B\circ f$. Moreover the idempotent natural transformation associated to $H$ is exactly $e$.\medskip

Next lemma is essentially the universal property of the coidentifier that can be deduced from the
dual version of \cite[Definition 14(1)]{FOPTST99}, see also \cite[Lemma 1.14(1)]{AB22}.

\begin{lem}\label{lem:coidentifier}
 Let $\cc$ be a category, let $e:\id_{\cc}\rightarrow \id_{\cc}$ be an idempotent natural transformation and let $H:\cc\rightarrow \cc_{e}$ be the quotient functor. A functor $F:\cc\to \dd$  satisfies $Fe=\id_{F}$ if and only if there is a functor $F_{e}:\cc_{e}\rightarrow \dd$ (necessarily unique) such that $F=F_{e}\circ H$. Given $F,F':\cc\to \dd$  such that $Fe=\id_{F}$ and $F'e=\id_{F'}$, and a natural transformation $\beta:F\to F'$, there is a unique natural transformation $\beta_e:F_e\to F'_e$ such that $\beta=\beta_eH$.
\end{lem}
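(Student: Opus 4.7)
The plan is to construct $F_e$ directly on objects and morphisms using the fact that $\mathrm{Ob}(\cc_e)=\mathrm{Ob}(\cc)$ and that morphisms in $\cc_e$ are equivalence classes, the key point being that the relation $\sim$ is tailored precisely so that the assignment $F_e(\overline{f}):=F(f)$ descends to the quotient.

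For the first statement, I would start with the ``if'' direction. Assuming $F=F_e\circ H$, I would show $He=\id_H$: for every object $X$, the equality $e_X\sim \id_X$ amounts to $e_B\circ e_X=e_B\circ \id_X$ with $A=B=X$, which follows from idempotency of $e$. Thus $Fe=F_e\circ He=F_e\circ \id_H=\id_F$. For the ``only if'' direction, assuming $Fe=\id_F$, I would define $F_e$ to act as $F$ on objects and by $F_e(\overline{f}):=F(f)$ on morphisms. Well-definedness is the only nontrivial check: if $\overline{f}=\overline{g}$, i.e. $e_B\circ f=e_B\circ g$, applying $F$ and using $Fe_B=\id_{FB}$ gives $F(f)=F(g)$. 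Functoriality is inherited from $\cc$ since composition and identities in $\cc_e$ are represented by composition and identities in $\cc$. The equality $F=F_e\circ H$ is immediate from the definition, and uniqueness of $F_e$ follows since $H$ is the identity on objects and surjective on hom-sets, forcing both the object-assignment and morphism-assignment of any such factorization.

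For the second statement, setting $(\beta_e)_X:=\beta_X$ for all $X\in\mathrm{Ob}(\cc_e)=\mathrm{Ob}(\cc)$ is forced by the requirement $\beta=\beta_e H$, which yields both existence and uniqueness at the level of components. The only verification is naturality of $\beta_e$ with respect to morphisms $\overline{f}:A\to B$ in $\cc_e$: we need $(\beta_e)_B\circ F_e(\overline{f})=F'_e(\overline{f})\circ (\beta_e)_A$, which unravels to $\beta_B\circ F(f)=F'(f)\circ \beta_A$, i.e. naturality of $\beta$ on $f$.

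The statement is essentially the standard universal property of a quotient category, so I do not expect any serious obstacle; the only delicate point is verifying that $F_e$ is well-defined on morphisms, and this is exactly where the hypothesis $Fe=\id_F$ is used, in combination with the very definition of the congruence $\sim$ associated to $e$. As a remark, this well-definedness is also encoded in the universal property of the idempotent $e$ recorded in Proposition \ref{prop:idempotent} when $F$ happens to be semiseparable with associated idempotent $e$, but here it is derived directly from $Fe=\id_F$ without any semiseparability assumption on $F$.
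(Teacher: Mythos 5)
Your proof is correct and is exactly the standard direct verification that the paper defers to its references for (the paper itself gives no proof of this lemma, only citations to the universal property of the coidentifier). All the key points are in place: $He=\id_H$ by idempotency of $e$ for the "if" direction, well-definedness of $F_e(\overline{f}):=F(f)$ from $Fe=\id_F$ for the "only if" direction, and the forced componentwise definition of $\beta_e$ with naturality inherited from $\beta$.
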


The following result shows that each semiseparable functor factors, through the coidentifier category, as a naturally full functor followed by a separable one.

\begin{thm}\label{thm:coidentifier}\cite[Theorem 1.15]{AB22}
Let $F:\cc\rightarrow \dd$ be a semiseparable functor and let $e:\id_\cc\to\id_\cc$ be the associated idempotent natural transformation. Then, there is a unique functor $F_{e}:\cc%
_{e}\rightarrow \dd$ (necessarily separable) such that $F=F_{e}\circ H$ where $H:\cc\rightarrow \cc_{e}$ is the quotient functor.
Furthermore if $F$ also factors as $S\circ N$ where $S:\mathcal{E}\rightarrow \dd$ is a separable functor and $N:\cc\rightarrow \mathcal{E}$ is a naturally full functor, then there is a unique functor $N_e:\cc_e\to \mathcal{E}$ (necessarily fully faithful) such that $N_e\circ H=N$ and $S\circ N_e=F_e$, and $e$ is also the idempotent natural transformation associated to $N$ (by Remark \ref{rmk:idemp}).
\begin{gather*}
 \xymatrix{\cc\ar[rd]_F\ar[r]^H&\cc_e \ar@{.>}[d]^{F_e}\\&\dd  }\qquad\qquad
  \xymatrix{\cc\ar[r]^H\ar[d]_{N}&\cc_e \ar@{.>}[dl]|{N_e}\ar[d]^{F_e}\\\mathcal{E}\ar[r]^S&\dd  }
  \end{gather*}
The natural transformation making $F_e$ separable is uniquely determined by the equality $\p^{F_e}_{HX,HY}:=\f^H_{X,Y}\circ\p^F_{X,Y}$, where $\p^F_{X,Y}$ is the one making $F$ semiseparable, for all $X,Y$ in $\cc$.
\end{thm}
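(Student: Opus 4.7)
The plan is to apply the universal property of the coidentifier category to factor $F$ through $H$, then construct the section natural transformation witnessing separability of $F_e$, and finally use the same machinery to handle the uniqueness-of-factorization statement.

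First I would observe that by Proposition \ref{prop:idempotent} the hypothesis that $e$ is the idempotent associated to $F$ yields $Fe=\id_F$. Lemma \ref{lem:coidentifier} then produces a unique functor $F_e:\cc_e\to\dd$ with $F_e\circ H=F$. To show $F_e$ is separable I define $\p^{F_e}_{HX,HY}:=\f^H_{X,Y}\circ \p^F_{X,Y}$, which takes values in $\Hom_{\cc_e}(HX,HY)$ and whose source matches the target $\Hom_{\dd}(F_eHX,F_eHY)=\Hom_\dd(FX,FY)$ of $\p^F_{X,Y}$. Naturality in $\cc_e$ is checked on representatives $g:X'\to X$, $h:Y\to Y'$: using naturality of $\p^F$ and functoriality of $H$ one gets
\begin{align*}
\p^{F_e}_{HX',HY'}\bigl(F_e(\overline{h})\circ\alpha\circ F_e(\overline{g})\bigr)&=\f^H\bigl(\p^F(F(h)\circ\alpha\circ F(g))\bigr)\\
&=\f^H\bigl(h\circ\p^F(\alpha)\circ g\bigr)\\
&=\overline{h}\circ\p^{F_e}(\alpha)\circ\overline{g}.
\end{align*}
Finally $\p^{F_e}_{HX,HY}\bigl(\f^{F_e}_{HX,HY}(\overline{f})\bigr)=\overline{\p^F(Ff)}$, and the semiseparability identity $F(\p^F(Ff))=Ff$ combined with the universal property of $e$ in Proposition \ref{prop:idempotent} yields $e_Y\circ \p^F(Ff)=e_Y\circ f$, i.e.\ $\overline{\p^F(Ff)}=\overline{f}$, so $\p^{F_e}\circ \f^{F_e}=\id$ and $F_e$ is separable.

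For the second statement I would first invoke Remark \ref{rmk:idemp}: since $N$ is naturally full (hence semiseparable by Proposition \ref{prop:sep}(ii)) and $S$ is separable, the idempotent associated to $F=S\circ N$ coincides with that associated to $N$, so $e$ itself is the idempotent associated to $N$ and in particular $Ne=\id_N$. Lemma \ref{lem:coidentifier} yields a unique $N_e:\cc_e\to\e$ with $N_e\circ H=N$; then $(S\circ N_e)\circ H=S\circ N=F=F_e\circ H$, so the uniqueness clause in Lemma \ref{lem:coidentifier} forces $S\circ N_e=F_e$. Fullness of $N_e$ follows from fullness of $N$ (granted by Proposition \ref{prop:sep}(ii)): any $\alpha:N_eHX\to N_eHY$ equals $N(f)=N_e(\overline{f})$ for some $f$. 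For faithfulness, $N_e(\overline{f})=N_e(\overline{g})$ means $Nf=Ng$, and the universal property of the idempotent associated to $N$ gives $e_Y\circ f=e_Y\circ g$, i.e.\ $\overline{f}=\overline{g}$.

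The main technical point is verifying naturality of $\p^{F_e}$ together with $\p^{F_e}\circ \f^{F_e}=\id$; both hinge on translating the semiseparability equation $\f\circ\p\circ\f=\f$ through the universal property of $e$, but once that translation is in place the argument is entirely formal. The other delicate observation is noticing, via Remark \ref{rmk:idemp}, that the \emph{same} idempotent $e$ governs both $F$ and $N$ in the factorization $F=S\circ N$; without this one could not apply Lemma \ref{lem:coidentifier} to factor $N$ through $H$, which is what triggers the rest of the argument.
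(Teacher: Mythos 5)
Your proof is correct and follows exactly the route the statement itself prescribes: factor through $H$ via Lemma \ref{lem:coidentifier} using $Fe=\id_F$, verify separability of $F_e$ with the displayed formula $\p^{F_e}=\f^H\circ\p^F$ by translating $\f\circ\p\circ\f=\f$ through the universal property of $e$ in Proposition \ref{prop:idempotent}, and handle the second factorization by first identifying (via Remark \ref{rmk:idemp}) the idempotent of $N$ with $e$ so that $N$ also descends to $\cc_e$. The paper quotes this result from \cite{AB22} without reproducing the argument, but all the key steps you supply (including full faithfulness of $N_e$ from fullness of $N$ plus the universal property of $e$) are the intended ones and are sound.
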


\subsection{Eilenberg-Moore category}\label{sub:Eil-Kleisli}
In order to present the behaviour of semiseparable adjoint functors in terms of separable (co)monads and associated (co)comparison functor, we remind some notions concerning Eilenberg-Moore categories \cite{EM65}.

Given a monad $(\top,m:\top\top\to \top,\eta:\id_\cc\to \top)$ on a category $\cc$ we denote by $\cc_{\top}$ the Eilenberg-Moore category of modules (or algebras) over it. The forgetful functor $U_\top:\cc_\top\to \cc$ has a left adjoint, namely the \emph{free functor}
$$V_\top:\cc\to \cc_\top,\qquad C\mapsto (\top C,m _C),\qquad f\mapsto \top (f).$$
The unit $\id_\cc\to U_\top V_\top=\top$ is exactly $\eta$ while the counit $\beta:V_\top U_\top\to \id_{\cc_\top}$ is completely determined by the equality by $U_\top\beta_{(X,\mu)}=\mu$ for every object $(X,\mu)$ in $\cc_\top$ (see \cite[Proposition 4.1.4]{BorII94}). Dually, given a comonad $(\bot,\Delta:\bot\to \bot\bot,\epsilon:\bot\to \id_\cc)$ on a category $\cc$ we denote by $\cc^{\bot}$ the Eilenberg-Moore category of comodules (or coalgebras) over it. The forgetful functor $U^\bot:\cc^\bot\to \cc$ has a right adjoint, namely the \emph{cofree functor} $$V^\bot:\cc\to \cc^\bot,\qquad C\mapsto (\bot C,\Delta _C),\qquad f\mapsto \bot (f).$$
The unit $\alpha:\id_{\cc^\bot}\to V^\bot U^\bot $ is completely determined by the equality $U^\bot\alpha_{(X,\rho)}=\rho$ for every object $(X,\rho)$ in $\cc^\bot$ while the counit $U^\bot V^\bot =\bot\to \id_{\cc}$ is exactly $\epsilon$.\medskip

Given an adjunction  $F\dashv G:\dd\to\cc$, with unit $\eta$ and counit $\epsilon$, we can consider the monad $(GF, G\epsilon F, \eta )$ and the comonad $(FG, F\eta G, \epsilon )$.
We have the \emph{comparison functor}
$$K_{GF}:\dd\to \cc_{GF},\qquad D\mapsto (GD,G\epsilon_{D}),\qquad f\mapsto G (f)$$
and the \emph{cocomparison functor}
$$K^{FG}:\cc\to \dd^{FG},\qquad C\mapsto (FC,F\eta_{C}),\qquad f\mapsto F (f).$$
Thus we have the following diagram
\begin{gather*}\label{diag:eil-moore}
\vcenter{%
\xymatrixcolsep{1.8cm}\xymatrixrowsep{1cm}\xymatrix{
\dd^{FG} \ar@{}[r]|-\perp \ar@<1ex>[r]^-{U^{FG}}&\dd\ar@/^1pc/[rd]^{K_{GF}} \ar@<1ex>[l]^-{V^{FG}}
\ar@<1ex>[d]^*-<0.1cm>{^{G}}\\
&\cc \ar@/^1pc/[lu]^{K^{FG}}  \ar@<1ex>[u]^*-<0.1cm>{^{F}}\ar@{}[u]|{\dashv}\ar@{}[r]|-\perp \ar@<1ex>[r]^-{V_{GF}}&\cc _{GF}\ar@<1ex>[l]^-{U_{GF}}
} }
\end{gather*} where $U_{GF}\circ K_{GF}=G$, $K_{GF}\circ F = V_{GF}$, $U^{FG}\circ K^{FG}=F$ and $K^{FG}\circ G = V^{FG}$.
%Furthermore (see \cite[Theorem VI.5.2]{Mac98}, \cite{BD72}), given an adjunction $(F:\cc\to\dd, G:\dd\to\cc)$, with unit $\eta$, counit $\epsilon$, and associated monad $(GF, G\epsilon F, \eta )$, there exists a unique functor $L:GF\text{-}\mathrm{Free}_\cc\to \dd$ such that $G\circ L=U'_{GF}$ and $L\circ V'_{GF}=F$, \bl{\begin{equation}\label{eq:kleisli-eilenb}
%\xymatrixcolsep{1cm}\xymatrixrowsep{0.8cm}\xymatrix{
%&&\cc\ar@<-0.5ex>[drr]_-{V_{GF}}\ar@<-0.5ex>[dll]_-{V'_{GF}}\ar@<-0.5ex>[d]_-{F}&&\\
%GF\text{-}\mathrm{Free}_\cc  \ar@<-0.5ex>[urr]_-{U'_{GF}}\ar[rr]_-{L}&&\dd\ar@<-0.5ex>[u]_-{G}\ar[rr]_-{K_{GF}}&&\cc_{GF}, \ar@<-0.5ex>[ull]_-{U_{GF}}
%}
%\end{equation}
%where $(V'_{GF} ,U'_{GF} )$ is the restriction of the adjunction $(V_{GF} ,U_{GF} )$ to $GF\text{-}\mathrm{Free}_\cc$.} In particular, $L$ is fully faithful.
%\par

We recall that a monad $(\top,m:\top\top\to \top,\eta:\id_\cc\to \top)$ on a category $\cc$ is said to be \emph{separable} \cite{BruV07} if there exists a natural transformation $\sigma :\top\to\top\top$ such that $m\circ\sigma =\id _{\top}$ and $\top m\circ\sigma \top = \sigma\circ m = m\top\circ \top\sigma$; in particular, a separable monad is a monad satisfying the equivalent conditions of \cite[Proposition 6.3]{BruV07}. Dually, a comonad $(\bot,\Delta:\bot\to \bot\bot,\epsilon:\bot\to \id_\cc)$ on a category $\cc$ is said to be \emph{coseparable} if there exists a natural transformation $\tau :\bot\bot\to\bot$ satisfying $\tau\circ\Delta = \id _{\bot}$ and $\bot\tau\circ\Delta\bot=\Delta \circ \tau =\tau\bot\circ\bot\Delta$.\medskip

%Furthermore, an \emph{idempotent monad} is a monad $(\top,m,\eta)$ on a category $\cc$ whose multiplication $m$ is an isomorphism or, equivalently, such that the forgetful functor $U_\top:\cc_\top\to \cc$ is fully faithful, see \cite[Proposition 4.2.3]{BorII94}.
%
%Dually, an \emph{idempotent comonad} is a comonad $(\bot,\Delta,\epsilon)$ on a category $\cc$ whose comultiplication $\Delta$ is an isomorphism or, equivalently, such that the forgetful functor $U^\bot:\cc^\bot\to \cc$ is fully faithful, see \cite[Section 6]{ApT69}.
%
% An adjunction $(F:\cc\to\dd, G:\dd\to\cc)$ with unit $\eta :\id_{%
%\cc}\rightarrow GF$ and counit  $\epsilon :FG\rightarrow \mathrm{%
%Id}_{\dd}$ is said to be an \emph{idempotent adjunction} if the monad $(GF,G\epsilon F,\eta)$ is idempotent, or equivalently if the comonad $(FG, F\eta G, \epsilon )$ is idempotent, see e.g. \cite[Subsection 3.4]{ClW11}. Indeed by \cite[Proposition 2.8]{MS82} this is equivalent to ask that anyone of the natural transformations $\epsilon F,$ $G\epsilon$, $F\eta$ and $\eta G$ is an isomorphism.\medskip

The following results characterize the semiseparability of a right (left) adjoint functor in terms of the natural fullness of the (co)comparison functor and of the separability of the forgetful functor from the Eilenberg-Moore category of (co)modules over the associated (co)monad.

\begin{thm}\label{thm:ssepMonad}\cite[Theorem 2.9 and Theorem 2.14]{AB22}
Let $F\dashv G:\dd\to\cc$ be an adjunction.

\begin{enumerate}
  \item[(i)] $G$ is semiseparable if and only if the forgetful functor $U_{GF}:\cc_{GF}\to\cc$ is separable (equivalently, the monad $(GF,G\epsilon F,\eta)$ is separable) and the comparison functor $K_{GF}:\dd\to \cc_{GF}$ is naturally full.
  \item[(ii)] $F$ is semiseparable if and only if the forgetful functor $U^{FG}: \dd^{FG} \to \dd$ is separable (equivalently, the comonad $(FG, F\eta G, \epsilon )$ is coseparable) and the cocomparison functor $K^{FG}:\cc\to \dd^{FG}$ is naturally full.
\end{enumerate}
\end{thm}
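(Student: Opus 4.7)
The plan is to translate semiseparability of $G$ into Rafael-type data on the adjunction and then build from it both the separating structure on the monad and a section witnessing $K_{GF}$ as naturally full. Concretely, I would first prove the following translation: for an adjunction $F\dashv G:\dd\to\cc$ with unit $\eta$ and counit $\epsilon$, $G$ is semiseparable if and only if there exists a natural transformation $\gamma:\id_\dd\to FG$ satisfying $G\epsilon\circ G\gamma=\id_G$. This follows from the hom-adjunction isomorphism $\Hom_\cc(GD,GD')\cong\Hom_\dd(FGD,D')$ combined with contravariant Yoneda on the source, which gives a bijective correspondence $\p^G\leftrightarrow\gamma$ via $\p^G_{D,D'}(\psi)=\epsilon_{D'}\circ F\psi\circ\gamma_D$; the equation $\f\p\f=\f$ unwinds exactly to $G(\epsilon\gamma)=\id_G$, and the associated idempotent of Proposition \ref{prop:idempotent} reads $e=\epsilon\gamma$.

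For the direction $(\Leftarrow)$ in (i), the factorization $G=U_{GF}\circ K_{GF}$ has $U_{GF}$ separable (hence semiseparable by Proposition \ref{prop:sep}(i)) and $K_{GF}$ naturally full, so Lemma \ref{lem:comp}(ii) immediately yields that $G$ is semiseparable.

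For $(\Rightarrow)$ in (i), assume $G$ is semiseparable and fix $\gamma$ as above. I would set $\sigma:=G\gamma F:GF\to GFGF$; then $G\epsilon F\circ\sigma=G(\epsilon\gamma)F=\id_{GF}$ gives the unital splitting, and the two Frobenius-type identities for separability of the monad $(GF,G\epsilon F,\eta)$ reduce, after using the naturality of $\gamma$ against $\epsilon F$ and of $\epsilon$ against $\gamma F$, to the common expression $G\gamma F\circ G\epsilon F$, so the monad (equivalently $U_{GF}$) is separable. To show $K_{GF}$ is naturally full, observe first that $U_{GF}$ is faithful, so Lemma \ref{lem:comp}(iii) applied to $G=U_{GF}\circ K_{GF}$ gives that $K_{GF}$ is semiseparable; by Proposition \ref{prop:sep}(ii) it remains to verify fullness. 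Given $\psi:(GD,G\epsilon_D)\to(GD',G\epsilon_{D'})$ in $\cc_{GF}$, i.e.\ $\psi\circ G\epsilon_D=G\epsilon_{D'}\circ GF\psi$, set $\phi:=\epsilon_{D'}\circ F\psi\circ\gamma_D:D\to D'$; then the algebra condition combined with $G(\epsilon\gamma)=\id_G$ yields $G\phi=G\epsilon_{D'}\circ GF\psi\circ G\gamma_D=\psi\circ G\epsilon_D\circ G\gamma_D=\psi\circ G(\epsilon\gamma)_D=\psi$, so $K_{GF}\phi=\psi$.

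Part (ii) is the formal dual: the semiseparability of the left adjoint $F$ is characterized by a natural transformation $\delta:GF\to\id_\cc$ with $F(\delta\eta)=\id_F$ (obtained by the analogous Yoneda argument on the source side); then $\tau:=F\delta G$ coseparates the comonad $(FG,F\eta G,\epsilon)$, and a coalgebra morphism $\psi:(FC,F\eta_C)\to(FC',F\eta_{C'})$ in $\dd^{FG}$ lifts to $\phi:=\delta_{C'}\circ G\psi\circ\eta_C$ with $F\phi=\psi$, witnessing the fullness of $K^{FG}$. I expect the main obstacle to be precisely the Rafael-style translation of $\p^G$ into the single natural transformation $\gamma$, which requires a careful Yoneda argument handling both hom-variables of $\p^G$ simultaneously; once this translation is in hand, the separating structure on the monad, the Frobenius identities, and the fullness of $K_{GF}$ all follow formally from naturality and the single equation $G(\epsilon\gamma)=\id_G$.
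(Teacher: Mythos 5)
Your proposal is correct. The paper states this theorem without proof, citing \cite{AB22} (Theorems 2.9 and 2.14), and your argument — the Rafael-type translation of $\p^G$ into a natural transformation $\gamma:\id_\dd\to FG$ with $G(\epsilon\circ\gamma)=\id_G$ via the adjunction isomorphism and Yoneda, followed by taking $\sigma=G\gamma F$ for the separability of the monad, the composition lemmas for the converse, and the explicit lift $\phi=\epsilon_{D'}\circ F\psi\circ\gamma_D$ for the fullness of $K_{GF}$ — is essentially the same route as the cited proof.
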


As a consequence of Theorem \ref{thm:ssepMonad}, one recovers the following similar characterization for separable adjoint functors. The first item, should be compared with \cite[proof of Proposition 3.5]{Chen15} and \cite[Proposition 2.16]{AGM15}, while the second item is \cite[Corollary 2.15]{AB22}.

\begin{cor} \label{cor:sepmonad}
Let $F\dashv G:\dd\to\cc$ be an adjunction.
\begin{itemize}
  \item[(i)]  $G$ is separable if and only if the forgetful functor $U_{GF}:\cc_{GF}\to\cc$ is separable (equivalently, the monad $(GF,G\epsilon F,\eta)$ is separable) and the comparison functor $K_{GF}:\dd\to \cc_{GF}$ is fully faithful (i.e. $G$ is premonadic).
  \item[(ii)] $F$ is separable if and only if the forgetful functor $U^{FG}: \dd^{FG} \to \dd$ is separable (equivalently, the comonad $(FG, F\eta G, \epsilon )$ is coseparable) and the cocomparison functor $K^{FG}:\cc\to \dd^{FG}$ is fully faithful (i.e. $F$ is precomonadic).
\end{itemize}
\end{cor}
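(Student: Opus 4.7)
The plan is to derive both items as a direct consequence of the semiseparable characterization in Theorem \ref{thm:ssepMonad} combined with Proposition \ref{prop:sep}(i), which identifies separability with semiseparability plus faithfulness. I will write out (i); (ii) then follows by the dual argument applied to Theorem \ref{thm:ssepMonad}(ii) together with the factorization $F=U^{FG}\circ K^{FG}$.

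For the forward direction of (i), assume $G$ is separable. Then $G$ is in particular semiseparable, so Theorem \ref{thm:ssepMonad}(i) already yields that the monad $(GF, G\epsilon F, \eta)$ is separable (equivalently, $U_{GF}$ is separable) and that the comparison functor $K_{GF}$ is naturally full. The only remaining task is to upgrade natural fullness to full faithfulness for $K_{GF}$. By Proposition \ref{prop:sep}(i), separability of $G$ forces $G$ to be faithful. Since $U_{GF}$ is itself faithful (being the forgetful functor from an Eilenberg--Moore category), the factorization $G=U_{GF}\circ K_{GF}$ immediately implies that $K_{GF}$ is faithful. Now observe that a functor which is both naturally full and faithful is automatically fully faithful: naturally fullness says that for each pair of objects the map $\f$ admits a one-sided inverse $\p$ with $\f\circ\p=\id$, so $\f$ is surjective on hom-sets, while faithfulness says that $\f$ is injective on hom-sets; together this gives bijectivity.

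For the converse, assume $U_{GF}$ is separable and $K_{GF}$ is fully faithful. A fully faithful functor is separable (the componentwise inverse of $\f$ provides a natural transformation $\p$ with $\p\circ\f=\id$), and the composition of two separable functors is separable by \cite[Lemma 1.1]{NVV89}. Hence $G=U_{GF}\circ K_{GF}$ is separable, as desired.

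There is no real technical obstacle here: the whole argument amounts to the elementary observation that \emph{naturally full plus faithful equals fully faithful}, which bridges the natural fullness condition supplied by Theorem \ref{thm:ssepMonad} with the fully faithfulness condition appearing in the statement. Part (ii) is obtained by reading the same argument in the opposite category: apply Theorem \ref{thm:ssepMonad}(ii) and Proposition \ref{prop:sep}(i) to $F$, use that $U^{FG}$ is faithful, and conclude using the factorization $F=U^{FG}\circ K^{FG}$ together with the fact that fully faithful functors are separable and separability is stable under composition.
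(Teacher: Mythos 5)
Your proof is correct and follows essentially the route the paper intends: the corollary is presented as a consequence of Theorem \ref{thm:ssepMonad} combined with Proposition \ref{prop:sep}(i), which is exactly your reduction (separable = semiseparable + faithful, transfer faithfulness along $G=U_{GF}\circ K_{GF}$, and observe that naturally full plus faithful equals fully faithful); the same template appears verbatim in the paper's proof of the later Corollary \ref{cor:compcorefretr}. The only cosmetic remark is that in the forward direction the faithfulness of $U_{GF}$ is not needed (faithfulness of $G=U_{GF}\circ K_{GF}$ already forces $K_{GF}$ faithful); it is only needed in the converse, which your separability-of-compositions argument handles correctly anyway.
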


\subsection{(Co)reflections and bireflections}\label{sub:corefbiref}
Recall that
\begin{itemize}
  %\item a functor is called \emph{Frobenius} if it has a left and right adjoint equal;
  \item a \emph{reflection} is a functor admitting a fully faithful right adjoint;
  \item a \emph{coreflection} is a functor admitting a fully faithful left adjoint, see \cite{Ber07};
  \item a \emph{bireflection} is a functor $G:\dd\to\cc$ having a left and right adjoint equal, say $F:\cc\to \dd$, which is fully faithful and satisfies the coherent condition $\eta^r\circ \epsilon^l=\id$ where $\epsilon^l:FG\to\id$ is the counit of $F\dashv G$ while $\eta^r:\id\to  FG$ is the unit of $G\dashv F$, cf. \cite[Definition 8]{FOPTST99}.
\end{itemize} Being a coreflection (respectively a reflection) is equivalent to the fact that the unit (respectively counit) of the corresponding adjunction is an isomorphism, see \cite[Proposition 3.4.1]{Bor94}. The adjoint of the inclusion of a (co)reflective subcategory is a typical example of (co)reflection. Bireflective subcategories of a given category $\cc$ provide examples of bireflections. It is known that these subcategories correspond bijectively to split-idempotent natural transformations $e:\id_\cc\to\id_\cc$ with specified splitting, see \cite[Theorem 13]{FOPTST99} and \cite[Theorem 1.3]{Joh96}; this fact is connected to the quotient functor $H:\cc\to \cc_e$ which comes out to be a bireflection if and only if $e$ splits, see \cite[Proposition 2.27]{AB22}.\medskip

\begin{rmk}\label{rmk:coref-biref}
		(Co)reflections are closed under composition. In fact, if $G:\dd\to\cc$, $G':\e\to\dd$ are (co)reflections with fully faithful left (right) adjoints $F:\cc\to\dd$ and $F':\dd\to\e$ respectively, then $G\circ G'$ is a (co)reflection with fully faithful left (right) adjoint $F'\circ F$. Moreover, also bireflections are closed under composition. Indeed, if $G:\dd\to\cc$, $G':\e\to\dd$ are bireflections with fully faithful left and right adjoints $F$ and $F'$ respectively, satisfying the coherent conditions $\eta^r\circ \epsilon^l=\id$ and $\bar{\eta}^r\circ \bar{\epsilon}^l=\id$ where $\epsilon^l:FG\to\id$ is the counit of $F\dashv G$, $\bar{\epsilon}^l:F'G'\to\id$ is the counit of $F'\dashv G'$ while $\eta^r:\id\to  FG$ is the unit of $G\dashv F$ and $\bar{\eta}^r:\id\to  F'G'$ is the unit of $G'\dashv F'$, then $G\circ G'$ is a bireflection with fully faithful left and right adjoint $F'\circ F$, satisfying the coherent condition $F'\eta^rG'\circ\bar{\eta}^r\circ\bar{\epsilon}^l\circ F'\epsilon^l G'=\id$. 
\end{rmk}

Next result shows how the above notions interact in case the functor is semiseparable.

\begin{thm}
\label{thm:frobenius} \cite[Theorem 2.24]{AB22}
A functor is a semiseparable (co)reflection if and only if it is a naturally full (co)reflection if and only if it is a bireflection.
\end{thm}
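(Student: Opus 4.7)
The plan is to treat the coreflection case, with the reflection case following by duality. Abbreviate (SC) = semiseparable coreflection, (NC) = naturally full coreflection, (B) = bireflection. I will prove $(\mathrm{NC}) \Rightarrow (\mathrm{SC})$, $(\mathrm{SC}) \Rightarrow (\mathrm{NC})$, and $(\mathrm{NC}) \Leftrightarrow (\mathrm{B})$. The direction $(\mathrm{NC}) \Rightarrow (\mathrm{SC})$ is immediate, since naturally full implies semiseparable by Proposition~\ref{prop:sep}(ii).

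For $(\mathrm{SC}) \Rightarrow (\mathrm{NC})$, I would invoke Theorem~\ref{thm:ssepMonad}(i). For a coreflection $F \dashv G$ the unit $\eta: \id_\cc \to GF$ is invertible, and a short check using the triangle identity $G\epsilon \circ \eta G = \id_G$ combined with the invertibility of $\eta$ shows that $\eta^{-1}: GF \to \id_\cc$ is a monad isomorphism onto the identity monad on $\cc$. Consequently $GF$ is automatically separable, $\cc_{GF}$ is canonically equivalent to $\cc$, and under this equivalence the comparison $K_{GF}: \dd \to \cc_{GF}$ is identified with $G$ itself. Theorem~\ref{thm:ssepMonad}(i) then forces $K_{GF}$, and therefore $G$, to be naturally full.

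The technical engine for $(\mathrm{NC}) \Leftrightarrow (\mathrm{B})$ is the following lemma: for any semiseparable coreflection $G$ with associated idempotent $e: \id_\dd \to \id_\dd$, we have $e_{FX} = \id_{FX}$ for every $X \in \cc$, and hence $e_{FGY} = \id_{FGY}$ for every $Y \in \dd$. The proof is a one-line adjunction argument: for any $f: FX \to Y$, the morphisms $f$ and $e_Y \circ f$ are both sent to $Gf \circ \eta_X$ by the hom-bijection $\Hom_\dd(FX, Y) \cong \Hom_\cc(X, GY)$ (using $Ge = \id_G$), so they coincide; specializing to $Y = FX$ and $f = \id_{FX}$ yields the claim.

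With this lemma in hand, both halves of $(\mathrm{NC}) \Leftrightarrow (\mathrm{B})$ are formal manipulations. For $(\mathrm{B}) \Rightarrow (\mathrm{NC})$, I set $\p_{A,B}(g) := \epsilon^l_B \circ Fg \circ \eta^r_A$; naturality is routine, and $\f \p = \id$ reduces to the identity $G\eta^r = \eta G$, which follows by applying $G$ to the coherent condition $\eta^r \circ \epsilon^l = \id_{FG}$ and combining with the triangle identity $G\epsilon^l \circ \eta G = \id$. For $(\mathrm{NC}) \Rightarrow (\mathrm{B})$, I pick a section $\p$ of $\f$ and set $\eta^r_Y := \p(\eta_{GY})$ and $\epsilon^r := \eta^{-1}$; naturality of $\eta^r$ is direct from the two naturalities of $\p$ and $\eta$, while $G\eta^r = \eta G$ is immediate from $\f \p = \id$. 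The coherent condition $\eta^r_Y \circ \epsilon_Y = \id_{FGY}$ reduces, via naturality of $\p$ in the first variable and $G\epsilon_Y = \eta_{GY}^{-1}$, to $\p(\id_{GFGY}) = e_{FGY} = \id_{FGY}$; the second triangle identity $\eta^r F = F\eta$ reduces, via naturality of $\p$ in the second variable and $GF\eta_X = \eta_{GFX}$, to $F\eta_X \circ \p(\id_{GFX}) = F\eta_X \circ e_{FX} = F\eta_X$. The main obstacle is really isolating this lemma on $e_{FX}$; once it is secured, the bireflection structure assembles mechanically.
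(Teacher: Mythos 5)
Your argument is correct. Note that this paper does not prove the statement at all --- it is quoted verbatim from \cite[Theorem 2.24]{AB22} --- so there is no in-paper proof to measure you against; judged on its own terms, every step of your reconstruction checks out. Two features of your route are worth flagging. First, your key lemma is in fact stronger than you state: the computation $f\mapsto Gf\circ\eta_X$ identifying $f$ with $e_Y\circ f$ uses only the adjunction bijection and $Ge=\id_G$, so $eF=\id_F$ holds for \emph{any} semiseparable right adjoint, not just a coreflection; this is exactly the pivot on which the bireflection structure turns, and isolating it is the right move. Second, your proof of (SC)$\Rightarrow$(NC) is the one place where you lean on heavy machinery: you route through Theorem \ref{thm:ssepMonad}(i) after observing that an invertible unit collapses the monad $GF$ to the identity monad, so that $\cc_{GF}\cong\cc$ and $K_{GF}$ is identified with $G$. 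This is legitimate (the identification is an isomorphism of categories, since the unit axiom forces $\mu=\eta_C^{-1}$ on every module, and natural fullness transports across it), but it is avoidable: with your lemma $eF=\id_F$ in hand one can instead verify directly that $\p'_{X,Y}(g):=\epsilon_Y\circ F(g\circ\eta_{GX}^{-1}\circ G(e\text{-corrected data}))$-type formulas, or more simply the observation that $\f\p\f=\f$ together with surjectivity of $\f_{FGX,Y}$ up to the invertible $\eta_{GX}$, upgrades $\p$ to a genuine section of $\f$. Using the Eilenberg--Moore characterization costs nothing logically (Theorem 2.9 of \cite{AB22} precedes Theorem 2.24 there, so no circularity), but it makes the proof less self-contained than the rest of your argument, which is entirely elementary. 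The remaining verifications --- naturality of $\p_{A,B}(g)=\epsilon^l_B\circ Fg\circ\eta^r_A$, the identity $G\eta^r=\eta G$ extracted from $\eta^r\circ\epsilon^l=\id$ plus the triangle identity, and the reductions of the coherence condition and of $\eta^rF=F\eta$ to $e_{FGY}=\id$ and $e_{FX}=\id$ respectively --- are all sound, and the reflection case does follow by the evident duality.
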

%if and only if it is a Frobenius (co)reflection

\section{Conditions up to retracts}\label{sec:idempcmp}
In order to introduce (co)reflections up to retracts and bireflections up to retracts we have to deal with general facts about idempotent completions. First in Subsection \ref{sub:idempcom} we recall the notions of idempotent completion of categories, functors and natural transformations.
In Subsection \ref{sub:complsep} we prove that a functor $F$ is either faithful, full, fully faithful, semiseparable, separable or naturally full if and only if so is its completion $F^{\natural}$.
Then we introduce (co)reflections up to retracts and bireflections up to retracts. We collect some properties of these new notions and relate the latter one to the concepts of semiseparable and naturally full functor.
Then, in Subsection \ref{sub:featcofer}, we show that  (co)reflections (and bireflections) up to retracts verify properties of type \eqref{P1} and \eqref{P2} discussed in the Introduction.
In Subsection \ref{sub:semiadj} we consider semifunctors and semiadjunctions. Among other results, we show how to construct a semiadjunction out of a right (left) semiadjoint in the sense of \cite{MW13}.
These notions are applied in Subsection \ref{sub:condutr}  in order to provide a characterization of (co)reflections up to retracts. A first consequence is that a (co)reflection up to retracts comes out to be always surjective up to retracts. Then we give sufficient conditions guaranteeing that a functor is a (co)reflection up to retracts that will be applied to the (co)comparison functor in the next section.

\subsection{Idempotent completion}\label{sub:idempcom}
We recall from \cite{Chen15} what is the idempotent completion of a category $\cc$. %An endomorphism $e:X\to X$ of $\cc$ is \emph{idempotent} if $e^2=e$ \rd{[In realtà, la nozione di idempotente è già usata molto prima in Subsection \ref{sub:coidentifier}]}.
An idempotent morphism $e:X\to X$ \emph{splits} if there exist two morphisms $p:X\to Y$ and $i:Y\to X$ such that $e=i\circ p$ and $\id _Y=p\circ i$; the category $\cc$ is said to be \emph{idempotent complete} or \emph{Cauchy complete} if all idempotents split. The \emph{idempotent completion} or \emph{Karoubi envelope} \cite{Kar78} $\cc^\natural$ of a category $\cc$ is the category whose objects are pairs $(X,e)$, where $X$ is an object in $\cc$ and $e:X\to X$ is an idempotent morphism in $\cc$; a morphism $f:(X,e)\to (X',e')$ in $\cc^\natural$ is a morphism $f:X\to X'$ in $\cc$ such that $f=e'\circ f\circ e$. Note that $\id_{(X,e)}=e:(X,e)\to (X,e).$

There is a canonical functor $$\iota_\cc : \cc\to\cc^\natural,\quad X\mapsto (X,\id_X),\quad [f:X\to Y]\mapsto [f:(X,\id_X)\to (Y,\id_Y)],$$  which is fully faithful. The functor $\iota_\cc$ is an equivalence if and only if $\cc$ is idempotent complete. A functor $F:\cc\to\dd$ can be extended to a functor $F^\natural : \cc^\natural\to\dd^\natural$, the \emph{completion} of $F$, which is defined by setting $F^\natural (X,e)= (F(X),F(e))$ and $F^\natural (f)=F(f)$, so that $\iota_\dd\circ F =F^\natural\circ\iota_\cc$, i.e. 
\begin{equation*}\xymatrixcolsep{1.6cm}\xymatrixrowsep{.6cm}
\xymatrix{\cc\ar[r]^{\iota_\cc}\ar[d]_{F}&\cc^\natural\ar[d]^{F^\natural}\\
\dd\ar[r]_{\iota_\dd}&\dd^\natural }
\end{equation*}
is a commutative diagram. A natural transformation $\alpha :F\rightarrow F^{\prime }$ induces the natural
transformation $\alpha ^{\natural }:F^{\natural }\rightarrow \left( F^{\prime
}\right) ^{\natural }$ with components $\alpha _{\left( X,e\right)
}^{\natural }:=\alpha _{X}\circ Fe=F^{\prime }e\circ \alpha _{X}$.
As a consequence, an adjunction $\left( F,G,\eta ,\epsilon \right) $
induces an adjunction $\left( F^{\natural },G^{\natural },\eta ^{\natural
},\epsilon ^{\natural }\right) $.

\subsection{The completion of semiseparable functors}\label{sub:complsep}
Next aim is to explore the behaviour of semiseparability with respect to idempotent completion. We also include the case of faithful and full functors although it is known in the literature at least in one direction.

\begin{prop}\label{prop:sscompletion}
Let $F:\cc\to\dd$ be a functor. Then,
\begin{itemize}
\item[(1)] $F$ is faithful if and only if so is $F^{\natural}$;
\item[(2)] $F$ is full if and only if so is $F^{\natural}$;
\item[(3)]$F$ is fully faithful if and only if so is $F^{\natural}$.
\end{itemize}
\end{prop}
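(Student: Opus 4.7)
The plan is to exploit the commutative square $\iota_\dd\circ F=F^\natural\circ\iota_\cc$ together with the fact that $\iota_\cc$ and $\iota_\dd$ are fully faithful. Since items (1) and (2) will be proved separately, item (3) is then immediate by combining them.

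For the \emph{only if} direction of (1), assume $F$ is faithful and take two parallel morphisms $f,g:(X,e)\to(Y,e')$ in $\cc^\natural$ with $F^\natural(f)=F^\natural(g)$. By definition this reads $F(f)=F(g)$ in $\dd$, so faithfulness of $F$ gives $f=g$. For the \emph{only if} direction of (2), assume $F$ is full and pick an arbitrary morphism $h:(FX,Fe)\to(FY,Fe')$ in $\dd^\natural$, i.e.\ a morphism $h:FX\to FY$ satisfying $h=Fe'\circ h\circ Fe$. Fullness of $F$ yields some $g:X\to Y$ with $F(g)=h$. Set $f:=e'\circ g\circ e:X\to Y$. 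Using that $e,e'$ are idempotent one checks $e'\circ f\circ e=e'\circ g\circ e=f$, so $f$ is a legitimate morphism $(X,e)\to(Y,e')$ in $\cc^\natural$; moreover
\[
F^\natural(f)=F(f)=Fe'\circ F(g)\circ Fe=Fe'\circ h\circ Fe=h,
\]
so $F^\natural$ is full. Item (3) follows at once from (1) and (2).

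For the \emph{if} directions, the point is that every property at stake is inherited along a fully faithful functor applied on source and target. More precisely, the square above, evaluated on a pair of objects $X,Y\in\cc$, identifies the action of $F$ on $\Hom_\cc(X,Y)$ with the action of $F^\natural$ on $\Hom_{\cc^\natural}((X,\id_X),(Y,\id_Y))$, via the bijections induced by the fully faithful $\iota_\cc$ and $\iota_\dd$. Hence if $F^\natural$ is faithful (resp.\ full, fully faithful) on the full subcategory of objects of the form $(Z,\id_Z)$, then so is $F$; in particular this holds whenever $F^\natural$ has the property globally.

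I expect no serious obstacle: the only point that requires mild care is verifying, in the \emph{only if} part of (2), that the chosen preimage $f=e'\circ g\circ e$ actually lies in $\Hom_{\cc^\natural}((X,e),(Y,e'))$, which is why one has to pre- and post-compose $g$ by the idempotents rather than simply taking $g$ itself.
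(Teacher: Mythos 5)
Your proof is correct and follows essentially the same route as the paper: the ``only if'' directions are the standard direct verifications (which the paper simply cites as well known), and the ``if'' directions are deduced from the commutative square $\iota_\dd\circ F=F^\natural\circ\iota_\cc$ together with full faithfulness of the canonical embeddings, exactly as in the paper. The one point you flag as needing care --- replacing the preimage $g$ by $e'\circ g\circ e$ in the fullness argument --- is handled correctly.
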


\proof The ``only if'' part is well-known, see e.g. \cite[Lemma 58]{Ri16}.
\begin{itemize}
\item[(1)]If $F^\natural$ is faithful, then the composite $\iota_\dd\circ F=F^\natural\circ\iota_\cc$ is faithful, hence $F$ is faithful.
    \begin{invisible}
      Conversely, assume that $F$ is faithful and let $f,g:(C,e)\to (C',e')$ be morphisms in $\cc^\natural$ such that $F^\natural (f)=F^\natural (g)$. Since for all $C,C'$ in $\cc$ the map $\f_{C,C'}: \Hom_{\cc}(C,C')\rightarrow \Hom_{\dd}(F(C), F(C'))$ is injective, from $F(f)=F^\natural (f)=F^\natural (g)=F(g)$ it follows $f=g$. Therefore, also the map $$\f^{\natural} _{C,C'}: \Hom_{\cc^\natural}((C,e),(C',e'))\rightarrow \Hom_{\dd^\natural}((F(C),F(e)) ,(F(C'),F(e'))$$ is injective, so $F^\natural$ is faithful.
    \end{invisible}
\item[(2)] If $F^\natural$ is full, then $\iota_\dd\circ F=F^\natural\circ\iota_\cc$ is full. Since $\iota_\dd$ is faithful, we get that $F$ is full.
\begin{invisible}
  Indeed, given $g:FX\to FY$ in $\dd$ then $\iota_\dd g:\iota_\dd FX\to \iota_\dd FY$ must be of the form $\iota_\dd Ff$ for some $f:X\to Y$. Since $\iota_\dd$ is faithful, from $\iota_\dd g=\iota_\dd Ff$ we deduce $g=Ff$ whence $F$ is full. Conversely Assume that $F$ is full. Let $g:\f^\natural ((C,e))=(F(C),F(e))\to \f^\natural ((C',e'))=(F(C'),F(e'))$ be a morphism in $\dd^\natural$. Since $F$ is full, there exists a morphism $f:C\to C'$ in $\cc$ such that $Ff=g$. Set $f':=e'\circ f\circ e :(C,e)\to (C',e')$. Then, $F^\natural f'=Ff'=Fe'\circ Ff\circ Fe=Fe'\circ g\circ Fe = g$, hence $F^\natural$ is full.
\end{invisible}
\item[(3)] It follows from (1) and (2). \qedhere
\end{itemize}
\endproof

In the following result, the proof that the semiseparability of $F$ implies the one of $F^{\natural}$, was suggested to us by Paolo Saracco. The ``only if'' part of $(2)$ in the following result seems to be known, see e.g. \cite[Lemma 3.11]{Sun19}.
\begin{cor}\label{cor:sep-nat-ff}
Let $F:\cc\to\dd$ be a functor. Then,
\begin{itemize}
\item[(1)] $F$ is semiseparable if and only if so is $F^{\natural}$;
\item[(2)] $F$ is separable if and only if so is $F^{\natural}$;
\item[(3)] $F$ is naturally full if and only if so is $F^{\natural}$.
\end{itemize}
\end{cor}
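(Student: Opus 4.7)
The plan is to deduce statements (2) and (3) from (1) by combining with earlier results. Indeed, once (1) is established, Proposition \ref{prop:sep}(i) characterizes separability as semiseparability plus faithfulness, and Proposition \ref{prop:sscompletion}(1) shows that faithfulness is preserved and reflected by completion; together these yield (2). Statement (3) follows analogously from Proposition \ref{prop:sep}(ii) and Proposition \ref{prop:sscompletion}(2). So the substantive task is (1).

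For the ``if'' direction of (1), suppose $F^{\natural}$ is semiseparable with associated $\p^{F^\natural}$. Since $\iota_{\cc}$ and $\iota_{\dd}$ are fully faithful and satisfy $\iota_{\dd}\circ F=F^{\natural}\circ\iota_{\cc}$, one has canonical bijections $\Hom_{\cc}(C,C')\cong\Hom_{\cc^{\natural}}((C,\id_C),(C',\id_{C'}))$ and $\Hom_{\dd}(FC,FC')\cong\Hom_{\dd^{\natural}}(F^{\natural}(C,\id_C),F^{\natural}(C',\id_{C'}))$, through which $\p^{F^{\natural}}$ transfers to a natural transformation $\p^{F}$; the identity $\f^{F}\circ\p^{F}\circ\f^{F}=\f^{F}$ is then just the restriction of the analogous identity for $F^{\natural}$.

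For the ``only if'' direction, suppose $F$ is semiseparable with associated $\p^{F}$. Given $g\in\Hom_{\dd^{\natural}}(F^{\natural}(C,e),F^{\natural}(C',e'))$, viewed as a morphism $g:FC\to FC'$ satisfying $Fe'\circ g\circ Fe=g$, I would define
\[
\p^{F^{\natural}}_{(C,e),(C',e')}(g):=e'\circ\p^{F}_{C,C'}(g)\circ e.
\]
Since $e^{2}=e$ and $(e')^{2}=e'$, the output satisfies $e'\circ(\p^{F^{\natural}}g)\circ e=\p^{F^{\natural}}g$, so it indeed lies in $\Hom_{\cc^{\natural}}((C,e),(C',e'))$. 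To verify the semi-separability identity, take $f:(C,e)\to(C',e')$ in $\cc^{\natural}$, so $f=e'\circ f\circ e$ in $\cc$; then
\[
\f^{F^{\natural}}\!\bigl(\p^{F^{\natural}}\!\bigl(\f^{F^{\natural}}(f)\bigr)\bigr)=F\!\bigl(e'\circ\p^{F}_{C,C'}(Ff)\circ e\bigr)=Fe'\circ F\p^{F}_{C,C'}(Ff)\circ Fe,
\]
and applying $\f^{F}\circ\p^{F}\circ\f^{F}=\f^{F}$ turns the middle factor into $Ff$, giving $Fe'\circ Ff\circ Fe=F(e'\circ f\circ e)=Ff=\f^{F^{\natural}}(f)$.

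The main obstacle is the naturality of $\p^{F^{\natural}}$. Given $h:(C_{1},e_{1})\to(C,e)$ and $k:(C',e')\to(C'_{1},e'_{1})$ in $\cc^{\natural}$, the required equality $\p^{F^{\natural}}(Fk\circ g\circ Fh)=k\circ\p^{F^{\natural}}(g)\circ h$ expands, via naturality of $\p^{F}$, to comparing $e'_{1}\circ k\circ\p^{F}(g)\circ h\circ e_{1}$ with $k\circ e'\circ\p^{F}(g)\circ e\circ h$; the identities $e'_{1}\circ k=k=k\circ e'$ and $e\circ h=h=h\circ e_{1}$ (which follow from $k=e'_{1}\circ k\circ e'$, $h=e\circ h\circ e_{1}$, and the idempotency of $e,e',e_{1},e'_{1}$) reduce both sides to $k\circ\p^{F}(g)\circ h$. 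This careful bookkeeping with the idempotents, rather than any deep categorical input, is the delicate point of the argument.
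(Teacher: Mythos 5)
Your proposal is correct and follows essentially the same route as the paper: parts (2) and (3) are deduced from (1) via Proposition \ref{prop:sep} and Proposition \ref{prop:sscompletion}, and the ``only if'' direction of (1) is the same construction (note your $e'\circ\p^F_{C,C'}(g)\circ e$ actually equals the paper's $\p^F_{C,C'}(g)$, by naturality of $\p^F$ applied to $g=Fe'\circ g\circ Fe$), with the naturality bookkeeping you carry out explicitly. The only cosmetic difference is in the ``if'' direction, where the paper invokes the composition Lemma \ref{lem:comp} (ii)--(iii) applied to $\iota_\dd\circ F=F^\natural\circ\iota_\cc$ instead of restricting $\p^{F^\natural}$ by hand, but the content is the same.
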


\proof
(1) Assume that $F^\natural$ is semiseparable. Since $\iota_\cc$ is fully faithful, it is in particular naturally full, hence, by Lemma \ref{lem:comp} (ii), $F^\natural\circ\iota_\cc$ is semiseparable. From $\iota_\dd\circ F =F^\natural\circ\iota_\cc$ it follows that $\iota_\dd\circ F$ is semiseparable as well, so that, since $\iota_\dd$ is faithful, $F$ is semiseparable, by Lemma \ref{lem:comp}(iii). Conversely, if $F$ is semiseparable, then there exists a natural transformation $\p^F: \Hom_{\dd}(F-, F-)\rightarrow \Hom_{\cc}(-,-)$ such that $\f^F\p^F\f^F =\f^F$. Define $\p^{F^\natural} : \Hom_{\dd^\natural}(F^\natural -, F^\natural -)\rightarrow \Hom_{\cc^\natural}(-,-)$ by $\p^{F^\natural}_{C,C'} (g)=\p^F_{C,C'}(g)$, for every $g: (F(C),F(e))\to (F(C'),F(e'))$ in $\dd^\natural$. Since $g=Fe'\circ g\circ Fe$, by naturality of $\p^F$ it follows that $e'\circ \p^F_{C,C'}(g)\circ e =\p^F_{C,C'}(Fe'\circ g\circ Fe)= \p^F_{C,C'}(g)$, hence $\p^F_{C,C'}(g)$ is a morphism in $\cc^\natural$.  Moreover, $\p^{F^\natural}$ is a natural transformation and it holds $\f^{F^\natural}_{C,C'}\p^{F^\natural}_{C,C'}\f^{F^\natural}_{C,C'}(g)=\f^F_{C,C'}\p^F_{C,C'}\f^F_{C,C'}(g)=\f^F_{C,C'}(g)=\f^{F^\natural}_{C,C'}(g)$.
%Assume $\cc$ is idempotent complete. In this case, $\iota_\cc$ is an equivalence of categories and let $\Gamma : \cc^\natural\to\cc$ denote a quasi-inverse for $\iota_\cc$. We get $F^\natural\cong F^\natural\circ\iota_\cc\circ\Gamma = \iota_\dd\circ F\circ \Gamma$, hence $F^\natural$ is semiseparable by Corollary \ref{lem_A} (ii). Indeed, since $\Gamma$ is in particular naturally full, $F\circ \Gamma$ is semiseparable by Lemma \ref{lem:comp} (ii), so by composing with $\iota_\dd$ which is separable as it is fully faithful, it follows by Lemma \ref{lem:comp} (i) that $\iota_\dd\circ F\circ \Gamma$ is semiseparable.

(2) and (3) follow from (1), Proposition \ref{prop:sep}  and Proposition \ref{prop:sscompletion}.
\begin{invisible}
\item[(2)] It follows from $(1)$, Proposition \ref{prop:sep} (i) and Proposition \ref{prop:sscompletion} (1).
\item[(3)] It follows from $(1)$, Proposition \ref{prop:sep} (ii) and Proposition \ref{prop:sscompletion} (2).
\end{invisible}
\endproof

\subsection{(Co)reflections and bireflections up to retracts}\label{sub:condutr}
We are now ready to introduce and investigate the announced notion of (co)reflection up to retracts. We also recall two notions that are already present in the literature, i.e. those of equivalence up to retracts and of surjective up to retracts. Recall that an object $A$ in a category $\cc$ is a \emph{retract} of an object $B$ in $\cc$ if there are morphisms $i: A\to B$ and $p:B\to A$ such that $p\circ i =\id_A$.
\begin{defn}\label{defn:(co)refl}
Consider a functor $F:\cc\to\dd$ and its completion $F^\natural:\cc^\natural\to\dd^\natural$. Then, $F$ is
\begin{itemize}
  \item an \emph{equivalence up to retracts} if $F^\natural$ is an equivalence, see \cite[page 47]{Chen15};
  \item \emph{surjective up to retracts}\footnote{These functors are also called \emph{dense up to retracts}, see \cite[Notation and conventions]{Sun19}.} if every object $D$ in $\dd$ is a retract of $FC$ for some object $C$ in $\cc$, see \cite[Definition 2.5]{BD72};
  \item a \textbf{reflection up to retracts} if $F^\natural$ is a reflection;
   \item a \textbf{coreflection up to retracts} if $F^\natural$ is  a coreflection;
  \item a \textbf{bireflection up to retracts} if $F^\natural$ is a bireflection.
  \end{itemize}
\end{defn}

In the following lemma we collect some basic facts related to the above notions.
\begin{lem}\label{lem:trivfactsutr} The following assertions hold true.
\begin{enumerate}
  \item Any equivalence is an equivalence up to retracts.
  \item Any (co)reflection is a (co)reflection up to retracts.
  \item A functor is a bireflection up to retracts if and only if it is a semiseparable (co)reflection up to retracts if and only if it is a naturally full (co)reflection up to retracts.
  \item Any bireflection is a bireflection up to retracts.
  \item An equivalence is the same thing as a fully faithful bireflection.
  \item A functor is an equivalence up to retracts if and only if it is fully faithful and
surjective up to retracts if and only if it is a fully faithful bireflection up to retracts.
  \item An equivalence up to retracts is both a reflection up to retracts and a coreflection  up to retracts.
\end{enumerate}
\end{lem}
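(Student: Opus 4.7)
The plan is to dispatch the seven items in order, using as main inputs (a) the fact from Subsection \ref{sub:idempcom} that adjunctions and natural isomorphisms lift to the idempotent completion, (b) Proposition \ref{prop:sscompletion} and Corollary \ref{cor:sep-nat-ff}, which transfer full faithfulness, semiseparability and natural fullness between $F$ and $F^\natural$, and (c) Theorem \ref{thm:frobenius}, which identifies bireflections with semiseparable (equivalently, naturally full) (co)reflections.

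For (1) and (2) I would note that an adjunction $(F,G,\eta,\epsilon)$ lifts to $(F^\natural,G^\natural,\eta^\natural,\epsilon^\natural)$, and since $\alpha^\natural_{(X,e)}=\alpha_X\circ Fe$, the natural transformation $\eta^\natural$ (respectively $\epsilon^\natural$) is an isomorphism whenever $\eta$ (respectively $\epsilon$) is; together with Proposition \ref{prop:sscompletion}(3), this shows that a fully faithful left (right) adjoint of $F$ lifts to a fully faithful left (right) adjoint of $F^\natural$, proving (2), and that the simultaneous invertibility of unit and counit is preserved, proving (1). Assertion (3) is then obtained by applying Theorem \ref{thm:frobenius} directly to $F^\natural$ and translating each hypothesis on $F^\natural$ back to $F$ via Corollary \ref{cor:sep-nat-ff}. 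For (4), the lifted adjoint of a bireflection remains fully faithful by Proposition \ref{prop:sscompletion}(3), and the coherent condition $\eta^r\circ\epsilon^l=\id$ passes to the completion because $(-)^\natural$ preserves composition of natural transformations and sends the identity to the identity.

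The main substantive step is (5). For the ``$\Leftarrow$'' implication, suppose $G:\dd\to\cc$ is a fully faithful bireflection with adjoint $F:\cc\to\dd$. Recalling the standard fact that in an adjunction $L\dashv R$ the left adjoint (respectively right adjoint) is fully faithful if and only if the unit (respectively counit) is invertible, the adjunction $F\dashv G$ with both $F$ and $G$ fully faithful yields $\eta^l$ and $\epsilon^l$ iso, making $F\dashv G$ an adjoint equivalence, so $G$ is an equivalence. For ``$\Rightarrow$'', start with an equivalence $G$, pick a quasi-inverse $F$, and arrange $F\dashv G$ as an adjoint equivalence with invertible unit $\eta^l$ and counit $\epsilon^l$. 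Define the companion adjunction $G\dashv F$ by $\eta^r:=(\epsilon^l)^{-1}$ and $\epsilon^r:=(\eta^l)^{-1}$; a short verification from the original triangle identities shows this is a genuine adjunction, $F$ is fully faithful (being an equivalence), and the coherent condition $\eta^r\circ\epsilon^l=(\epsilon^l)^{-1}\circ\epsilon^l=\id$ holds tautologically, so $G$ is a fully faithful bireflection. The only point requiring care is the bookkeeping of which isomorphism comes from which adjunction.

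Finally, (6) combines (5) applied to $F^\natural$ with the characterisation recalled in the introduction that $F^\natural$ is an equivalence if and only if $F$ is fully faithful and surjective up to retracts, together with Proposition \ref{prop:sscompletion}(3). And (7) is immediate from (6) and the analysis of (5): an equivalence up to retracts is a fully faithful bireflection up to retracts, so $F^\natural$ has a fully faithful left and a fully faithful right adjoint, and is therefore both a reflection and a coreflection, as required.
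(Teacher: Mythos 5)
Your proof is correct and follows essentially the same route as the paper's: items (1)--(3), (6) and (7) are handled identically, by lifting adjunctions to the idempotent completion and invoking Proposition \ref{prop:sscompletion}, Corollary \ref{cor:sep-nat-ff} and Theorem \ref{thm:frobenius}. The only (harmless) deviations are in (4), where you verify directly that the coherence condition $\eta^r\circ \epsilon^l=\id$ is preserved by $(-)^{\natural}$ instead of reducing to items (2)--(3) via Theorem \ref{thm:frobenius}, and in (5), where you spell out the adjoint-equivalence bookkeeping that the paper leaves to a citation.
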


\begin{proof}
(1) If $F$ is an equivalence with quasi-inverse $G$, then $(F^\natural,G^\natural)$ is an equivalence and hence $F$ is an equivalence up to retracts.

(2) If $G$ is a coreflection, it has a fully faithful left adjoint $F$. Thus $F^\natural\dashv G^\natural$ and $F^\natural$ is fully faithful by Proposition \ref{prop:sscompletion}. Thus $G^\natural$  is a coreflection, i.e. $G$ is a coreflection up to retracts. The proof for reflections is similar.

(3) Assume $F$ is a semiseparable (resp. naturally full) (co)reflection up to retracts. By Corollary \ref{cor:sep-nat-ff}, $F^\natural$ is a semiseparable (resp. naturally full) (co)reflection. Thus, by Theorem \ref{thm:frobenius}, $F^\natural$ is a bireflection, i.e. $F$ is a bireflection up to retracts. Conversely, by means of Theorem \ref{thm:frobenius} and Corollary \ref{cor:sep-nat-ff}, in a similar way one gets that a bireflection up to retracts is a semiseparable (resp. naturally full) (co)reflection up to retracts.

(4) A bireflection $F$ is in particular a semiseparable (co)reflection by Theorem \ref{thm:frobenius}. As a consequence of $(2)$ and $(3)$, we get that $F$ is a bireflection up to retracts.

(5) An equivalence is clearly a fully faithful bireflection, and conversely a fully faithful bireflection is an equivalence as the unit and counit of the corresponding adjunction are both invertible (see \cite[Proposition 3.4.3]{Bor94}).

(6) It is well-known that $F$ is an equivalence up to retracts if and only if it is fully faithful and
surjective up to retracts, see e.g. \cite[Lemma 3.4(2)]{Chen15}. It is also equivalent to $F$ being a fully faithful bireflection up to retracts in view of Proposition \ref{prop:sscompletion} and Theorem \ref{thm:frobenius}. \begin{invisible} In fact, if $F$ is a fully faithful bireflection up to retracts, then  $F^\natural$ is a fully faithful bireflection and hence $F^\natural$ and its adjoint are both fully faithful i.e. $F^\natural$ is an equivalence. Conversely, if $F$ is an equivalence up to retracts then $F^\natural$ is an equivalence so that it and its adjoint are both fully faithful so that $F^\natural$ is a fully faithful coreflection. By Theorem \ref{thm:frobenius}, $F^\natural$ is a fully faithful bireflection so that, by Proposition \ref{prop:sscompletion}, $F$ is a fully faithful bireflection up to retracts.    \end{invisible}

(7) If $F$ is an equivalence up to retracts, its completion $F^\natural$ is an equivalence and hence $F^\natural$ is a (co)reflection. This means that $F$ is a (co)reflection up to retracts.
 \end{proof}

\begin{rmk}
	From Remark \ref{rmk:coref-biref}, it follows that also (co)reflections up to retracts and bireflections up to retracts are closed under composition.
\end{rmk}

\begin{es}
The canonical functor $\iota _{\cc}:\cc\rightarrow
\cc^{\natural }$ is an equivalence up to retracts, see e.g. \cite[Theorem A.6]{Kah20}.

\begin{invisible}
In Example \ref{es:iotaequtr} we have seen that the  unit $\eta_\cc
:\id _{\cc^{\natural }} \rightarrow \iota _{\cc}\upsilon _{\cc} $ of the semiadjunction $(\upsilon_\cc,\iota _{\cc})$ is a split monomorphism.  As a consequence any object $C$ in $\cc^{\natural }$ is a retract of $\iota_\cc\nu_\cc C$ which is an object in $%
\mathrm{Im}\left( \iota _{\cc}\right) $ and hence $\iota_\cc$ is surjective up to retracts. Since $\iota _{\cc}$
is fully faithful it follows, by Lemma \ref{lem:trivfactsutr}, that $\iota
_{\cc}$ is an equivalence up to retracts. By \cite[Theorem 7]{Ho90}, it follows that $\upsilon^\natural_\cc\dashv\iota^\natural_\cc$ and $\iota^\natural_\cc\dashv\upsilon^\natural_\cc$. Thus $\upsilon^\natural_\cc$ is a quasi-inverse of the equivalence $\iota^\natural_\cc$.
\end{invisible}
\end{es}

%\rd{[Ho spostato qui Maschke/conservative]} 
Recall that a functor $F:\cc\to\dd$ is called a \emph{Maschke functor} if it reflects split-monomorphisms i.e. if, for every morphism $i$ in $\cc$ such that $Fi$ is a split-monomorphism, then $i$ is a split-monomorphism\footnote{This is equivalent to \cite[Remark 6]{CMZ02}, where $F$ is called a Maschke functor if every object in $\cc$ is relative injective. Recall that an object $M$ is called relative injective if,  for every morphism $i:C\to C'$
such that $Fi$ is a split-monomorphism, then the map $\Hom_\cc(i,M):\Hom_\cc(C',M)\to \Hom_\cc(C,M),f\mapsto f\circ i$, is surjective.}. Similarly, $F$ is a \emph{dual Maschke functor} if it reflects split-epimorphisms. A functor is called \emph{conservative} if it reflects isomorphisms.

\begin{rmk}\label{rmk:Maschke}
By \cite[Proposition 1.2]{NVV89} a separable functor is both Maschke and dual Maschke.
Moreover a functor which is both Maschke and dual Maschke is conservative.
\end{rmk}

\begin{es}  Let $(F,G)$ be an adjunction.  Then, by \cite[Corollary 5]{Zhu03}, the functor $F$ is a Maschke functor if and only if $G$ is surjective up to retracts. Dually, the functor $G$ is dual Maschke if and only if $F$ is surjective up to retracts.
\begin{invisible}
We already observed that the definition we have given of $F$ to be a Maschke functor is equivalent to require that any object $C\in\cc$ is relative injective. On the other hand, one has that $C\in\cc$ is relative injective if and only if the unit $\eta_C:C\to GFC$ is a split-monomorphism, see \cite[Proposition 60]{CMZ02}. As a consequence $F:\cc\to\dd$ is a Maschke functor if and only if, for every object $C$ in $\cc$, one has that the unit $\eta_C:C\to GFC$ is a split-monomorphism. If $F:\cc\to\dd$ is a Maschke functor then the unit $\eta_C:C\to GFC$ is a split-monomorphism and hence $C$ is a retract of $GFC$. Conversely, if $C$ is endowed with morphisms $i:C\to GD$ and $r:GD\to C$ such that $r\circ i=\id_C$, then $r\circ G\epsilon_D\circ GFi\circ \eta_C=r\circ G\epsilon_D\circ \eta_{GD}\circ i=r\circ i=\id_C$ so that $\eta_C$ is a split-monomorphism and hence $F:\cc\to\dd$ is a Maschke functor.\par
Dually, $G:\dd\to\cc$ is a dual Maschke functor if and only if any object $D\in\dd$ is relative projective, hence if and only if, for every object $D$ in $\dd$, the counit $\epsilon_D:FGD\to D$ is a split-epimorphism. Assume $G:\dd\to\cc$ is a dual Maschke functor. Then the counit $\epsilon_D:FGD\to D$ is a split-epimorphism and hence $D$ is a retract of $FGD$. Conversely, if $D$ is endowed with morphisms $i:D\to FC$ and $r:FC\to D$ such that $r\circ i=\id_D$, then $\epsilon_D\circ FGr\circ F\eta_C\circ i=r\circ \epsilon_{FC}\circ F\eta_{C}\circ i=r\circ i=\id_D$, hence $\epsilon_D$ is a split-epimorphism and so $G:\dd\to\cc$ is a dual Maschke functor.
\end{invisible}
\end{es}

%The following result is a first property of (co)reflections up to retracts in connection to semiseparability. \rd{[In realtà è già dentro Lemma \ref{lem:trivfactsutr}]}
%
%\begin{cor}
%\label{cor:Gcompl_equiv}Let $G$ be a (co)reflection up to retracts. Then, $G$ is semiseparable if and only if it is naturally full.
%\end{cor}
%
%\proof
%Since $G$ is a coreflection up to retracts, then \rd{$G^\natural$ is a coreflection}. By Theorem \ref{thm:frobenius}, $G^\natural$ is semiseparable if and only if it is naturally full. By Proposition \ref{prop:sscompletion} and Corollary \ref{cor:sep-nat-ff} this is equivalent to $G$ is semiseparable if and only if it is naturally full. The proof in case $G$ is a reflection is similar.
%\begin{invisible}
%Since $F:\cc\rightarrow \dd$ is a reflection up to retracts, then there exists a fully faithful right adjoint $R:\dd^\natural\to\cc^\natural$ of $F^\natural$. By Theorem \ref{thm:frobenius}, $F^\natural$ is semiseparable if and only if it is naturally full. By Proposition \ref{prop:sscompletion} and Corollary \ref{cor:sep-nat-ff} this is equivalent to $F$ is semiseparable if and only if it is naturally full.
%\end{invisible}
%\endproof

\subsection{Two peculiar features}\label{sub:featcofer}
The following result includes among others the announced property \eqref{P1}, discussed in the Introduction, for a coreflection up to retracts, namely that, if it has a left adjoint, then it is a coreflection.

\begin{prop}\label{prop:corefl}
The following assertions hold true.
\begin{enumerate}
  \item If a coreflection up to retracts has a left adjoint, then it is a coreflection.
  \item If a coreflection up to retracts has a right adjoint, then it is a reflection.
  \item If a reflection up to retracts has a right adjoint, then it is a reflection.
  \item If a reflection up to retracts has a left adjoint, then it is a coreflection.
  \item If a bireflection up to retracts has an adjoint, then it is a bireflection.
  \item If an equivalence up to retracts has an adjoint, then it is an equivalence.
\end{enumerate}
\end{prop}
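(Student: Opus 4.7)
The plan is to group the six items according to the side on which $G$ has its adjoint, relying on three tools: uniqueness of adjoints combined with Proposition \ref{prop:sscompletion}, the mate correspondence for an adjoint triple, and descent of adjunctions through the fully faithful inclusion $\iota$.

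Items (1) and (3) are dual and form the straightforward case. The given adjunction $L\dashv G$ yields $L^\natural\dashv G^\natural$; since $G^\natural$ is a coreflection, some left adjoint of $G^\natural$ is fully faithful, and by uniqueness of adjoints $L^\natural$ must be naturally isomorphic to it. Hence $L^\natural$ is fully faithful, so by Proposition \ref{prop:sscompletion} the functor $L$ is fully faithful, and $G$ is a coreflection. The argument for (3) is identical up to swapping left and right.

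Items (2) and (4), again dual, are the delicate cases, since the fully faithful adjoint produced by the hypothesis sits on the side opposite to the adjoint $G$ actually has. The plan is to form in the completion the adjoint triple $L'\dashv G^\natural\dashv F^\natural$ (respectively $L^\natural\dashv G^\natural\dashv R'$) with the outer functor on the hypothesised side fully faithful, and to invoke the following general fact about adjoint triples: in any $L\dashv U\dashv R$ the outer adjoints $L$ and $R$ are fully faithful together. Establishing this is the main obstacle. I would prove it by observing that the derived adjunction $UL\dashv UR$ between endofunctors has counit $\epsilon^R\circ U\epsilon^L R$, so that the triangle identity $U\epsilon^L\circ \eta^L U=\id_U$ for $L\dashv U$ collapses the mate of $\eta^L:\id\to UL$ under $UL\dashv UR$ to precisely $\epsilon^R:UR\to\id$; since the mate correspondence preserves isomorphisms, one of $\eta^L$ and $\epsilon^R$ is invertible iff the other is. Applied to our triple, this forces $F^\natural$ (respectively $L^\natural$) to be fully faithful, and Proposition \ref{prop:sscompletion} then yields that $G$ is a reflection (respectively a coreflection).

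For (5), assume $G$ has a left adjoint $L$; the case of a right adjoint is symmetric. The bireflection-up-to-retracts hypothesis supplies a common fully faithful left and right adjoint $F'$ of $G^\natural$, and uniqueness of adjoints gives $L^\natural\cong F'$, whence $L^\natural$ is fully faithful and furnishes both adjunctions $L^\natural\dashv G^\natural$ and $G^\natural\dashv L^\natural$. Restricting the unit and counit of the second adjunction along the fully faithful $\iota_\cc$ and $\iota_\dd$ produces an adjunction $G\dashv L$ in the original categories, while the triangle identities and the coherent equation $\eta^r\circ \epsilon^l=\id$ descend verbatim; combined with $L$ fully faithful this makes $G$ a bireflection. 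Finally, (6) is immediate from (5) and Lemma \ref{lem:trivfactsutr}: an equivalence up to retracts is a fully faithful bireflection up to retracts, which (5) upgrades to a fully faithful bireflection, hence to an equivalence.
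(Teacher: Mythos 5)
Your proposal is correct, and for items (1)--(4) and (6) it follows essentially the same route as the paper: pass to the completion, identify the given adjoint (up to isomorphism) with the fully faithful one supplied by the hypothesis, and descend full faithfulness via Proposition \ref{prop:sscompletion}. There are two genuine points of divergence. First, for (2) and (4) the paper simply cites \cite[Proposition 3.4.2]{Bor94} for the fact that in an adjoint triple $L\dashv U\dashv R$ the outer adjoints are fully faithful together, whereas you re-derive it; your derivation is sound --- the triangle identity makes precomposition by $\eta^L_X$ and postcomposition by $\epsilon^R_Y$ correspond under the isomorphism $\Hom(ULX,Y)\cong\Hom(X,URY)$, and Yoneda finishes --- though calling $\eta^L$ and $\epsilon^R$ ``mates'' is loose, since they point in incompatible directions for the correspondence $\Nat(UL,\id)\cong\Nat(\id,UR)$; what you actually use is the induced correspondence of hom-set maps. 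Second, for (5) the paper avoids any descent of adjunction data: it rewrites ``bireflection up to retracts'' as ``naturally full (co)reflection up to retracts'' via Lemma \ref{lem:trivfactsutr}, applies (1) or (3) to get a naturally full (co)reflection, and invokes Theorem \ref{thm:frobenius}. Your direct construction --- transporting the two adjunctions and the coherence equation $\eta^r\circ\epsilon^l=\id$ from the completion back along $\iota_\cc,\iota_\dd$ --- does work, since evaluating units, counits, triangle identities and the coherence at objects of the form $(X,\id_X)$ restricts everything verbatim; but it is more delicate (one must first transport the bireflection data along the isomorphism $L^\natural\cong F'$, then check that the restricted data still pairs the given left adjunction with the constructed right one) and buys nothing over the paper's one-line reduction.
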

\proof
(1)  If $G$ has a left adjoint $F$, then $F^\natural\dashv G^\natural$. If $G$ is a coreflection up to retracts, then  $G^\natural$ is a coreflection. Thus $F^\natural$ is fully faithful and hence so is $F$ by  Proposition \ref{prop:sscompletion}, i.e. $G$ is a coreflection.

%Let $\left( F,G,\eta ,\epsilon \right) $ be an adjunction. If $G$ is a coreflection up to retracts, then by Proposition \ref{prop:corefupretract} there is $\nu :GF\rightarrow \id_{\cc}$ such that $\eta \circ \nu =GF\id$ and $\nu \circ \eta =\id_{\id_\cc}$. Since $F$ and $G$ are functors,  then $GF\id=\id_{GF}$ and hence $\eta$ is an isomorphism i.e. $G$ is a coreflection.

(2) If $F$ has a right adjoint $G$, then $F^\natural\dashv G^\natural$. If $F$ is a coreflection up to retracts, then  $F^\natural$ is a coreflection. Thus it has a fully faithful left adjoint. Then also the right adjoint $G^\natural$ is fully faithful by \cite[Proposition 3.4.2]{Bor94}. By Proposition \ref{prop:sscompletion}, $G$ is fully faithful i.e. $F$ is a reflection.

(3) is dual to (1) and (4) is dual to (2).

(5) If $F$ is a bireflection up to retracts, then by Lemma \ref{lem:trivfactsutr} $F$ is a naturally full (co)reflection up to retracts.
If $F$ has a left adjoint, by (1), it is a naturally full coreflection while if $F$ has a right adjoint, by (3), it is a naturally full reflection. In both cases, by Theorem \ref{thm:frobenius}, $F$ is a bireflection.

(6) By Lemma \ref{lem:trivfactsutr} an equivalence up to retracts is a fully faithful bireflection up to retracts. If it has an adjoint, by (5), it is a fully faithful bireflection, i.e. an equivalence by Lemma \ref{lem:trivfactsutr}.
\endproof
\begin{rmk}
		By Proposition \ref{prop:corefl} and Lemma \ref{lem:trivfactsutr}, it follows that 
\begin{itemize}
    \item any coreflection up to retracts with a right adjoint is a reflection up to retracts,
    \item   any reflection up to retracts with a left adjoint is a coreflection up to retracts.
\end{itemize}
\end{rmk}

%Let $(F:\cc\to\dd, G:\dd\to\cc)$ be an adjunction.
%\rd{The adjunctions $(F,G)$ and $(V_{GF},U_{GF})$ have the same unit and hence} the left adjoint $F$ is fully faithful if and only if so is the free functor $V_{GF}:\cc\to\cc_{GF}$. So, this means that $G$ is a coreflection if and only if the forgetful functor $U_{GF}:\cc_{GF}\to\dd$ is a coreflection. Moreover, by Proposition \ref{prop:corefl}, this is equivalent to $G$ being a coreflection up to retracts if and only if so is $U_{GF}$.
%
%Dually, if we consider the forgetful functor $U^{FG}:\dd^{FG}\to\dd$, then $F$ is a reflection if and only if $U^{FG}$ is a reflection. Equivalently, $F$ is a reflection up to retracts if and only if so is $U^{FG}$. Moreover, if $F$ is a reflection, also the free functor $V_{GF}:\cc\to GF\text{-}\mathrm{Free}_\cc$ is a reflection. \rd{[La prima parte e il duale non sono speculari!]}
% \begin{invisible}
% By Remark {rmk:monad} 3), the right adjoint $G$ is fully faithful if and only if so is the cofree functor $V^{FG}:\dd\to\dd^{FG}$. So, this means that $F$ is a reflection if and only if the forgetful functor $U^{FG}:\dd^{FG}\to\dd$ is a reflection. Moreover, by Proposition \ref{prop:corefl}, this is equivalent to $F$ is a reflection up to retracts if and only if so is $U^{FG}$. The last assertion follows by observing that if $G$ is fully faithful, then also the forgetful functor $U_{GF}$ restricted to $GF\text{-}\mathrm{Free}_\cc$ is fully faithful, as $U_{GF}=G\circ L$ and $L$ is fully faithful, hence $V_{GF}:\cc\to GF\text{-}\mathrm{Free}_\cc$ is a reflection.
% \end{invisible}
%\end{rmk}

We are now going to prove the property \eqref{P2}, announced in the Introduction, namely that  a coreflection up to retracts whose source category is idempotent complete has a left adjoint (it is indeed a coreflection). First we need the following lemma.

\begin{lem}
\label{lem:idpcom-adj}Let $\dd$ be an idempotent complete category. A functor $G:\dd\rightarrow \cc$ has a
left (resp. right) adjoint if and only if so does $G^{\natural }$.
\end{lem}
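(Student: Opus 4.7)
The forward direction is immediate from what has already been noted: any adjunction $(F,G,\eta,\epsilon)$ induces an adjunction $(F^{\natural},G^{\natural},\eta^{\natural},\epsilon^{\natural})$, so if $G$ has a left (resp.\ right) adjoint, then $G^{\natural}$ inherits one. The content of the lemma lies in the converse, where the hypothesis that $\dd$ is idempotent complete is crucial: it makes $\iota_{\dd}:\dd\to \dd^{\natural}$ an equivalence, which is exactly what allows an adjoint of $G^{\natural}$ to be transported back from $\dd^{\natural}$ to $\dd$.

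For the converse in the ``left adjoint'' case, I would assume $F'\dashv G^{\natural}$ with unit and counit, say, $\eta'$ and $\epsilon'$. The key identity I would use is $G^{\natural}\circ \iota_{\cc}=\iota_{\dd}\circ G$, which on any $Y\in \dd$ gives $G^{\natural}\iota_{\dd}Y=\iota_{\cc}GY$. Combining the adjunction isomorphism for $F'\dashv G^{\natural}$ with this identity and the fact that $\iota_{\cc}$ is fully faithful, one gets, for every $X\in \cc$ and $Y\in \dd$, a chain of natural isomorphisms
\[
\Hom_{\dd^{\natural}}\bigl(F'\iota_{\cc}X,\iota_{\dd}Y\bigr)
\cong \Hom_{\cc^{\natural}}\bigl(\iota_{\cc}X,G^{\natural}\iota_{\dd}Y\bigr)
= \Hom_{\cc^{\natural}}\bigl(\iota_{\cc}X,\iota_{\cc}GY\bigr)
\cong \Hom_{\cc}(X,GY).
\]
Since $\dd$ is idempotent complete, $\iota_{\dd}$ is essentially surjective, so for every $X\in\cc$ I can pick an object $FX\in \dd$ together with an isomorphism $\iota_{\dd}FX\cong F'\iota_{\cc}X$ in $\dd^{\natural}$. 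Using this isomorphism and the fully faithfulness of $\iota_{\dd}$, the left–hand side of the display above becomes $\Hom_{\dd}(FX,Y)$, and I get a natural isomorphism $\Hom_{\dd}(FX,Y)\cong \Hom_{\cc}(X,GY)$. By the pointwise criterion for adjoint functors this assembles into a functor $F:\cc\to \dd$ left adjoint to $G$.

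The case of a right adjoint is entirely dual: if $H'\dashv G^{\natural}$ is replaced by $G^{\natural}\dashv H'$, one uses the same identity $G^{\natural}\iota_{\dd}=\iota_{\cc}G$ together with essential surjectivity of $\iota_{\dd}$ to represent $\Hom_{\cc}(G-,X)$ by some $HX\in \dd$, obtaining $G\dashv H$.

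I do not expect a genuine obstacle here; the only delicate point is that the choice of $FX$ (resp.\ $HX$) is only unique up to isomorphism, so one should either invoke the standard fact that a pointwise representable functor assembles into an honest left (resp.\ right) adjoint, or, equivalently, fix once and for all a quasi–inverse $\iota_{\dd}^{-1}$ to the equivalence $\iota_{\dd}$ and set $F:=\iota_{\dd}^{-1}\circ F'\circ \iota_{\cc}$ (resp.\ $H:=\iota_{\dd}^{-1}\circ H'\circ \iota_{\cc}$), after which the adjunction isomorphism is a formal consequence of the computations above.
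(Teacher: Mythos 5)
Your proof is correct and is essentially the paper's argument: the paper likewise uses that $\iota_{\dd}$ is an equivalence (with quasi-inverse/left adjoint $V_{\dd}$), composes the adjunctions to get $V_{\dd}L\dashv G^{\natural}\iota_{\dd}=\iota_{\cc}G$, and invokes full faithfulness of $\iota_{\cc}$ to conclude $V_{\dd}L\iota_{\cc}\dashv G$ — which is exactly your $F:=\iota_{\dd}^{-1}\circ F'\circ\iota_{\cc}$, with your hom-set chain being the underlying computation. The only blemish is a transcription slip: the commutation identity should read $G^{\natural}\circ\iota_{\dd}=\iota_{\cc}\circ G$ (your stated version does not typecheck), though you evaluate it correctly in the very next line.
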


\begin{proof}
 If $F\dashv G$, we know that $F^{\natural }\dashv G^{\natural }$. Conversely, assume that $L\dashv G^{\natural }:\dd^{\natural }\rightarrow \cc%
^{\natural }$. Since $\dd$ is idempotent complete, the functor $\iota _{\dd}:%
\dd\rightarrow \dd^{\natural }$ is an equivalence of
categories and hence it has a left adjoint $V_{\dd}:\dd%
^{\natural }\rightarrow \dd$. From $V_{\dd}\dashv \iota _{\dd}$ and $L\dashv G^{\natural }$, we get
$V_{\dd}L\dashv G^{\natural }\iota _{\dd}$ and hence $V_{\dd}L\dashv \iota _{\cc} G$. Since $\iota _{\cc}$ is fully faithful, this implies $V_{\dd}L\iota _{\cc}\dashv G$.
\begin{invisible}
Let $F\dashv UG$ with $U:\cc\to\cc$ fully faithful. Given objects $C$ and $D$ in the relevant categories, we have a chain of natural isomorphisms
$\Hom\left(FUC,D\right) \cong
\Hom\left( UC,UGD\right) \cong
\Hom\left( C,GD\right),$
the latter one being fully faithfulness of $U$. Thus $FU\dashv G$. Here $U=\iota_\cc:\cc\to\cc^\natural$. Given an object $C$ in $\cc$
	and an object $D$ in $\dd$, we have a chain of natural isomorphisms	$
	\Hom_{\dd}\left( V_{\dd}L\iota _{\cc%
	}C,D\right) \cong \Hom_{\dd^{\natural }}\left( L\iota _{%
		\cc}C,\iota _{\dd}D\right) \cong \Hom_{\cc%
		^{\natural }}\left( \iota _{\cc}C,G^{\natural }\iota _{\dd%
	}D\right) =\Hom_{\cc^{\natural }}\left( \iota _{\cc%
	}C,\iota _{\cc}GD\right) \cong \Hom_{\cc}\left(
	C,GD\right)
	$ where the latter isomorphism results from the fully faithfulness of $\iota _{%
		\cc}$.
\end{invisible}
The case in which $G$ has a right adjoint follows similarly.
\begin{invisible}
If $G\dashv H$, then $G^{\natural }\dashv H^{\natural }$. Conversely, assume that $G^{\natural }\dashv R:\cc^{\natural }\rightarrow \dd^{\natural }$. Since $%
\dd$ is idempotent complete, the functor $\iota _{\dd}:%
\dd\rightarrow \dd^{\natural }$ is an equivalence of
categories and hence it has a right adjoint $V_{\dd}:\dd%
^{\natural }\rightarrow \dd$. From $\iota _{\dd}\dashv V_{\dd}$ and $G^{\natural }\dashv R$, we get
 $G^{\natural } \iota _{\dd}\dashv V_{\dd} R$ i.e. $\iota _{\cc}G\dashv V_{\dd} R$. Since $\iota _{\cc}$  is fully faithful we deduce that $G\dashv V_{\dd} R\iota _{\cc}$. In fact, if $UF\dashv G$ with $U$ fully faithful, we get $\Hom\left(C,GUD\right) \cong
\Hom\left( UFC,UD\right) \cong
\Hom\left( FC,D\right)$.
\end{invisible}
\end{proof}

\begin{prop}
\label{prop:idpcom-cutr}Let $\dd$ be an idempotent complete category.
A functor $G:\dd\rightarrow \cc$  is a coreflection (resp. reflection, bireflection, equivalence) up to retracts if and only if it is a coreflection (resp. reflection, bireflection, equivalence).
\end{prop}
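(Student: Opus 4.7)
The plan is to prove this by combining the two already-established auxiliary results: the adjoint-transfer Lemma \ref{lem:idpcom-adj}, which says that (when $\dd$ is idempotent complete) $G$ has a left/right adjoint iff $G^{\natural}$ does, and Proposition \ref{prop:corefl}, which upgrades each ``up to retracts'' property to the full version as soon as one has a suitable adjoint. The easy direction (that being a coreflection, reflection, bireflection or equivalence implies the corresponding ``up to retracts'' notion) is immediate from Lemma \ref{lem:trivfactsutr}(2), (4), (7) and the observation that an equivalence is in particular an equivalence up to retracts; this does not even use the idempotent-completeness of $\dd$.

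For the nontrivial direction, I would first handle the coreflection case. Assume $G:\dd\to \cc$ is a coreflection up to retracts. By definition $G^{\natural}$ is a coreflection, so in particular $G^{\natural}$ has a (fully faithful) left adjoint. Since $\dd$ is idempotent complete, Lemma \ref{lem:idpcom-adj} produces a left adjoint for $G$ itself. Then Proposition \ref{prop:corefl}(1) applies: a coreflection up to retracts with a left adjoint is a coreflection, so $G$ is a coreflection. The reflection case is strictly dual: $G^{\natural}$ has a right adjoint, Lemma \ref{lem:idpcom-adj} yields a right adjoint for $G$, and Proposition \ref{prop:corefl}(3) concludes.

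For the bireflection case, assume $G$ is a bireflection up to retracts, so that $G^{\natural}$ has a single (fully faithful) functor serving as both its left and right adjoint. In particular $G^{\natural}$ has at least one adjoint, so Lemma \ref{lem:idpcom-adj} provides $G$ with an adjoint; then Proposition \ref{prop:corefl}(5) upgrades the situation and yields that $G$ is a bireflection. The equivalence case is analogous, using Proposition \ref{prop:corefl}(6): an equivalence up to retracts equipped with an adjoint is genuinely an equivalence.

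The whole argument is essentially a bookkeeping exercise, and no single step should be an obstacle given the preceding results; the only thing to be careful about is to invoke Lemma \ref{lem:idpcom-adj} in the correct variance (left adjoint for the coreflection case, right adjoint for the reflection case, either for bireflection and equivalence), and to note that the hypothesis that $\dd$ is idempotent complete is used exclusively through that lemma. This structure also makes clear why idempotent completeness of $\dd$ (the \emph{source} of $G$, i.e.\ the codomain of its putative adjoint) is the right assumption: it is exactly what is needed to descend an adjoint from $\cc^{\natural}\to \dd^{\natural}$ down to $\cc\to \dd$.
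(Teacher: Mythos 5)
Your proposal is correct and follows essentially the same route as the paper: the "up to retracts" hypothesis gives $G^{\natural}$ an adjoint, Lemma \ref{lem:idpcom-adj} descends that adjoint to $G$ using idempotent completeness of $\dd$, and Proposition \ref{prop:corefl} then upgrades $G$ to an actual coreflection (resp. reflection, bireflection, equivalence), with the converse direction handled by Lemma \ref{lem:trivfactsutr}. No gaps.
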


\begin{proof}If $G$ is a coreflection (resp. reflection) up to retracts, then $G^{\natural }$ has a left (resp. right)
adjoint so that, by Lemma \ref{lem:idpcom-adj}, so does $G.$ By Proposition \ref{prop:corefl} $G$ is a coreflection (resp. reflection). The other implication is
always true by Lemma \ref{lem:trivfactsutr}.
\begin{invisible}
If $G$ is a reflection up to retracts, then $G^{\natural }$ has a right
adjoint so that, by the foregoing, so does $G.$ By Proposition \ref{prop:corefl} $G$ is a reflection. The other implication is
always true.
\end{invisible}
Similarly, one deals with the case of bireflection and equivalence.
\end{proof}
For a deeper understanding of (co)reflections up to retracts, we are now going to investigate the notion of semiadjunction.

\subsection{Semiadjunctions}\label{sub:semiadj}
Recall from \cite{Ha85} that a \emph{semifunctor} is defined the same way as a functor, except that
a semifunctor needs not to preserve identities. The notion of semifunctor originally appeared in \cite[Definition 4.1]{EZ76} under the name of \emph{weak functor}.
For semifunctors $F,F^{\prime
}:\cc\rightarrow \dd$, a \emph{natural transformation} $\alpha
:F\rightarrow F^{\prime }$ is a family $\left( \alpha _{C}:FC\rightarrow
F^{\prime }C\right) _{C\in \cc}$ of morphisms in $\dd$ such
that $\alpha _{D}\circ Ff=F^{\prime }f\circ \alpha _{C}$ for every morphism $%
f:C\rightarrow D.$ If moreover $\alpha _{C}\circ F\left( \id_{C}\right)
=\alpha _{C}$, then $\alpha$ is called a  \emph{seminatural transformation}. By a \emph{semiadjunction} we mean a datum $\left( F,G,\eta
,\epsilon \right) $ where $F:\cc\rightarrow \dd$ and $G:%
\dd\rightarrow \cc$ are semifunctors endowed with natural transformations $\eta :\id_{\cc}\rightarrow GF$
(unit) and $\epsilon :FG\rightarrow \id_{\dd}$ (counit) such
that $G\epsilon \circ \eta G=G\id$ and $\epsilon
F\circ F\eta =F\id$, see \cite[Definition 22]{Ho90}. Although the terminology suggests that it is a weaker notion, a seminatural transformation $\alpha :F\rightarrow F^{\prime }$ is in particular a natural transformation but the converse is not true in general. It is true in case either $F$ or $F'$ is a functor, see \cite[Theorem 16]{Ho90}. For this reason, $\eta$ and $\epsilon$ as above are also seminatural transformations.

Any semifunctor $F:\cc\rightarrow \dd$ induces a functor $%
F^{\natural }:\cc^{\natural }\rightarrow \dd^{\natural }$
such that $F^{\natural }\left( C,c\right) =\left( FC,Fc\right) $ and $%
F^{\natural }f=Ff.$ In fact $F^{\natural }\id_{\left( C,c\right)
}=Fc=\id_{\left( FC,Fc\right) }=\id_{F^{\natural }\left(
C,c\right) },$ as observed in \cite[Definition 1.3]{Ha85}.
However note that $\iota_\dd\circ F \neq F^\natural\circ\iota_\cc$ unless $F$ is a functor.
 % Moreover any
%functor $\cc^{\natural }\rightarrow \dd^{\natural }$ is of
%the form $F^{\natural }$ for some semifunctor $F:\cc\rightarrow
%\dd$, see \cite[Proposition 1.4]{Ha85}.
Moreover any semifunctor is determined by its completion, cf. \cite[Proposition 1.4]{Ha85}.

Any seminatural transformation $\alpha :F\rightarrow F^{\prime }$ induces the natural
transformation $\alpha ^{\natural }:F^{\natural }\rightarrow \left( F^{\prime
}\right) ^{\natural }$ with components $\alpha _{\left( C,c\right)
}^{\natural }:=\alpha _{C}\circ Fc=F^{\prime }c\circ \alpha _{C},$ cf. \cite[%
Theorem 20]{Ho90}.

As a consequence any semiadjunction $\left( F,G,\eta ,\epsilon \right) $
induces an adjunction $\left( F^{\natural },G^{\natural },\eta ^{\natural
},\epsilon ^{\natural }\right) $ where $\eta _{\left( C,c\right) }^{\natural
}=\eta _{C}\circ c:\left( C,c\right) \rightarrow \left( GFC,GFc\right) $ and
$\epsilon _{\left( D,d\right) }^{\natural }=d\circ \epsilon _{D}:\left(
FGD,FGd\right) \rightarrow \left( D,d\right) $.

\begin{es}\label{es:iotaequtr}
Consider the canonical functor $\iota _{\cc}:\cc\rightarrow
\cc^{\natural }$. There is also a semifunctor $\upsilon_\cc : \cc^\natural \to\cc$ which maps an object 
$\left( C,c\right) $ in $\cc^\natural$ to the underlying object $C$ and a morphism $f:(C,c)\to (C',c') $ to the underlying morphism $%
\upsilon_\cc f:C\rightarrow C'$ such that $c'\circ\upsilon_\cc f\circ c=\upsilon_\cc f$. It is a semifunctor as $\upsilon_\cc (\id_{(C,c)})=c\neq \id_C$ in general. By \cite[Example 6]{Ho90} we have that $(\upsilon_\cc,\iota _{\cc})$ and $(\iota _{\cc},\upsilon_\cc)$ are semiadjunctions. Let us exhibit explicitly their units and counits. Note that $\iota _{\cc%
}\upsilon _{\cc}\left( C,c\right) =\left( C,\id_{C}\right)$. \begin{itemize}
  \item The unit of $(\upsilon_\cc,\iota _{\cc})$  is defined by $(\eta_\cc)
_{\left( C,c\right) }=c:\left( C,c\right) \rightarrow \left( C,\id_{C}\right) $.
  \item The counit of $(\upsilon_\cc,\iota _{\cc})$  is $\epsilon _{\cc} :=\id_{%
\id_{\cc}}:\upsilon _{\cc}\iota _{\cc}=%
\id_{\cc}\rightarrow \id_{\cc}$.
\item The unit of $(\iota _{\cc},\upsilon_\cc)$ is $\epsilon _{\cc} :=\id_{\id_{\cc}}:\id_\cc\to\id_\cc=\upsilon _{\cc}\iota _{\cc}$.
\item The counit of $(\iota _{\cc},\upsilon_\cc)$ is defined by $(\nu_\cc) _{\left( C,c\right) }=c:\left( C,\id_{C}\right) \rightarrow \left(
C,c\right) $.
\end{itemize}
One has that $\eta _\cc\circ \nu _\cc=\iota _\cc\upsilon _\cc\id$  and $\nu_\cc\circ\eta_\cc=\id $.
\end{es}

\begin{invisible}

For every $f:\left( C,c\right) \rightarrow \left( C^{\prime },c^{\prime
}\right) $ we have
\begin{eqnarray*}
\upsilon _{\cc}\left( \eta _{\left( C^{\prime },c^{\prime }\right)
}\circ f\right)  &=&\upsilon _{\cc}\eta _{\left( C^{\prime
},c^{\prime }\right) }\circ \upsilon _{\cc}f=c^{\prime }\circ
\upsilon _{\cc}f=\upsilon _{\cc}f\circ c=\upsilon _{\mathcal{%
C}}f\circ \upsilon _{\cc}\eta _{\left( C,c\right) }=\upsilon _{%
\cc}\left( \iota _{\cc}\upsilon _{\cc}f\circ \eta
_{\left( C,c\right) }\right) , \\
\upsilon _{\cc}\left( \eta _{\left( C,c\right) }\circ \id%
_{\left( C,c\right) }\right)  &=&c\circ c=c=\upsilon _{\cc}\left(
\eta _{\left( C,c\right) }\right)
\end{eqnarray*}%
so that $\eta _{\left( C^{\prime },c^{\prime }\right) }\circ f=\iota _{%
\cc}\upsilon _{\cc}f\circ \eta _{\left( C,c\right) },$ and $%
\eta _{\left( C,c\right) }\circ \id_{\left( C,c\right) }=\eta
_{\left( C,c\right) }$ and we can define the seminatural transformation $%
\eta _{\cc}:=\eta :\id_{\cc^{\natural }}\rightarrow
\iota _{\cc}\upsilon _{\cc}$ by setting $\eta :=\left( \eta
_{\left( C,c\right) }\right) _{\left( C,c\right) \in \cc^{\natural }}
$. Similarly $\epsilon _{C}\circ \upsilon _{\cc}\iota _{\cc%
}\left( \id_{C}\right) =\epsilon _{C}\circ \id_{C}=\epsilon
_{C}$ and hence $\epsilon :=\epsilon _{\cc}:\upsilon _{\cc%
}\iota _{\cc}=\id_{\cc}\rightarrow \id_{%
\cc}$ is seminatural. We compute
\begin{eqnarray*}
\left( \epsilon \upsilon _{\cc}\circ \upsilon _{\cc}\eta
\right) _{\left( C,c\right) } &=&\epsilon _{\upsilon _{\cc}\left(
C,c\right) }\circ \upsilon _{\cc}\eta _{\left( C,c\right) }=\epsilon
_{C}\circ c=\id_{C}\circ c=c=\upsilon _{\cc}\left( \mathrm{Id%
}_{\left( C,c\right) }\right)  \\
\upsilon _{\cc}\left( \iota _{\cc}\epsilon \circ \eta \iota
_{\cc}\right) _{C} &=&\upsilon _{\cc}\iota _{\cc%
}\epsilon _{C}\circ \upsilon _{\cc}\eta _{\left( C,\id%
_{C}\right) }=\epsilon _{C}\circ \id_{C}=\id_{C}\circ
\id_{C}=\id_{C}=\upsilon _{\cc}\iota _{\cc%
}\left( \id_{C}\right)
\end{eqnarray*}%
so that $\epsilon \upsilon _{\cc}\circ \upsilon _{\cc}\eta
=\upsilon _{\cc}\id$ and $\iota _{\cc}\epsilon \circ
\eta \iota _{\cc}=\iota _{\cc}\id$. Hence $\left(
\upsilon _{\cc},\iota _{\cc},\eta ,\epsilon \right) $ is a
semiadjunction.

Define $\nu _{\left( C,c\right) }:\iota _{\cc}\upsilon _{\cc%
}\left( C,c\right) =\left( C,\id_{C}\right) \rightarrow \left(
C,c\right) $ such that $\upsilon _{\cc}\nu _{\left( C,c\right)
}:=c:C\rightarrow C$. Given $f:\left( C,c\right) \rightarrow \left(
C^{\prime },c^{\prime }\right) ,$ we have%
\begin{eqnarray*}
\upsilon _{\cc}\left( \nu _{\left( C^{\prime },c^{\prime }\right)
}\circ \iota _{\cc}\upsilon _{\cc}f\right)  &=&\upsilon _{%
\cc}\nu _{\left( C^{\prime },c^{\prime }\right) }\circ \upsilon _{%
\cc}f=c^{\prime }\circ \upsilon _{\cc}f=\upsilon _{\mathcal{C%
}}f\circ c=\upsilon _{\cc}f\circ \upsilon _{\cc}\nu _{\left(
C,c\right) }=\upsilon _{\cc}\left( f\circ \nu _{\left( C,c\right)
}\right)  \\
\upsilon _{\cc}\left( \nu _{\left( C,c\right) }\circ \iota _{%
\cc}\upsilon _{\cc}\id_{\left( C,c\right) }\right)
&=&\upsilon _{\cc}\left( \id_{\left( C,c\right) }\circ \nu
_{\left( C,c\right) }\right) =\upsilon _{\cc}\left( \nu _{\left(
C,c\right) }\right)
\end{eqnarray*}%
so that $\nu _{\left( C^{\prime },c^{\prime }\right) }\circ \iota _{\mathcal{%
C}}\upsilon _{\cc}f=f\circ \nu _{\left( C,c\right) }$ and $\nu
_{\left( C,c\right) }\circ \iota _{\cc}\upsilon _{\cc}%
\id_{\left( C,c\right) }=\nu _{\left( C,c\right) }$ and hence $\nu
:=\left( \nu _{\left( C,c\right) }\right) _{\left( C,c\right) \in \cc%
^{\natural }}:\iota _{\cc}\upsilon _{\cc}\rightarrow \mathrm{%
Id}_{\cc^{\natural }}$ is seminatural. We have%
\begin{eqnarray*}
\upsilon _{\cc}\nu _{\left( C,c\right) }\circ \left( \epsilon _{%
\cc}^{-1}\right) _{\upsilon _{\cc}\left( C,c\right) }
&=&c\circ \left( \epsilon _{\cc}^{-1}\right) _{C}=c\circ \id%
_{C}=c=\upsilon _{\cc}\left( \id_{\left( C,c\right) }\right)
, \\
\upsilon _{\cc}\left( \nu _{\iota _{\cc}C}\circ \iota _{%
\cc}\left( \epsilon _{\cc}^{-1}\right) _{C}\right)
&=&\upsilon _{\cc}\nu _{\left( C,\id_{C}\right) }\circ
\upsilon _{\cc}\iota _{\cc}\id_{C}=\id%
_{C}\circ \id_{C}=\id_{C}=\upsilon _{\cc}\iota _{%
\cc}\left( \id_{C}\right)
\end{eqnarray*}%
and hence $\upsilon _{\cc}\nu \circ \epsilon _{\cc%
}^{-1}\upsilon _{\cc}=\upsilon _{\cc}\id$ and $\nu
\iota _{\cc}\circ \iota _{\cc}\epsilon _{\cc%
}^{-1}=\iota _{\cc}\id$ so that $\left( \iota _{\cc%
},\upsilon _{\cc}\right)$ is a semiadjunction. Note that
\begin{equation*}
\upsilon _{\cc}\left( \nu _{\left( C,c\right) }\circ \eta _{\left(
C,c\right) }\right) =\upsilon _{\cc}\nu _{\left( C,c\right) }\circ
\upsilon _{\cc}\eta _{\left( C,c\right) }=c\circ c=c=\upsilon _{%
\cc}\left( \id_{\left( C,c\right) }\right)
\end{equation*}%
and hence $\nu \circ \eta =\id$.
Note that $\upsilon _{\cc}\left( \left( \eta _{\cc}\right)
_{\left( C,c\right) }\circ \nu _{\left( C,c\right) }\right) =\upsilon _{%
\cc}\left( \eta _{\cc}\right) _{\left( C,c\right) }\circ
\upsilon _{\cc}\nu _{\left( C,c\right) }=c\circ c=c=\upsilon _{%
\cc}\iota _{\cc}\upsilon _{\cc}\id_{\left(
C,c\right) }$ so that $\left( \eta _{\cc}\right) _{\left( C,c\right)
}\circ \left( \nu _{\cc}\right) _{\left( C,c\right) }=\iota _{%
\cc}\upsilon _{\cc}\id_{\left( C,c\right) }$ and
hence $\eta _{\cc}\circ \nu _{\cc}=\iota _{\cc%
}\upsilon _{\cc}\id.$
\end{invisible}

\begin{invisible}
Given a seminatural transformation $\alpha :F\rightarrow F^{\prime }$, note
that $F^{\prime }c\circ \left( \alpha _{C}\circ Fc\right) \circ Fc=F^{\prime
}c\circ \alpha _{C}\circ Fc\circ Fc=\alpha _{C}\circ Fc\circ Fc\circ
Fc=\alpha _{C}\circ Fc$ and hence $\alpha _{\left( C,c\right) }^{\natural
}:=\alpha _{C}\circ Fc$ is really a morphism $F^{\natural }\left( C,c\right)
=\left( FC,Fc\right) \rightarrow \left( F^{\prime }C,F^{\prime }c\right)
=\left( F^{\prime }\right) ^{\natural }\left( C,c\right) $. Given $f:\left(
C,c\right) \rightarrow \left( C^{\prime },c^{\prime }\right) $, we compute
\begin{equation*}
\left( F^{\prime }\right) ^{\natural }f\circ \alpha _{\left( C,c\right)
}^{\natural }=F^{\prime }f\circ \alpha _{C}\circ Fc=\alpha _{C}\circ Ff\circ
Fc=\alpha _{C}\circ F\left( f\circ c\right) =\alpha _{C}\circ Ff=\alpha
_{C}\circ F^{\natural }f
\end{equation*}%
and hence we get a seminatural transformation $\alpha ^{\natural
}:F^{\natural }\rightarrow \left( F^{\prime }\right) ^{\natural }$ as
desired. Note that, if $\beta :F^{\prime }\rightarrow F^{\prime \prime }$ is
another seminatural transformation, then $\beta _{\left( C,c\right)
}^{\natural }\circ \alpha _{\left( C,c\right) }^{\natural }=\beta _{C}\circ
F^{\prime }c\circ \alpha _{C}\circ Fc=\beta _{C}\circ \alpha _{C}\circ
Fc\circ Fc=\left( \beta \circ \alpha \right) _{C}\circ Fc=\left( \beta \circ
\alpha \right) _{\left( C,c\right) }^{\natural }$ so that $\beta ^{\natural
}\circ \alpha ^{\natural }=\left( \beta \circ \alpha \right) ^{\natural }.$
Moreover%
\begin{eqnarray*}
\left( \alpha G\right) _{\left( C,c\right) }^{\natural } &=&\left( \alpha
G\right) _{C}\circ FGc=\alpha GC\circ FGc =\alpha _{\left( GC,Gc\right)
}^{\natural }=\left( \alpha ^{\natural }G^{\natural }\right) _{\left(
C,c\right) } \\
\left( H\alpha \right) _{\left( C,c\right) }^{\natural } &=&\left( H\alpha
\right) _{C}\circ HFc=H\alpha _{C}\circ HFc=H\left( \alpha _{C}\circ
Fc\right) =H\alpha _{\left( C,c\right) }^{\natural }=H^{\natural }\alpha
_{\left( C,c\right) }^{\natural }
\end{eqnarray*}%
so that $\left( \alpha G\right) ^{\natural }=\alpha ^{\natural }G^{\natural }
$ and $\left( H\alpha \right) ^{\natural }=H^{\natural }\alpha ^{\natural }$%
.

In particular we can consider $\eta ^{\natural }:\left( \id_{%
\cc}\right) ^{\natural }\rightarrow \left( GF\right) ^{\natural }$
i.e. $\eta ^{\natural }:\id_{\cc^{\natural }}\rightarrow
G^{\natural }F^{\natural }$ and similarly $\epsilon ^{\natural }:F^{\natural
}G^{\natural }\rightarrow \id_{\dd^{\natural }}$. We have%
\begin{eqnarray*}
G^{\natural }\epsilon ^{\natural }\circ \eta ^{\natural }G^{\natural }
&=&\left( G\epsilon \right) ^{\natural }\circ \left( \eta G\right)
^{\natural }=\left( G\epsilon \circ \eta G\right) ^{\natural }=\left(
G\id\right) ^{\natural }=\id_{G^{\natural }}, \\
\epsilon ^{\natural }F^{\natural }\circ F^{\natural }\eta ^{\natural }
&=&\left( \epsilon F\right) ^{\natural }\circ \left( F\eta \right)
^{\natural }=\left( \epsilon F\circ F\eta \right) ^{\natural }=\left(F\id\right) ^{\natural }=\id_{F^{\natural }}\text{.}
\end{eqnarray*}
\end{invisible}

\begin{invisible}%\rd{[Ho commentato questo risultato perch\'e serviva solo nella dim di Lemma \ref{lem:anticompl} che ora \`e commentata.]}
\begin{lem}\label{lem:Fiota}
Let $F,G:\cc^\natural\to \dd$ be functors such that $F\circ\iota_\cc=G\circ\iota_\cc$. Then $F\cong G$.
\end{lem}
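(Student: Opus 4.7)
The plan is to construct the natural isomorphism $\alpha\colon F\to G$ by exploiting the fact that every object of $\cc^\natural$ is canonically a retract of one in the image of $\iota_\cc$, and by invoking the uniqueness of splittings of idempotents. For each $(X,e)\in\cc^\natural$ the morphism $e$ of $\cc$ underlies two morphisms in $\cc^\natural$, namely $i_{(X,e)}\colon(X,e)\to(X,\id_X)$ and $p_{(X,e)}\colon(X,\id_X)\to(X,e)$, and a direct computation gives $p_{(X,e)}\circ i_{(X,e)}=\id_{(X,e)}$ while $i_{(X,e)}\circ p_{(X,e)}=\iota_\cc(e)$. Hence $(X,e)$ is a splitting of the idempotent $\iota_\cc(e)$ on $\iota_\cc(X)$.

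Since $F\iota_\cc=G\iota_\cc$, the two functors coincide on $(X,\id_X)$ and on $\iota_\cc(e)$, so applying them to the above data yields two splittings of the common idempotent $F(\iota_\cc(e))=G(\iota_\cc(e))$. This prompts the definition
\[\alpha_{(X,e)}:=G(p_{(X,e)})\circ F(i_{(X,e)})\colon F(X,e)\to G(X,e),\]
with candidate inverse $F(p_{(X,e)})\circ G(i_{(X,e)})$. Collapsing the interior composition via $F\iota_\cc=G\iota_\cc$, both composites reduce to the image under $F$ (respectively $G$) of the endomorphism $p_{(X,e)}\circ\iota_\cc(e)\circ i_{(X,e)}=\id_{(X,e)}$ of $(X,e)$ in $\cc^\natural$, which is the identity.

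Naturality rests on the factorization $f=p_{(Y,e')}\circ\iota_\cc(f)\circ i_{(X,e)}$ in $\cc^\natural$, valid for every morphism $f\colon(X,e)\to(Y,e')$ as a direct translation of the defining relation $e'\circ f\circ e=f$. Substituting this factorization and once more using $F\iota_\cc(f)=G\iota_\cc(f)$, both $\alpha_{(Y,e')}\circ F(f)$ and $G(f)\circ\alpha_{(X,e)}$ collapse to the common expression $G(p_{(Y,e')}\circ\iota_\cc(f))\circ F(i_{(X,e)})$, whence $\alpha$ is natural.

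There is no real obstacle beyond the bookkeeping of composition in $\cc^\natural$: every verification reduces, through the explicit description of morphisms in the idempotent completion, to identities in $\cc$ that follow from $e\circ e=e$ and the analogous relation for $e'$. Notably no idempotent completeness of $\dd$ is required, since $F(X,e)$ and $G(X,e)$ already furnish the relevant splittings in $\dd$.
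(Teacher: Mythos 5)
Your proof is correct, and it is essentially the argument the paper itself records for this lemma: your $i_{(X,e)}$ and $p_{(X,e)}$ are exactly the components of the transformations $\eta_\cc$ and $\nu_\cc$ of Example \ref{es:iotaequtr}, and your isomorphism $G(p_{(X,e)})\circ F(i_{(X,e)})$ is the paper's comparison of the two splittings of the common idempotent $F\iota_\cc\upsilon_\cc\id=G\iota_\cc\upsilon_\cc\id$. The only cosmetic difference is that the paper's displayed one-line proof instead cites the fact that precomposition with $\iota_\cc$ induces an equivalence $\mathrm{Funct}(\cc^\natural,\dd)\to\mathrm{Funct}(\cc,\dd)$, of which your computation is a self-contained instance.
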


\begin{proof}It follows from the fact that $\mathrm{Funct}(\cc^\natural,\dd)\to\mathrm{Funct}(\cc,\dd),F\mapsto F\circ\iota_\cc$, is an equivalence of categories, see e.g. \cite[Expos\'{e} IV, Exercise 7.5]{SGA4}.
LA PARTE RESTANTE DELLA DIM. ERA INVISIBILE:\\
  From Example \ref{es:iotaequtr}, consider the seminatural transformations $\eta _\cc:\id_{\cc^{\natural
}}\rightarrow \iota _\cc\upsilon _\cc$ and $\nu _\cc:\iota _\cc\upsilon _\cc\rightarrow \id_{\cc^{\natural
}}$ such that
$\eta _\cc\circ \nu _\cc=\iota _\cc\upsilon _\cc\id$  and $\nu_\cc\circ\eta_\cc=\id $. Thus $F\iota _{\cc}\upsilon _{%
\cc}\overset{F\nu _{\cc}}{\rightarrow }F\overset{F\eta _{%
\cc}}{\rightarrow }F\iota _{\cc}\upsilon _{\cc}$ and
$G\iota _{\cc}\upsilon _{\cc}\overset{%
G\nu _{\cc}}{\rightarrow }G\overset{%
G\eta _{\cc}}{\rightarrow }G\iota _{\mathcal{%
C}}\upsilon _{\cc}$ are two ways to split the same idempotent as $%
F\eta _{\cc}\circ F\nu _{\cc}=F\left( \eta _{\cc%
}\circ \nu _{\cc}\right) =F\iota _{\cc}\upsilon _{\cc%
}\id=G\iota _{\cc}\upsilon _{\cc}\mathrm{%
Id}=G\left( \eta _{\cc}\circ \nu _{\cc}\right)
=G\eta _{\cc}\circ G\nu _{\cc}$. As
a consequence the compositions $ F\overset{F\eta _{\cc}}{%
\rightarrow }F\iota _{\cc}\upsilon _{\cc}=G\iota
_{\cc}\upsilon _{\cc}\overset{G\nu _{\cc}%
}{\rightarrow }G$ and $G\overset{G\eta _{\cc}}{\rightarrow }%
G\iota _{\cc}\upsilon _{\cc}=F\iota _{\cc%
}\upsilon _{\cc}\overset{F\nu _{\cc}}{\rightarrow }F$ are mutual inverses.

\end{proof}
\end{invisible}

We include here the following well-known lemma that will be useful afterwards.

\begin{lem}\label{lem:anticompl} (Cf. \cite[proof of Theorem 1]{HoM95}) Let $\cc$ and $\dd$ be categories.
\begin{enumerate}
  \item[1)] For every functor $G:\cc^{\natural }\to \dd^{\natural }$, then $F:=\upsilon_\dd\circ G\circ\iota_\cc:\cc\rightarrow\dd$ is a semifunctor such that $F^\natural \cong G$.
  \item[2)] Given semifunctors $F,G:\cc\rightarrow \dd$ and a natural
transformation $\alpha :F^{\natural }\rightarrow G^{\natural }$, then $\beta :=\upsilon _{\dd}\alpha \iota_{\cc}:F\rightarrow G$ is a seminatural transformation such that $\beta ^{\natural }=\alpha$.
\end{enumerate}
\end{lem}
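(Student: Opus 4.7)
The plan is to split the work cleanly into the two parts. For part~1), first I would verify that $F:=\upsilon_\dd\circ G\circ\iota_\cc$ is a semifunctor: composition preservation is inherited from the three factors, whereas $F$ need not preserve identities precisely because $\upsilon_\dd$ is a semifunctor, as indeed $F(\id_C)=\upsilon_\dd(\id_{G\iota_\cc C})$ is the idempotent underlying the object $G\iota_\cc C\in\dd^\natural$, which is generally not $\id_{FC}$.

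The core of part~1) is the pointwise identity $F^\natural\circ\iota_\cc=G\circ\iota_\cc$ as functors $\cc\to\dd^\natural$. Indeed, writing $G\iota_\cc C=(D,d)$ one computes $FC=D$ and $F\id_C=d$, so $F^\natural\iota_\cc C=(FC,F\id_C)=(D,d)=G\iota_\cc C$; the check on morphisms is equally direct. With this equality in hand, I would obtain the natural isomorphism $F^\natural\cong G$ via the universal property of the idempotent completion: every $(C,e)\in\cc^\natural$ is the splitting of the idempotent $\iota_\cc e$ on $\iota_\cc C$, so both $F^\natural(C,e)$ and $G(C,e)$ are splittings in $\dd^\natural$ of the common idempotent $F^\natural \iota_\cc e=G\iota_\cc e$ acting on the common object $F^\natural \iota_\cc C = G\iota_\cc C$. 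Canonical uniqueness of such splittings then delivers the natural isomorphism, with naturality in $(C,e)$ flowing from naturality of the splitting construction.

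For part~2), I would set $\beta_C:=\upsilon_\dd(\alpha_{\iota_\cc C})\colon FC\to GC$. Naturality of $\beta$ drops out from naturality of $\alpha$ applied to morphisms of the form $\iota_\cc f$, while seminaturality $\beta_C\circ F\id_C=\beta_C$ follows from the observation that $F\id_C$ is, by definition, the identity of the object $F^\natural\iota_\cc C=(FC,F\id_C)$ in $\dd^\natural$, so $\alpha_{\iota_\cc C}\circ F\id_C=\alpha_{\iota_\cc C}$ already in $\dd^\natural$, and the claim is obtained by applying $\upsilon_\dd$.

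The main obstacle, and the only nontrivial verification, is $\beta^\natural=\alpha$. My plan is to exploit the retract $i:=e\colon (C,e)\to\iota_\cc C$, $p:=e\colon \iota_\cc C\to (C,e)$ with $p\circ i=\id_{(C,e)}$ in $\cc^\natural$: naturality of $\alpha$ along $p$ and $i$ yields $\alpha_{(C,e)}=G^\natural p\circ \alpha_{\iota_\cc C}\circ F^\natural i$, and passing to underlying morphisms via $\upsilon_\dd$, combined with the identity $\upsilon_\dd \alpha_{\iota_\cc C}\circ Fe=Ge\circ \upsilon_\dd \alpha_{\iota_\cc C}$ (naturality of $\alpha$ along the endomorphism $\iota_\cc e$), collapses the expression to $\beta_C\circ Fe=\beta^\natural_{(C,e)}$. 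The bookkeeping between identities in $\dd^\natural$ (which are the defining idempotents) and their underlying morphisms in $\dd$ is where care is required, but the identifications produced in part~1) make each of these reductions routine once set up correctly.
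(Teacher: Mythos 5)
Your proposal is correct and follows essentially the same route as the paper: establishing the equality $F^\natural\circ\iota_\cc=G\circ\iota_\cc$ and then transporting along the canonical splitting of the idempotent $\iota_\cc e$ (equivalently, the unit/counit of the semiadjunction $(\upsilon_\cc,\iota_\cc)$) to get $F^\natural\cong G$ in part~1) and $\beta^\natural=\alpha$ in part~2). The only difference is cosmetic: you phrase the part~2) computation pointwise via the retraction $(C,e)\rightleftarrows\iota_\cc C$, whereas the paper packages the same maps as the seminatural transformation $\eta_\cc$ and argues globally.
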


\begin{invisible}
1) Consider the semifunctor $\upsilon_\cc : \cc^\natural \to\cc$ of Example \ref{es:iotaequtr}. Now set $F:=\upsilon_\dd\circ G\circ\iota_\cc :\cc\to\dd$, which is a semifunctor, as $\upsilon_\dd: \dd^\natural\to\dd$ is so. Note that for every $(X,e)$ in $\cc^\natural$ and $f:(X,e)\to (X',e')$ in $\cc^\natural$
%\begin{gather*}
$$\upsilon_\dd F^\natural (X,e)=\upsilon_\dd(FX,Fe)=FX=F\upsilon_\cc (X,e),$$
$$\upsilon_\dd F^\natural (f)=\upsilon_\dd (F^\natural f:(FX,Fe)\to (FX',Fe'))=F\upsilon_\cc (f),$$
%\end{gather*}
so that $\upsilon_\dd\circ F^\natural = F\circ\upsilon_\cc$.

For every object $C$ in $\cc,$ we have $FC=\upsilon _{\dd%
}G\iota _{\cc}C$ so that $G\iota _{\cc}C=\left(
FC,e_{C}\right) $ for some morphism $e_{C}:FC\rightarrow FC$. Then $
F\id_{C}=\upsilon _{\dd}G\iota _{\cc}\id%
_{C}=\upsilon _{\dd}\id_{G\iota _{\cc}\left(
C\right) }=\upsilon _{\dd}\id_{\left( FC,e_{C}\right) }=e_{C}
$ so that $G\iota _{\cc}C=\left( FC,e_{C}\right) =\left( FC,F\mathrm{Id%
}_{C}\right) =F^{\natural }\left( C,\id_{C}\right) =F^{\natural
}\iota _{\cc}C.$ Given a morphism $%
f:C\rightarrow C^{\prime }$, we have $\upsilon _{\dd}F^{\natural
}\iota _{\cc}f=F\upsilon _{\cc}\iota _{\cc%
}f=Ff=\upsilon _{\dd}G\iota _{\cc}f$ and hence $F^{\natural
}\iota _{\cc}f=G\iota _{\cc}f$. We have so proved that $
F^{\natural }\iota _{\cc}=G\iota _{\cc}$. By Lemma \ref{lem:Fiota}, we get $F^{\natural }\cong G$.

2) Since $F=\upsilon_\dd\circ F^\natural\circ\iota_\cc$ and $G=\upsilon_\dd\circ G^\natural\circ\iota_\cc$, we can consider the seminatural transformation $\beta=\upsilon_\dd\alpha\iota_\cc :F\to G$. Then $\beta$ is really seminatural as for every $C\in\cc$, $\beta_C\circ F(\id_C)=\upsilon_\dd\alpha\iota_\cc C\circ \upsilon_\dd F^\natural\iota_\cc (\id _C)=\upsilon_\dd(\alpha\iota_\cc C\circ \id_{F^\natural\iota_\cc C})=\upsilon_\dd \alpha\iota_\cc C =\beta_C$, and moreover, for every $f:C\to D$ in $\cc$, since $\alpha_{\iota_\cc D}\circ F^\natural\iota_\cc f=G^\natural\iota_\cc f\circ\alpha_{\iota_\cc C}$, we have $\beta_D\circ Ff = (\upsilon_\dd\alpha\iota_\cc D)\circ (\upsilon_\dd F^\natural\iota_\cc )(f)=\upsilon_\dd (\alpha\iota_\cc D\circ F^\natural \iota_\cc f)=\upsilon_\dd (G^\natural \iota_\cc f\circ\alpha_{\iota_\cc C})=\upsilon_\dd G^\natural\iota_\cc (f)\circ\upsilon_\dd\alpha\iota_\cc C=Gf\circ \beta_C$.

By naturality of $\alpha :F^{\natural }\rightarrow G^{\natural }$ we
have $G^{\natural }\eta _{\cc}\circ \alpha =\alpha \iota _{\mathcal{C%
}}\upsilon _{\cc}\circ F^{\natural }\eta _{\cc}$ where $\eta
_{\cc}:\id_{\cc^{\natural }}\rightarrow \iota _{%
\cc}\upsilon _{\cc}$ is the unit of the semiadjunction $%
\left( \upsilon _{\cc},\iota _{\cc}\right) .$ Since $%
\upsilon _{\cc}\eta _{\cc}=\upsilon _{\cc}\id%
,$ we get $\upsilon _\dd G^{\natural }\id=G\upsilon _\cc\id=G\upsilon _\cc\eta_\cc=\upsilon _\dd G^{\natural }\eta_\cc$ and hence
\begin{eqnarray*}
\upsilon _{\dd}\alpha &=&\upsilon _{\dd}\left( G^{\natural }\id\circ \alpha \right) =\upsilon _{\dd%
}G^{\natural }\id\circ
\upsilon _{\dd}\alpha =\upsilon _{\dd}G^{\natural }\eta _{\cc}\circ \upsilon _{%
\dd}\alpha =\upsilon _{\dd}\left( G^{\natural }\eta _{%
\cc}\circ \alpha \right) \\
&=& \upsilon _{\dd}\left( \alpha \iota
_{\cc}\upsilon _{\cc}\circ F^{\natural }\eta _{\cc%
}\right) =\upsilon _{\dd}\alpha \iota _{\cc}\upsilon _{%
\cc}\circ \upsilon _{\dd}F^{\natural }\eta _{\cc} =\beta \upsilon _{\cc}\circ F\upsilon _{\cc}\eta _{%
\cc}=\beta \upsilon _{\cc}\circ F\upsilon _{\cc}%
\id.
\end{eqnarray*}
We have so obtained $\upsilon _{\dd}\alpha =\beta \upsilon _{%
\cc}\circ F\upsilon _{\cc}\id$ which, evaluated on an object $(C,c)$, means that $\upsilon _{\dd}\alpha _{\left(
C,c\right) }=\beta _{C}\circ F\upsilon _{\cc}\id_{\left(
C,c\right) }=\beta _{C}\circ Fc=\upsilon _{\dd}\beta ^{\natural
}\left( C,c\right) $ so that $\alpha =\beta ^{\natural }.$
\end{invisible}

\begin{lem}\label{lem:corefsemiadj} The following assertions hold true.
  \begin{enumerate}
    \item Any functor $G$ whose completion has a left adjoint is part of a semiadjunction $(F,G)$.
    \item Any functor $F$ whose completion has a right adjoint is part of a semiadjunction $(F,G)$.
  \end{enumerate}
\end{lem}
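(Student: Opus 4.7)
I will prove (1); statement (2) follows by the dual argument. Let $L\dashv G^{\natural}$ with unit $\eta:\id_{\cc^{\natural}}\to G^{\natural}L$ and counit $\epsilon:LG^{\natural}\to\id_{\dd^{\natural}}$. By Lemma \ref{lem:anticompl}(1), the semifunctor $F:=\upsilon_{\dd}\circ L\circ\iota_{\cc}:\cc\to\dd$ satisfies $F^{\natural}\cong L$. Pick such an isomorphism $\phi:F^{\natural}\to L$ and transport the adjunction through it, setting $\eta':=G^{\natural}\phi^{-1}\circ\eta$ and $\epsilon':=\epsilon\circ\phi G^{\natural}$. This gives an adjunction $F^{\natural}\dashv G^{\natural}$ whose unit and counit, in view of the strict equalities $\id_{\cc^{\natural}}=(\id_{\cc})^{\natural}$, $G^{\natural}F^{\natural}=(GF)^{\natural}$, $F^{\natural}G^{\natural}=(FG)^{\natural}$, $\id_{\dd^{\natural}}=(\id_{\dd})^{\natural}$, take the form $\eta':(\id_{\cc})^{\natural}\to(GF)^{\natural}$ and $\epsilon':(FG)^{\natural}\to(\id_{\dd})^{\natural}$.

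Invoking Lemma \ref{lem:anticompl}(2) on $\eta'$ and $\epsilon'$ produces seminatural transformations
\begin{equation*}
\bar{\eta}:=\upsilon_{\cc}\eta'\iota_{\cc}:\id_{\cc}\to GF,\qquad\bar{\epsilon}:=\upsilon_{\dd}\epsilon'\iota_{\dd}:FG\to\id_{\dd},
\end{equation*}
with $\bar{\eta}^{\natural}=\eta'$ and $\bar{\epsilon}^{\natural}=\epsilon'$. The candidate semiadjunction is $(F,G,\bar{\eta},\bar{\epsilon})$; it only remains to verify the triangle identities $G\bar{\epsilon}\circ\bar{\eta}G=G\id$ and $\bar{\epsilon}F\circ F\bar{\eta}=F\id$.

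The completion operation $(-)^{\natural}$ is compatible with vertical composition and horizontal whiskering (directly from the componentwise formula $\alpha^{\natural}_{(C,c)}=\alpha_{C}\circ Fc$ recalled before Example \ref{es:iotaequtr}), so completing the left-hand sides and using the ordinary triangle identities for $F^{\natural}\dashv G^{\natural}$ yields
\begin{equation*}
(G\bar{\epsilon}\circ\bar{\eta}G)^{\natural}=G^{\natural}\epsilon'\circ\eta'G^{\natural}=\id_{G^{\natural}}=(G\id)^{\natural},
\end{equation*}
and similarly $(\bar{\epsilon}F\circ F\bar{\eta})^{\natural}=\epsilon'F^{\natural}\circ F^{\natural}\eta'=\id_{F^{\natural}}=(F\id)^{\natural}$, the last equality because $F$ preserves composition, so $F\id_{C}\circ Fc=F(\id_{C}\circ c)=Fc=\id_{F^{\natural}(C,c)}$. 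The assignment $\beta\mapsto\beta^{\natural}$ from seminatural transformations to natural transformations of completions is injective (indeed bijective by Lemma \ref{lem:anticompl}(2), since any seminatural $\beta$ satisfies $\beta=\upsilon_{\dd}\beta^{\natural}\iota_{\cc}$ precisely by seminaturality). Therefore the two displayed identities descend to $G\bar{\epsilon}\circ\bar{\eta}G=G\id$ and $\bar{\epsilon}F\circ F\bar{\eta}=F\id$, so $(F,G,\bar{\eta},\bar{\epsilon})$ is a semiadjunction. For (2), dualize: starting from $F^{\natural}\dashv R$, set $G:=\upsilon_{\cc}\circ R\circ\iota_{\dd}$ and run the same argument.

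The only (minor) obstacle is the bookkeeping in the first step: since the isomorphism $F^{\natural}\cong L$ is typically not a strict equality, one has to transport the unit and counit through $\phi$ before being in a position to apply Lemma \ref{lem:anticompl}(2). Once this is done, the verification of the semiadjunction triangle identities is essentially automatic, thanks to the functoriality of $(-)^{\natural}$ and the fact that seminatural transformations are determined by their completions.
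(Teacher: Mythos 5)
Your proof is correct and follows the same route as the paper: both arguments produce the semifunctor $F$ from the left adjoint $L$ via Lemma \ref{lem:anticompl} and then deduce a semiadjunction from the adjunction $F^{\natural}\dashv G^{\natural}$. The only difference is that where the paper simply cites \cite[Theorem 3.5]{Ho93} for the passage from an adjunction of completions to a semiadjunction, you verify that step directly — transporting the unit and counit along the isomorphism $F^{\natural}\cong L$, descending them through Lemma \ref{lem:anticompl}(2), and using the faithfulness of $(-)^{\natural}$ on seminatural transformations to check the triangle identities — and this verification is sound.
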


\begin{proof}
(1) Let $G:\dd\to\cc$ be a functor whose completion $G^\natural:\dd^\natural\to\cc^\natural$ has a left adjoint $L:\cc^\natural\to\dd^\natural$. From Lemma \ref{lem:anticompl}, there exists a semifunctor $F:\cc\rightarrow\dd$ such that $F^\natural \cong L$, hence $F^\natural\dashv G^\natural$. Thus, by \cite[Theorem 3.5]{Ho93} it follows that $(F,G)$ is a semiadjunction.

(2) It is proved similarly.
\end{proof}

In \cite[Definition 1.3]{MW13}, the authors introduced the concept of ``right semiadjoint'' (resp. ``left semiadjoint'') which is a priori unrelated to the one of semiadjunction in the sense we are using here: it consists of functors $F:\cc\rightarrow \dd$ and $G:\dd\rightarrow \cc$ endowed with
natural transformations $\eta :\id_{\cc}\rightarrow GF$ and $%
\epsilon :FG\rightarrow \id_{\dd}$ such that $G\epsilon \circ \eta G=\id_{G}$ (resp. $\epsilon F \circ F\eta =\id_{F}$). The following result essentially shows how to construct a semiadjunction out of a right (left) semiadjoint\footnote{In order to avoid confusion we have not used the expression ``right (left) semiadjoint'' in the statement.}. 

\begin{lem}
\label{lem:rightsemi}Let $F:\cc\rightarrow \dd$\textbf{\ }and%
\textbf{\ }$G:\dd\rightarrow \cc$ be functors endowed with
natural transformations $\eta :\id_{\cc}\rightarrow GF$ and $%
\epsilon :FG\rightarrow \id_{\dd}$.
\begin{enumerate}
  \item If $G\epsilon \circ \eta G=\id_{G}$, then there is a
semifunctor $F^{\prime }:\cc\rightarrow \dd$, that acts as $%
F$ on objects, such that $\left( F^{\prime },G\right) $ is a semiadjunction.
  \item If $\epsilon F \circ F\eta =\id_{F}$, then there is a
semifunctor $G^{\prime }:\dd\rightarrow \cc$, that acts as $%
G$ on objects, such that $\left( F,G^{\prime }\right) $ is a semiadjunction.
\end{enumerate}
\end{lem}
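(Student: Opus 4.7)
\medskip
\noindent\emph{Proof proposal.} I only sketch part (1), as part (2) is obtained by a perfectly dual construction (swap the roles of $F,G$ and of $\eta,\epsilon$). The guiding idea is that under the hypothesis $G\epsilon\circ\eta G=\id_{G}$, the failure of the second triangle identity is controlled by the ``triangle endomorphism'' $e:=\epsilon F\circ F\eta:F\rightarrow F$, and one can hope to repair this failure by defining a semifunctor $F'$ that agrees with $F$ on objects but has $F'(\id_{C})=e_{C}$ by construction.

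First, I would prove that $e_{C}:=\epsilon_{FC}\circ F\eta_{C}$ defines an idempotent natural transformation $e:F\to F$. Naturality is immediate from naturality of $\eta$ and $\epsilon$. For idempotency, using naturality of $\epsilon$ to commute $\epsilon_{FC}$ past $F\eta_{C}$, and naturality of $\eta$ to rewrite $GF\eta_{C}\circ\eta_{C}=\eta_{GFC}\circ\eta_{C}$, the computation reduces to applying the hypothesis $G\epsilon_{FC}\circ\eta_{GFC}=\id_{GFC}$ inside the middle of the expression, collapsing $e_{C}\circ e_{C}$ to $e_{C}$. This step is routine but it is the only place where the assumption is used in an essential way, so it is the conceptual heart of the argument.

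Next, I would define $F':\cc\to\dd$ by $F'C:=FC$ on objects and $F'f:=e_{C'}\circ Ff$ on morphisms $f:C\to C'$. Naturality of $e$ gives $e_{C'}\circ Ff=Ff\circ e_{C}$, from which one easily verifies $F'(g\circ f)=F'g\circ F'f$ by absorbing one copy of $e$ into the other via idempotency. Of course $F'(\id_{C})=e_{C}\neq\id_{FC}$ in general, so $F'$ is a genuine semifunctor. The natural transformations $\eta:\id_{\cc}\to GF=GF'$ and $\epsilon:F'G=FG\to\id_{\dd}$ make sense because $F$ and $F'$ agree on objects; their naturality with respect to $F'$ (rather than $F$) follows by the same kind of bookkeeping that established idempotency of $e$, again invoking $G\epsilon\circ\eta G=\id_{G}$ to kill the inserted $e$'s. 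Seminaturality is automatic since the other functor involved in each case is honest.

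Finally, for the two semi-triangle identities $G\epsilon\circ\eta G=G\id$ and $\epsilon F'\circ F'\eta=F'\id$: the first coincides with the standing hypothesis (as $G$ is a functor, $G\id=\id_{G}$). For the second, unfolding $F'\eta_{C}=e_{GFC}\circ F\eta_{C}$ and $F'\id_{C}=e_{C}$, one has to show
\[
\epsilon_{FC}\circ\epsilon_{FGFC}\circ F\eta_{GFC}\circ F\eta_{C}=\epsilon_{FC}\circ F\eta_{C},
\]
which follows by the same manipulation used in the idempotency computation, namely commuting $\epsilon$'s via naturality and then applying $G\epsilon\circ\eta G=\id_{G}$ to the middle factor. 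The main (very mild) obstacle is just keeping careful track of whether one applies naturality of $\eta$ versus $\epsilon$, and distinguishing $GF\eta$ from $\eta_{GF}$; once the idempotency of $e$ is established, every subsequent verification reduces to the same template.
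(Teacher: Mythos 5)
Your proposal is correct and follows essentially the same route as the paper: it introduces the idempotent $e=\epsilon F\circ F\eta$, defines the semifunctor $F'$ by twisting $F$ on morphisms with $e$ (your $e_{C'}\circ Ff$ equals the paper's $Ff\circ e_C$ by naturality of $e$), keeps $\eta$ and $\epsilon$ as unit and counit, and uses the hypothesis $G\epsilon\circ\eta G=\id_G$ exactly where the paper does, namely to derive $Ge\circ\eta=\eta$ and $\epsilon\circ eG=\epsilon$ and hence the (semi)naturality and the identity $\epsilon F'\circ F'\eta=F'\id$. The only cosmetic difference is that you invoke the general fact that a natural transformation into or out of an honest functor is automatically seminatural, where the paper verifies the condition $\alpha_C\circ F'(\id_C)=\alpha_C$ by hand; both are fine.
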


\begin{proof}
We just prove (1). Set $e:=\epsilon F\circ F\eta :F\rightarrow F$. It is well-known that $e$
is idempotent, see e.g. \cite[Lemma 1.4(2)]{MW13}.
\begin{invisible}
$e\circ e=\epsilon F\circ F\eta \circ \epsilon F\circ F\eta =\epsilon F\circ
\epsilon FGF\circ FGF\eta \circ F\eta =\epsilon F\circ FG\epsilon F\circ
F\eta GF\circ F\eta =\epsilon F\circ F\eta =e.$
\end{invisible}
Let us check that there is a semifunctor $F^{\prime }:\cc%
\rightarrow \dd$ that acts as $F$ on objects and sends a morphism $%
f:X\rightarrow Y$ to $Ff\circ e_{X}$. Given $f:X\rightarrow Y$ and $g:Y\rightarrow Z$ in $\cc$ we have%
\begin{equation*}
F^{\prime }g\circ F^{\prime }f=Fg\circ e_{Y}\circ Ff\circ e_{X}=Fg\circ
Ff\circ e_{X}\circ e_{X}=F\left( g\circ f\right) \circ e_{X}=F^{\prime
}\left( g\circ f\right)
\end{equation*}%
so that $F^{\prime }$ is a semifunctor. Let us check that $\left( F^{\prime
},G,\eta ^{\prime },\epsilon ^{\prime }\right) $ is a semiadjunction where $%
\eta _{C}^{\prime }:=\eta _{C}$ and $\epsilon _{D}^{\prime }=\epsilon _{D}.$
To this aim, we first note that%
\begin{align*}
\epsilon _{X}\circ e_{GX}& =\epsilon _{X}\circ \epsilon _{FGX}\circ F\eta
_{GX}=\epsilon _{X}\circ FG\epsilon _{X}\circ F\eta _{GX}=\epsilon _{X}\circ
F\left( G\epsilon _{X}\circ \eta _{GX}\right) =\epsilon _{X}\circ F\left(
\id_{GX}\right) =\epsilon _{X}, \\
Ge_{X}\circ \eta _{X}& =G\epsilon _{FX}\circ GF\eta _{X}\circ \eta
_{X}=G\epsilon _{FX}\circ \eta _{GFX}\circ \eta _{X}=\left( G\epsilon \circ
\eta G\right) _{FX}\circ \eta _{X}=\id_{GFX}\circ \eta _{X}=\eta
_{X}.
\end{align*}
so that we get the equalities
\begin{equation}  \label{form:e1}
\epsilon \circ eG =\epsilon\qquad\text{and}\qquad Ge\circ \eta =\eta.
\end{equation}

For every object $D$ in $\dd$, we have $\epsilon _{D}^{\prime }\circ
F^{\prime }G\id_{D}=\epsilon _{D}\circ FG\id_{D}=\epsilon
_{D}=\epsilon _{D}^{\prime }$ and for every morphism $f:X\rightarrow Y$ in $%
\dd$, we have
\begin{equation*}
\epsilon _{Y}^{\prime }\circ F^{\prime }Gf=\epsilon _{Y}\circ FGf\circ
e_{GX}=f\circ \epsilon _{X}\circ e_{GX}\overset{(\ref{form:e1})}{=}f\circ
\epsilon _{X}=f\circ \epsilon _{X}^{\prime }
\end{equation*}%
so that we can define the seminatural transformation $\epsilon ^{\prime
}:=\left( \epsilon _{D}\right) _{D\in \dd}:F^{\prime }G\rightarrow
\id_{\dd}$.

For every object $C$ in $\cc$, we have $\eta _{C}^{\prime }\circ
\id_{\cc^{\prime }}\left( \id_{C}\right) =\eta
_{C}^{\prime }\circ \id_{C}=\eta _{C}^{\prime }$ and for every
morphism $f:X\rightarrow Y$ in $\cc$, we have
\begin{equation*}
GF^{\prime }f\circ \eta _{X}^{\prime }=G\left( Ff\circ e_{X}\right) \circ
\eta _{X}=G\left( e_{Y}\circ Ff\right) \circ \eta _{X}=Ge_{Y}\circ GFf\circ
\eta _{X}=Ge_{Y}\circ \eta _{Y}\circ f\overset{(\ref{form:e1})}{=}\eta
_{Y}\circ f=\eta _{Y}^{\prime }\circ f
\end{equation*}%
so that we can define the seminatural transformation $\eta ^{\prime
}:=\left( \eta _{D}\right) _{D\in \dd}:\id_{\cc%
}\rightarrow GF^{\prime }$. We compute%
\begin{equation*}
G\epsilon _{D}^{\prime }\circ \eta _{GD}^{\prime }=G\epsilon _{D}\circ \eta
_{GD}=\id_{G}
\end{equation*}%
and
\begin{equation*}
\epsilon _{F^{\prime }C}^{\prime }\circ F^{\prime }\eta _{C}^{\prime
}=\epsilon _{FC}\circ F^{\prime }\eta _{C}=\epsilon _{FC}\circ F\eta
_{C}\circ e_{C}=e_{C}\circ
e_{C}=e_{C}=F^{\prime }\id_{C}.
\end{equation*}%
Therefore $\left( F^{\prime },G,\eta ^{\prime },\epsilon ^{\prime }\right) $
is a semiadjunction.
\end{proof}

\subsection{Characterization of (co)reflections up to retracts}
Now, we provide a characterization of (co)reflections up to retracts which are semiadjoint functors. It will be applied to the quotient functor $H:\cc\to\cc_e$ in Theorem \ref{thm:H-corefl-utr}.

\begin{prop}
\label{prop:corefupretract}Let $\left( F,G,\eta ,\epsilon \right) $ be a
semiadjunction. Then,\begin{itemize}
\item[(1)] $G$ is a coreflection up to retracts if and only if there is $\nu :GF\rightarrow \id_{\cc}$ such that $\eta \circ \nu =GF\id$ and $\nu \circ \eta =\id_{\id_\cc}$.%\rd{[Provare il viceversa?]}
\item[(2)] $F$ is a reflection up to retracts if and only if there is $\gamma :\id _\dd\rightarrow FG$
such that $\gamma \circ \epsilon =FG\id$ and $\epsilon \circ \gamma =\id_{\id_\dd}$.
\end{itemize}
\end{prop}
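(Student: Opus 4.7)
The plan is to reduce both parts to a statement about the induced adjunction on idempotent completions. Since $(F,G,\eta,\epsilon)$ is a semiadjunction, it induces an adjunction $(F^{\natural},G^{\natural},\eta^{\natural},\epsilon^{\natural})$. By uniqueness of left adjoints up to isomorphism, $G$ is a coreflection up to retracts if and only if $F^{\natural}$ is fully faithful, equivalently $\eta^{\natural}:\id_{\cc^{\natural}}\to G^{\natural}F^{\natural}$ is a natural isomorphism. So for (1) it suffices to show that the existence of $\nu$ with the stated properties is equivalent to the invertibility of $\eta^{\natural}$, and (2) will follow by the dual argument using $\epsilon^{\natural}$.

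For the ``if'' direction of (1), starting from $\nu$, I would define a candidate inverse of $\eta^{\natural}$ by $\mu_{(C,c)}:=c\circ \nu_C$. The morphism condition $\mu_{(C,c)}=c\circ \mu_{(C,c)}\circ GFc$ follows from the naturality of $\nu$ applied to $c:C\to C$ together with $c\circ c=c$, and naturality of $\mu$ in $(C,c)$ is inherited from that of $\nu$. The two triangle identities then unfold directly: at $(C,c)$,
\[
\mu_{(C,c)}\circ \eta^{\natural}_{(C,c)}=c\circ \nu_C\circ \eta_C\circ c=c\circ c=c=\id_{(C,c)},
\]
using $\nu\circ \eta=\id$, while
\[
\eta^{\natural}_{(C,c)}\circ \mu_{(C,c)}=\eta_C\circ c\circ \nu_C=GF(c)\circ \eta_C\circ \nu_C=GF(c)\circ GF(\id_C)=GF(c)=\id_{(GFC,GFc)},
\]
using the naturality of $\eta$ and $\eta\circ \nu=GF\id$.

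For the ``only if'' direction, assume $\eta^{\natural}$ is invertible with inverse $\mu$ and set $\nu:=\upsilon_{\cc}\mu\iota_{\cc}:GF\to \id_{\cc}$. By Lemma \ref{lem:anticompl}(2), $\nu$ is seminatural with $\nu^{\natural}=\mu$. Then $(\nu\circ \eta)^{\natural}=\mu\circ \eta^{\natural}=\id$ and $(\eta\circ \nu)^{\natural}=\eta^{\natural}\circ \mu=\id_{(GF)^{\natural}}=(GF\id)^{\natural}$, and since a seminatural transformation is determined by its completion via $\alpha_C=\alpha^{\natural}_{(C,\id_C)}$, these equalities descend to $\nu\circ \eta=\id_{\id_{\cc}}$ and $\eta\circ \nu=GF\id$. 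The dual argument for (2) produces $\gamma$ out of the inverse of $\epsilon^{\natural}$ and conversely. The main obstacle is purely bookkeeping: one has to keep track of the difference between (semi)natural transformations and morphisms in $\cc^{\natural}$ (and between semifunctor composition and identities); once the correspondence between seminatural transformations and their completions is set up, the verifications are short.
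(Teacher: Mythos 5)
Your proposal is correct and follows essentially the same route as the paper: reduce to invertibility of $\eta^{\natural}$, construct the inverse as $c\circ\nu_C$ in one direction, and use Lemma \ref{lem:anticompl} together with the fact that seminatural transformations are determined by their completions in the other. The computations match the paper's almost verbatim.
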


\begin{proof}
\begin{itemize}
\item[(1)] Assume there is $\nu :GF\rightarrow \id_{\cc}$ such that $\eta \circ \nu =GF\id$ and $\nu \circ \eta =\id_{\id_\cc}$. Let us prove that $\eta ^{\natural }$ is an isomorphism with inverse $\nu
^{\natural }$ defined by $\nu _{\left( C,c\right) }^{\natural }:=c\circ \nu
_{C}$ so that $F^{\natural }$ is fully faithful, i.e. $G$ is a coreflection
up to retracts. Note that $c\circ \left( c\circ \nu _{C}\right) \circ
GFc=c\circ c\circ \nu _{C}\circ GFc=c\circ c\circ c\circ \nu _{C}=c\circ \nu
_{C}$ and hence we get the morphism $\nu _{\left( C,c\right) }^{\natural
}:\left( GFC,GFc\right) \rightarrow \left( C,c\right) $. We compute%
\begin{align*}
\eta _{\left( C,c\right) }^{\natural }\circ \nu _{\left( C,c\right)
}^{\natural } &=\eta _{C}\circ c\circ c\circ \nu _{C}=\eta _{C}\circ c\circ
\nu _{C}=GFc\circ \eta _{C}\circ \nu _{C}=GFc\circ GF\id=GFc=\mathrm{%
Id}_{\left( GFC,GFc\right) }, \\
\nu _{\left( C,c\right) }^{\natural }\circ \eta _{\left( C,c\right)
}^{\natural } &= c\circ \nu _{C}\circ \eta _{C}\circ c=c\circ \id%
_{C}\circ c=c\circ c=c=\id_{\left( C,c\right) }
\end{align*}%
so that $\eta _{\left( C,c\right) }^{\natural }$ is an isomorphism in $%
\cc^{\natural }$. Conversely, assume that $G$ is a coreflection up to retracts. Then $G^\natural$ has a left adjoint $F^{\natural }$ which is fully faithful, so the unit $\eta ^{\natural }:\id _{\cc^\natural}\to G^\natural F^\natural$ of the adjunction $(F^\natural ,G^\natural ,\eta^\natural ,\epsilon^\natural )$ is an isomorphism. %The components of the unit are $\eta^{\natural}_{(C,c)}=\eta_C\circ c :(C,c)\to (GFC, GFc)$.
By Lemma \ref{lem:anticompl}, there exists a seminatural transformation $\nu:GF\to\id _\cc$ such that $\nu^\natural=(\eta ^{\natural })^{-1}$. Thus we have $(\eta\circ\nu )^\natural=\eta^\natural\circ\nu^\natural =\id _{G^\natural F^\natural} =(GF\id  )^{\natural}$ and $(\nu\circ\eta )^\natural=\nu^\natural\circ\eta^\natural = \id _{\id _{\cc^\natural}}=(\id _{\id _\cc})^\natural$, hence by \cite[Lemma 23]{Ho90} it follows that $\eta\circ\nu =GF\id $ and $\nu\circ\eta =\id _{\id _\cc}$, respectively. %=\id _{(\id _{\cc})^\natural}
\item[(2)] The proof follows by the same arguments. \qedhere
\begin{invisible}
We prove that $\epsilon ^{\natural }$ is an isomorphism with inverse $\gamma
^{\natural }$ defined by $\gamma _{\left( D,d\right) }^{\natural }:= \gamma_D\circ d$ so that $G^{\natural }$ is fully faithful i.e. $F$ is a reflection
up to retracts. Since $FGd\circ \left( \gamma_D\circ d\right) \circ d=FGd\circ \gamma_D\circ d \circ d=\gamma_D\circ d\circ d\circ d=\gamma_D\circ d$, we get the morphism $\gamma_{\left( D,d\right) }^{\natural
}:\left( D,d\right) \rightarrow \left( FGD, FGd\right) $. We compute%
\begin{eqnarray*}
\gamma _{\left( D,d\right) }^{\natural }\circ \epsilon _{\left( D,d\right)
}^{\natural } &=&\gamma _{D}\circ d\circ d\circ \epsilon _{D}=\gamma _{D}\circ d\circ
\epsilon _{D}=\gamma_D\circ \epsilon_{D}\circ FGd=FG\id \circ FGd=FGd=\mathrm{%
Id}_{\left( FGD,FGd\right) }, \\
\epsilon _{\left( D,d\right) }^{\natural }\circ \gamma _{\left( D,d\right)
}^{\natural } &=&d\circ \epsilon _{D}\circ \gamma _{D}\circ d=d\circ \id%
_{D}\circ d=d\circ d=d=\id_{\left( D,d\right) }
\end{eqnarray*}%
so that $\epsilon _{\left( D,d\right) }^{\natural }$ is an isomorphism in $%
\dd^{\natural }$. Conversely, assume that $F$ is a reflection up to retracts. Then $F^\natural$ has a right adjoint $G^{\natural }$ which is fully faithful, so the counit $\epsilon ^{\natural }:F^\natural G^\natural\to\id _{\dd^\natural}$ of the adjunction $(F^\natural ,G^\natural ,\eta^\natural ,\epsilon^\natural )$ is an isomorphism with inverse $\gamma^{\natural }:\id _{\dd^\natural}\to F^\natural G^\natural$. %The components of the unit are $\eta^{\natural}_{(C,c)}=\eta_C\circ c :(C,c)\to (GFC, GFc)$.
Moreover, since there exists a seminatural transformation $\gamma:\id _\dd\to FG$ such that $(\gamma )^\natural =\gamma^\natural$, we have $(\gamma\circ\epsilon )^\natural=\gamma^\natural\circ\epsilon^\natural =\id _{F^\natural G^\natural} =(FG\id  )^{\natural}$ and $(\epsilon\circ\gamma )^\natural=\epsilon^\natural\circ\gamma^\natural = \id _{\id _{\dd^\natural}}=(\id _{\id _\dd})^\natural$, hence by \cite[Lemma 23]{Ho90} it follows that $\gamma\circ\epsilon =FG\id $ and $\epsilon\circ\gamma =\id _{\id _\dd}$, respectively.
\end{invisible}
\end{itemize}
\end{proof}

Proposition \ref{prop:corefupretract} allows us to characterize a (co)reflection up to retracts as part of a semiadjunction as follows.

\begin{cor}[Characterization of (co)reflections up to retracts]\label{cor:charactutr} Let $\cc$ and $\dd$ be categories. 
\begin{enumerate}
  \item A functor $G:\dd\to\cc$ is a coreflection up to retracts if and only if it is part of a semiadjunction $(F,G,\eta,\epsilon)$ and there is $\nu :GF\rightarrow \id_{\cc}$ such that $\eta \circ \nu =GF\id$ and $\nu \circ \eta =\id_{\id_\cc}$.
 \item A functor $F:\cc\to\dd$ is a reflection  up to retracts if and only if it is part of a semiadjunction $(F,G,\eta,\epsilon)$ and there is $\gamma :\id _\dd\rightarrow FG$
such that $\gamma \circ \epsilon =FG\id$ and $\epsilon \circ \gamma =\id_{\id_\dd}$.
\end{enumerate}
\end{cor}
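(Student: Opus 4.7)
The plan is to derive the corollary as a direct combination of Proposition \ref{prop:corefupretract} and Lemma \ref{lem:corefsemiadj}, so the work is really just to assemble these pieces. I will treat (1) in detail, and (2) will follow by a dual argument.

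For (1), the ``if'' direction is immediate from Proposition \ref{prop:corefupretract}(1): starting from a semiadjunction $(F,G,\eta,\epsilon)$ together with a $\nu:GF\to\id_\cc$ satisfying $\eta\circ\nu=GF\id$ and $\nu\circ\eta=\id_{\id_\cc}$, that proposition already yields that $G$ is a coreflection up to retracts, with $\eta^\natural$ explicitly inverted by the formula $\nu^\natural_{(C,c)}:=c\circ\nu_C$.

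For the ``only if'' direction, I start with a coreflection up to retracts $G:\dd\to\cc$. By definition, $G^\natural:\dd^\natural\to\cc^\natural$ is a coreflection, and in particular has a left adjoint. Lemma \ref{lem:corefsemiadj}(1) then provides a semifunctor $F:\cc\to\dd$ making $(F,G)$ into a semiadjunction $(F,G,\eta,\epsilon)$. Finally I invoke Proposition \ref{prop:corefupretract}(1) again, this time in its ``only if'' direction applied to this semiadjunction, to extract the required seminatural transformation $\nu:GF\to\id_\cc$ satisfying the two identities.

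Part (2) is dual and proceeds along exactly the same lines, using Lemma \ref{lem:corefsemiadj}(2) to build a semiadjunction $(F,G)$ out of a reflection up to retracts $F$, and then Proposition \ref{prop:corefupretract}(2) to produce $\gamma:\id_\dd\to FG$ with the analogous identities. There is no real obstacle here, since all the substantive work (the existence of the semiadjoint, the transport of the invertibility of $\eta^\natural$ back to a seminatural $\nu$ on the uncompleted categories, and the verification of the two coherence identities) has been carried out in the preceding lemma and proposition; the corollary is just a repackaging of those results into a characterization statement, so the entire proof amounts to citing Lemma \ref{lem:corefsemiadj} and Proposition \ref{prop:corefupretract} in the appropriate order.
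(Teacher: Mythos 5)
Your proposal is correct and follows essentially the same route as the paper: the ``if'' direction is Proposition \ref{prop:corefupretract}, and for the ``only if'' direction one notes that the completion $G^\natural$ of a coreflection up to retracts has a (fully faithful) left adjoint, invokes Lemma \ref{lem:corefsemiadj} to realize $G$ as part of a semiadjunction, and then applies Proposition \ref{prop:corefupretract} once more to obtain $\nu$. Part (2) is handled dually in both arguments, so there is nothing to add.
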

\begin{proof} We prove (1), the proof of (2) being similar. In view of  Proposition \ref{prop:corefupretract}, it suffices to check that a coreflection up to retracts $G:\dd\to\cc$ is always part of a semiadjunction $(F,G,\eta,\epsilon)$. In fact for such a $G$, the completion $G^\natural$ has a fully faithful left adjoint and we conclude by Lemma \ref{lem:corefsemiadj}.
\end{proof}

The following result is a consequence of Corollary \ref{cor:charactutr}.

\begin{cor}\label{cor:surjutr-coreflutr}
Any (co)reflection up to retracts is surjective up to retracts.
\end{cor}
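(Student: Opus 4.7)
The plan is to read off both halves of the statement directly from the characterization in Corollary \ref{cor:charactutr}, whose whole point is to produce sections/retractions already at the level of the unit or counit (rather than only after passing to the idempotent completion). So essentially no calculation is needed; the work has been done in the semiadjunction machinery built up in Subsection \ref{sub:semiadj}.

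For the coreflection case, suppose $G:\dd\to\cc$ is a coreflection up to retracts. By Corollary \ref{cor:charactutr}(1), $G$ is part of a semiadjunction $(F,G,\eta,\epsilon)$ together with a seminatural transformation $\nu:GF\to \id_{\cc}$ satisfying, in particular, $\nu\circ\eta=\id_{\id_{\cc}}$. Evaluated at an object $C\in\cc$, this equality reads $\nu_C\circ\eta_C=\id_C$, which exhibits $C$ as a retract of $GFC=G(FC)$ via the pair $(\eta_C,\nu_C)$. Since $FC\in\dd$, this shows that every object of $\cc$ is a retract of an object in the image of $G$, i.e.\ $G$ is surjective up to retracts.

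The reflection case is dual. If $F:\cc\to\dd$ is a reflection up to retracts, then by Corollary \ref{cor:charactutr}(2), $F$ is part of a semiadjunction $(F,G,\eta,\epsilon)$ with a seminatural $\gamma:\id_{\dd}\to FG$ satisfying $\epsilon\circ\gamma=\id_{\id_{\dd}}$. At an object $D\in\dd$ this gives $\epsilon_D\circ\gamma_D=\id_D$, so $D$ is a retract of $FGD=F(GD)$; thus every object of $\dd$ is a retract of an object in the image of $F$, and $F$ is surjective up to retracts.

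There is no real obstacle here: once Corollary \ref{cor:charactutr} is in hand, the relevant section is literally one component of the semiadjunction data. The only thing worth a brief remark is that the identity $\nu\circ\eta=\id$ (respectively $\epsilon\circ\gamma=\id$) provided by that corollary is stated between seminatural transformations in the base category $\cc$ (respectively $\dd$), so we do not need to pass through the idempotent completion to read off the retraction — it holds on the nose in $\cc$ (respectively $\dd$), which is exactly what the definition of ``surjective up to retracts'' requires.
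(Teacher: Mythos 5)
Your proof is correct and follows exactly the same route as the paper's: invoke Corollary \ref{cor:charactutr} to obtain the semiadjunction together with $\nu$ (resp.\ $\gamma$), and read off the retraction $\nu_C\circ\eta_C=\id_C$ (resp.\ $\epsilon_D\circ\gamma_D=\id_D$) componentwise. Your closing remark that the splitting already holds in $\cc$ (resp.\ $\dd$) rather than only in the completion is accurate and is precisely what makes the argument work.
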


\begin{proof}
Let $G:\dd\rightarrow \cc$ be a coreflection up to retracts. By Corollary \ref{cor:charactutr} (1), $G$ is part of a semiadjunction $\left( F,G,\eta ,\epsilon \right)$ and there is $\nu :GF\rightarrow \id_{\cc}$ such that $\nu \circ \eta =\id_{\id_\cc}$. Given
an object $C$ in $\cc$ we get $\nu_{C} \circ \eta_{C} =\id_{C}$ and hence $C$ is a retract of $GFC$, i.e. $G$ is surjective up to retracts. %\rd{\sout{(actually also $GF$ is surjective up to retracts)}[Mi sembra pleonastico]}. 
Similarly, any reflection up to retracts is surjective up to retracts by Corollary \ref{cor:charactutr} (2). \begin{invisible} Let $F:\cc\rightarrow \dd$ be a reflection up to retracts. By Corollary \ref{cor:charactutr} (2), it is part of a semiadjunction $(F,G,\eta,\epsilon)$ and there is $\gamma :\id _\dd\rightarrow FG$ such that $\epsilon \circ \gamma =\id_{\id_\dd}$. Given an object $D$ in $\dd$ we get that $\epsilon_{D} \circ \gamma_{D} =\id_{D}$  and hence $D$ is a retract of $FGD$ i.e. $F$ is surjective up to retracts.
\end{invisible}	
%
%Since $F^ {\natural }$ has a right adjoint, by Lemma \ref{lem:corefsemiadj}, there is a semifunctor $G:\dd\rightarrow \cc$ such that $\left( F,G,\eta ,\epsilon \right) $ is a semi-adjunction. By Proposition \ref{prop:corefupretract} (2), there is $\gamma :\id_{\dd}\rightarrow FG$
%such that $\epsilon \circ \gamma =\id_{\id_{\dd}}.$
%which is an object in $\mathrm{Im}\left( F\right) $
\end{proof}

%\begin{invisible}
%\begin{cor}\label{H-(co)refluptoretr}
%Let $\cc$ be a category and let $e:\id_{\cc%
%}\rightarrow \id_{\cc}$ be a split-idempotent natural
%transformation (e.g. an idempotent natural transformation when $\cc$ is idempotent complete). Then $H:\cc\rightarrow \cc_{e}$ is a (co)reflection up to retracts.
%\end{cor}
%\proof
%By Proposition \ref{prop:Hcorefl}, $H$ is a Frobenius functor with a fully faithful left adjoint, hence it is a coreflection and a reflection, so by Proposition \ref{prop:corefl} it is also a coreflection up to retracts and a reflection up to retracts.
%\endproof
%\begin{cor}
%Let $\left( F\dashv G:\dd\to\cc,\eta ,\epsilon \right) $ be an adjunction with $G$ semiseparable and $\dd$ idempotent complete. Then, the comparison functor $K_{GF}:\dd\to\cc_{GF}$ is a (co)reflection up to retracts. Moreover, if $G$ is separable, then $K_{GF}$ is an equivalence up to retracts.
%\end{cor}
%\proof
%By Proposition \ref{prop:CoidEil} and Proposition \ref{prop:corefl}, it follows that $K_{GF}$ is (co)reflection up to retracts, that is, $K_{GF}^{\natural }$ is a (co)reflection. Moreover, if $G$ is separable, then $K_{GF}$ is fully faithful by Corollary \ref{cor:sepmonad}, and thus by Corollary \ref{cor:sep-nat-ff} (3) $K_{GF}^{\natural }$ is fully faithful. So it follows that $K_{GF}$ is an equivalence up to retracts.
%\endproof
%\end{invisible}

Now we give further conditions for a functor to be a (co)reflection up to retracts.  We will apply it in the next section to study the (co)comparison functor attached to an adjunction.

\begin{prop}
\label{prop:trinat}Let $F:\cc\rightarrow \dd$\textbf{\ }and%
\textbf{\ }$G:\dd\rightarrow \cc$ be functors endowed with
natural transformations $\eta :\id_{\cc}\rightarrow GF$ and $%
\epsilon :FG\rightarrow \id_{\dd}$.
\begin{enumerate}
  \item If there is a natural transformation $\nu :GF\rightarrow \mathrm{Id%
}_{\cc}$ such that $\nu \circ \eta =\id$ and $\nu
G=G\epsilon $, then $G$ is a coreflection up to retracts.
  \item If there is a natural transformation $\gamma :\mathrm{Id%
}_{\dd}\to FG$ such that $\epsilon \circ \gamma =\id$ and $\gamma
F=F\eta ,$ then $F$ is a reflection up to retracts.
\end{enumerate}
\end{prop}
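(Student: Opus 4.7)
The plan is to derive both statements from the characterization of (co)reflections up to retracts via semiadjunctions given by Corollary \ref{cor:charactutr}, equivalently, by Proposition \ref{prop:corefupretract}. Thus, for (1), the goal becomes to exhibit a semiadjunction with $G$ on the right together with a morphism $\nu:GF'\to \id_{\cc}$ (with $F'$ possibly different from $F$, but only as a semifunctor) satisfying $\eta\circ\nu=GF'\id$ and $\nu\circ\eta=\id_{\id_{\cc}}$.

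The first step is to build such a semiadjunction by invoking Lemma \ref{lem:rightsemi}(1). The hypotheses $\nu\circ\eta=\id$ and $\nu G=G\epsilon$ combine to give $G\epsilon\circ\eta G=\nu G\circ \eta G=(\nu\circ\eta)G=\id_{G}$, which is exactly the right-semiadjoint condition required by that lemma. Accordingly we obtain a semifunctor $F':\cc\to\dd$ that agrees with $F$ on objects and sends a morphism $f$ to $Ff\circ e_{-}$, where $e:=\epsilon F\circ F\eta:F\to F$ is the associated idempotent, so that $(F',G,\eta,\epsilon)$ is a semiadjunction.

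The key calculation is then to verify the two conditions of Proposition \ref{prop:corefupretract}(1) for $\nu:GF'\to \id_{\cc}$. The identity $\nu\circ\eta=\id_{\id_{\cc}}$ is given. For $\eta\circ\nu=GF'\id$, one computes $GF'\id_{C}=G(e_{C})=G\epsilon_{FC}\circ GF\eta_{C}$; using $\nu G=G\epsilon$ to rewrite $G\epsilon_{FC}=\nu_{GFC}$, and then naturality of $\nu:GF\to\id_{\cc}$ applied to the morphism $\eta_{C}:C\to GFC$, one gets $\nu_{GFC}\circ GF\eta_{C}=\eta_{C}\circ\nu_{C}$, as required. Along the way one should also check that $\nu:GF'\to\id_{\cc}$ is seminatural, which reduces to the equality $\nu_{C}\circ Ge_{C}=\nu_{C}$, an immediate consequence of the preceding computation together with $\nu\circ\eta=\id$. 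With these in hand, Proposition \ref{prop:corefupretract}(1) yields that $G$ is a coreflection up to retracts.

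For part (2), I would run the dual argument: the hypotheses $\epsilon\circ\gamma=\id$ and $\gamma F=F\eta$ give $\epsilon F\circ F\eta=\epsilon F\circ\gamma F=(\epsilon\circ\gamma)F=\id_{F}$, so Lemma \ref{lem:rightsemi}(2) produces a semifunctor $G':\dd\to\cc$ that agrees with $G$ on objects and makes $(F,G',\eta,\epsilon)$ a semiadjunction; naturality of $\gamma$ together with $\gamma F=F\eta$ then delivers the two conditions of Proposition \ref{prop:corefupretract}(2). The main obstacle is the bookkeeping around the passage from the functor $F$ (respectively $G$) to the semifunctor $F'$ (respectively $G'$); once one recognises that the role of the hypothesis $\nu G=G\epsilon$ is precisely to convert the idempotent $Ge$ into the composite $\eta\circ\nu$, and dually for $\gamma F=F\eta$, the identification of $GF'\id$ (resp.\ $FG'\id$) with $\eta\circ\nu$ (resp.\ $\gamma\circ\epsilon$) is forced and everything falls into place.
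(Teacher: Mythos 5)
Your proposal is correct and follows essentially the same route as the paper's proof: reduce to Lemma \ref{lem:rightsemi} via the identity $G\epsilon\circ\eta G=(\nu\circ\eta)G=\id_G$, then verify the two conditions of Proposition \ref{prop:corefupretract} using $Ge_C=\eta_C\circ\nu_C$ (obtained from $\nu G=G\epsilon$ and naturality of $\nu$). The only cosmetic difference is that the paper works with $\nu'_C:=\nu_C\circ Ge_C$, which your observation $\nu_C\circ Ge_C=\nu_C$ shows coincides with $\nu$ itself.
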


\begin{proof} We just prove (1). Given $\nu $ as in the statement, note that $G\epsilon \circ \eta G=\nu
G\circ \eta G=\left( \nu \circ \eta \right) G=\id_{G}$ so that we
are in the setting of Lemma \ref{lem:rightsemi}.  For any $C$ in $\cc$ define $\nu _{C}^{\prime }:=\nu _{C}\circ Ge_{C}$, where $e:=\epsilon F\circ F\eta$. Then $\nu
_{C}^{\prime }\circ GF^{\prime }\left( \id_{C}\right) =\nu _{C}\circ
Ge_{C}\circ Ge_{C}=\nu _{C}\circ Ge_{C}=\nu _{C}^{\prime }$ and for every
morphism $f:X\rightarrow Y$ in $\cc$, we have%
\begin{equation*}
\nu _{Y}^{\prime }\circ GF^{\prime }f=\nu _{Y}\circ Ge_{Y}\circ GFf\circ
Ge_{X}=\nu _{Y}\circ GFf\circ Ge_{X}\circ Ge_{X}=f\circ \nu _{X}\circ
Ge_{X}=f\circ \nu _{X}^{\prime }
\end{equation*}%
so that we can define the seminatural transformation $\nu ^{\prime
}:=\left( \nu _{C}^{\prime }\right) _{C\in \cc}:GF^{\prime
}\rightarrow \id_{\cc}$. We compute%
\begin{align*}
\nu _{C}^{\prime }\circ \eta _{C}^{\prime } &=\nu _{C}\circ Ge_{C}\circ
\eta _{C}\overset{(\ref{form:e1})}{=}\nu _{C}\circ \eta _{C}=\id_{C},
\\
\eta _{C}^{\prime }\circ \nu _{C}^{\prime } &=\eta _{C}\circ \nu _{C}\circ
Ge_{C}\overset{\text{nat.}\nu }{=}\nu _{GFC}\circ GF\eta _{C}\circ Ge_{C} \\
&=G\epsilon _{FC}\circ GF\eta _{C}\circ Ge_{C}=Ge_{C}\circ
Ge_{C}=GF^{\prime }\id_{C}.
\end{align*}%
By Proposition \ref{prop:corefupretract}, we conclude.
\end{proof}

\section{Quotient and (co)comparison functor}\label{se:quotcom}
This section collects the fall-outs of the results we achieved so far. First we prove that the quotient functor $H:\cc\rightarrow \cc_{e}$ onto the coidentifier category is always a coreflection up to retracts. Then also  the (co)comparison functor attached to an adjunction whose associated (co)monad is (co)separable is shown to be a coreflection (reflection) up to retracts. This result allows  to characterize a semiseparable right (left) adjoint in terms of (co)separability of the associated (co)monad and the requirement that the (co)comparison functor is a bireflection up to retracts. To complete the picture, we study the (semi)separability of a pair of functors whose source categories are not idempotent complete, namely the free induction functor and the free restriction of scalars functor.

In Subsection \ref{sub:compfact} we compare the two canonical factorizations we have attached to a semiseparable right adjoint $G:\dd\to\cc$, namely the one through the coidentifier category  and the one through the comparison functor, showing they are connected by an equivalence up to retracts. 
% We also provide an alternative approach where we first prove this connecting functor is an equivalence up to retracts and only then we deduce that the comparison functor is a coreflection up to retracts.

In Subsection \ref{sub:kleisli}, we show that in presence of a separable monad, the associated Kleisli category and Eilenberg-Moore category have  equivalent idempotent completions. Moreover, given a semiseparable right adjoint $G:\dd\to\cc$ these idempotent completions result to be equivalent to the idempotent completion of $\dd_e$, where $e$ is the idempotent natural transformation associated to $G$.

In Subsection \ref{sub:pretriang} we apply the foregoing achievements to  obtain a semi-analogue of a result due to P. Balmer concerning pre-triangulated categories. Finally, we provide conditions for the Eilenberg-Moore category $\cc_{GF}$ to inherit the pre-triangulation from the base category $\cc$.

\medskip
\noindent \textbf{The quotient functor.} We start by proving that the quotient functor $H:\cc\rightarrow \cc_{e}$ of Subsection \ref{sub:coidentifier} is a coreflection up to retracts. Since we know that $H$ is naturally full (as recalled in Subsection \ref{sub:coidentifier}), it reveals to be indeed a bireflection up to retracts.

\begin{thm}\label{thm:H-corefl-utr}
Let $\cc$ be a category, let $e:\id_{\cc}\rightarrow \id_{\cc}$ be an idempotent natural transformation. Then, the quotient functor $H:\cc\rightarrow \cc_{e}$ is a coreflection up to retracts whence a bireflection up to retracts.
\end{thm}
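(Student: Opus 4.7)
The plan is to apply the characterization of coreflections up to retracts given in Proposition \ref{prop:corefupretract}(1), by exhibiting an explicit semifunctor $F:\cc_e\to\cc$ paired with $H$ in a semiadjunction, and then to upgrade the conclusion to a bireflection up to retracts via Lemma \ref{lem:trivfactsutr}(3) using the already established natural fullness of $H$.

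First I would construct the candidate semifunctor $F:\cc_e\to\cc$ by setting $FA:=A$ on objects and $F(\overline{f}):=e_B\circ f$ for $f:A\to B$ in $\cc$. This is well defined, since $\overline{f}=\overline{g}$ means precisely $e_B\circ f=e_B\circ g$, and it preserves composition because for $f:A\to B$ and $g:B\to C$ one has $F(\overline{g})\circ F(\overline{f})=e_C\circ g\circ e_B\circ f=e_C\circ g\circ f=F(\overline{g\circ f})$ by naturality of $e$. Note $F(\id_A)=e_A\neq \id_A$ in general, so $F$ is only a semifunctor.

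Next I would assemble the semiadjunction $(F,H,\eta,\epsilon)$. On the one hand $HF=\id_{\cc_e}$ genuinely as a functor: on morphisms $HF(\overline{f})=\overline{e_B\circ f}=\overline{f}$ (since $e_B$ is idempotent), and on identities $HF(\id_A)=\overline{e_A}=\overline{\id_A}=\id_A$ in $\cc_e$. This lets me take $\eta:=\id_{\id_{\cc_e}}$. On the other hand I would set $\epsilon_A:=e_A:FH(A)=A\to A$; naturality of $\epsilon$ is naturality of $e$, and the two semiadjunction identities reduce to $H(e_A)=\overline{\id_A}=\id_A$ (yielding $H\epsilon\circ \eta H=H\id$) and to the calculation $\epsilon_{FA}\circ F\eta_A=e_A\circ e_A=e_A=F\id_A$.

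Finally I would apply Proposition \ref{prop:corefupretract}(1) with $\nu:=\id_{\id_{\cc_e}}:HF\to\id_{\cc_e}$. Since $\eta=\id$ and $HF\id_A=H(e_A)=\id_A$, both required equalities $\eta\circ\nu=HF\id$ and $\nu\circ\eta=\id_{\id_{\cc_e}}$ hold tautologically, hence $H$ is a coreflection up to retracts. Because $H$ is naturally full (as recalled in Subsection \ref{sub:coidentifier}), Lemma \ref{lem:trivfactsutr}(3) promotes this to a bireflection up to retracts, concluding the proof. The main delicate point is the bookkeeping in the semifunctor setting — specifically keeping track of the distinction between $\id_{HF}$ and the transformation $HF\id$ when verifying the conditions of Proposition \ref{prop:corefupretract}, which here happen to coincide precisely because $H$ coequalizes $e_A$ and $\id_A$.
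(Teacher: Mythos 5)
Your proposal is correct and follows essentially the same route as the paper's proof: you construct the same semifunctor $L=F:\cc_e\to\cc$ (identity on objects, $\overline{f}\mapsto e_B\circ f$), exhibit the same semiadjunction with unit $\overline{\id}$ and counit $e$, verify $HF=\id_{\cc_e}$ so that $\nu=\id$ satisfies the hypotheses of Proposition \ref{prop:corefupretract}(1), and then upgrade to a bireflection up to retracts via natural fullness of $H$ and Lemma \ref{lem:trivfactsutr}(3). All the verifications (well-definedness, preservation of composition via naturality and idempotence of $e$, and the two semiadjunction identities) match the paper's computations.
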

\proof
Define the semifunctor $L:\cc_e\to\cc$  as the identity on objects and by $(\bar{f}:X\to Y)\mapsto (e_Y\circ f:X\to Y)$ on morphisms. Note that it is really a semifunctor as $L\overline{\id }_X =e_X\circ\id_X=e_X\neq \id _{LX}$ in general. Moreover, it is well-defined as $\bar{f}=\bar{g}$ if and only if $e_Y\circ f =e_Y\circ g$. Now we show that $(L,H)$ is a semiadjunction with unit $\eta : \id_{\cc_e}\to HL$, $\eta_X =\overline{\id }_X : X\to HLX=X$, and counit $\epsilon : LH\to \id _{\cc}$, $\epsilon_Y:=e_Y : LHY=Y\to Y$. First, observe that $\eta$ and $\epsilon$ are seminatural transformations. Indeed, for every $\bar{f}:X\to Y$ in $\cc_e$, we have $HL\bar{f}\circ\eta_X = H(e_Y\circ f)\circ \overline{\id }_X = He_Y\circ Hf\circ H\id_X =\id_{HY}\circ Hf\circ\id_{HX}=\overline{\id}_Y\circ \bar{f}=\eta_Y\circ \bar{f}$, hence in particular $HL\overline{\id}_X\circ\eta_X =\eta_X\circ\overline{\id}_X=\eta_X$, thus $\eta$ is a seminatural transformation. The same holds for $\epsilon$, as $\epsilon_Y\circ LHf =e_Y\circ L\bar{f}=e_Y\circ e_Y\circ f=e_Y\circ f=f\circ e_X=f\circ \epsilon_X$ and in particular $\epsilon_Y\circ LH\id_Y=\id_Y\circ\epsilon_Y=\epsilon_Y$. Moreover, for every $X\in\cc$ and $Y\in\cc_e$ we have the identities $\epsilon_{LX}\circ L\eta_X =e_{LX}\circ L\overline{\id}_X=e_X\circ L\overline{\id}_X =e_X\circ e_X\circ\id_X=e_X\circ\id_X=L\overline{\id}_X$ and $H\epsilon_Y\circ\eta_{HY}=He_Y\circ\id_{HY}=He_Y=\id_{HY}=H\id_Y$. So $(L,H,\eta ,\epsilon )$ is a semiadjunction. Since for every object $X\in\cc_e$, $HL(X)=X$, and for every morphism $\bar{f}$ in $\cc_e$, $HL\bar{f}=H(e_Y\circ f)=He_Y\circ Hf=\id_{HY}\circ\bar{f}=\bar{f}$, we have $HL=\id_{\cc_e}$, and thus $\eta =\id_{\id_{\cc_e}}$, hence there exists $\nu=\id_{\id_{\cc_e}}:HL\to\id_{\cc_e}$ such that $\eta \circ \nu =\id_{\id_{\cc_e}}=HL\id$ and $\nu \circ \eta =\id_{\id_{\cc_e}}$. By Proposition \ref{prop:corefupretract} $H:\cc\rightarrow \cc_{e}$ is a coreflection up to retracts. Since $H$ is also naturally full, then, by Lemma \ref{lem:trivfactsutr}, $H$ is bireflection up to retracts.
\endproof

\begin{rmk}\label{rmk:Hnotbiref}
The functor $H:\cc\rightarrow \cc_{e}$ is a bireflection if and only if the idempotent natural transformation $e:\id_{\cc}\rightarrow \id_{\cc}$ splits, see \cite[Proposition 2.27]{AB22}. Thus, in general it is a bireflection up to retracts but not a bireflection.
\end{rmk}

\begin{es}\label{exa:proj} Let $R$ be a ring and let $R$-$\mathrm{Mod}$ be the category of left $R$-modules. Denote by $R\text{-}\mathrm{Mod}_f$ and $R\text{-}\mathrm{Proj}$ the full subcategories of $R$-$\mathrm{Mod}$ whose objects are free left $R$-modules and projective left $R$-modules, respectively. 
%\rd{\sout{Recall e.g. from \text{\cite[Section 17]{AF92}} that a left $R$-module $P$ is \emph{projective} if for any surjective $R$-linear map $f:M\to N$ and for every $R$-linear map $g:P\to N$ there is a $R$-linear map $h:P\to M$ such that $g=f\circ h$. Any free module is projective.}
%\sout{See also \text{\cite[Example 4.2]{BorDej86}} for the finitely generated projective case.} [Nel libro di Karoubi si fa (solo) il caso finitamente generato.]
Let $\Psi:R\text{-}\mathrm{Mod}_f\to R\text{-}\mathrm{Proj}$ be the inclusion functor. It is an equivalence up to retracts as it is fully faithful and any projective module is a retract of a free module, cf. Lemma \ref{lem:trivfactsutr}\,(6). As a consequence, by \cite[Theorem 6.12, page 30]{Kar78}, the functor $\Psi$ induces an equivalence $\Psi': R\text{-}\mathrm{Mod}_f^\natural\to R\text{-}\mathrm{Proj},\,(F,e)\mapsto \mathrm{Im}(e)$. This fact is well-known and, in the finitely generated case, it is written explicitly in \cite[Theorem 6.16]{Kar78}. 
\begin{invisible}
 However, since we did not find a proof of this fact in the general case in the literature, we include it here for the reader's sake.
Explicitly, let $\Psi: \cc^\natural\to R\text{-}\mathrm{Proj}$ be the functor defined by $(F,e)\mapsto \mathrm{Im}(e)$, where $F$ is a free left $R$-module and $e:F\to F$ is an idempotent (which splits as $e=i\circ p$, where $p:F\to\mathrm{Im}(e)$ is the canonical projection and the inclusion map $i:\mathrm{Im}(e)\to F$ is the induced section, i.e. $p\circ i=\id_{\mathrm{Im}(e)}$); moreover, $\Psi$ assigns to a given morphism $f:(F,e)\to (F',e')$ in $\cc^\natural$ the composite morphism $\Psi(f)=p'\circ f\circ i:\mathrm{Im}(e)\to\mathrm{Im}(e')$ of projective left $R$-modules, where the idempotent arrow $e':F'\to F'$ splits as the canonical projection $p':F'\to\mathrm{Im}(e')$ followed by the inclusion $i':\mathrm{Im}(e')\to F'$. It results that $\Psi$ is an equivalence of categories. Indeed, given a morphism $h:\mathrm{Im}(e)\to\mathrm{Im}(e')$, we set $f:=i'hp$. Then $e'fe=i'p'i'hpip=i'hp=f$ so that we get a morphism $f:(F,e)\to (F',e')$ in $\cc^\natural$. Moreover $h=p'i'hpi=p'fi=\Psi(f)$ so that $\Psi$ is full. If $f,g:(F,e)\to (F',e')$ are such that $\Psi(f)=\Psi(g)$, then $p'fi=p'gi$ and hence $f=e'fe=i'p'fip=i'p'gip=e'ge=g$ so that $\Psi$ is faithful. Moreover given $P$ projective, the canonical projection $\pi:R^{(P)}\to P,\; (r_p)_{p\in P}\mapsto \sum_{p\in P} r_p p$ splits via some morphism $\sigma :P\to R^{(P)}$ as $P$ is projective. Then $\Psi(R^{(P)},\sigma \pi)=\mathrm{Im}(\sigma \pi)=\mathrm{Im}(\sigma )\cong P$ so that $\Psi$ is essentially surjective. This proves that $\Psi$ is effectively an equivalence of categories, cf. \cite[Theorem 1, page 93]{Mac98}.   
\end{invisible}

Now set $\cc:=R\text{-}\mathrm{Mod}_f$. Given a central idempotent element $z\in R$, with $z\neq 0,1$, define the idempotent natural transformation $e:\id_{\cc}\rightarrow \id_{\cc}$ by setting $e_M:M\to M,m\mapsto zm$, for every free left $R$-module $M$. If $e$ splitted, then $e_R:R\to R$ would split in $\cc$ and thus $zR=\im(e_R)$ would be a free $R$-module.
\begin{invisible}
If $e_R$ splits in $\cc$, then there are morphisms of $R$-modules $p:R\to X,i:X\to R$, with $X\in\cc$, such that $e_R=i\circ p$ and $p\circ i=\id_X$. If we denote by $j:\im(e_R)\to R$ the canonical injection, we get that $p\circ j:\im(e_R)\to X$ is an isomorphism with inverse $X\to \im(e_R),x\mapsto i(x)$ (each $x\in X$ can be written as $p(r)$ for some $r$ so that $i(x)=i(p(r))=e_R(r)\in \im(e_R)$). Thus $\im(e_R)\cong X\in\cc$.
\end{invisible}
Since $0\neq z\in zR$, we have $zR\neq 0$ and it is known that a nonzero free module is faithful, i.e. it has trivial annihilator.
\begin{invisible}
If $M\neq 0$ is a free module, we have a non zero basis element $m\in M$. Given $r\in \mathrm{Ann}_R(M)$, we get $rm=0$. By linear independence of $m$ we obtain $r=0$. Thus $\mathrm{Ann}_R(M)=0.$
\end{invisible}
Hence $1-z\in\mathrm{Ann}_R(zR)=0$ and so $z=1$, a contradiction.
Therefore $e$ does not split and hence $H:\cc\rightarrow \cc_{e}$ is a bireflection up to retracts but not a bireflection in view of Remark \ref{rmk:Hnotbiref}. For example, take $R=\RR\times\RR$ and $z=(1,0)$.
\end{es}

\noindent \textbf{The (co)comparison functor.}
Now we move our attention to the (co)comparison functor attached to an adjunction.
\begin{thm}\label{thm:monutr}
Let $F\dashv G:\dd\rightarrow \cc$ be an adjunction with unit $\eta $ and counit $\epsilon $.
\begin{enumerate}
  \item If the monad $\left( GF,G\epsilon F,\eta \right) $ is separable,
then the comparison functor $K_{GF}:\dd\rightarrow \cc_{GF}$ is a coreflection up to retracts.
  \item If the comonad $(FG, F\eta G, \epsilon)$ is coseparable, then the cocomparison functor $K^{FG}:\cc\to \dd^{FG}$ is a reflection up to retracts.
\end{enumerate}
\end{thm}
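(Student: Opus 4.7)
The plan is to deduce (1) from Proposition \ref{prop:trinat}(1), applied to $K_{GF}$ in the role of the ``$G$'' of that proposition. The natural companion is $F_{0}:=F\circ U_{GF}:\cc_{GF}\to \dd$, because then $K_{GF}\circ F_{0}=V_{GF}\circ U_{GF}$ and $F_{0}\circ K_{GF}=FG$. Consequently, the counit $\epsilon :FG\to \id_{\dd}$ of $F\dashv G$ is already a natural transformation $F_{0}K_{GF}\to \id_{\dd}$, and the counit $\beta :V_{GF}U_{GF}\to \id_{\cc_{GF}}$ of the free--forgetful adjunction $V_{GF}\dashv U_{GF}$ serves as the candidate retraction $\nu :K_{GF}F_{0}\to \id_{\cc_{GF}}$ required by the proposition.

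To produce a matching unit $\eta _{0}:\id_{\cc_{GF}}\to V_{GF}U_{GF}$, I exploit that separability of the monad $GF$ is equivalent to separability of the forgetful functor $U_{GF}$, as recorded in Theorem \ref{thm:ssepMonad}. Choose a natural transformation $\p$ with $\p\circ \f^{U_{GF}}=\id$, and set $(\eta_{0})_{(X,\mu)}:=\p_{(X,\mu),\,V_{GF}U_{GF}(X,\mu)}(\eta _{X})$, where $\eta $ is the unit of $F\dashv G$; naturality of $\eta_{0}$ follows from naturality of $\p$ and of $\eta$. The two hypotheses of Proposition \ref{prop:trinat}(1) are then immediate. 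Naturality of $\p$ yields
\[\beta_{(X,\mu)}\circ \p(\eta _{X})=\p\bigl(U_{GF}\beta_{(X,\mu)}\circ \eta _{X}\bigr)=\p(\mu \circ \eta _{X})=\p(\id_{X})=\id_{(X,\mu)},\]
so $\nu \circ \eta_{0}=\id$; and a short direct computation shows that both $\beta_{K_{GF}(D)}$ and $K_{GF}(\epsilon_{D})$ have $G\epsilon_{D}$ as their image under $U_{GF}$, so faithfulness of $U_{GF}$ (ensured by its separability) yields $\nu K_{GF}=K_{GF}\epsilon $. Proposition \ref{prop:trinat}(1) then gives that $K_{GF}$ is a coreflection up to retracts.

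Part (2) is entirely dual: set $G_{0}:=G\circ U^{FG}:\dd^{FG}\to \cc$, use the unit $\alpha $ of $U^{FG}\dashv V^{FG}$ (whose component $\alpha_{(X,\rho)}$ has underlying map $\rho$) in the role of the ``$\gamma$'' of Proposition \ref{prop:trinat}(2), and build $\tilde\epsilon :V^{FG}U^{FG}\to \id_{\dd^{FG}}$ from a separability datum for $U^{FG}$ by the dual formula $\tilde\epsilon_{(X,\rho)}:=\p(\epsilon_{X})$. The identity $\tilde\epsilon \circ \alpha =\id$ follows from the triangle identity $\epsilon_{X}\circ \rho =\id_{X}$ for $U^{FG}\dashv V^{FG}$, exactly as in part (1), and $\gamma F=F\eta $ is checked after applying $U^{FG}$. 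The main obstacle I anticipate is choosing the right candidate for the semiadjoint, namely $F\circ U_{GF}$ (resp.\ $G\circ U^{FG}$), and locating separability at the right step; once these choices are made, every remaining verification reduces to unit/counit triangle identities combined with naturality of $\p$.
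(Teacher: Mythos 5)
Your proposal is correct and follows essentially the same route as the paper's proof: the paper also applies Proposition \ref{prop:trinat} with the companion $\Lambda:=F\circ U_{GF}$, takes $\nu$ to be the counit $\beta$ of $V_{GF}\dashv U_{GF}$ and $\epsilon_1$ to be the counit of $F\dashv G$, and checks $\nu K_{GF}=K_{GF}\epsilon$ by applying $U_{GF}$. The only cosmetic difference is that you construct the section $\eta_0$ of $\beta$ explicitly as $\p(\eta_X)$ from the separability datum of $U_{GF}$, whereas the paper obtains it in one stroke by citing Rafael's theorem.
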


\begin{proof} We just check (1).
Set $K:=K_{GF}:\dd\rightarrow \cc_{GF},$ $U:=U_{GF}:\mathcal{%
C}_{GF}\rightarrow \cc,$ $V:=V_{GF}:\cc\rightarrow \mathcal{C%
}_{GF}$ and consider $\Lambda :=FU:\cc_{GF}\rightarrow \dd$.
Let us construct three natural transformations $\eta _{1}:\id_{%
\cc_{GF}}\rightarrow K\Lambda ,$ $\epsilon _{1}:\Lambda K\rightarrow
\id_{\dd}$ and $\nu _{1}:K\Lambda \rightarrow \id_{%
\cc_{GF}}$ that fulfill the requirements of Proposition \ref{prop:trinat},
i.e. such that $\nu _{1}\circ \eta _{1}=\id$ and $\nu
_{1}K=K\epsilon _{1}.$
Since $\Lambda K=FUK=FG$ it makes sense to define $\epsilon _{1}:=\epsilon $%
, the counit of the adjunction $\left( F,G\right) $. Since $K\Lambda =KFU=VU$
we can set $\nu _{1}:=\beta ,$ the counit of the adjunction $\left(
V,U\right) $, which is defined by $U\beta _{\left( C,\mu \right) }=\mu $ for
every object $\left( C,\mu \right) $ in $\cc_{GF}$.

Since the monad $\left( GF,G\epsilon F,\eta \right) $ is separable, then the
functor $U$ is separable and hence, by Rafael Theorem, there is a natural
transformation $\eta _{1}:\id_{\cc_{GF}}\rightarrow VU$ such
that $\beta \circ \eta _{1}=\id$, i.e. $\nu _{1}\circ \eta _{1}=%
\id$.

Moreover $U\beta _{KD}=U\beta _{\left( GD,G\epsilon D\right) }=G\epsilon
D=UK\epsilon _{1}D$ so that $\beta K=K\epsilon _{1}$, i.e. $\nu
_{1}K=K\epsilon _{1}$.
\end{proof}

Theorem \ref{thm:monutr} allows to obtain the following characterization improving Theorem \ref{thm:ssepMonad}.

\begin{thm}\label{thm:computr}Let $F\dashv G:\dd\rightarrow \cc$ be an adjunction with unit $\eta $ and counit $\epsilon $.
\begin{enumerate}
\item $G$ is semiseparable if and only if the monad $\left( GF,G\epsilon F,\eta \right) $ is separable and the comparison functor $K_{GF}:\dd\rightarrow \cc_{GF}$ is a bireflection up to retracts.
\item $F$ is semiseparable  if and only if the comonad $(FG, F\eta G, \epsilon)$ is coseparable and the cocomparison functor $K^{FG}:\cc\to \dd^{FG}$ is a bireflection up to retracts.
\end{enumerate}
\end{thm}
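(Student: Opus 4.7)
The plan is to deduce Theorem \ref{thm:computr} from the three ingredients already in place: the characterization Theorem \ref{thm:ssepMonad} of semiseparable adjoints in terms of natural fullness of the (co)comparison functor plus (co)separability of the associated (co)monad; Theorem \ref{thm:monutr}, which turns (co)separability of the (co)monad into the statement that $K_{GF}$ (resp. $K^{FG}$) is a coreflection (resp. reflection) up to retracts; and Lemma \ref{lem:trivfactsutr}(3), which identifies bireflections up to retracts with naturally full (co)reflections up to retracts. I will argue (1) in detail; (2) follows by the formally dual argument, invoking the dual halves of each cited result.

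For the forward implication of (1), I would start by assuming $G$ is semiseparable. Applying Theorem \ref{thm:ssepMonad}(i) gives simultaneously that the monad $(GF, G\epsilon F,\eta)$ is separable and that $K_{GF}:\dd\to \cc_{GF}$ is naturally full. The separability of the monad now feeds into Theorem \ref{thm:monutr}(1) to yield that $K_{GF}$ is moreover a coreflection up to retracts. At this point $K_{GF}$ is both naturally full and a coreflection up to retracts, so the second equivalence in Lemma \ref{lem:trivfactsutr}(3) promotes it to a bireflection up to retracts. This gives the two desired conclusions.

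For the converse implication of (1), I would assume that the monad $(GF,G\epsilon F,\eta)$ is separable and that $K_{GF}$ is a bireflection up to retracts. Lemma \ref{lem:trivfactsutr}(3) (read in the other direction) ensures that a bireflection up to retracts is in particular naturally full, so $K_{GF}$ is naturally full. Together with the separability of the monad, Theorem \ref{thm:ssepMonad}(i) then furnishes the semiseparability of $G$, as required. Part (2) is handled verbatim with $F$, $K^{FG}$, and the coseparable comonad $(FG, F\eta G,\epsilon)$ in place of $G$, $K_{GF}$, and the separable monad, now using parts (ii) of Theorem \ref{thm:ssepMonad} and Theorem \ref{thm:monutr}(2).

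The argument is essentially a bookkeeping combination of earlier results, so there is no genuine obstacle; the only point requiring attention is that Lemma \ref{lem:trivfactsutr}(3) is used in both directions (to upgrade the pair ``naturally full + coreflection up to retracts'' to ``bireflection up to retracts'' in one direction, and to extract ``naturally full'' from ``bireflection up to retracts'' in the other). All the non-trivial content is already encapsulated in the proof of Theorem \ref{thm:monutr}, where separability of the monad is converted (via the Rafael-type splitting of the unit of the adjunction $V_{GF}\dashv U_{GF}$) into the data required by Proposition \ref{prop:trinat}.
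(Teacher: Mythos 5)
Your proposal is correct and follows essentially the same route as the paper's proof: reduce via Theorem \ref{thm:ssepMonad} to ``monad separable and $K_{GF}$ naturally full'', use Theorem \ref{thm:monutr} to know $K_{GF}$ is a coreflection up to retracts whenever the monad is separable, and then invoke Lemma \ref{lem:trivfactsutr}(3) to pass between ``naturally full'' and ``bireflection up to retracts''. The paper merely compresses your two directions into a single chain of equivalences.
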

\begin{proof}
We just prove (1). By Theorem \ref{thm:ssepMonad}, $G$ is semiseparable if and only if the monad $\left( GF,G\epsilon F,\eta \right) $ is separable and $K_{GF}$ is a naturally full. When $\left( GF,G\epsilon F,\eta \right) $ is separable, $K_{GF}$ is  a coreflection up to retracts by Theorem \ref{thm:monutr}, and hence it is naturally full if and only it it is a naturally full coreflection up to retracts if and only if it is a bireflection up to retracts by Lemma \ref{lem:trivfactsutr}.
\end{proof}

Theorem \ref{thm:computr} allows to retrieve the following characterization improving Corollary \ref{cor:sepmonad}.

\begin{cor}\label{cor:compcorefretr} Let $F\dashv G:\dd\rightarrow \cc$ be an adjunction with unit $\eta $ and counit $\epsilon $.

\begin{enumerate}
\item \cite[Proposition 3.5]{Chen15} $G$ is separable if and only if the monad $\left( GF,G\epsilon F,\eta \right) $ is separable and the comparison functor $K_{GF}:\dd\rightarrow \cc_{GF}$ is an equivalence up to retracts.
\item  \cite[Proposition 2.3]{Sun19} $F$ is separable if and only if the comonad $(FG, F\eta G, \epsilon)$ is coseparable and the cocomparison functor $K^{FG}:\cc\to \dd^{FG}$ is an equivalence up to retracts.
\end{enumerate}
\end{cor}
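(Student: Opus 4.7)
The plan is to deduce the corollary from Corollary \ref{cor:sepmonad} combined with the refinements provided by Theorem \ref{thm:computr} (or equivalently Theorem \ref{thm:monutr}) and the structural facts about bireflections up to retracts collected in Lemma \ref{lem:trivfactsutr}. I will focus on part (1), since (2) follows by dualizing each step.

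Corollary \ref{cor:sepmonad}(1) already characterizes the separability of $G$ as the conjunction of the separability of the monad $(GF,G\epsilon F,\eta)$ and the fully faithfulness of $K_{GF}$. Hence it suffices to prove that, \emph{assuming that the monad is separable}, $K_{GF}$ is fully faithful if and only if $K_{GF}$ is an equivalence up to retracts.

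For the forward implication, if the monad is separable, then Theorem \ref{thm:monutr}(1) guarantees that $K_{GF}$ is a coreflection up to retracts. If in addition $K_{GF}$ is fully faithful, Proposition \ref{prop:sscompletion}(3) gives that $(K_{GF})^{\natural}$ is a fully faithful coreflection. A fully faithful functor is in particular semiseparable, so Theorem \ref{thm:frobenius} upgrades $(K_{GF})^{\natural}$ to a bireflection; a fully faithful bireflection is an equivalence by Lemma \ref{lem:trivfactsutr}(5), so $(K_{GF})^{\natural}$ is an equivalence and $K_{GF}$ is an equivalence up to retracts. Conversely, if $K_{GF}$ is an equivalence up to retracts then it is in particular fully faithful by Lemma \ref{lem:trivfactsutr}(6), closing the loop.

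Part (2) is obtained by the dual argument, replacing the monad with the comonad $(FG,F\eta G,\epsilon)$, the comparison functor with the cocomparison functor $K^{FG}$, Theorem \ref{thm:monutr}(1) with Theorem \ref{thm:monutr}(2), and invoking the reflection version of the characterizations. There is no real obstacle here: the work has already been done in the earlier sections, and the proof amounts to observing that, once the (co)monad is (co)separable, the (co)comparison functor is automatically a (co)reflection up to retracts, so its fully faithfulness is precisely what promotes it to an equivalence up to retracts.
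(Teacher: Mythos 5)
Your proof is correct; every step checks out against the results it cites. It does, however, take a slightly different route from the paper's. The paper factors the statement through its new semiseparable characterization: by Proposition \ref{prop:sep}, $G$ is separable iff it is semiseparable and faithful; Theorem \ref{thm:computr} handles the semiseparable part (monad separable and $K_{GF}$ a bireflection up to retracts), the faithfulness of $U_{GF}$ transfers faithfulness from $G$ to $K_{GF}$, and Lemma \ref{lem:trivfactsutr}(6) identifies a fully faithful bireflection up to retracts with an equivalence up to retracts. You instead start from the older Corollary \ref{cor:sepmonad} (monad separable and $K_{GF}$ fully faithful) and prove the bridge ``fully faithful $\Leftrightarrow$ equivalence up to retracts'' under the standing hypothesis that the monad is separable, using Theorem \ref{thm:monutr} directly to get the coreflection-up-to-retracts property and then upgrading via Proposition \ref{prop:sscompletion}, Theorem \ref{thm:frobenius} and Lemma \ref{lem:trivfactsutr}(5). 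The two arguments rest on the same engine (Theorem \ref{thm:monutr}, which also powers Theorem \ref{thm:computr}); yours has the mild advantage of not needing Theorem \ref{thm:computr} or the faithfulness-transfer observation, while the paper's has the advantage of exhibiting the separable case as a literal specialization of the semiseparable one, which is the narrative thread of the section. Your dualization for part (2) is also fine.
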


\begin{proof}
We just prove (1). By Proposition \ref{prop:sep}, $G$ is separable if and only if it is semiseparable and faithful. By Theorem \ref{thm:computr}, $G$ is semiseparable if and only if the monad $\left( GF,G\epsilon F,\eta \right) $ is separable and $K_{GF}$ is a bireflection up to retracts. On the other hand, since $G=U_{GF}\circ K_{GF}$ and $U_{GF}$ is faithful, we have that $G$ is faithful if and only if so is $K_{GF}$. Summing up $G$ is separable if and only if $\left( GF,G\epsilon F,\eta \right) $ is separable and $K_{GF}$ is a fully faithful bireflection up to retracts. By Lemma \ref{lem:trivfactsutr}, the latter requirements on  $K_{GF}$ means it is an equivalence up to retracts.
\end{proof}

The special features we proved for coreflections up to retracts yield the following result. 

%\begin{cor}\label{cor:compsemi}
%  Let $F\dashv G:\dd\to\cc$ be an adjunction with comparison functor $K_{GF}:\dd\to\cc_{GF}$ and cocomparison functor $K^{FG}:\cc\to \dd^{FG}$.
%  \begin{enumerate}
%    \item Assume $G$ is semiseparable. If $K_{GF}$ has a left adjoint (it does when $\dd$ is idempotent complete), then $K_{GF}$ is a bireflection.
% \item Assume $F$ is semiseparable. If $K^{FG}$ has a right adjoint (it does when $\cc$ is idempotent complete), then $K^{FG}$ is a bireflection.
% \item (cf. \cite[Proposition, page 93]{Par71} and \cite[Proposition 2.16(3)]{AGM15}) Assume $G$ is separable. If $K_{GF}$ has a left adjoint (it does when $\dd$ is idempotent complete), then $K_{GF}$ is an equivalence (i.e. $G$ is monadic).
% \item (cf. \cite[Proposition 3.16]{Mes06}) Assume $F$ is separable. If $K^{FG}$ has a right adjoint (it does when $\cc$ is idempotent complete), then $K^{FG}$ is an equivalence (i.e. $F$ is comonadic).
%  \end{enumerate}
%\end{cor}
\begin{cor}\label{cor:compsemi}
	Let $F\dashv G:\dd\to\cc$ be an adjunction with comparison functor $K_{GF}:\dd\to\cc_{GF}$ and cocomparison functor $K^{FG}:\cc\to \dd^{FG}$.
	\begin{enumerate}
		\item Assume $G$ is semiseparable. If $K_{GF}$ has a left adjoint, then $K_{GF}$ is a bireflection.%(it does when $\dd$ is idempotent complete)
		\item Assume $F$ is semiseparable. If $K^{FG}$ has a right adjoint, then $K^{FG}$ is a bireflection.%(it does when $\cc$ is idempotent complete)
		\item (cf. \cite[Proposition, page 93]{Par71} and \cite[Proposition 2.16(3)]{AGM15}) Assume $G$ is separable. If $K_{GF}$ has a left adjoint, then $K_{GF}$ is an equivalence (i.e. $G$ is monadic)%.(it does when $\dd$ is idempotent complete)
		\item (cf. \cite[Proposition 3.16]{Mes06}) Assume $F$ is separable. If $K^{FG}$ has a right adjoint, then $K^{FG}$ is an equivalence (i.e. $F$ is comonadic).%(it does when $\cc$ is idempotent complete)
	\end{enumerate}
	In case $\dd$ (resp. $\cc$) is idempotent complete, if $G$ (resp. $F$) is (semi)separable, then $K_{GF}$ (resp. $K^{FG}$) has a left (resp. right) adjoint so the previous assertions apply.
\end{cor}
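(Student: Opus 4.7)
The statement is an assembly of results already developed in the paper. My plan is to reduce each of items (1)--(4) to the appropriate characterization theorem (Theorem \ref{thm:computr} or Corollary \ref{cor:compcorefretr}) describing the (co)comparison functor as a (bireflection/equivalence) up to retracts, and then invoke the ``having an adjoint promotes up-to-retracts to honest'' principle of Proposition \ref{prop:corefl}. The final clause about idempotent complete source categories is then obtained by observing, via Proposition \ref{prop:idpcom-cutr}, that this promotion occurs automatically, thereby producing the required adjoint.

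For item (1), I would first apply Theorem \ref{thm:computr}(1) to turn the semiseparability of $G$ into the statement that $K_{GF}$ is a bireflection up to retracts. Since by hypothesis $K_{GF}$ admits a left adjoint, Proposition \ref{prop:corefl}(5) upgrades it to an honest bireflection. Item (2) is dual: Theorem \ref{thm:computr}(2) gives $K^{FG}$ as a bireflection up to retracts, and the assumed right adjoint lets Proposition \ref{prop:corefl}(5) finish. For items (3) and (4), I would use Corollary \ref{cor:compcorefretr} in place of Theorem \ref{thm:computr}, so that $K_{GF}$ (resp.\ $K^{FG}$) is an equivalence up to retracts, and then close the argument with Proposition \ref{prop:corefl}(6), which promotes an equivalence up to retracts admitting an adjoint to an actual equivalence.

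For the final assertion, I would proceed as follows. Assume $\dd$ is idempotent complete and $G$ is semiseparable (the separable case being a special instance). By Theorem \ref{thm:computr}(1) the monad $(GF,G\epsilon F,\eta)$ is separable, hence by Theorem \ref{thm:monutr}(1) the comparison functor $K_{GF}$ is a coreflection up to retracts. Since $\dd$ is idempotent complete, Proposition \ref{prop:idpcom-cutr} converts this into $K_{GF}$ being an honest coreflection, so in particular $K_{GF}$ admits a left adjoint, and (1) or (3) then applies. The cocomparison case, under idempotent completeness of $\cc$, is handled symmetrically via Theorem \ref{thm:monutr}(2) and the dual part of Proposition \ref{prop:idpcom-cutr}.

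There is no real obstacle in this proof; it is a bookkeeping exercise that harvests the accumulated machinery. The one point worth verifying carefully is that in each of (1)--(4) the hypothesis on the associated (co)monad required by Theorem \ref{thm:monutr} is indeed subsumed by the (semi)separability of the adjoint functor, which is precisely the content of Theorem \ref{thm:ssepMonad} (and is already built into Theorem \ref{thm:computr} and Corollary \ref{cor:compcorefretr}), so nothing extra needs to be checked.
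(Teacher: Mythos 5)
Your proposal is correct and follows essentially the same route as the paper: Theorem \ref{thm:computr} (resp. Corollary \ref{cor:compcorefretr}) gives the up-to-retracts property, Proposition \ref{prop:corefl} promotes it in the presence of an adjoint, and Proposition \ref{prop:idpcom-cutr} supplies the adjoint when the source is idempotent complete. The only (harmless) difference is in the final clause, where you detour through Theorem \ref{thm:monutr} to get that $K_{GF}$ is a coreflection up to retracts, whereas the paper applies Proposition \ref{prop:idpcom-cutr} directly to the bireflection (resp. equivalence) up to retracts already obtained.
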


\begin{proof}
We just prove (1) and (3). If $G$ is semiseparable (resp. separable), by Theorem \ref{thm:computr} (resp. Corollary \ref{cor:compcorefretr}) we know that $K_{GF}$ is a bireflection (resp. equivalence) up to retracts. Then, if $K_{GF}$ has a left adjoint, by Proposition \ref{prop:corefl} $K_{GF}$ is a bireflection (resp.  equivalence).
 By Proposition \ref{prop:idpcom-cutr}, if $\dd$ is idempotent complete, then $K_{GF}$ has a left adjoint as it is a bireflection (resp. equivalence) up to retracts.
\begin{invisible}
We prove (2) and (4). If $F$ is semiseparable (resp. separable), by Theorem \ref{thm:computr} (resp. Corollary \ref{cor:compcorefretr}) we know that $K^{FG}$ is a bireflection (resp. equivalence) up to retracts. Then, if $K^{FG}$ has a left adjoint, by Proposition \ref{prop:corefl} $K^{FG}$ is a bireflection (resp. an equivalence).
 By Proposition \ref{prop:idpcom-cutr}, if $\dd$ is idempotent complete, then $K^{FG}$ has a left adjoint as it is a bireflection (resp. equivalence) up to retracts.
\end{invisible}
\end{proof}

\begin{invisible} Corollari non usati.

\begin{cor}  Assume that $F\dashv G:\dd\to\cc$ is an adjunction. Then the following are equivalent.
\begin{enumerate}
  \item $G$ is a bireflection and $K_{GF}:\dd\to\cc_{GF}$ has a left adjoint.
  \item $K_{GF}:\dd\to\cc_{GF}$ is naturally full and $U_{GF}:\cc_{GF}\to \cc$ is an equivalence.
\end{enumerate}
\end{cor}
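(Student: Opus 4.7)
The plan is to prove the two implications separately, exploiting the fact that being a bireflection forces the associated monad $GF$ to be isomorphic to the identity monad, so that the Eilenberg--Moore factorisation collapses onto $\cc$ via $U_{GF}$.

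For $(1)\Rightarrow(2)$, I would first observe that, being a bireflection, $G$ is in particular a naturally full coreflection by Theorem~\ref{thm:frobenius}, hence semiseparable by Proposition~\ref{prop:sep}(ii). Theorem~\ref{thm:ssepMonad}(i) then yields that the monad $(GF,G\epsilon F,\eta)$ is separable and, crucially for us, that $K_{GF}$ is naturally full. To establish that $U_{GF}$ is an equivalence, note that the left adjoint $F$ of a bireflection is fully faithful, so that the unit $\eta:\id_\cc\to GF$ of $F\dashv G$ is a natural isomorphism. For any $GF$-algebra $(X,\mu)$, the unit axiom $\mu\circ\eta_X=\id_X$ then forces $\mu=\eta_X^{-1}$, while the associativity axiom collapses thanks to the monad unit laws. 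Consequently the assignment $X\mapsto (X,\eta_X^{-1})$ is a quasi-inverse to $U_{GF}$.

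For $(2)\Rightarrow(1)$, since $U_{GF}$ is an equivalence and $V_{GF}\dashv U_{GF}$, the left adjoint $V_{GF}$ is a quasi-inverse to $U_{GF}$; in particular $U_{GF}\dashv V_{GF}$ as well and $V_{GF}\dashv U_{GF}$ is an adjoint equivalence. Its unit, which coincides with the unit $\eta$ of $F\dashv G$, is therefore a natural isomorphism, so $F$ is fully faithful and $G$ is a coreflection. Next, from $G=U_{GF}\circ K_{GF}$ with $K_{GF}$ naturally full and $U_{GF}$ fully faithful (being an equivalence), the composite $G$ is itself naturally full: taking $\p^G:=\p^{K_{GF}}\circ(\f^{U_{GF}})^{-1}$, where $\p^{K_{GF}}$ is the natural transformation witnessing natural fullness of $K_{GF}$ and $(\f^{U_{GF}})^{-1}$ is the componentwise inverse of the bijection $\f^{U_{GF}}$, a direct computation shows $\f^G\circ\p^G=\id$. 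Applying Theorem~\ref{thm:frobenius} to the naturally full coreflection $G$ then yields that $G$ is a bireflection. Finally, from $V_{GF}U_{GF}\cong\id$ one gets $K_{GF}\cong V_{GF}\circ G$, and combining $F\dashv G$ with $U_{GF}\dashv V_{GF}$ gives $F\circ U_{GF}\dashv K_{GF}$, so that $K_{GF}$ admits a left adjoint.

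I do not foresee any serious obstacle: the whole argument reduces to the observation that a bireflection makes $GF$ isomorphic to the identity monad. The one point requiring some care is the verification that a monad whose unit is a natural isomorphism is isomorphic to the identity monad, but this follows immediately from the two unit axioms $m\circ\eta T=\id_T=m\circ T\eta$ together with invertibility of $\eta$, forcing $m=(\eta T)^{-1}=(T\eta)^{-1}$ and collapsing the algebra structure on any $X$ to $\eta_X^{-1}$.
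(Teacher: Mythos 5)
Your proof is correct, but it follows a genuinely different route from the one in the paper. The paper's argument stays inside the machinery it has built: for $(1)\Rightarrow(2)$ it invokes Corollary \ref{cor:compsemi} (which rests on the up-to-retracts results) to conclude that $K_{GF}$ is a bireflection, and then Lemma \ref{lem:Berger} together with conservativity of $U_{GF}$ to get that $U_{GF}$ is an equivalence; for $(2)\Rightarrow(1)$ it uses Theorem \ref{thm:ssepMonad} to get semiseparability of $G$, deduces that $K_{GF}$ has a left adjoint from $U_{GF}\circ K_{GF}=G$, applies Corollary \ref{cor:compsemi} again to make $K_{GF}$ a bireflection, and transfers this back to $G$ along the equivalence $U_{GF}$. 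You instead exploit the elementary observation that a bireflection (indeed any coreflection) has invertible unit, so the monad $GF$ is isomorphic to the identity monad and every algebra structure is forced to be $\eta_X^{-1}$; this makes $U_{GF}$ an isomorphism of categories with explicit quasi-inverse $X\mapsto(X,\eta_X^{-1})$, and the converse direction is handled by an equally direct computation ($\p^G:=\p^{K_{GF}}\circ(\f^{U_{GF}})^{-1}$ witnesses natural fullness of $G$, and $K_{GF}\cong V_{GF}\circ G$ yields the left adjoint $F\circ U_{GF}$). Your approach buys self-containedness — it bypasses Corollary \ref{cor:compsemi}, Lemma \ref{lem:Berger} and the whole up-to-retracts apparatus, needing only Theorem \ref{thm:frobenius} and Theorem \ref{thm:ssepMonad} — and it incidentally shows two slightly stronger facts: the hypothesis in $(1)$ that $K_{GF}$ has a left adjoint is redundant (it follows from $G$ being a bireflection alone), and $U_{GF}$ is in fact an isomorphism of categories rather than merely an equivalence. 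The paper's proof is shorter given the results already available and better illustrates how those results are meant to be used.
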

\begin{proof}
$(1)\Rightarrow (2)$. Since $G$ is a bireflection, by Theorem \ref{thm:frobenius}, it is in particular semiseparable and a coreflection. By Corollary \ref{cor:compsemi}, we have that $K_{GF}$ is a bireflection whence naturally full and a coreflection by Theorem \ref{thm:frobenius}. Since $U_{GF}$ is conservative (\cite[Proposition 4.1.4]{BorII94}) and both $G$ and $K_{GF}$ are in particular coreflections, in view of the equality $U_{GF}\circ K_{GF}=G$ and by Lemma \ref{lem:Berger}, we get that $U_{GF}$ is an equivalence.

$(2)\Rightarrow (1)$. Since $K_{GF}$ is naturally full and $U_{GF}$ is an equivalence, then, by Theorem \ref{thm:ssepMonad}, $G$ is semiseparable. Since $U_{GF}:\cc_{GF}\to \cc$ is an equivalence and $G$ has a left adjoint, from the equality $U_{GF}\circ K_{GF}=G$ we get that also $K_{GF}$ has a left adjoint. Hence, by Corollary \ref{cor:compsemi}, we have that $K_{GF}$ is a bireflection so that, from the equality $U_{GF}\circ K_{GF}=G$ and the fact that $U_{GF}$ is an equivalence, we deduce that $G$ is a bireflection.
\end{proof}

\begin{cor}  Assume that $F\dashv G:\dd\to\cc$ is an adjunction. Then the following are equivalent.
\begin{enumerate}
  \item $F$ is a bireflection and $K^{FG}:\cc\to \dd^{FG}$ has a right adjoint.
  \item $K^{FG}:\cc\to \dd^{FG}$ is naturally full and $U^{FG}:\dd^{FG}\to\dd$ is an equivalence.
\end{enumerate}
\end{cor}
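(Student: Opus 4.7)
The strategy is to dualise the argument for the preceding corollary, interchanging monads with comonads, left adjoints with right adjoints, and coreflections with reflections. The main inputs will be Theorem \ref{thm:frobenius}, Theorem \ref{thm:ssepMonad}(ii), Corollary \ref{cor:compsemi}(2), Lemma \ref{lem:trivfactsutr}(5), and the closure of bireflections under composition (Remark \ref{rmk:coref-biref}).

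For $(1)\Rightarrow(2)$, I would first invoke Theorem \ref{thm:frobenius} to deduce from the bireflection hypothesis that $F$ is both semiseparable and a reflection. Semiseparability of $F$, via Theorem \ref{thm:ssepMonad}(ii), immediately yields that $K^{FG}$ is naturally full. To establish that $U^{FG}$ is an equivalence, I would use that $F$ being a reflection forces the right adjoint $G$ to be fully faithful, hence the counit $\epsilon\colon FG\to\id_\dd$ to be invertible. A direct computation then shows that any coalgebra structure $\rho\colon X\to FGX$ on an object of $\dd$ is forced by the counit axiom $\epsilon_X\circ\rho=\id_X$ to equal $\epsilon_X^{-1}$, and this structure is automatically coassociative because of the comonad identity $\epsilon FG\circ F\eta G=\id_{FG}$; thus $X\mapsto(X,\epsilon_X^{-1})$ is a quasi-inverse of $U^{FG}$. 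Observe that the hypothesis that $K^{FG}$ has a right adjoint is not actually needed for this direction.

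For $(2)\Rightarrow(1)$, I would proceed in three steps. First, the equivalence $U^{FG}$ is in particular separable, so the comonad $(FG,F\eta G,\epsilon)$ is coseparable, and together with the naturally full assumption on $K^{FG}$, Theorem \ref{thm:ssepMonad}(ii) produces the semiseparability of $F$. Second, a quasi-inverse $V$ of $U^{FG}$ yields $K^{FG}\cong V\circ F$, whence $K^{FG}$ inherits a right adjoint, namely $G\circ U^{FG}$; this supplies the second half of (1). Third, Corollary \ref{cor:compsemi}(2) upgrades $K^{FG}$ to a bireflection, while $U^{FG}$ is itself a bireflection by Lemma \ref{lem:trivfactsutr}(5), and since bireflections are closed under composition (Remark \ref{rmk:coref-biref}), the factorisation $F=U^{FG}\circ K^{FG}$ exhibits $F$ as a bireflection. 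The only real obstacle I anticipate is keeping the reflection/coreflection and left/right adjoint bookkeeping straight while dualising; the one genuinely new calculation — the explicit quasi-inverse of $U^{FG}$ produced from the invertible counit — is elementary once the comonad axioms are written out.
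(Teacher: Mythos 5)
Your proof is correct, and for the implication $(2)\Rightarrow(1)$ it coincides with the paper's argument (which proves the companion statement for $G$ and leaves this one to dualization): semiseparability of $F$ from Theorem \ref{thm:ssepMonad}, transport of the right adjoint of $F$ along the equivalence $U^{FG}$ to get one for $K^{FG}$, Corollary \ref{cor:compsemi} to make $K^{FG}$ a bireflection, and the factorization $F=U^{FG}\circ K^{FG}$ to conclude. Where you genuinely diverge is in $(1)\Rightarrow(2)$: the paper first upgrades $K^{FG}$ to a bireflection via Corollary \ref{cor:compsemi} (which is where the right-adjoint hypothesis enters) and then deduces that $U^{FG}$ is an equivalence from Lemma \ref{lem:Berger}(1), using that $U^{FG}$ is conservative and that both $F$ and $K^{FG}$ are reflections. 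You instead obtain natural fullness of $K^{FG}$ directly from Theorem \ref{thm:ssepMonad}(ii) and prove that $U^{FG}$ is an equivalence by hand: since the bireflection $F$ has fully faithful right adjoint $G$, the counit $\epsilon$ is invertible, the counit axiom forces every $FG$-comodule structure on $X$ to be $\epsilon_X^{-1}$, and coassociativity is automatic from the triangle identity $G\epsilon\circ\eta G=\id_G$ applied under $F$ (together with $FG\epsilon_X=\epsilon_{FGX}$, which holds once $\epsilon$ is invertible). This is more elementary than invoking Lemma \ref{lem:Berger}, it actually produces an isomorphism of categories rather than a mere equivalence, and it makes visible that the right-adjoint hypothesis on $K^{FG}$ is not needed for this direction — a small sharpening the paper's route does not exhibit. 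Both arguments are valid; the paper's buys uniformity with its general machinery on (co)reflections, yours buys self-containedness and the redundancy observation.
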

\end{invisible}

What follows is an example of a coreflection (up to retracts) which is not an equivalence (up to retracts) and not even a bireflection (up to retracts).
\begin{es}
Consider the forgetful functor $G:\Top\to\Set$ and its left adjoint $F:\Set\to\Top$ which assigns to each set $X$ the topological space $X$ equipped with the discrete topology (all subsets of $X$ are open), see \cite[page 144]{Mac98}. This adjunction defines on $\Set$ the identity monad $\mathbb{I}=(\id_\Set,\id,\id)$. The Eilenberg-Moore category of modules over $\mathbb{I}$ is then $\Set$, thus the comparison functor $K_{GF}:\Top\to\Set_\mathbb{I}=\Set$ is the given forgetful functor $G$. Note that the identity monad $\mathbb{I}$ is separable, thus by Theorem \ref{thm:monutr} $K_{GF}$ is a coreflection up to retracts and then a coreflection either by Proposition \ref{prop:idpcom-cutr}, as $\Top$ is an idempotent complete category (it has in fact equalizers, see \cite[Theorem 2.15]{Ho93}), or by Proposition \ref{prop:corefl}, as $K_{GF}=G$ has a left adjoint. Since $K_{GF}$ is not an equivalence, again by Proposition \ref{prop:idpcom-cutr} it follows that $K_{GF}$ is not even an equivalence up to retracts. By Corollary \ref{cor:compcorefretr} we have that $G$ is not separable and, since $G$ is faithful, $G$ is not semiseparable by Proposition \ref{prop:sep}. Then, by Theorem \ref{thm:computr} $K_{GF}$ is not even a bireflection up to retracts, and hence not a bireflection by Proposition \ref{prop:idpcom-cutr}. 
\end{es}

Next aim is to exhibit examples of (semi)separable adjoints to whom Theorem \ref{thm:computr} and Corollary \ref{cor:compcorefretr} apply even if the relevant categories are not idempotent complete, namely the free induction functor and the free restriction of scalars functor.\medskip

\noindent\textbf{The free induction and restriction functors.} 
In order to study the (semi)separability of the free induction functor and of the free restriction of scalars functor, we will use the following lemma, inspired by \cite[Lemma 2.9]{AM16}.
\begin{lem}\label{lem:subadjunction}
	Let $F\dashv G:\cc\to\dd$ be an adjunction of functors and let $S:\cc'\to\cc$ and $T:\dd'\to\dd$ be fully faithful functors. Assume that there exist functors $F':\dd'\to\cc'$ and $G':\cc'\to\dd'$ such that both squares
	\begin{gather*}\label{eq:STff}
		\vcenter{
			\xymatrixcolsep{1cm}\xymatrixrowsep{.6cm}
			\xymatrix{\cc'\ar[r]^-{S}
				\ar@<1ex>[d]^*-<0.1cm>{^{G'}}& \cc \ar@<1ex>[d]^*-<0.1cm>{^{G}}
				\\
				\dd'\ar[r]^{T} \ar@<1ex>[u]^*-<0.1cm>{^{F'}}&\dd \ar@<1ex>[u]^*-<0.1cm>{^{F}}\ar@{}[u]|{\dashv}
		} }%\ar@{}[u]|{\dashv}
	\end{gather*}
	are commutative, i.e. $F\circ T=S\circ F'$ and $T\circ G'=G\circ S$. Then, $(F',G')$ is an adjunction in a unique way such that the pair of functors $(S,T)$ is a map of adjunctions in the sense of \cite[IV.7]{Mac98}. %and there exist natural transformations $\epsilon':F'G'\to\id$, $\eta':\id\to G'F'$ such that $S\epsilon'=\epsilon S$ and $T\eta'=\eta T$.% Moreover, $(F',G',\eta',\epsilon')$ is an adjunction.  
\\ Moreover, if $G$ (respectively, $F$) is (semi)separable, then also $G'$ (respectively, $F'$) is (semi)separable. 
\end{lem}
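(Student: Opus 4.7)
The plan is to transfer the adjunction and the witnessing natural transformations through the fully faithful functors $S$ and $T$.

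For the adjunction itself, I would construct $F' \dashv G'$ via its hom-set formulation. Fix $X' \in \dd'$ and $Y' \in \cc'$. Exploiting the commuting squares and the full faithfulness of $S$ and $T$, there is a chain of natural bijections
\begin{align*}
\Hom_{\cc'}(F'X', Y') &\cong \Hom_{\cc}(SF'X', SY') = \Hom_{\cc}(FTX', SY') \\
&\cong \Hom_{\dd}(TX', GSY') = \Hom_{\dd}(TX', TG'Y') \cong \Hom_{\dd'}(X', G'Y'),
\end{align*}
where the middle step is the $F \dashv G$ adjunction isomorphism and the outer steps use full faithfulness of $S$ and $T$. Naturality in $X'$ and $Y'$ is inherited pointwise, and the resulting adjunction $F' \dashv G'$ has unit $\eta'$ and counit $\epsilon'$ uniquely characterized by $T\eta' = \eta T$ and $S\epsilon' = \epsilon S$, under the identifications $FT = SF'$ and $TG' = GS$. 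These are exactly the equations making $(S,T)$ a map of adjunctions in the sense of \cite[IV.7]{Mac98}; faithfulness of $S$ and $T$ forces $\eta'$ and $\epsilon'$ uniquely, giving the uniqueness clause.

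For the (semi)separability transfer, suppose first that $G$ is semiseparable with witness $\p^G$ satisfying $\f^G \p^G \f^G = \f^G$. Given $X', Y' \in \cc'$ and $f : G'X' \to G'Y'$ in $\dd'$, the morphism $Tf : TG'X' \to TG'Y'$ coincides with a morphism $GSX' \to GSY'$ in $\dd$, so $\p^G_{SX', SY'}(Tf) : SX' \to SY'$ is defined in $\cc$. Since $S$ is fully faithful, there is a unique $g : X' \to Y'$ in $\cc'$ with $Sg = \p^G_{SX', SY'}(Tf)$, and I set $\p^{G'}_{X', Y'}(f) := g$. Naturality of $\p^{G'}$ follows, after applying $S$, from naturality of $\p^G$ together with $TG' = GS$. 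To verify $\f^{G'} \p^{G'} \f^{G'} = \f^{G'}$, for any $h : X' \to Y'$ in $\cc'$ one computes
\[
T\bigl(\f^{G'} \p^{G'} \f^{G'}(h)\bigr) = \f^G \p^G \f^G(Sh) = \f^G(Sh) = T\f^{G'}(h),
\]
using $T(G'h) = GSh$; faithfulness of $T$ yields the required equality. When $G$ is additionally separable ($\p^G \f^G = \id$), the analogous chase, now descending through the faithful $S$, gives $\p^{G'} \f^{G'} = \id$. The dual construction handles (semi)separability of $F$: one defines $\p^{F'}$ via $\p^F$ and the relation $FT = SF'$, invoking full faithfulness of $T$ to obtain a well-defined morphism in $\dd'$.

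The main technical point I foresee is verifying the naturality of $\p^{G'}$ in both arguments, which after applying $S$ reduces to an equality of the form $\p^G(GSk \circ Tf \circ GSl) = Sk \circ \p^G(Tf) \circ Sl$, immediate from the naturality of $\p^G$. Apart from this bookkeeping, the whole argument amounts to transferring equalities along the faithful $S$ and $T$.
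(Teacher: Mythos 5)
Your proposal is correct. The first half (constructing $F'\dashv G'$ from the chain of natural bijections $\Hom_{\cc'}(F'X',Y')\cong\Hom_{\cc}(FTX',SY')\cong\Hom_{\dd}(TX',TG'Y')\cong\Hom_{\dd'}(X',G'Y')$, and reading off the map-of-adjunctions condition together with uniqueness from faithfulness of $S$ and $T$) is essentially the paper's argument verbatim. For the (semi)separability transfer you diverge: the paper does not build $\p^{G'}$ by hand but simply observes that $G\circ S=T\circ G'$, that $S$ fully faithful implies $S$ naturally full so $G\circ S$ is semiseparable by Lemma \ref{lem:comp}(ii), and that $T$ faithful then forces $G'$ semiseparable by Lemma \ref{lem:comp}(iii) (and analogously in the separable case). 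Your explicit construction — defining $\p^{G'}_{X',Y'}(f)$ as the unique $S$-preimage of $\p^G_{SX',SY'}(Tf)$ and checking $\f^{G'}\p^{G'}\f^{G'}=\f^{G'}$ after applying the faithful $T$ — is exactly what one obtains by unwinding those composition lemmas in this special case, so it is correct and self-contained at the cost of redoing bookkeeping (naturality of $\p^{G'}$) that the citation approach gets for free. The paper's route is shorter and reuses infrastructure already established in Section 1; yours is more elementary and does not presuppose the composition lemmas.
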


\begin{proof}
Consider $D'\in\dd'$, $C'\in\cc'$.  The composition of natural isomorphisms yields the natural isomorphism $\varphi_{D',C'}:=(\mathcal{F}^{T}_{D',G'C'})^{-1}\circ \varphi_{TD',SC'} \circ \mathcal{F}^{S}_{F'D',C'}$. By construction the diagram
\[\xymatrix{\Hom_{\cc'}(F'D',C')\ar[rr]^{\mathcal{F}^{S}_{F'D',C'}}\ar@{.>}[d]^{\varphi_{D',C'}}&& \Hom_{\cc}(SF'D',SC')\ar@{=}[r]&\Hom_{\cc}(FTD', SC')\ar[d]^{\varphi_{TD',SC'}}\\
 \Hom_{\dd'}(D',G'C')\ar[rr]^{\mathcal{F}^{T}_{D',G'C'}}&&\Hom_{\dd}(TD',TG'C')\ar@{=}[r]&  \Hom_\dd(TD', GSC')}\] 
 commutes and this means that the pair of functors $(S,T)$ is a map of adjunctions.
% [Di qui commenterei tutto tranne le ultime righe]
\begin{invisible}	
 Consider $D'\in\dd'$, $C'\in\cc'$. Then, since in the commutative square \eqref{eq:STff} $S$, $T$ are fully faithful and $F\dashv G$ are adjoint functors, we get $\Hom_{\cc'}(F'D',C')\cong\Hom_{\cc}(SF'D',SC')=\Hom_\cc(F(TD'), SC')\cong\Hom_\dd(TD', GSC')=\Hom_\dd(TD',TG'C')\cong\Hom_{\dd'}(D',G'C')$, thus $$\Hom_{\cc'}(F'D',C')\cong\Hom_{\dd'}(D',G'C').$$ Explicitly, we have the isomorphism $\varphi_{D',C'}:\Hom_{\cc'}(F'D',C')\to  \Hom_{\dd'}(D', G'C')$ that assigns to any morphism $f:F'D'\to C'$ the unique morphism $h:D'\to G'C'$ in $\dd'$ such that $T(h)=G(S(f))\circ\eta_{TD'}$. 
 Its inverse is given for any $g:D'\to G'C'$ in $\dd'$ by $\phi_{D', C'}(g)=k$, where $k:F'D'\to C'$ is the unique morphism in $\cc'$ such that $S(k)=\epsilon_{SC'}\circ F(T(g))$. In fact, for any morphism $f:F'D'\to C'$ in $\cc'$, we have $\phi_{D',C'}(\varphi_{D',C'}(f))=\phi_{D',C'}(h)$, where $h:D'\to G'C'$ is the unique morphism in $\dd'$ such that $T(h)=G(S(f))\circ\eta_{TD'}$, and $\phi_{D',C'}(h)=k$, with $k:F'D'\to C'$ the unique morphism in $\cc'$ such that $S(k)=\epsilon_{SC'}\circ F(T(h))$. Note that $S(k)=\epsilon_{SC'}\circ F(T(h))=\epsilon_{SC'}\circ FG(S(f))\circ F(\eta_{TD'})=S(f)\circ \epsilon_{SF'D'}\circ F(\eta_{TD'})=S(f)\circ \epsilon_{FTD'}\circ F(\eta_{TD'})=S(f)\circ\mathrm{Id}_{FTD'}=S(f)$, hence $k=f$ and so $\phi_{D',C'}(\varphi_{D',C'}(f))=f$. Similarly, for any $g:D'\to G'C'$ in $\dd'$, we have $\varphi_{D',C'}(\phi_{D',C'}(g))=\varphi_{D',C'}(k)$, where $k:F'D'\to C'$ is the unique morphism in $\cc'$ such that $S(k)=\epsilon_{SC'}\circ F(T(g))$ and $\varphi_{D',C'}(k)=h$, with $h:D'\to G'C'$ the unique morphism in $\dd'$ such that $T(h)=G(S(k))\circ\eta_{TD'}$. Note that $T(h)=G(S(k))\circ\eta_{TD'}=G(\epsilon_{SC'}\circ FT(g))\circ\eta_{TD'}=G\epsilon_{SC'}\circ GFT(g)\circ\eta_{TD'}=G\epsilon_{SC'}\circ \eta_{TG'C'}\circ T(g)=G\epsilon_{SC'}\circ\eta_{GSC'}\circ T(g)=\mathrm{Id}_{GSC'}\circ T(g)=T(g)$, hence $h=g$ and so $\varphi_{D',C'}(\phi_{D',C'}(g))=g$. Moreover, $\varphi_{D',C'}$ is natural in $D'$ as for any morphism $u:\bar{D}\to D'$ in $\dd'$ we have that $\varphi_{D',C'}(f)\circ u=\varphi_{\bar{D},C'}(f\circ F'u)$, for every $f:F'D'\to C'$ in $\cc'$. Indeed, $\varphi_{D',C'}(f)\circ u=h\circ u$, where $h:D'\to G'C'$ is the unique morphism in $\dd'$ such that $T(h)=G(S(f))\circ\eta_{TD'}$, while $\varphi_{\bar{D},C'}(f\circ F'u)=k$, where $k:\bar{D}\to G'C'$ is the unique morphism in $\dd'$ such that $T(k)=G(S(f\circ F'u))\circ \eta_{T\bar{D}}$. Note that $G(S(f\circ F'u))\circ \eta_{T\bar{D}}=GS(f)\circ GSF'(u)\circ \eta_{T\bar{D}}=GS(f)\circ GFT(u)\circ \eta_{T\bar{D}}=GS(f)\circ\eta_{TD'}\circ Tu=T(h)\circ T(u)=T(h\circ u)$, hence $T(k)=T(h\circ u)$, thus $k=h\circ u$, and then $\varphi_{D',C'}(f)\circ u=\varphi_{\bar{D},C'}(f\circ F'u)$. In an analogous way one can prove that $\varphi_{D',C'}$ is natural in $C'$. Indeed, for any morphism $u:C'\to \bar{C}$ in $\cc'$ we have that, for every $f:F'D'\to C'$ in $\cc'$, $G'u\circ \varphi_{D',C'}(f) = G'u\circ h$, where $h:D'\to G'C'$ is the unique morphism in $\dd'$ such that $T(h)=G(S(f))\circ\eta_{TD'}$. On the other hand, $\varphi_{D',\bar{C}}(u\circ f)=k$, where $k:D'\to G'\bar{C}$ is the unique morphism in $\dd'$ such that $T(k)=G(S(u\circ f))\circ \eta_{TD'}$, and then, since $G(S(u\circ f))\circ \eta_{TD'}=GS(u)\circ GS(f)\circ\eta_{TD'}=TG'(u)\circ T(h)=T(G'u\circ h)$, we get $k=G'u\circ h$, hence $\varphi_{D',C'}$ is natural in $C'$ as $\varphi_{D',\bar{C}}(u\circ f)=G'u\circ \varphi_{D',C'}(f)$. Thus, $F'\dashv G'$ is an adjunction. Moreover, there exist natural transformations $\epsilon':F'G'\to\mathrm{Id}_{\dd'}$ and $\eta':\mathrm{Id}_{\cc'}\to G'F'$ such that $\epsilon' F'\circ F'\eta'=\mathrm{Id}_{F'}$ and $G'\epsilon'\circ\eta' G'=\mathrm{Id}_{G'}$.
 \end{invisible}
	Finally, assume that $G$ is semiseparable. Since $S$ is fully faithful, by Lemma \ref{lem:comp} (ii) $G\circ S$ is semiseparable, and then $T\circ G'$ is semiseparable, hence, since $T$ is faithful, by Lemma \ref{lem:comp} (iii) it follows that also $G'$ is semiseparable. If $G$ is separable, the proof follows analogously. The case with $F$ and $F'$ is similar. \begin{invisible}
		Assume that $F$ is semiseparable. Since $T$ is fully faithful, by Lemma \ref{lem:comp} (ii) $F\circ T$ is semiseparable, and then $S\circ F'$ is semiseparable, hence, since $S$ is faithful, by Lemma \ref{lem:comp} (iii) it follows that also $F'$ is semiseparable.
	\end{invisible}
\end{proof}
\begin{comment}
\begin{rmk}\rd{[Rimuoverei questo lemma se non è utilizzato]}
	Note that in Lemma \ref{lem:subadjunction}, since $F'\dashv G'$ are adjoint functors, the pair $(S,T):(F',G')\to (F,G)$ results to be a commutation datum \cite[Definition 2.3]{AM16}. Moreover, by \cite[Lemma 2.2]{AM16}, there is a unique natural transformation $\zeta:FT\to SF'$ such that $G\zeta\circ\eta T=T\eta'$ holds. Explicitly, $\zeta=\epsilon SF'\circ FT\eta':FT\to SF'$.
\end{rmk}	
\end{comment}

As in Example \ref{exa:proj}, denote by $R\text{-}\mathrm{Mod}_f$ the full subcategory of $R\text{-}\mathrm{Mod}$ consisting of free left $R$-modules. %Every free module is a generator. 
Given a ring morphism $\varphi:R\to S$, the induction functor $\varphi^*=S\otimes_R(-):{}R\text{-}\mathrm{Mod}\to{}S\text{-}\mathrm{Mod}$ has a right adjoint, namely the restriction of scalars functor $\varphi_{*}: {}S\text{-}\mathrm{Mod}\to {}R\text{-}\mathrm{Mod}$. Moreover $\varphi^*$
preserves free modules as $S\otimes_RR^{(B)}\cong (S\otimes_RR)^{(B)}\cong S^{(B)}$, 
\begin{invisible}
    (see \cite[Theorem 19.10]{AF92}),
\end{invisible} 
giving rise to the functor 
\[\varphi_f^*=S\otimes_R(-):{}R\text{-}\mathrm{Mod}_f\to{}S\text{-}\mathrm{Mod}_f,\] that we call the \textbf{free induction functor}.\medskip

We have the following result.

\begin{prop}\label{prop:varfree} Let $\varphi:R\to S$ be a ring morphism. The following assertions are equivalent.
\begin{enumerate}
    \item The free induction functor $\varphi^*_f: {}R\text{-}\mathrm{Mod}_f\to{}S\text{-}\mathrm{Mod}_f$ has a right adjoint $\varphi_{*f}$.
    \item $S$ is free as a left $R$-module.
    \item The restriction of scalars functor $\varphi_{*}: {}S\text{-}\mathrm{Mod}\to {}R\text{-}\mathrm{Mod}$ preserves free modules.
\end{enumerate}
In case the above equivalent conditions hold, then $\varphi_{*f}$ is induced by $\varphi_{*}$ and the unit and counit of $(\varphi^*_f,\varphi_{*f})$ are the restrictions of the ones of $(\varphi^*,\varphi_{*})$. Moreover, if $S\neq 0$, then $\varphi$ is injective and $\varphi^*_{f}$ is faithful.
\end{prop}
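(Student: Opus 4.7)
The plan is to close the cycle (3) $\Rightarrow$ (1) $\Rightarrow$ (2) $\Rightarrow$ (3) and then read off the additional assertions. The equivalence (2) $\Leftrightarrow$ (3) is essentially bookkeeping: restriction of scalars commutes with arbitrary direct sums, so $\varphi_*(S^{(B)})\cong ({}_RS)^{(B)}$ as left $R$-modules for every set $B$. Hence $\varphi_*$ preserves free $S$-modules if and only if every $({}_RS)^{(B)}$ is free as a left $R$-module, which (testing at a singleton $B$, and conversely iterating direct sums of a free basis of ${}_RS$) is equivalent to ${}_RS$ itself being free.

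For (3) $\Rightarrow$ (1) I would invoke Lemma \ref{lem:subadjunction} with the fully faithful inclusions $\iota_R\colon R\text{-}\mathrm{Mod}_f\hookrightarrow R\text{-}\mathrm{Mod}$ and $\iota_S\colon S\text{-}\mathrm{Mod}_f\hookrightarrow S\text{-}\mathrm{Mod}$, the adjunction $\varphi^*\dashv\varphi_*$, the free induction functor $\varphi^*_f$, and the functor $\varphi_{*f}$ defined as the (well-defined, by (3)) restriction of $\varphi_*$ to $S\text{-}\mathrm{Mod}_f$. The required commutativities $\varphi^*\circ\iota_R=\iota_S\circ\varphi^*_f$ and $\iota_R\circ\varphi_{*f}=\varphi_*\circ\iota_S$ hold tautologically, so the lemma produces the adjunction $\varphi^*_f\dashv\varphi_{*f}$ as the unique one making $(\iota_R,\iota_S)$ a map of adjunctions. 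Tracing through the lemma's construction, the hom-bijection of $\varphi^*_f\dashv\varphi_{*f}$ is the restriction of that of $\varphi^*\dashv\varphi_*$, which immediately identifies the corresponding units and counits as the restrictions of $\eta$ and $\epsilon$, delivering the ``moreover'' clause at once.

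For (1) $\Rightarrow$ (2) I would perform a Yoneda-style reduction. Given a right adjoint $\varphi_{*f}$ of $\varphi^*_f$, the fullness of $\iota_R,\iota_S$ combined with the two adjunctions yields the chain of natural isomorphisms
\[\Hom_R(F,\varphi_{*f}M)\cong\Hom_S(\varphi^*F,M)\cong\Hom_R(F,\varphi_*M)\]
for every $F\in R\text{-}\mathrm{Mod}_f$ and $M\in S\text{-}\mathrm{Mod}_f$. Specialising to $F=R$ and using $\Hom_R(R,-)\cong\id$ gives $\varphi_{*f}M\cong\varphi_*M$ as left $R$-modules; since $\varphi_{*f}M$ is by hypothesis free, so is $\varphi_*M$, and taking $M=S$ produces (2). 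This is the only step I expect to demand a moment's care, as one must recognise that the single test object $F=R$ is already enough to recover $\varphi_{*f}M$ as an $R$-module.

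Finally, assume the equivalent conditions hold with $S\neq 0$. A free basis of ${}_RS$ is then nonempty; for any basis element $b$ and $r\in R$ with $\varphi(r)=0$ one has $r\cdot b=\varphi(r)b=0$, so $R$-linear independence of $\{b\}$ forces $r=0$, proving $\varphi$ injective. Faithfulness of $\varphi^*_f$ then drops out: a morphism $f\colon R^{(B)}\to R^{(C)}$ of free left $R$-modules is encoded by an $R$-valued matrix $(a_{ij})$, and $\varphi^*_f(f)$ is encoded by $(\varphi(a_{ij}))$; if this vanishes, injectivity of $\varphi$ forces every $a_{ij}=0$, whence $f=0$.
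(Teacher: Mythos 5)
Your proof is correct and follows essentially the same route as the paper: the cycle of implications is closed in the same way, with Lemma \ref{lem:subadjunction} delivering $(3)\Rightarrow(1)$ together with the ``moreover'' clause, and your Yoneda-style evaluation at $F=R$ in $(1)\Rightarrow(2)$ is the same computation the paper performs via the chain $S\cong{}_S\Hom({}_SS,{}_SS)\cong{}_S\Hom(\varphi^*_f(R),{}_SS)\cong{}_R\Hom({}_RR,\varphi_{*f}(S))\cong\varphi_{*f}(S)$. The only local difference is the faithfulness of $\varphi^*_f$: you argue elementarily with matrices over $R$ and injectivity of $\varphi$, whereas the paper observes that the unit $(\eta_f)_M=(\varphi\otimes_RM)\circ l_M^{-1}$ is injective because free modules are flat; both arguments are valid.
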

We call the functor $\varphi_{*f}$  the \textbf{free restriction of scalars functor}.
\begin{proof}
$(1)\Rightarrow (2)$. Assume that $\varphi_f^*$ has a right adjoint $G:{}S\text{-}\mathrm{Mod}_f\to{}R\text{-}\mathrm{Mod}_f$. Then, we have the following isomorphisms of left $R$-modules: $S\cong {}_S\Hom({}_S S,{}_S S)\cong{}_S\Hom(S\otimes_R R,{}_S S)={}_S\Hom(\varphi_f^*(R),{}_S S)\cong {}_R\Hom({}_R R,{}_R G(S))\cong {}_RG(S)$. 
\begin{invisible} [Abbiamo inserito le seguenti righe in invisible perché gli isomorfismi in gioco sono quelli standard.] Explicitly, we have the isomorphism $\beta:S \to {}_S\Hom({}_S S,{}_S S)$, $x\mapsto [s\mapsto sx]$, with inverse $f\mapsto f(1_S)$; the isomorphism $\gamma:{}_S\Hom({}_S S,{}_S S)\to {}_S\Hom(S\otimes_R R,{}_S S)$, $f\mapsto f\circ\mu$, with inverse $\delta: {}_S\Hom(S\otimes_R R,{}_S S)\to {}_S\Hom({}_S S,{}_S S)$, $h\mapsto h\circ \mu^{-1}$, where $\mu: S\otimes _R R\to S$ is the isomorphism $s\otimes_R r\mapsto s\cdot r=s\varphi(r)$ with inverse $\mu^{-1}:S\to S\otimes_R R$, $s\mapsto s\otimes_R 1_R$; the isomorphism $\alpha: {}_RG(S)\to {}_R\Hom({}_R R,{}_R G(S))$, $m\mapsto [r\mapsto r\cdot m]$, with inverse $f\mapsto f(1_R)$. Indeed, $\beta(x)$ is a left $S$-module homomorphism as, for every $s,s',t,t'\in S$, we have $\beta(x)(s\cdot s'+t\cdot t')=\beta(x)(ss'+tt')=(ss'+tt')x=ss'x+tt'x=s\cdot\beta(x)(s')+t\cdot\beta(x)(t')$, and $\beta$ is left (and right) $R$-linear as for every $r,r'\in R$ and $x,y\in S$, we have $\beta(r\cdot x+r'\cdot y)(s)=\beta(\varphi(r)x+\varphi(r')y)(s)=s\varphi(r)x+s\varphi(r')y=\beta(x)(s\varphi(r))+\beta(y)(s\varphi(r'))=\beta(x)(s\cdot r)+\beta(y)(s\cdot r')=(r\cdot\beta(x))(s)+(r'\cdot\beta(y))(s)$ (and $\beta(x\cdot r+ y\cdot r')(s)=\beta(x\varphi(r)+y\varphi(r'))(s)=sx\varphi(r)+sy\varphi(r')=(\beta(x)(s))\varphi(r)+(\beta(y)(s))\varphi(r')=(\beta(x)\cdot r)(s)+(\beta(y)\cdot r')(s)$), for every $s\in S$. %(and $\beta(x)(s\cdot r+s'\cdot r')=\beta(x)(s\varphi(r)+s'\varphi(r') )=(s\varphi(r)+s'\varphi(r'))x=s\varphi(r)x+s'\varphi(r')x=\beta(x)(s\varphi(r) )+\beta(x)(s'\varphi(r') )=\beta(x)(s\cdot r)+\beta(x)(s'\cdot r')=(r\cdot \beta(x))(s)+(r'\cdot\beta(x))(s')$), 
	Moreover, $\beta$ is injective as, if $\beta(x)=0$, then $x=\beta(x)(1)=0$, for any $x\in S$, and it is also surjective as, if $f\in {}_S\Hom({}_S S,{}_S S)$, then $f(s)=sf(1)=\beta(f(1))(s)$, for every $s\in S$. Next consider $\gamma:{}_S\Hom({}_S S,{}_S S)\to {}_S\Hom(S\otimes_R R,{}_S S)$. We have that $\gamma(f)=f\circ\mu$ is a composition of left $S$-module homomorphisms, so it is a morphism of left $S$-modules; $\gamma$ is left $R$-linear as for every $f,f'\in {}_S\Hom({}_S S,{}_S S)$ we have $\gamma(\bar{r}\cdot f+r'\cdot f')(s\otimes_R r)=(\bar{r}\cdot f+r'\cdot f')(s\varphi(r))=(\bar{r}\cdot f)(s\varphi(r))+(r'\cdot f')(s\varphi(r))=f(s\varphi(r)\varphi(\bar{r}))+f'(s\varphi(r)\varphi(r'))=f(s\varphi(r\bar{r}))+f'(s\varphi(rr'))=f\mu(s\otimes_R r\bar{r})+f'\mu(s\otimes_R rr')=\gamma(f)((s\otimes_R r)\bar{r})+\gamma(f')((s\otimes_R r)r'))=(\bar{r}\cdot\gamma(f))(s\otimes_R r)+(r'\cdot\gamma(f'))(s\otimes_R r)=(\bar{r}\cdot\gamma(f)+ r'\cdot\gamma(f'))(s\otimes_R r)$. Moreover, $\gamma$ is invertible, as from \cite[Proposition 19.6]{AF92} $\mu$ is an isomorphism, and then we have $\delta\gamma(f)=\delta(f\mu)=(f\mu)\mu^{-1}=f\mu\mu^{-1}=f$ and $\gamma\delta(f)=\gamma(f\mu^{-1})=(f\mu^{-1})\mu=f\mu^{-1}\mu=f$. Finally, $\alpha$ results to be an isomorphism of left $R$-modules. In fact, $\alpha(m)$ is a left $R$-module homomorphism as $\alpha(m)(ra+r'b)=(ra+r'b)\cdot m=(ra)\cdot m+(r'b)\cdot m=r\cdot(a\cdot m)+r'\cdot(b\cdot m)=r\cdot(\alpha(m)(a))+r'\cdot(\alpha(m)(b))$, for every $r,r',a,b\in R$, and $\alpha$ is left $R$-linear as for every $r,r'\in R$ and $m,n\in G(S)$, we have $\alpha(r\cdot m+r'\cdot n)(a)=a\cdot(r\cdot m+r'\cdot n)=(ar)\cdot m+(ar')\cdot n =\alpha(m)(ar)+\alpha(n)(ar')=r\cdot\alpha(m)(a)+r'\cdot \alpha(n)(a)=(r\cdot\alpha(m)+r'\cdot \alpha(n))(a)$, for every $a\in R$. Moreover, $\alpha$ is injective as, if $\alpha(m)=0$, then $m=\alpha(m)(1)=0$, for any $m\in G(S)$, and it is also surjective as, if $f\in {}_R\Hom({}_R R,{}_R G(S))$, then $f(r)=r\cdot f(1)=\alpha(f(1))(r)$, for every $r\in R$. 
\end{invisible}
Since ${}_RG(S)$ is a free left $R$-module, then so is  $S$. 

$(2)\Rightarrow (3)$. Assume that $S$ is a free left $R$-module. Then, $S\cong R^{(J)}$. %Note that the $(S,R)$-bimodule ${}_SS_R$ with right action induced by $\varphi$ is finitely generated and projective as a left $S$-module (cf. \cite[Claim 3.25]{AB22}). 
If $X$ is a free left $S$-module (i.e. $X\cong S^{(A)}$), then it can be regarded as a left $R$-module where the action of $R$ is given by $R\times X\to X$, $(r, x)\mapsto \varphi(r)x$. Then  $\varphi_*(X)={}_RX\cong({}_R S)^{(A)}\cong (R^{(J)})^{(A)}\cong R^{(A\times J)}$
 is a free left $R$-module. 
 
$(3)\Rightarrow (1)$.  If $\varphi_{*}$ preserves free modules, it induces $\varphi_{*f}:S\text{-}\mathrm{Mod}_f\to R\text{-}\mathrm{Mod}_f$. Since the inclusion functors   $i_S:S\text{-}\mathrm{Mod}_f\hookrightarrow S\text{-}\mathrm{Mod}$ and $i_R:R\text{-}\mathrm{Mod}_f\hookrightarrow R\text{-}\mathrm{Mod}$ are
fully faithful, then the assumptions of Lemma \ref{lem:subadjunction} are satisfied and $(\varphi^*_{f},\varphi_{*f})$ results to be an adjunction. Indeed, the square 
\begin{gather*}
		\xymatrixcolsep{1cm}\xymatrixrowsep{.8cm}
		\xymatrix{S\text{-}\mathrm{Mod}_f\ar[r]^-{i_S}
			\ar@<1ex>[d]^*-<0.1cm>{^{\varphi_{*f}}}& S\text{-}\mathrm{Mod} \ar@<1ex>[d]^*-<0.1cm>{^{\varphi_{*}}}
			\\
			R\text{-}\mathrm{Mod}_f\ar[r]^{i_R} \ar@<1ex>[u]^*-<0.1cm>{^{\varphi^*_{f}}}& R\text{-}\mathrm{Mod} \ar@<1ex>[u]^*-<0.1cm>{^{\varphi^*}}\ar@{}[u]|{\dashv}
	 }%\ar@{}[u]|{\dashv}
\end{gather*} 
is commutative, i.e. $i_R \circ \varphi_{*f}=\varphi_*\circ i_S$ and $i_S\circ\varphi^*_{f}=\varphi^*\circ i_R$, since $\varphi_{*f}$ and $\varphi^*_{f}$ have been defined as the restrictions of $\varphi_{*}$ and $\varphi^*$ respectively. 
Since the pair $(i_S,i_R)$ constitute ad morphism of adjuctions, by \cite[Proposition 1, page 99]{Mac98} we know that the unit $\eta_f$ and counit $\epsilon_f$ of $(\varphi^*_{f},\varphi_{*f})$ are related to the unit $\eta$ and counit $\epsilon$ of $(\varphi^*,\varphi_{*})$ by the equalities $\eta i_R=i_R\eta_f$ and $\epsilon i_S=i_S\epsilon_f$.
This means that $\eta_f$ and $\epsilon_f$ are just the restrictions of $\eta$ and $\epsilon$ respectively. 
\begin{comment}
[Commenterei da qui all'ultima frase, quella che inizia con "Thus...", esclusa.]
since $i_R \varphi_{*f}(S^{(B)})=\varphi_{*}(S^{(B)})=\varphi_{*} i_S(S^{(B)})$, and $i_S \varphi^*_{f}(R^{(B)})=i_S(S\otimes_RR^{(B)})=S\otimes_R R^{(B)}=\varphi^*(R^{(B)})=\varphi^* i_R(R^{(B)})$. %while $\varphi^* i_R(R^{(B)})=\varphi^*(R^{(B)})=S\otimes_R R^{(B)}\cong (S\otimes_RR)^{(B)}\cong S^{(B)}$.
Note that the unit $\eta$ and the counit $\epsilon$ of the adjunction $(\varphi^*_f,\varphi_{*f})$, are respectively given by
$$\eta_M  : M\to S\otimes_R M,\, m\mapsto 1_{S}\otimes_R m,\qquad\text{and}\qquad \epsilon_N: S\otimes_R N\to N,\, s\otimes_R n\mapsto sn,$$
for any $M\in R\text{-}\mathrm{Mod}_f$, $N\in S\text{-}\mathrm{Mod}_f$. Indeed, we have $(\epsilon_{\varphi^*_f(M)}\circ\varphi_f^*(\eta_{M}))(s\otimes_Rx)=\epsilon_{\varphi^*_f(M)}(s\otimes_R 1_S\otimes_R x)=s1_S\otimes_Rx=s\otimes_Rx=\id_{\varphi^*_f(M)}(s\otimes_Rx)$, for any $x\in M$, $s\in S$, and $(\varphi_{*f}\epsilon_{N}\circ\eta_{\varphi_{*f}(N)})(y)=\varphi_{*f}\epsilon_{N}(1_S\otimes_R y)=y=\id_{\varphi_{*f}(N)}(y)$, for any $y \in N$. 
\end{comment}
 Explicitly, the unit is defined as $(\eta_f)_M  : M\to S\otimes_R M,\, m\mapsto 1_{S}\otimes_R m$,
for any $M\in R\text{-}\mathrm{Mod}_f$. Note that $(\eta_f)_M=(\varphi\otimes_R M)\circ l_M^{-1}$ where $l_M:R\otimes_R M\to M$ is the canonical isomorphism. 
Assume $S\neq 0$. Since $M$ if a free left $R$-module, then it is flat, so that $(\eta_f)_M$ is injective as so is $\varphi$ since $\mathrm{Ker}(\varphi)\subseteq \mathrm{Ann}_R(S)$ and the annihilator is zero as every non-trivial free left $R$-module is faithful. Then, $\varphi^*_f$ is faithful.
 \begin{invisible}
    Let $r\in\mathrm{Ker}(\varphi)$. Then $rs=\varphi(r)s=0$ so that $r\in \mathrm{Ann}_R(S)$. This proves the inclusion $\mathrm{Ker}(\varphi)\subseteq \mathrm{Ann}_R(S)$. Now let $r\in \mathrm{Ann}_R(S)$. Since $S$ is free it has a non-zero basis element, say $s$. From $rs=0$ and linear independence, we deduce that $r=0$ so that $\mathrm{Ann}_R(S)=0$, i.e. $S$ is faithful.
 \end{invisible}
\end{proof}

% Note that the restriction of scalars functor can be written as  $\varphi_*\cong{}_{S}\Hom(S,-):{}S\text{-}\mathrm{Mod}\to{}R\text{-}\mathrm{Mod}$.  Moreover an object is called perfectly presentable in case the hom-functor (into Set however) preserve  sifted colimits, in particular coproducts.

We recall the following known facts:
\begin{itemize}
	\item $\varphi_*$ is separable if and only if $S/R$ is separable, i.e. the multiplication $m_S:S\otimes_R S\to S$, $s\otimes_R s'\mapsto ss'$ splits as an $S$-bimodule map, see \cite[Proposition 1.3]{NVV89};

	\item $\varphi^*$ is separable if and only if $\varphi$ is split-mono as an $R$-bimodule map, i.e. if there is $E\in {}_{R}\Hom_{R}(S,R)$ such that $E\circ \varphi=\id$, see \cite[Proposition 1.3]{NVV89}; %\rd{\sout{moreover, $\varphi^*$ is separable if and only if there is $\phi\in  {}_{R}\Hom_S(S,\Hom_{R}(S,R))$ such that $\phi(1_S)(1_S)=1_R$;}}
	\item $\varphi^*$ is semiseparable if and only if $\varphi$ is a regular morphism of $R$-bimodules, i.e. there is $E\in {}_{R}\Hom_{R}(S,R)$ such that $\varphi\circ E\circ\varphi =\varphi$, see \cite[Proposition 3.1]{AB22}.
\end{itemize}
Note that the free restriction of scalars functor $\varphi_{*f}$ is a faithful functor, so by Proposition \ref{prop:sep} it is semiseparable if and only if it is separable. Assuming that $S\neq 0$ is free as a left $R$-module,  then by Proposition \ref{prop:varfree} the functor $\varphi^*_{f}$ is faithful, hence again by Proposition \ref{prop:sep} it is semiseparable if and only if it is separable. It remains to check when $\varphi_{*f}$ and $\varphi^*_{f}$ are separable functors.
%, $R^{(B)}\mapsto S^{(B)}$, $[f:R^{(B)}\to R^{(A)}]\mapsto S\otimes_R f\cong R^{(J)}\otimes_R f\cong (R\otimes_R f)^{(J)}\cong f^{(J)}: S^{(B)}\to S^{(A)}$.

\begin{prop}\label{prop:inducfunc-free}
	Let $\varphi :R\to S$ be a morphism of rings, with $S$ a free left $R$-module. 
 \begin{enumerate}
     \item[1)] The free induction functor $\varphi_f^*= S\otimes_{R}(-):R\text{-}\mathrm{Mod}_f\rightarrow S\text{-}\mathrm{Mod}_f$ is separable if and only if  $\varphi$ is a split-mono as an $R$-bimodule map.
     \item[2)] The free restriction of scalars functor $\varphi_{*f}:S\text{-}\mathrm{Mod}_f\to R\text{-}\mathrm{Mod}_f$ is separable if and only if $S/R$ is separable.
 \end{enumerate}
\end{prop}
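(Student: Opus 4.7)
The plan is to apply Rafael's theorem to the adjunction $\varphi^{*}_{f}\dashv \varphi_{*f}$ supplied by Proposition \ref{prop:varfree}, whose unit $\eta_{f}$ and counit $\epsilon_{f}$ are the restrictions of the unit $\eta$ and counit $\epsilon$ of $\varphi^{*}\dashv \varphi_{*}$. Recall that $\varphi^{*}$ is separable if and only if $\varphi$ is a split monomorphism of $R$-bimodules, while $\varphi_{*}$ is separable if and only if $S/R$ is separable.

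For the ``if'' direction of both items, I would appeal to Lemma \ref{lem:subadjunction} applied to the commutative square of fully faithful inclusions $(i_{S},i_{R})$ exhibited in the proof of Proposition \ref{prop:varfree}: the separability of $\varphi^{*}$ (resp. of $\varphi_{*}$) transfers along this square to yield separability of $\varphi^{*}_{f}$ (resp. of $\varphi_{*f}$).

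For the ``only if'' direction of (1), by Rafael's theorem there exists a natural transformation $\nu_{f}:\varphi_{*f}\varphi^{*}_{f}\to \id_{R\text{-}\mathrm{Mod}_{f}}$ with $\nu_{f}\circ \eta_{f}=\id$. Evaluating at $M=R$ and using the canonical isomorphism $S\otimes_{R}R\cong S$, one extracts a map $E:S\to R$, automatically left $R$-linear, satisfying $E\circ \varphi=\id_{R}$ (since $(\eta_{f})_{R}$ becomes $\varphi$ under the identification). The crucial observation is that for every $r\in R$ the right multiplication $\rho_{r}:R\to R$, $x\mapsto xr$, is a morphism of free left $R$-modules of rank one, hence a morphism in $R\text{-}\mathrm{Mod}_{f}$; applying naturality of $\nu_{f}$ to $\rho_{r}$ forces $E(s\varphi(r))=E(s)\,r$, i.e. $E$ is right $R$-linear. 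Thus $E$ is an $R$-bimodule retraction of $\varphi$.

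For the ``only if'' direction of (2), Rafael's theorem provides a natural transformation $\sigma_{f}:\id_{S\text{-}\mathrm{Mod}_{f}}\to \varphi^{*}_{f}\varphi_{*f}$ with $\epsilon_{f}\circ \sigma_{f}=\id$. Since $S$ is free of rank one over itself, evaluating at $N=S$ yields a left $S$-linear map $\sigma:S\to S\otimes_{R}S$ with $m_{S}\circ \sigma=\id_{S}$. Naturality of $\sigma_{f}$ at the right-multiplication morphisms $\rho_{s}:S\to S$, $x\mapsto xs$, which are morphisms in $S\text{-}\mathrm{Mod}_{f}$, yields right $S$-linearity of $\sigma$, so that $\sigma$ is an $S$-bimodule section of $m_{S}:S\otimes_{R}S\to S$; this is precisely separability of $S/R$. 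The main, though mild, obstacle I expect is the verification that the right-multiplication maps $\rho_{r}$ and $\rho_{s}$ — not themselves bimodule maps — do belong to the relevant free-module categories, so that naturality of $\nu_{f}$ and $\sigma_{f}$ can legitimately be invoked to upgrade the one-sided linearity of $E$ and $\sigma$ to the required two-sided linearity.
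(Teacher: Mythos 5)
Your proposal is correct and follows essentially the same route as the paper: Rafael's theorem applied to the adjunction $(\varphi^*_f,\varphi_{*f})$ from Proposition \ref{prop:varfree}, evaluation of the splitting natural transformation at the rank-one free modules $R$ (resp. $S$), naturality at the right-multiplication maps (which are left-linear endomorphisms of a free module, hence legitimate morphisms in the free-module category) to upgrade to bimodule linearity, and Lemma \ref{lem:subadjunction} for the converse directions. No gaps.
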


\begin{proof}
1)
Assume that $\varphi_f^*$ is separable. Then, by Rafael Theorem, there exists a natural transformation $\nu \in \mathrm{Nat}(\varphi_{*f}\varphi_f^*,\id_{R\text{-}\mathrm{Mod}_f})$ such that $\nu\circ\eta = \id$, where $\eta$ is the unit of $(\varphi^*_{f},\varphi_{*f})$ i.e. $\eta_M  : M\to S\otimes_R M,\, m\mapsto 1_{S}\otimes_R m$,
for any $M\in R\text{-}\mathrm{Mod}_f$. Now, since $R$ is a free $R$-module, we consider $E\in {}_{R}\Hom_{R}(S,R)$ defined by setting $E(s):=\nu_R(s\otimes_R 1_R)$, for every $s\in S$ (note that the right $R$-linearity of $E$ descends from the naturality of $\nu$.\begin{invisible}
    Indeed, for any $s\in S$, $r\in R$ we have that $E(s)r=\nu_R(s\otimes_R 1_R)r=(f_r\circ\nu_R)(s\otimes_R 1_R)=(\nu_R\circ(S\otimes_Rf_r))(s\otimes_R1_R)=\nu_R(s\otimes_R r)=\nu_R(sr\otimes_R1_R)=E(sr)$, where $f_r:R\to R$ is the left $R$-module map $r'\mapsto r'r$.
\end{invisible} Then, for every $r\in R$,
	we get $
	(E\circ\varphi )(r) = E(\varphi(r))=\nu_R(\varphi(r)\otimes_R 1_R)=\nu_R(\eta_R(r)) = r
	$. Thus $E\circ\varphi =\id$. %Thus, the composition $\varphi^*\circ i_R=i_S\circ\varphi^*_{f}$, where $i_S$ and $i_R$ are fully faithful, results to be separable. %\cite[Lemma 1.1.3]{NVV89}
 Conversely, if $\varphi$ is a split-mono as an $R$-bimodule map, we mentioned that $\varphi^*$ is separable. By Lemma \ref{lem:subadjunction}, so is $\varphi_f^*$.
 
2) Assume now that $\varphi_{f*}$ is separable. Then, by Rafael Theorem,  there exists a natural transformation $\gamma \in \mathrm{Nat}(\id_{S\text{-}\mathrm{Mod}_f},\varphi_f^*\varphi_{*f})$ such that $\epsilon\circ\gamma = \id$, where $\epsilon$ is the counit of $(\varphi^*_{f},\varphi_{*f})$ i.e. $\epsilon_N  : S\otimes_R N\to N,\, s\otimes n\mapsto sn$,
for any $N\in S\text{-}\mathrm{Mod}_f$. Now, since $S$ is a free $S$-module, we consider $\gamma_S\in {}_{S}\Hom_{S}(S,S\otimes_R S)$ (note that the right $S$-linearity of $\gamma_S$ descends from the naturality of $\gamma$). 
\begin{invisible}
    Indeed, for any $s,x\in S$ consider the left $S$-module map $f_s:S\to S$, $x\mapsto xs$. Then, by naturality of $\gamma$ we have $\gamma_S(xs)=\gamma_Sf_s(x)=(S\otimes_Rf_s)\gamma_S(x)=\gamma_S(x)s.$
\end{invisible}
Since $\epsilon_S\circ \gamma_S=\id$, we conclude that the multiplication $m_S=\epsilon_S:S\otimes_R S\to S$ splits as an $S$-bimodule map so that $S/R$ is separable.
Conversely, if $S/R$ is separable, we mentioned that $\varphi_*$ is separable. By Lemma \ref{lem:subadjunction}, so is $\varphi_{f*}$.
\end{proof}

\begin{es}
1) Consider the morphism of rings $\varphi:\mathbb{R}\to \mathbb{R}\times \mathbb{R},r\mapsto (r,r)$. The $\mathbb{R}$-bimodule structure induced on $\mathbb{R}\times \mathbb{R}$ via $\varphi$ is the canonical one so that it is free.
The canonical projection $E:\mathbb{R}\times \mathbb{R}\to \mathbb{R},(a,b)\mapsto a$, is a morphism of $\mathbb{R}$-bimodules such that $E\circ \varphi=\id$. By Proposition \ref{prop:inducfunc-free}, the free induction functor $\varphi_f^*= \mathbb{R}^2\otimes_{\mathbb{R}}(-):\mathbb{R}\text{-}\mathrm{Mod}_f\rightarrow \mathbb{R}^2\text{-}\mathrm{Mod}_f$  is separable.

2) Let $R$ be a ring and let $\varphi:R\to \mathrm{M}_n(R)$ be the canonical inclusion into the ring of $n\times n$ matrices over $R$. It is well-known that $\mathrm{M}_n(R)/R$ is separable (see e.g. \cite[Example II]{DI71}) and clearly $\mathrm{M}_n(R)\cong R^{n^2}$ is free as a left $R$-module.
By Proposition \ref{prop:inducfunc-free}, the free restriction of scalars functor $\varphi_{*f}:\mathrm{M}_n(R)\text{-}\mathrm{Mod}_f\to R\text{-}\mathrm{Mod}_f$ is separable.
\end{es}

\begin{comment}
\begin{rmk} \rd{[Non vedo applicazioni di questo remark, lo rimuoverei. Tra l'altro se $S$ è libero faccio fatica a vedere un morfismo suriettivo d'anneli $\varphi:R\to S$.]}
	If $\varphi:R\to S$ is a surjective morphism of rings, with $S$ a free left $R$-module, then by \cite[Exercise 8.11 (4)]{AF92} $\varphi$ splits, hence $\varphi^*$ is separable (and then semiseparable). Moreover, by \cite[Remark 3.4]{AB22} $\varphi^*$ is naturally full if and only if it is semiseparable, hence $\varphi^*$ results to be fully faithful.
\end{rmk}
\end{comment}
\bl{\begin{invisible}
\begin{prop}\label{prop:fact-free}
	Let $\varphi :R\to S$ be a morphism of rings with $S$ a free left $R$-module. Write $\varphi=\iota\circ \overline{\varphi}$ where $\iota:\varphi(R)\to S$ is the canonical inclusion and $\overline{\varphi}:R\to \varphi(R)$ is the corestriction of $\varphi$ to its image $\varphi(R)$. Consider the free induction functors $\iota^*_f:R\text{-}\mathrm{Mod}_f\to \varphi(R)\text{-}\mathrm{Mod}_f$ and $\overline{\varphi}^*_f:\varphi(R)\text{-}\mathrm{Mod}_f\to S\text{-}\mathrm{Mod}_f$.
	Then, $\varphi_f^*:R\text{-}\mathrm{Mod}_f\rightarrow S\text{-}\mathrm{Mod}_f$ is separable if and only if $\iota^*_f$ is separable and $\overline{\varphi}^*_f$ is an equivalence. 
	\end{prop}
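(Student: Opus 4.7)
The plan is to reduce the statement to Proposition \ref{prop:inducfunc-free}, which characterizes the separability of a free induction functor $\psi^*_f$ in terms of the splitting of the corresponding ring morphism $\psi$ as a bimodule map. The key observation is that associativity of the tensor product provides a natural isomorphism
\[
\varphi_f^* \;\cong\; \iota^*_f\circ \overline{\varphi}^*_f,
\]
induced by $S\otimes_R M\cong S\otimes_{\varphi(R)}(\varphi(R)\otimes_R M)$ for every free left $R$-module $M$.

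For the ``only if'' direction I would first use Proposition \ref{prop:inducfunc-free} to extract an $R$-bimodule retraction $E:S\to R$ of $\varphi$. Injectivity of $\varphi$ then makes $\overline{\varphi}:R\to \varphi(R)$ a bijective ring homomorphism, hence a ring isomorphism; this immediately yields that $\overline{\varphi}^*_f$ is an isomorphism of categories, in particular an equivalence. The composite $E':=\overline{\varphi}\circ E:S\to \varphi(R)$ is then a $\varphi(R)$-bimodule map (the $R$- and $\varphi(R)$-bimodule structures on $S$ being identified via the iso $\overline{\varphi}$) which splits $\iota$, since $E'\circ \iota\circ \overline{\varphi} = \overline{\varphi}\circ E\circ \varphi = \overline{\varphi}$ and $\overline{\varphi}$ is right-cancellable. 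Applying Proposition \ref{prop:inducfunc-free} to $\iota$ then yields that $\iota^*_f$ is separable.

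For the converse, any equivalence is in particular a separable functor, and separable functors are closed under composition by \cite[Lemma 1.1]{NVV89}. Since $\varphi_f^*$ is naturally isomorphic to $\iota^*_f\circ \overline{\varphi}^*_f$ and both factors are separable, we conclude that $\varphi_f^*$ is separable (separability being invariant under natural isomorphism of functors).

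The main obstacle---more a bookkeeping matter than a conceptual one---is the careful matching of the two distinct bimodule structures on $S$ (as $R$-bimodule via $\varphi$ and as $\varphi(R)$-bimodule via $\iota$), together with the tacit freeness hypotheses needed for the intermediate free induction functors to be defined, namely that $S$ be free as a left $\varphi(R)$-module and $\varphi(R)$ be free as a left $R$-module. In the ``only if'' direction both are automatic once $\overline{\varphi}$ has been shown to be an isomorphism, whereas in the ``if'' direction they are implicit in the hypothesis that $\iota^*_f$ and $\overline{\varphi}^*_f$ exist in the first place.
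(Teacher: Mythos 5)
Your proof is correct, and in the forward direction it takes a genuinely different and more elementary route than the paper's. The paper lifts everything to the unrestricted module categories: it deduces separability of $\varphi^*$ from the bimodule retraction $E$, invokes the factorization theorem for semiseparable induction functors from \cite[Proposition 3.5]{AB22} to get that $\iota^*$ is separable and $\overline{\varphi}^*$ is a bireflection, upgrades the latter to an equivalence via Theorem \ref{thm:frobenius} and Lemma \ref{lem:trivfactsutr}, and then transfers all of this back to the free-module subcategories through Lemma \ref{lem:subadjunction}, with a separate argument (fullness of $\overline{\varphi}_{*}$, splitting of the counit) to see that $\overline{\varphi}^*_f$ itself is an equivalence. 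You instead stay entirely at the level of ring morphisms: the retraction $E\circ\varphi=\id$ forces $\varphi$ to be injective, so $\overline{\varphi}$ is a ring isomorphism and $\overline{\varphi}^*_f$ is trivially an equivalence, while $E':=\overline{\varphi}\circ E$ splits $\iota$ as a $\varphi(R)$-bimodule map, so Proposition \ref{prop:inducfunc-free} applied to $\iota$ (legitimate, since freeness of $S$ over $R$ transports to freeness over $\varphi(R)$ along the isomorphism $\overline{\varphi}$) gives separability of $\iota^*_f$ directly. This short-circuits the categorical machinery and is, if anything, cleaner; the converse direction is identical in both proofs (composite of separable functors, invariance under the natural isomorphism $\iota^*_f\circ\overline{\varphi}^*_f\cong\varphi^*_f$). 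One small inaccuracy in your closing remark: the freeness hypotheses are not needed for the intermediate free induction functors to \emph{be defined} (induction along any ring map preserves free modules), only for Proposition \ref{prop:inducfunc-free} to be applicable to $\iota$; since you establish that freeness where it is actually used, this does not affect the argument.
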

	\begin{proof} \rd{[Nota che se $\overline{\varphi}^*_f$ è un'equivalenza, allora ha un'aggiunto e quindi per Proposizione \ref{prop:varfree} si ha che $\overline{\varphi}$ è iniettiva, dunque biiettiva. Ciò significa $\varphi$ iniettiva. ]}
		If $\varphi^*_f$ is separable, by Proposition \ref{prop:inducfunc-free} there is $E\in {}_{R}\Hom_{R}(S,R)$ such that $\id =E\circ \varphi=E\circ\iota\circ \overline{\varphi}$, hence $\varphi^*$ is separable and also $\overline{\varphi}^*$ results to be separable. By \cite[Proposition 3.5]{AB22} since $\varphi^*$ is in particular semiseparable we have that $\iota^*$ is separable and $\overline{\varphi}^*$ is a bireflection. Thus, $\overline{\varphi}^*$ is an equivalence by Lemma \ref{lem:trivfactsutr} (5) since a separable bireflection is a fully faithful bireflection. Consider the free induction functors $\iota^*_f$ and $\overline{\varphi}^*_f$ which fulfill $i_{\varphi(R)}\circ \iota^*_{f}=\iota^*\circ i_{R}$. and $i_S\circ\overline{\varphi}^*_{f}=\overline{\varphi}^*\circ i_{\varphi(R)}$. Then, $\iota^*_f$ and $\overline{\varphi}^*_f$ are separable as so are $\iota^*$ and $\overline{\varphi}^*$, respectively. Note that $\overline{\varphi}^*_f$ has a right adjoint as $S$ is free as a left $\varphi(R)$-module with basis $\{1_S=\varphi(1_R)\}$, thus Proposition \ref{prop:varfree} applies. Since the restriction of scalars functor $\overline{\varphi}_*$ is full (as $\overline{\varphi}$ is surjective), the counit $\bar{\epsilon}_D$ of the adjunction $(\overline{\varphi}^*,\overline{\varphi}_*)$ splits for every $D\in S\text{-}\mathrm{Mod}$, and hence also the counit $\bar{\epsilon}_{fD}$ of the adjunction $(\overline{\varphi}^*_f,\overline{\varphi}_{*f})$, which is given by the restriction of $\bar{\epsilon}$, splits for every $D\in S\text{-}\mathrm{Mod}_f$, thus $\overline{\varphi}_{*f}$ is full. Since it is also faithful, we get that $\overline{\varphi}^*_f$ is a reflection. By Theorem \ref{thm:frobenius} $\overline{\varphi}^*_f$ results to be a bireflection, and then an equivalence as it is separable. Conversely, if  $\iota^*_f$ is separable and $\overline{\varphi}^*_f$ is an equivalence, whence separable, then the composition $\iota^*_f\circ \overline{\varphi}^*_f\cong (\iota\circ \overline{\varphi})^*_f=\varphi^*_f$ is separable. 
\end{proof}
\end{invisible}}

\subsection{Comparing the factorizations of a semiseparable adjoint}\label{sub:compfact}
Let $F\dashv G:\dd\to\cc$ be an adjunction. So far we have seen that if $G$ is semiseparable, then it admits two canonical factorizations as a bireflection up to retracts followed by a separable functor, namely $G=G_e\circ H$  (cf. Theorems  \ref{thm:coidentifier} and \ref{thm:H-corefl-utr}) and $G=U_{GF}\circ K_{GF}$ (cf. Theorems  \ref{thm:ssepMonad} and \ref{thm:computr}).
$$\xymatrixcolsep{1.6cm}\xymatrixrowsep{.6cm}\xymatrix{\dd\ar[r]^{{H}}_{\text{biref. u.t.r.}}& {\dd_e}\ar[r]^{{G_e}}_{\text{sep.}}& \cc}\qquad \text{and}\qquad\xymatrix{\dd\ar[r]^{ K_{GF}}_{\text{biref. u.t.r.}}& {\cc_{GF}}\ar[r]^{ U_{GF}}_{\text{sep.}}& \cc}$$
Similar factorizations have been obtained also for $F$ in case it is semiseparable. 
Next aim is to compare these factorizations. First we need Lemma \ref{lem:coididp}, an easy result concerning the idempotent completeness of the coidentifier, and the useful Lemma \ref{lem:Berger}, regarding the composition of (co)reflections (up to retracts). The subsequent Proposition \ref{prop:F_e:equiv-utr} provides a factorization of bireflections up to retracts.\medskip

In order to state next result, we adopt the following terminology : A functor $F:\cc\to\dd$ \textbf{lifts idempotents} whenever each idempotent morphism in $\dd$ is of the form $F(q)$ for some idempotent morphism $q$ in $\cc$. It is clear that, given such a functor, if $\cc$ is idempotent complete so is $\dd$.
\begin{invisible}
Let $F:\cc\to\dd$ be a functor that lifts idempotents morphisms and assume that $\cc$ is idempotent complete.
If $d$ is an idempotent morphism in $\dd$, then there is an idempotent morphism $q$ in $\cc$ such that $d=Fq$. By hypothesis we can write $q=i\circ p$
for some morphisms $p$ and $i$ such that $p\circ i=\id$. Then $%
d=F(q)=F(i\circ p)=Fi\circ Fp$ and $%
Fp\circ Fi=F(p\circ i)=F(\id)=\id$
so that the idempotent $d$ splits. We have so proved that $\dd$ is idempotent complete.
\end{invisible}

\begin{lem}
\label{lem:coididp}Let $\cc$ be a category and let $e:\id_{%
\cc}\rightarrow \id_{\cc}$ be an idempotent natural
transformation. Then the quotient functor $H:\cc\to\cc_e$ lifts idempotents. As a consequence, if $\cc$ is idempotent complete so is the coidentifier $\cc_{e}$.
\end{lem}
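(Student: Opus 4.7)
The plan is to show that, given an idempotent morphism $\overline{f}:X\to X$ in $\cc_{e}$, one can explicitly exhibit an idempotent in $\cc$ mapping to it under $H$; the statement about idempotent completeness of $\cc_e$ then drops out from the general remark preceding the lemma.

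First I would translate the hypothesis. The equation $\overline{f}\circ\overline{f}=\overline{f}$ in $\cc_{e}$ means, by the very definition of the congruence $\sim$, that $e_{X}\circ f\circ f=e_{X}\circ f$ in $\cc$. My candidate lift will be $q:=e_{X}\circ f:X\to X$. The point is that, by naturality of $e:\id_{\cc}\to \id_{\cc}$ applied at $f:X\to X$, one has $e_{X}\circ f=f\circ e_{X}$; together with $e_{X}\circ e_{X}=e_{X}$ and the hypothesis, this gives
\[
q\circ q=e_{X}\circ f\circ e_{X}\circ f=e_{X}\circ e_{X}\circ f\circ f=e_{X}\circ f\circ f=e_{X}\circ f=q,
\]
so $q$ is idempotent. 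Moreover $H(q)=\overline{e_{X}\circ f}$ and since $e_{X}\circ(e_{X}\circ f)=e_{X}\circ f$ we get $\overline{e_{X}\circ f}=\overline{f}$, so $H(q)=\overline{f}$. This proves that $H$ lifts idempotents.

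For the final claim, assume $\cc$ is idempotent complete and pick any idempotent $\overline{f}:X\to X$ in $\cc_{e}$. By the above, there is an idempotent $q:X\to X$ in $\cc$ with $H(q)=\overline{f}$. Since $\cc$ is idempotent complete, $q$ splits in $\cc$ as $q=i\circ p$ with $p\circ i=\id$. Applying the functor $H$ yields $\overline{f}=H(q)=H(i)\circ H(p)$ and $H(p)\circ H(i)=H(\id)=\id$, so $\overline{f}$ splits in $\cc_{e}$. Hence $\cc_{e}$ is idempotent complete.

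I expect no serious obstacle here: the only subtle point is choosing the right lift (taking $q=f$ would work at the level of $\sim$ but would not yield an idempotent in $\cc$), and the correct choice $q=e_{X}\circ f$ is forced by the observation that $e_{X}\circ f$ is already a normal-form representative of the class $\overline{f}$ and automatically squares to itself thanks to naturality of $e$.
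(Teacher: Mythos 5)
Your proof is correct and follows essentially the same route as the paper's: you take the same lift $q=e_X\circ f$, verify idempotency via naturality of $e$ and the relation $e_X\circ f\circ f=e_X\circ f$, and check $H(q)=\overline{f}$ using $e_X\circ e_X=e_X$. The deduction of idempotent completeness of $\cc_e$ by splitting $q$ in $\cc$ and applying $H$ is exactly the general observation the paper records just before the lemma.
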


\begin{proof}
Let $\overline{h}:C\rightarrow C$ be an idempotent morphism in $\cc%
_{e}$. Then $\overline{h}\circ \overline{h}=\overline{h}$ i.e. $\overline{%
h\circ h}=\overline{h}$ and hence $e_{C}\circ h\circ h=e_{C}\circ h$. Set $%
q:=e_{C}\circ h:C\rightarrow C$. Then $q\circ q=e_{C}\circ h\circ e_{C}\circ
h=e_{C}\circ e_{C}\circ h\circ h=e_{C}\circ h\circ h=e_{C}\circ h=q$ and
hence $q$ is an idempotent morphism in $\cc$. Moreover $Hq=\overline{q}=\overline{e_{C}\circ h}=\overline{h}$.
%Assume now that $\cc$ is
%idempotent complete. By the foregoing, an idempotent morphism in $\cc%
%_{e},$ is of the form $\overline{q}$ where $q:C\rightarrow C$ is an
%idempotent morphism in $\cc$. By hypothesis we can write $q=s\circ p$
%for some morphisms $p$ and $s$ such that $p\circ s=\id$. Then $%
%\overline{q}=\overline{s\circ p}=\overline{s}\circ \overline{p}$ and $%
%\overline{p}\circ \overline{s}=\overline{p\circ s}=\overline{\id}$
%so that the idempotent $\overline{q}$ splits. We have so proved that $%
%\cc_{e}$ is idempotent complete.
\end{proof}

\begin{lem}
	\label{lem:Berger}Let $G:\dd\rightarrow \cc$ and $U:\mathcal{C\rightarrow C}^{\prime }$
	be functors.
	\begin{enumerate}
		\item[1)] If $G$ is a (co)reflection and $U$ is conservative, then $U$ is an equivalence if and
		only if $U\circ G$ is a (co)reflection.
		\item[2)]  If $G$ is a (co)reflection up to retracts and $U$ is separable, then $U$ is an equivalence up to retracts if and
		only if $U\circ G$ is a (co)reflection up to retracts.
	\end{enumerate}
	
\end{lem}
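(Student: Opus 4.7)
The plan is to settle $(1)$ and then deduce $(2)$ by passing to idempotent completions; we treat the coreflection case throughout, the reflection one being strictly dual.

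For the forward direction of $(1)$, we write $F\dashv G$ with $F$ fully faithful. If $U$ is an equivalence with quasi-inverse $V$, then $V\dashv U$ and composing adjunctions yields $FV\dashv UG$; since $F$ and $V$ are both fully faithful, so is their composite, so $UG$ is a coreflection.

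For the converse in $(1)$, suppose $L\dashv UG$ with $L$ fully faithful, so that the unit $\eta':\id_{\cc'}\to UGL$ is invertible. Since $F$ is fully faithful, $\eta:\id_\cc\to GF$ is invertible as well, and chaining the bijection coming from $L\dashv UG$ with the isomorphism induced by $U\eta_C$ will produce a natural isomorphism
\begin{equation*}
\Hom_\cc(GLC',C)\stackrel{\sim}{\longrightarrow} \Hom_{\cc'}(C',UC),\qquad h\longmapsto Uh\circ\eta'_{C'},
\end{equation*}
identifying $GL$ as a left adjoint to $U$ whose unit is precisely $\eta'$. A triangle identity of $GL\dashv U$ then reads $U\epsilon^U\circ \eta' U=\id_U$, which forces $U\epsilon^U$ to be invertible; conservativity of $U$ upgrades this to $\epsilon^U$ being invertible, so $U$ is an equivalence with quasi-inverse $GL$.

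Assertion $(2)$ will follow from $(1)$ applied to the completions $G^\natural$ and $U^\natural$: indeed $(UG)^\natural=U^\natural G^\natural$, and being a (co)reflection (resp.\ equivalence) up to retracts means by definition that the completion is a (co)reflection (resp.\ equivalence); moreover Corollary~\ref{cor:sep-nat-ff} transfers separability of $U$ to $U^\natural$, and a separable functor is conservative by Remark~\ref{rmk:Maschke}, so the conservativity hypothesis is available for the completion. The main technical point is the verification, in the converse of $(1)$, that the unit of the synthesised adjunction $GL\dashv U$ is exactly $\eta'$, because this is precisely what lets conservativity of $U$ be invoked at the right spot; once this identification is made, the remaining verifications amount to routine uses of naturality and the triangle identities of $L\dashv UG$ and $F\dashv G$.
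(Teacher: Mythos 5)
Your overall architecture is sound, and part (2) is handled exactly as in the paper: pass to completions, use Corollary \ref{cor:sep-nat-ff} and Remark \ref{rmk:Maschke} to get conservativity of $U^{\natural}$, and apply part (1). The forward direction of (1) is also fine. The paper itself disposes of the hard direction of (1) by citing \cite[Corollary 4.9]{AGM18} (a consequence of \cite[Lemma 1.2]{Ber07}), so your attempt to reprove it directly is legitimate --- but as written it has a gap at the decisive step.

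The gap: you assert that chaining the adjunction bijection of $L\dashv UG$ with the isomorphism induced by $U\eta_{C}$ yields a natural bijection $\Hom_{\cc}(GLC',C)\cong\Hom_{\cc'}(C',UC)$, $h\mapsto Uh\circ\eta'_{C'}$. Those two ingredients only give $\Hom_{\cc'}(C',UC)\cong\Hom_{\dd}(LC',FC)$; the remaining passage to $\Hom_{\cc}(GLC',C)$ (apply $G$, then compose with $\eta_{C}^{-1}$) is not a formal bijection, because the adjunction $F\dashv G$ controls morphisms \emph{out of} objects $FX$, not morphisms \emph{into} $FC$ from an arbitrary object such as $LC'$. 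Concretely, since $\eta'_{C'}$ is invertible, your map is bijective if and only if $U\colon\Hom_{\cc}(GLC',C)\to\Hom_{\cc'}(UGLC',UC)$ is bijective, which is essentially the full faithfulness of $U$ that you are trying to prove. Computing the round trip with the obvious candidate inverse shows it equals precomposition with an idempotent $\alpha_{C'}\colon GLC'\to GLC'$ satisfying $U\alpha_{C'}=\id$; conservativity of $U$ is needed \emph{right here} to conclude $\alpha_{C'}=\id$, not merely at the later step where you upgrade $U\epsilon^{U}$ to $\epsilon^{U}$. A clean repair staying close to your outline: let $\theta_{C}\colon LUC\to FC$ be the mate of $U\eta_{C}$ under $L\dashv UG$; then $UG\theta_{C}=U\eta_{C}\circ(\eta'_{UC})^{-1}$ is invertible, conservativity makes $G\theta_{C}$ invertible, so $GLU\cong GF\cong\id_{\cc}$, while $UGL\cong\id_{\cc'}$ via $\eta'$; hence $GL$ is a quasi-inverse of $U$ and no adjunction $GL\dashv U$ need be synthesised beforehand.
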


\begin{proof}Set $G^{\prime }:=U\circ G.$
	
	1) Since $U$ is conservative, if $G'$ is a coreflection, by \cite[%
	Corollary 4.9]{AGM18}, which is a consequence of \cite[Lemma 1.2]{Ber07}, we
	get that $U$ is an equivalence. Conversely, if $U$
	is an equivalence then it is in particular a coreflection and hence, by Remark \ref{rmk:coref-biref}, $G'$ is a coreflection as a composition of coreflections. The statement for $G$ a reflection is proved dually.
	
	2) By Corollary \ref{cor:sep-nat-ff}, since $U$ is separable so is $U^{\natural }$. In particular $U^{\natural }$ is conservative, by Remark \ref{rmk:Maschke}.
	%\begin{invisible}
	%Let $F$ be a separable functor. If $f:X\rightarrow Y$ is such that $%
	%Ff:FX\rightarrow FY$ is invertible, then there is a morphism $%
	%g:FY\rightarrow FX$ such that $Ff\circ g=\id_{FY}$ and $g\circ Ff=%
	%\id_{FX}.$ Then $f\circ \mathcal{P}_{Y,X}\left( g\right) =\mathcal{P}%
	%_{Y,X}\left( Ff\circ g\right) =\mathcal{P}_{Y,X}\left( \id%
	%_{FY}\right) =\mathcal{P}_{Y,X}\left( F\id_{Y}\right) =\id%
	%_{Y}$ and similarly $\mathcal{P}_{Y,X}\left( g\right) \circ f=\mathcal{P}%
	%_{Y,X}\left( g\circ Ff\right) =\mathcal{P}_{Y,X}\left( \id%
	%_{FX}\right) =\mathcal{P}_{Y,X}\left( F\id_{X}\right) =\id%
	%_{X}$ so that $f$ is invertible.
	%\end{invisible}
	Therefore we have that $\left( G^{\prime
	}\right) ^{\natural }=U^{\natural }\circ G^{\natural }$ where $G^{\natural }$
	is a (co)reflection and $U^{\natural }$ is conservative. By 1), we get that $U^{\natural }$ is an equivalence (i.e. $U$ is an equivalence up to retracts) if and only if $\left( G^{\prime
	}\right) ^{\natural }$ is a (co)reflection (i.e. $G'$ is a (co)reflection up to retracts). 
\end{proof}
\begin{prop}\label{prop:F_e:equiv-utr}
Let $F:\cc\rightarrow \dd$ be a bireflection up to retracts.
If we consider the associated idempotent natural transformation
$e:\id_{\cc}\rightarrow \id_{\cc}$ and the corresponding
factorization $F=F_{e}\circ H$, then the unique functor $F_{e}:\cc%
_{e}\rightarrow \dd$ is an equivalence up to retracts. If $\cc$ is idempotent complete, then $F_{e}$ is an equivalence.
\end{prop}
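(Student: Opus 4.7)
The plan is to deduce the result from the factorization $F=F_e\circ H$ combined with Lemma \ref{lem:Berger}(2). First, since $F$ is a bireflection up to retracts, Lemma \ref{lem:trivfactsutr}(3) tells us that $F$ is in particular a semiseparable coreflection up to retracts. Thus the associated idempotent $e:\id_\cc\to\id_\cc$ is well-defined and, by Theorem \ref{thm:coidentifier}, $F$ factors uniquely as $F=F_e\circ H$ with $F_e:\cc_e\to\dd$ separable. By Theorem \ref{thm:H-corefl-utr}, the quotient functor $H:\cc\to\cc_e$ is a coreflection up to retracts.

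I then want to apply Lemma \ref{lem:Berger}(2) with $G:=H$ and $U:=F_e$. The hypotheses are in place: $H$ is a coreflection up to retracts, $F_e$ is separable, and the composite $F_e\circ H=F$ is a coreflection up to retracts (being even a bireflection up to retracts). The lemma then yields that $F_e$ is an equivalence up to retracts, as desired.

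For the second assertion, assume in addition that $\cc$ is idempotent complete. By Lemma \ref{lem:coididp} the coidentifier $\cc_e$ is idempotent complete as well. Since $F_e:\cc_e\to\dd$ has idempotent complete source and is an equivalence up to retracts, Proposition \ref{prop:idpcom-cutr} upgrades this to $F_e$ being a genuine equivalence.

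I expect no real obstacle here: all the work has been done in Theorem \ref{thm:H-corefl-utr}, Theorem \ref{thm:coidentifier}, Lemma \ref{lem:Berger}(2), Lemma \ref{lem:coididp} and Proposition \ref{prop:idpcom-cutr}, and the argument is essentially a two-line chase through these references. The only mild care needed is to make sure we have enough structure on $F$ to speak of $e$ and $F_e$ at all, which is exactly the content of Lemma \ref{lem:trivfactsutr}(3) guaranteeing semiseparability.
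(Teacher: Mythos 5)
Your proposal is correct and follows the paper's own proof essentially verbatim: the same reduction via Lemma \ref{lem:trivfactsutr}, Theorem \ref{thm:coidentifier} and Theorem \ref{thm:H-corefl-utr}, the same application of Lemma \ref{lem:Berger}(2) with $G:=H$ and $U:=F_e$, and the same use of Lemma \ref{lem:coididp} and Proposition \ref{prop:idpcom-cutr} for the idempotent complete case. Nothing is missing.
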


\begin{proof}
If $F$ is a bireflection up to retracts, it is a semiseparable coreflection up to retracts by Lemma \ref{lem:trivfactsutr}. In particular, $F$ admits the associated idempotent natural transformation $e:\id_{\cc}\rightarrow
\id_{\cc}$, see Proposition \ref{prop:idempotent}. By Theorem \ref{thm:coidentifier}, there is a factorization $F=F_{e}\circ H$ for a unique
functor $F_{e}:\cc_{e}\rightarrow \dd$ which is separable. Since both $F$ and $H$ are coreflections up to retracts (see Theorem \ref{thm:H-corefl-utr}) and $F_{e}$ is separable, by Lemma \ref{lem:Berger}, we get that $F_{e}$ is an equivalence
up to retracts.

If $\cc$ is idempotent complete so is $\cc_e$ by Lemma \ref{lem:coididp}. Then $F_{e}$ is an equivalence in view of Proposition \ref{prop:idpcom-cutr}.
\end{proof}

\begin{es}
 Let $F:\cc\to\dd$ be a bireflection up to retracts. Thus $F^\natural:\cc^\natural\to\dd^\natural$ is a bireflection. In particular, by Lemma \ref{lem:trivfactsutr}, it is a bireflection up to retracts whose source category $\cc^\natural$ is idempotent complete. By Proposition \ref{prop:F_e:equiv-utr}, $(F^\natural)_{\alpha}:(\cc^\natural)_{\alpha}\to \dd^\natural$ is an equivalence where $\alpha:\id_{\cc^\natural}\rightarrow
\id_{\cc^\natural}$ is the idempotent natural transformation associated to $F^\natural$. By definition and running through again the proof of Corollary \ref{cor:sep-nat-ff}, we get that
$$\alpha_{(C,c)}=\p^{F^\natural}_{(C,c),(C,c)}(\id_{F^\natural(C,c)})
=\p^{F^\natural}_{(C,c),(C,c)}(\id_{(FC,Fc)})
=\p^{F}_{C,C}(Fc)=\p^{F}_{C,C}(\id_{FC})\circ c=e_{C}\circ c$$ so that $\alpha=e^\natural$ where $e:\id_{\cc}\rightarrow
\id_{\cc}$ is the idempotent natural transformation associated to $F$. This shows that $(F^\natural)_{e^\natural}:(\cc^\natural)_{e^\natural}\to\dd^\natural$ is an equivalence and hence $\dd^\natural\cong (\cc^\natural)_{e^\natural}$.

  In particular, in Theorem \ref{thm:H-corefl-utr} we proved that $H:\cc\to\cc_e$ is a bireflection up to retracts. By the foregoing, $(H^\natural)_{e^\natural}:(\cc^\natural)_{e^\natural}\to(\cc_e)^\natural$ is an equivalence and hence $(\cc_e)^\natural\cong (\cc^\natural)_{e^\natural}$.
\end{es}

We are now able to compare the two factorizations we are interested in.

\begin{prop}\label{prop:CoidEil}Consider an adjunction $F\dashv G:\dd\rightarrow
\cc$.
\begin{enumerate}
  \item If $G$ is semiseparable and $e:\id_\dd\to\id_\dd$ is the associated idempotent natural transformation, then there is a unique functor
$\left( K_{GF}\right) _{e}:\dd_{e}\rightarrow \cc_{GF}$ such
that $\left( K_{GF}\right) _{e}\circ H=K_{GF}$ and $U_{GF}\circ \left(
K_{GF}\right) _{e}=G_{e}$. Moreover, the functor $\left( K_{GF}\right) _{e}$ is an equivalence up to
retracts. If $\dd$ is idempotent complete, then $\left( K_{GF}\right) _{e}$ is an equivalence of categories.
  \item If $F$ is semiseparable and $e:\id_\cc\to\id_\cc$ is the associated idempotent natural transformation, then there is a unique functor $\left( K^{FG}\right) _{e}:\cc_{e}\rightarrow\dd^{FG}$ such that $\left( K^{FG}\right) _{e}\circ H=K^{FG}$ and $U^{FG}\circ \left( K^{FG}\right) _{e}=F_{e}$.   Moreover, the functor $\left( K^{FG}\right) _{e}$ is an equivalence up to retracts. If $\cc$ is idempotent complete, then $\left( K^{FG}\right) _{e}$ is an equivalence of categories.
\end{enumerate}
\begin{equation*}
\xymatrix{\dd\ar[r]^H\ar[d]_{K_{GF}}&\dd_e
\ar@{.>}[dl]|{(K_{GF})_e}\ar[d]^{G_e}\\\cc_{GF}\ar[r]_{U_{GF}}&\cc }
\qquad
  \xymatrix{\cc\ar[r]^H\ar[d]_{K^{FG}}&\cc_e \ar@{.>}[dl]|{(K^{FG})_e}\ar[d]^{F_e}\\
  \dd^{FG}\ar[r]_{U^{FG}}&\dd  }
  \end{equation*}
\end{prop}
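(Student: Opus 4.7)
I would prove both parts together, focusing on part (1), since (2) follows by the dual argument swapping (co)monads, (co)comparison, Eilenberg–Moore, and so on.

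The strategy is to observe that, under the hypothesis that $G$ is semiseparable, we already possess \emph{two} canonical factorizations of the same type, namely ``naturally full followed by separable'': the one $G = G_e \circ H$ from Theorem \ref{thm:coidentifier}, and the one $G = U_{GF}\circ K_{GF}$, where $U_{GF}$ is separable and $K_{GF}$ is naturally full by Theorem \ref{thm:ssepMonad}\,(i). I would therefore simply apply the second part of Theorem \ref{thm:coidentifier} to the factorization $G = U_{GF}\circ K_{GF}$ (with $N := K_{GF}$ and $S := U_{GF}$). This yields at once the unique functor $N_e =: (K_{GF})_e:\dd_e \to \cc_{GF}$ with $(K_{GF})_e\circ H = K_{GF}$ and $U_{GF}\circ (K_{GF})_e = G_e$, and furthermore guarantees that $(K_{GF})_e$ is fully faithful. (Implicitly, this uses that the idempotent $e$ associated to $G$ is also the one associated to $K_{GF}$, which is precisely the content of Theorem \ref{thm:coidentifier}.)

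It then remains to upgrade ``fully faithful'' to ``equivalence up to retracts''. The natural tool is Lemma \ref{lem:Berger}\,(2), applied with $G := H : \dd \to \dd_e$ (playing the role of the reference functor) and $U := (K_{GF})_e$, so that $U\circ G = K_{GF}$. All three hypotheses of that lemma are in place: $H$ is a coreflection up to retracts by Theorem \ref{thm:H-corefl-utr}; $(K_{GF})_e$ is separable, being fully faithful; and $K_{GF}$ itself is a coreflection up to retracts by Theorem \ref{thm:monutr}\,(1), because the monad $GF$ is separable (Theorem \ref{thm:ssepMonad}\,(i)). Lemma \ref{lem:Berger}\,(2) then gives exactly that $(K_{GF})_e$ is an equivalence up to retracts.

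The idempotent-complete case is immediate: if $\dd$ is idempotent complete, Lemma \ref{lem:coididp} yields that $\dd_e$ is too, and Proposition \ref{prop:idpcom-cutr}, applied to $(K_{GF})_e:\dd_e \to \cc_{GF}$, promotes the equivalence up to retracts to an honest equivalence of categories. The main obstacle I anticipate is not computational but architectural: namely, recognizing that Theorem \ref{thm:coidentifier} and Lemma \ref{lem:Berger} together do essentially all the work, so that the proof reduces to identifying the correct roles of the various functors and invoking these results in the right order.
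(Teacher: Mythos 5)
Your proof is correct and follows essentially the same route as the paper: existence, uniqueness and compatibility come from Theorem \ref{thm:coidentifier} applied to the factorization $G=U_{GF}\circ K_{GF}$, and the upgrade to an equivalence up to retracts comes from Lemma \ref{lem:Berger}\,(2) together with the fact that $H$ and $K_{GF}$ are coreflections up to retracts, with Lemma \ref{lem:coididp} and Proposition \ref{prop:idpcom-cutr} handling the idempotent-complete case. The only difference is one of packaging: the paper routes the Berger-lemma step through Proposition \ref{prop:F_e:equiv-utr} (after matching the associated idempotents of $G$ and $K_{GF}$), whereas you apply Lemma \ref{lem:Berger} directly — the underlying argument is identical.
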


\begin{proof}We just prove (1). The existence of a unique functor $\left( K_{GF}\right) _{e}$ that makes commutative the diagram in the statement has already been observed in \cite[Remark 2.10]{AB22}. Moreover the functors $G_e$ and $U_{GF}$ are separable while the functors $H$ and $K_{GF}$ are naturally full. Furthermore, by Theorem \ref{thm:coidentifier} $G$ and  $K_{GF}$ have the same associated idempotent natural transformation $e$.

Since $e$ is the associated idempotent natural transformation for $K_{GF}$, the factorization $K_{GF}=\left( K_{GF}\right) _{e}\circ H$ is necessarily the one of Proposition \ref{prop:F_e:equiv-utr}, once observed that  $K_{GF}$ is a bireflection up to retracts by Theorem \ref{thm:computr}. As a consequence $\left( K_{GF}\right) _{e}$ is an equivalence up to retracts (an equivalence in case $\dd$ is idempotent complete).
\begin{invisible}
By Theorem \ref{thm:ssepMonad}(ii) $F$ factorizes as $F=U^{FG}\circ K^{FG}$ with $U^{FG}$ separable and $K^{FG}$ naturally full. By Theorem \ref{thm:coidentifier}, $F$ also factorizes as $F=F_{e}\circ H$, for a unique functor $F_{e}:\cc_{e}\rightarrow \dd$, and there is a unique fully
faithful functor $\left( K^{FG}\right) _{e}:\cc_{e}\rightarrow
\dd^{FG}$ such that $\left( K^{FG}\right) _{e}\circ H=K^{FG}$ and $%
U^{FG}\circ \left( K^{FG}\right) _{e}=F_{e}$. Moreover, $\left( K^{FG}\right) _{e}$ is an equivalence up to retracts by Proposition \ref{prop:F_e:equiv-utr}, as $K^{FG}$ is a bireflection up to retracts by the dual of Theorem \ref{thm:computr}, whence a semiseparable reflection up to retracts. If $\cc$ is idempotent complete, so is $\cc_e$ by Lemma \ref{lem:coididp}. As a consequence $\left( K^{FG}\right) _{e}$ is an equivalence in view of Proposition \ref{prop:idpcom-cutr}.
%Moreover, since $K^{FG}$ is naturally full, it is Frobenius by Theorem \ref{thm:frobenius}.
\end{invisible}\end{proof}

Although in the present paper we usually deduced the general results from
weaker ones (e.g. we deduced results on separable functors from those on
semiseparable functors), we could, in some cases, also have done the
opposite. For instance, given an adjunction $(F,G)$ with $G$ semiseparable, since the equality $\left( K_{GF}\right) _{e}\circ H=K_{GF}$ holds and the functor $H$ is a coreflection up to retracts, by Lemma \ref{lem:Berger} we can conclude that $K_{GF}$ is a coreflection up to retracts (whence a bireflection up to retracts) if we know that $\left( K_{GF}\right) _{e}$  is an equivalence up to
retracts. In other words, we can give a different proof of Theorem \ref{thm:computr}, by first showing that  $\left( K_{GF}\right) _{e}$  is an equivalence up to
retracts. To this aim we first need the following lemma.

\begin{lem}
\label{lem:coidadj}Let $G_{e}:\dd_{e}\rightarrow \cc$ be a
functor. If $G:=G_{e}\circ H:\dd\rightarrow \cc$ has a left
adjoint $F$ with unit $\eta $ and counit $\epsilon $, then $F_{e}:=H\circ F$
is a left adjoint of $G_{e}$ with unit $\eta _{e}$ and counit $\epsilon _{e}$
uniquely defined by the identities $\eta _{e}=\eta $ and $\epsilon
_{e}H=H\epsilon $. Moreover the adjunctions $\left(
F_{e},G_{e}\right) $ and $\left( F,G\right) $ have the same associated
monad (whence $\cc_{G_{e}F_{e}}=\cc_{GF}$) and the respective comparison functors are related by the equality $K_{G_{e}F_{e}}\circ H=K_{GF}$.
\begin{gather*}
\vcenter{
\xymatrixcolsep{1.6cm}\xymatrixrowsep{.6cm}
\xymatrix{\dd\ar[rd]_{K_{GF}}\ar[rr]^{H}
\ar@<1ex>[dd]^*-<0.1cm>{^G}&& \dd_e \ar[ld]^{K_{G_{e}F_{e}}}\ar@<1ex>[dd]^*-<0.1cm>{^{G_e}}\\
&\cc _{GF}=\cc_{G_{e}F_{e}}\ar[ld]_-{U_{GF}}\ar[rd]^-{U_{G_eF_e}}
\\
\cc\ar[rr]^{\id}  \ar@<1ex>[uu]^*-<0.1cm>{^F}\ar@{}[uu]|{\dashv}&&\cc \ar@<1ex>[uu]^*-<0.1cm>{^{F_e}}\ar@{}[uu]|{\dashv}
} }
\end{gather*}
\end{lem}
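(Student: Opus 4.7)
The plan is to construct $F_e := HF$ with $\eta_e := \eta$ and $\epsilon_e$ obtained via the universal property of the coidentifier, then to check the triangle identities, match the associated monads, and finally verify the comparison functor identity.

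First, since $G_e F_e = G_e H F = GF$, the natural transformation $\eta : \id_{\cc} \to GF$ serves directly as $\eta_e$. For the counit, note that $G = G_e H$ forces $Ge = G_e(He) = G_e \id_H = \id_G$, hence $FGe = \id_{FG}$ and thus $(HFG)e = H(FGe) = \id_{HFG}$. Together with $He = \id_H$, the hypotheses of Lemma \ref{lem:coidentifier} are satisfied for the natural transformation $H\epsilon : HFG \to H$ of functors $\dd \to \dd_e$: rewriting $HFG = F_e G_e H$ and $H = \id_{\dd_e} H$, the lemma yields a unique $\epsilon_e : F_e G_e \to \id_{\dd_e}$ with $\epsilon_e H = H\epsilon$.

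Next I verify the triangle identities. The identity $\epsilon_e F_e \circ F_e \eta_e = \id_{F_e}$ is immediate by rewriting: $\epsilon_e F_e \circ F_e \eta_e = (\epsilon_e H)F \circ HF\eta = H\epsilon F \circ HF\eta = H(\epsilon F \circ F\eta) = H \id_F = \id_{F_e}$. The identity $G_e \epsilon_e \circ \eta_e G_e = \id_{G_e}$ instead invokes the uniqueness in Lemma \ref{lem:coidentifier}: postcomposing with $H$ gives $(G_e\epsilon_e \circ \eta_e G_e) H = G_e\epsilon_e H \circ \eta G_e H = G_e H\epsilon \circ \eta G = G\epsilon \circ \eta G = \id_G = \id_{G_e} H$, and since $G = G_e H$ satisfies $Ge = \id_G$, the unique lift through $H$ of $\id_G$ to a natural transformation $G_e \to G_e$ must be $\id_{G_e}$; the equality follows.

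The monad associated with $(F_e, G_e)$ has endofunctor $G_e F_e = GF$, unit $\eta_e = \eta$, and multiplication $G_e \epsilon_e F_e = G_e(\epsilon_e H)F = G_e H \epsilon F = G\epsilon F$, matching the monad of $(F, G)$; hence $\cc_{G_e F_e} = \cc_{GF}$ literally. For the comparison functors, direct computation on objects gives $K_{G_e F_e}(HD) = (G_e HD, G_e \epsilon_{e, HD}) = (GD, G_e(\epsilon_e H)_D) = (GD, G\epsilon_D) = K_{GF}(D)$, and on morphisms $K_{G_e F_e}(Hf) = G_e Hf = Gf = K_{GF}(f)$, whence $K_{G_e F_e} \circ H = K_{GF}$.

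The main obstacle is the triangle identity involving $G_e$ in the middle: unlike the others it does not reduce to straightforward rewriting, but requires invoking the uniqueness clause in the universal property of the coidentifier. The key observation that unlocks this step is that $G = G_e H$ itself satisfies $Ge = \id_G$, which is what allows Lemma \ref{lem:coidentifier} to lift the equality of natural transformations from $\dd$ back to $\dd_e$.
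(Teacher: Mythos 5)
Your proposal is correct and follows essentially the same route as the paper: $\epsilon_e$ is obtained from $H\epsilon$ via the universal property of the coidentifier, $\eta_e:=\eta$, the triangle identity for $F_e$ is a direct rewriting, and the one for $G_e$ is deduced after whiskering with $H$ (the paper concludes by noting $H$ is the identity on objects, you by the uniqueness clause of Lemma \ref{lem:coidentifier} — the same fact in disguise), followed by the identical monad and comparison-functor computations. Your explicit verification that $Ge=\id_G$ and $(HFG)e=\id_{HFG}$, which the paper leaves implicit in its appeal to the factorizations through $H$, is a welcome addition but not a different argument.
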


\begin{proof}
Given $\epsilon :FG\rightarrow \id_{\dd}$ we have $H\epsilon
:HFG\rightarrow H$. By the universal property of the coidentifier, since $%
\left( F_{e}G_{e}\right) \circ H=HFG$ and $\id_{\dd%
_{e}}\circ H=H,$ we have $\left( HFG\right) _{e}=F_{e}G_{e}$ and $H_{e}=%
\id_{\dd_{e}}$ and hence there is a unique natural
transformation $\epsilon _{e}:F_{e}G_{e}\rightarrow \id_{\dd%
_{e}}$ such that $\epsilon _{e}H=H\epsilon $ (see Lemma \ref{lem:coidentifier}). Since $G_{e}\circ
F_{e}=G_{e}\circ H\circ F=G\circ F,$ it makes sense to define $\eta
_{e}:=\eta $. Then
\begin{equation*}
G_{e}\epsilon _{e}H\circ \eta _{e}G_{e}H=G_{e}H\epsilon \circ \eta
_{e}G_{e}H=G\epsilon \circ \eta G=\id_{G}=\id_{G_{e}H}.
\end{equation*}%
Since $H$ is the identity on objects, we deduce that $G_{e}\epsilon
_{e}\circ \eta _{e}G_{e}=\id_{G_{e}}$. Moreover
\begin{equation*}
\epsilon _{e}F_{e}\circ F_{e}\eta _{e}=\epsilon _{e}HF\circ HF\eta
_{e}=H\epsilon F\circ HF\eta =H\id_{F}=\id_{HF}=\id%
_{F_{e}}.
\end{equation*}
Since $G_{e}\circ F_{e}=G\circ F,$ $G_{e}\epsilon _{e}F_{e}=G_{e}\epsilon _{e}HF=G_{e}H\epsilon
F=G\epsilon F$ and $\eta _{e}=\eta $ we have that the adjunctions $\left(
F_{e},G_{e}\right) $ and $\left( F,G\right) $ have the same associated
monad. Thus $\cc_{G_{e}F_{e}}=\cc_{GF}$. Note
that
\begin{eqnarray*}
K_{G_{e}F_{e}}HX &=&\left( G_{e}HX,G_{e}\epsilon _{e}HX\right) =\left(
G_{e}HX,G_{e}H\epsilon X\right) =\left( GX,G\epsilon X\right) =K_{GF}X, \\
K_{G_{e}F_{e}}Hf &=&G_{e}Hf=Gf=K_{GF}f
\end{eqnarray*}%
so that $K_{G_{e}F_{e}}\circ H=K_{GF}$.
\end{proof}

 By Lemma \ref{lem:coidadj}, the adjunctions $\left(
F_{e}:=H\circ F,G_{e}\right) $ and $\left( F,G\right) $ have the same associated
monad (whence $\cc_{G_{e}F_{e}}=\cc_{GF}$) and the respective comparison functors are related by the equality $K_{G_{e}F_{e}}\circ H=K_{GF}$. Since the functor $\left(
K_{GF}\right) _{e}:\dd_{e}\rightarrow \cc_{GF}$ of Proposition \ref{prop:CoidEil} is uniquely
determined by the equality $\left( K_{GF}\right) _{e}\circ H=K_{GF},$ we get  $\left(
K_{GF}\right) _{e}=K_{G_{e}F_{e}}$. Since $G_{e}$ is separable, by \cite[Proposition 3.5]{Chen15}, we get that $%
K_{G_{e}F_{e}}$ is an equivalence up to retracts. Thus $\left(
K_{GF}\right) _{e}$ is an equivalence up to retracts as desired.

In a similar way, given an adjunction $(F,G)$ with $F$ semiseparable, we can conclude that $K^{FG}$ is a reflection up to retracts if we know that $\left( K^{FG}\right) _{e}$ is an equivalence up to retracts. This is a consequence of the following dual of Lemma \ref{lem:coidadj}. %\rd{[Forse non accorperei il lemma seguente al suo duale per non appesantire troppo, vista la lunghezza.]}
\begin{lem}
\label{lem:coidadj-dual}Let $F_{e}:\cc_{e}\rightarrow \dd$ be a
functor. If $F:=F_{e}\circ H:\cc\rightarrow \dd$ has a right
adjoint $G$ with unit $\eta $ and counit $\epsilon $, then $G_{e}:=H\circ G$
is a right adjoint of $F_{e}$ with unit $\eta _{e}$ and counit $\epsilon _{e}$
uniquely defined by the identities $\eta_{e}H =H\eta $ and $\epsilon
_{e}=\epsilon $. Moreover the adjunctions $\left(
F_{e},G_{e}\right) $ and $\left( F,G\right) $ have the same associated
comonad (whence $\dd^{F_{e}G_{e}}=\dd^{FG}$) and the respective cocomparison functors are related by the equality $K^{F_{e}G_{e}}\circ H=K^{FG}$.
\begin{gather*}
\vcenter{
\xymatrixcolsep{1.6cm}\xymatrixrowsep{.6cm}
\xymatrix{\dd\ar[rr]^{\id}
\ar@<1ex>[dd]^*-<0.1cm>{^G}&& \dd_e \ar@<1ex>[dd]^*-<0.1cm>{^{G_e}}\\
&\dd^{FG}=\dd^{F_{e}G_{e}}\ar[lu]^-{U^{FG}}\ar[ru]_-{U^{F_eG_e}}
\\
\cc\ar[rr]^{H} \ar[ru]^{K^{FG}} \ar@<1ex>[uu]^*-<0.1cm>{^F}\ar@{}[uu]|{\dashv}&&\cc_e\ar[lu]_-{K^{F_eG_e}} \ar@<1ex>[uu]^*-<0.1cm>{^{F_e}}\ar@{}[uu]|{\dashv}
} }
\end{gather*}
\end{lem}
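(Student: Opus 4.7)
The plan is to dualize Lemma \ref{lem:coidadj} step by step. First I would construct the would-be unit $\eta_{e}:\id_{\cc_{e}}\to G_{e}F_{e}$ via the universal property of the coidentifier. Since $F=F_{e}\circ H$ forces $Fe=\id_{F}$, it follows that $HGF$ satisfies $(HGF)e=\id$, and we already have $He=\id$ by idempotency of $e$. Together with the identifications $(HGF)_{e}=G_{e}F_{e}$ and $H_{e}=\id_{\cc_{e}}$ (forced by uniqueness from $G_{e}F_{e}\circ H=HG F_{e}H=HGF$ and $\id_{\cc_{e}}\circ H=H$), Lemma \ref{lem:coidentifier} yields a unique natural transformation $\eta_{e}:\id_{\cc_{e}}\to G_{e}F_{e}$ with $\eta_{e}H=H\eta$. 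The counit is immediate since $F_{e}G_{e}=F_{e}\circ H\circ G=FG$, so one simply sets $\epsilon_{e}:=\epsilon:F_{e}G_{e}\to\id_{\dd}$ with no factoring work.

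Next I would verify the two triangle identities. The first, $G_{e}\epsilon_{e}\circ\eta_{e}G_{e}=\id_{G_{e}}$, falls out by whiskering $\eta_{e}H=H\eta$ on the right with $G$ and applying $H$ to the triangle identity of $(F,G)$, giving $HG\epsilon\circ H\eta G=H(G\epsilon\circ\eta G)=H\id_{G}=\id_{G_{e}}$. For the second, $\epsilon_{e}F_{e}\circ F_{e}\eta_{e}=\id_{F_{e}}$, a direct computation only shows that the two sides agree after whiskering with $H$ on the right, since $(\epsilon_{e}F_{e}\circ F_{e}\eta_{e})H=\epsilon F\circ F\eta=\id_{F}=\id_{F_{e}}H$. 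I would then observe that $H$ is the identity on objects, so two natural transformations between functors out of $\cc_{e}$ that become equal after right-whiskering with $H$ already agree componentwise, hence are equal.

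Finally, the claim that $(F_{e},G_{e})$ and $(F,G)$ induce the same comonad is a one-line unwinding: $F_{e}G_{e}=FG$ and $\epsilon_{e}=\epsilon$ by construction, and $F_{e}\eta_{e}G_{e}=F_{e}(\eta_{e}H)G=F_{e}H\eta G=F\eta G$. This gives $\dd^{F_{e}G_{e}}=\dd^{FG}$. The equality $K^{F_{e}G_{e}}\circ H=K^{FG}$ then follows by evaluating both sides on an object $C\in\cc$ and a morphism $f$: on objects $K^{F_{e}G_{e}}(HC)=(F_{e}HC,F_{e}(\eta_{e})_{HC})=(FC,F_{e}H\eta_{C})=(FC,F\eta_{C})=K^{FG}(C)$ (using that $H$ is the identity on objects so that $(\eta_{e})_{HC}=(\eta_{e}H)_{C}=H\eta_{C}$), and on morphisms $K^{F_{e}G_{e}}(Hf)=F_{e}(Hf)=Ff=K^{FG}(f)$.

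I do not foresee any serious obstacle: the whole argument mirrors Lemma \ref{lem:coidadj} term for term. The only mildly delicate point is the second triangle identity, where one cannot invoke the universal property of $H$ directly (because both sides are natural transformations of functors out of $\cc_{e}$, not $\cc$), and one must instead rely on the fact that $H$ is identity-on-objects to cancel it on the right.
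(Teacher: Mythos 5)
Your proposal is correct and is essentially the proof the paper intends: the paper states this lemma without proof as the dual of Lemma \ref{lem:coidadj}, and your argument is precisely the term-by-term dualization of that proof (constructing $\eta_e$ via the universal property of the coidentifier, taking $\epsilon_e=\epsilon$ since $F_eG_e=FG$, and using that $H$ is the identity on objects to cancel it on the right in the triangle identity that does not follow directly). Your closing remark correctly identifies the one delicate point, which is the same one the paper handles in the proof of Lemma \ref{lem:coidadj}.
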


\subsection{Idempotent completion of Kleisli category}\label{sub:kleisli}
As another application of the results about conditions up to retracts, we now focus on the Kleisli construction for a monad $(\top,m:\top\top\to \top,\eta:\id_\cc\to \top)$ on a category $\cc$.
Recall that a $\top$-module is \emph{free} when it is isomorphic to one of the form $V_{\top}C=(\top C,m _C)$, for some object $C\in\cc$, and the full subcategory of $\cc_{\top}$ generated by the free $\top$-modules is equivalent to the so-called \emph{Kleisli category $\top$-$\mathrm{Free}_\cc$ of free $\top$-modules} (see \cite{Kl65}). Explicitly the objects of $\top$-$\mathrm{Free}_\cc$  are those of $\cc$ and a morphism $f:C\nrightarrow D$ in $\top$-$\mathrm{Free}_\cc$ is a morphism $f:C\to \top (D)$ in $\cc$; the composite of two morphisms $f:C\nrightarrow D$, $g:D\nrightarrow E$ in $\top$-$\mathrm{Free}_\cc$ is given in $\cc$ by the composite $$\xymatrixcolsep{1cm}\xymatrixrowsep{0.8cm}\xymatrix{C\ar[r]^-{f}& \top(D)\ar[r]^-{\top(g)}&\top\top(E)\ar[r]^-{m_E}&\top(E),}$$ and the identity $C\nrightarrow C$ on an object $C$ of $\top$-$\mathrm{Free}_\cc$ is the unit $\eta_C: C\to \top (C)$ in $\cc$. There is (see \cite[Proposition 4.1.6]{BorII94}) a fully faithful functor %[Potrebbe essere suggestivo indicarlo con $J_\top$]
$$J_\top:\top\text{-}\mathrm{Free}_\cc\to \cc_\top,\quad C\mapsto(\top C,m_C),\quad \left[f:C\nrightarrow D\right]\mapsto m_D\circ\top (f),$$ that fits into the following diagram
\begin{equation}\label{eq:J:monad}
\begin{split}
\xymatrixcolsep{1cm}\xymatrixrowsep{0.8cm}\xymatrix{
&\cc\ar@<0.5ex>[dr]^-{V_\top}\ar@<0.5ex>[dl]^-{V'_\top}&\\
\top\text{-}\mathrm{Free}_\cc  \ar@<0.5ex>[ur]^-{U'_\top}\ar@{^(->}[rr]_-{J_\top} &&\cc_\top ,\ar@<0.5ex>[ul]^-{U_\top} }    
\end{split}
\end{equation}
where the adjunction $(V_\top ,U_\top )$ restricts to an adjunction $(V'_\top ,U'_\top )$ between $\cc$ and $\top$-$\mathrm{Free}_\cc$, that is, $U'_\top = U_\top\circ J_\top$ and $J_\top\circ V'_\top=V_\top$ (see \cite[Corollary 4.1.7]{BorII94}).
Explicitly $U'_\top$ and  $V'_\top$ are given by
\begin{align}
U'_\top&:\top\text{-}\mathrm{Free}_\cc \to \cc,\quad C\mapsto \top (C),\quad f\mapsto m_D\circ \top (f),\label{def:U'top}\\
V'_\top&:\cc\to \top\text{-}\mathrm{Free}_\cc,\quad  C\mapsto C,\quad f\mapsto \eta_D\circ f\label{def:V'top}
\end{align}
\begin{invisible}
hence we have $J_\top V'_\top C=J_\top C=(\top C,m _C)=V_{\top}C$ and $J_\top V'_\top f=J_\top (\eta_D\circ f)= m_D\circ\top (\eta_D\circ f)=m_D\circ\top (\eta_D)\circ \top (f)=\id_{\top D}\circ\top (f)=\top (f)=V_\top (f)$.
\end{invisible}
%We remark that given a monad $(\top,m:\top\top\to \top,\eta:\id_\cc\to \top)$ on $\cc$, there are several adjunctions realizing $\top$: the initial one is provided by the Kleisli category $\top$-$\mathrm{Free}_\cc$, whereas the final one is given by the Eilenberg-Moore category $\cc_{\top}$ (see \cite[Page 196]{BorII94}, \cite[Theorem 3]{Mac98})}.
%We will study the Kleisli construction in Section \ref{section:kleisli}. 

In the next result we investigate the functor $J_\top$ in case the monad $\top$ is separable.
\begin{prop}\label{prop:J:equiv-utr}
Let $(\top,m,\eta)$ be a separable monad on a category $\cc$. Then, the canonical functor $J_\top:\top$-$\mathrm{Free}_\cc\to \cc_\top$ is an equivalence up to retracts. In particular $\top$-$\mathrm{Free}_\cc^\natural\cong \cc_\top^\natural$.
\end{prop}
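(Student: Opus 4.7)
The plan is to exploit the characterization of equivalences up to retracts given in Lemma \ref{lem:trivfactsutr}(6): a functor is an equivalence up to retracts if and only if it is fully faithful and surjective up to retracts. Since the paper has already pointed out (just above the statement) that $J_\top$ is fully faithful, the whole task reduces to checking that $J_\top$ is surjective up to retracts, i.e.\ that every $\top$-module is a retract (in $\cc_\top$) of a free one.

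To produce such a retraction, the natural tool is Rafael's theorem applied to the adjunction $V_\top\dashv U_\top$. Since $(\top,m,\eta)$ is a separable monad, Theorem \ref{thm:ssepMonad} (or equivalently \cite[Proposition 6.3]{BruV07}) ensures that the forgetful functor $U_\top:\cc_\top\to\cc$ is separable. As $U_\top$ is the \emph{right} adjoint of $V_\top$, Rafael's theorem yields a natural transformation $\zeta:\id_{\cc_\top}\to V_\top U_\top$ such that $\beta\circ\zeta=\id$, where $\beta:V_\top U_\top\to\id_{\cc_\top}$ is the counit (characterized on components by $U_\top\beta_{(X,\mu)}=\mu$).

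Evaluating at an arbitrary $(X,\mu)\in\cc_\top$, we have $V_\top U_\top(X,\mu)=(\top X,m_X)=J_\top(X)$, and the identity $\beta_{(X,\mu)}\circ\zeta_{(X,\mu)}=\id_{(X,\mu)}$ exhibits $(X,\mu)$ as a retract of $J_\top(X)$ in $\cc_\top$. Thus every object of $\cc_\top$ lies (up to retract) in the essential image of $J_\top$, so $J_\top$ is surjective up to retracts; together with full faithfulness this forces $J_\top$ to be an equivalence up to retracts by Lemma \ref{lem:trivfactsutr}(6). The final assertion $\top\text{-}\mathrm{Free}_\cc^{\natural}\cong\cc_\top^{\natural}$ is then immediate from Definition \ref{defn:(co)refl}, since by construction $J_\top^{\natural}$ is an equivalence of idempotent completions.

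The only conceptually non-trivial step is producing the module-theoretic section of $\beta_{(X,\mu)}=\mu$; this is precisely what separability of the monad buys us via Rafael's theorem, so no explicit calculation with the splitting $\sigma:\top\to\top\top$ is needed. Everything else is bookkeeping with the already-established results on (co)reflections up to retracts.
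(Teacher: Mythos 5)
Your proof is correct and follows essentially the same route as the paper: separability of the monad gives separability of $U_\top$, Rafael's theorem splits the counit $\beta$ of $V_\top\dashv U_\top$, and since $V_\top U_\top(X,\mu)=J_\top(X)$ this shows $J_\top$ is surjective up to retracts, which combined with its full faithfulness gives the claim via Lemma \ref{lem:trivfactsutr}(6). The only cosmetic difference is that the paper phrases the surjectivity step through the factorization $V_\top=J_\top\circ V'_\top$ rather than evaluating the counit componentwise, and cites the monad/forgetful-functor separability equivalence from the literature rather than from Theorem \ref{thm:ssepMonad}.
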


\proof
By \cite[2.9 (1)]{BBW09} the separability of the monad $(\top,m,\eta)$ is equivalent to the separability of the forgetful functor $U_\top:\cc_\top\to \cc$, hence, by Rafael Theorem this is also equivalent to the fact that the counit $\beta:V_\top U_\top\to \id_{\cc_\top}$ of the adjunction $(V_\top,U_\top )$
%\rd{\sout{,determined by $U_\top\beta_{(X,\mu)}=\mu$ for every object $(X,\mu)$ in $\cc_\top$,}}
is a split natural epimorphism. Thus, we get that $V_\top$ is surjective up to retracts and hence so is $J_\top$ in view of the equality $V_\top=J_\top\circ V'_\top$. But $J_\top$ is also fully faithful, hence it is an equivalence up to retracts by Lemma \ref{lem:trivfactsutr}.
\endproof

Now, given an adjunction $F\dashv G:\dd\to\cc$, with unit $\eta$, counit $\epsilon$, consider the diagram \eqref{eq:J:monad} for the associated monad $(GF, G\epsilon F, \eta )$. Then, (see \cite[Proposition 4.2.1]{BorII94}) there is the so-called \emph{Kleisi comparison functor} %[anche questo potrebbe essere suggestivo chiamarlo $L_{GF}$]
$$L_{GF}:GF\text{-}\mathrm{Free}_\cc\to \dd,\quad C\mapsto F(C),\quad f\mapsto \epsilon_{FD}\circ F(f),$$
such that $K_{GF}\circ L_{GF}$ is the functor $J_{GF}:GF\text{-}\mathrm{Free}_\cc\to \cc_{GF}$, $C\mapsto(GFC, G\epsilon_{FC})$, $f\mapsto G\epsilon_{FD}\circ GF(f)$.%, for every object $C$ and every morphism $f:C\to D$ in $GF$-$\mathrm{Free}_\cc$.
\begin{equation}\label{eq:kleisli-eilenb}
\begin{split}
\xymatrixcolsep{0.8cm}\xymatrixrowsep{0.5cm}\xymatrix{
&&\cc\ar@<-0.5ex>[drr]_-{V_{GF}}\ar@<-0.5ex>[dll]_-{V'_{GF}}\ar@<-0.5ex>[d]_-{F}&&\\
GF\text{-}\mathrm{Free}_\cc  \ar@<-0.5ex>[urr]_-{U'_{GF}}\ar[rr]_-{L_{GF}}&&\dd\ar[dr]_-{H}\ar@<-0.5ex>[u]_-{G}\ar[rr]_-{K_{GF}}&&\cc_{GF} \ar@<-0.5ex>[ull]_-{U_{GF}}\\
&&&\dd_e\ar[ur]_-{(K_{GF})_e}}
\end{split}
\end{equation}
\begin{invisible}
Indeed, for every object $C$ in $GF\text{-}\mathrm{Free}_\cc$, we have $K_{GF} L_{GF}(C)=K_{GF}F(C)=(GFC, G\epsilon_{FC})=J_{GF}(C)$, and for every morphism $f:C\nrightarrow D$ in $GF$-$\mathrm{Free}_\cc$, $K_{GF} L_{GF}(f)=GL_{GF}(f)=G(\epsilon_{FD}\circ F(f))=G\epsilon_{FD}\circ GF(f)=J_{GF}(f)$. Moreover, $L_{GF}$ is a functor: if $f:C\nrightarrow D$ and $g:D\nrightarrow E$ are morphisms in $GF$-$\mathrm{Free}_\cc$, then $L_{GF}(g)\circ L_{GF}(f)=\epsilon_{FE}\circ F(g)\circ\epsilon_{FD}\circ F(f)=\epsilon_{FE}\circ \epsilon_{FGFE}\circ FGF(g)\circ F(f)=\epsilon_{FE}\circ FG(\epsilon_{FE})\circ FGF(g)\circ F(f)=\epsilon_{FE}\circ F(G\epsilon_{FE}\circ GF(g)\circ f)=\epsilon_{FE}\circ F(g\bullet f)=L_{GF}(g\bullet f)$, where we denoted by $\bullet$ the composition in $GF\text{-}\mathrm{Free}_\cc$, and $L_{GF}(\id_C)=\epsilon_{FC}\circ F(\id_C)=\epsilon_{FC}\circ F(\eta_C)=\id_{F(C)}=\id_{L_{GF}(C)}$.
\end{invisible}
Moreover, $G\circ L_{GF}=U'_{GF}$ and $L_{GF}\circ V'_{GF}=F$, where $U'_{GF}:GF\text{-}\mathrm{Free}_\cc \to \cc$ is defined as in \eqref{def:U'top}, i.e. by setting $U'_{GF} (C)=GF(C)$, $U'_{GF} (f)=G\epsilon_{FD}\circ GF(f)$, for every object $C$ and every morphism $f:C\nrightarrow D$ in $GF\text{-}\mathrm{Free}_\cc$, and $V'_{GF}:\cc\to GF\text{-}\mathrm{Free}_\cc $ as in \eqref{def:V'top}, i.e. it is the identity map on objects and, for every morphism $f:C\to D$ in $\cc$, it is given by $V'_{GF}(f)=\eta_D\circ f$.
\begin{invisible}
$GL_{GF}(C)=GF(C)=U'_{GF}(C)$ and $G L_{GF}(f)=G(\epsilon_{FD}\circ F(f))=G\epsilon_{FD}\circ GF(f)=U'_{GF} (f)$; $L_{GF} V'_{GF}(C)=L_{GF}(C)=F(C)$ and $L_{GF} V'_{GF}(f)=L_{GF}(\eta_D\circ f)=\epsilon_{FD}\circ F(\eta_D\circ f)=F(f)$.
\end{invisible}
In particular, since $K_{GF}\circ L_{GF}=J_{GF}$ and $J_{GF}$ is faithful, then the functor $L_{GF}:GF\text{-}\mathrm{Free}_\cc\to \dd$ is faithful too. Moreover, a morphism $h:F(C)\to F(D)$ in $\dd$ corresponds by adjunction with the morphism $f:=Gh\circ\eta_C:C\to GF(D)$ in $\cc$, i.e. a morphism $f:C\nrightarrow D$ in $GF\text{-}\mathrm{Free}_\cc$ such that $L_{GF}(f)=h$, hence $L_{GF}$ is full as well.\\

The next step is to show that, given an adjunction, the semiseparability of the right adjoint provides an equivalence between the associated Kleisli and Eilenberg-Moore categories, after idempotent completion. As a consequence, these categories are also equivalent up to retracts to the coidentifier category associated to the semiseparable right adjoint.

\begin{prop}\label{prop:KL-HL-equiv-utr}
Let $F\dashv G:\dd\to\cc$ be an adjunction, and consider the diagram \eqref{eq:kleisli-eilenb}. Assume $G$ is a semiseparable functor. Then, the composite functor $K_{GF}\circ L_{GF}:GF\text{-}\mathrm{Free}_\cc\to\cc_{GF}$ is an equivalence up to retracts. Moreover, also the composite $H\circ L_{GF}:GF\text{-}\mathrm{Free}_\cc\to\dd_e$ is an equivalence up to retracts and hence $GF\text{-}\mathrm{Free}_\cc^\natural\cong \dd_e^\natural\cong \cc_{GF}^\natural$.
\end{prop}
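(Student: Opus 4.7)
The plan is to derive both equivalences up to retracts from the two factorizations already visible in diagram \eqref{eq:kleisli-eilenb}, together with the fact that, in the semiseparable situation, the monad $(GF,G\epsilon F,\eta)$ is separable (Theorem \ref{thm:ssepMonad}(i)) and the induced functor $(K_{GF})_e$ is an equivalence up to retracts (Proposition \ref{prop:CoidEil}(1)).

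First I would check the identity $K_{GF}\circ L_{GF}=J_{GF}$, which was already noted right before the statement. Since $G$ is semiseparable, the monad $(GF,G\epsilon F,\eta)$ is separable, so Proposition \ref{prop:J:equiv-utr} (applied to the monad $GF$) immediately shows that $J_{GF}=K_{GF}\circ L_{GF}$ is an equivalence up to retracts. This settles the first claim.

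For the second claim I would use the factorization $K_{GF}=(K_{GF})_e\circ H$ supplied by Proposition \ref{prop:CoidEil}(1), which gives
\[
J_{GF}=K_{GF}\circ L_{GF}=(K_{GF})_e\circ(H\circ L_{GF}).
\]
Applying the idempotent completion functor yields $J_{GF}^{\natural}=((K_{GF})_e)^{\natural}\circ (H\circ L_{GF})^{\natural}$. By Proposition \ref{prop:CoidEil}(1) the functor $((K_{GF})_e)^{\natural}$ is an equivalence of categories, and by the first claim so is $J_{GF}^{\natural}$. Choosing a quasi-inverse $Q$ of $((K_{GF})_e)^{\natural}$ and composing on the left gives $(H\circ L_{GF})^{\natural}\cong Q\circ J_{GF}^{\natural}$, hence $(H\circ L_{GF})^{\natural}$ is an equivalence, i.e.\ $H\circ L_{GF}$ is an equivalence up to retracts. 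The asserted triple equivalence $GF\text{-}\mathrm{Free}_{\cc}^{\natural}\cong\dd_e^{\natural}\cong \cc_{GF}^{\natural}$ then follows directly from the two equivalences up to retracts just produced, by unraveling Definition \ref{defn:(co)refl}.

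No genuinely hard step is expected, since all the heavy lifting has already been done: Proposition \ref{prop:J:equiv-utr} reduces the Kleisli-versus-Eilenberg-Moore comparison to the separability of the monad, and Proposition \ref{prop:CoidEil} reduces the coidentifier-versus-Eilenberg-Moore comparison to a canonical equivalence up to retracts. The only point requiring a brief justification is the right-cancellation of $((K_{GF})_e)^{\natural}$ in the equality $J_{GF}^{\natural}=((K_{GF})_e)^{\natural}\circ (H\circ L_{GF})^{\natural}$, which is legitimate precisely because that factor is an honest equivalence after completion (and not merely fully faithful).
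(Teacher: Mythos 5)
Your proposal is correct and follows essentially the same route as the paper: the first claim via $K_{GF}\circ L_{GF}=J_{GF}$, the separability of the monad and Proposition \ref{prop:J:equiv-utr}, and the second via the factorization $(K_{GF})_e^{\natural}\circ (H\circ L_{GF})^{\natural}=(K_{GF}\circ L_{GF})^{\natural}$ together with the fact from Proposition \ref{prop:CoidEil} that $(K_{GF})_e$ is an equivalence up to retracts. Your explicit remark about cancelling the honest equivalence $((K_{GF})_e)^{\natural}$ is exactly the implicit step in the paper's argument.
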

\proof
By Theorem \ref{thm:ssepMonad} (i), since $G$ is semiseparable, then the associated monad $(GF,G\epsilon F,\eta)$ is separable. Since the composite functor $K_{GF}\circ L_{GF}:GF\text{-}\mathrm{Free}_\cc\to\cc_{GF}$ equals the canonical functor $J_{GF}:GF\text{-}\mathrm{Free}_\cc\to \cc_{GF}$, by applying Proposition \ref{prop:J:equiv-utr}, we get that it is an equivalence up to retracts.

Moreover, by Proposition \ref{prop:CoidEil} there is a unique functor $\left( K_{GF}\right) _{e}:\dd_{e}\rightarrow \cc_{GF}$ such that $\left( K_{GF}\right) _{e}\circ H=K_{GF}$ and $U_{GF}\circ \left(K_{GF}\right) _{e}=G_{e}$, and in particular $\left( K_{GF}\right) _{e}$ is an equivalence up to retracts, so the fact that $H\circ L_{GF}$ is an equivalence up to retracts follows from the equality $(K_{GF})_e^\natural\circ (H\circ L_{GF})^\natural =(K_{GF}\circ L_{GF})^\natural$.
\endproof

As a consequence of Proposition \ref{prop:KL-HL-equiv-utr}, we recover \cite[Lemma 2.10]{Balm15} (see also \cite[Theorem 5.17 (d)]{Balm11} in the setting of idempotent complete suspended categories), in which the Kleisli and the Eilenberg-Moore comparison functors $L_{GF} : GF\text{-}\mathrm{Free}_\cc\to\dd$ and $K_{GF}:\dd\to\cc_{GF}$ result to be equivalences up to retracts, whenever the counit $\epsilon: FG\to\id_\dd$ of the adjunction $(F,G)$ admits a natural section, i.e. if there is a natural transformation $\xi:\id_\dd\to FG$ such that $\epsilon\circ\xi =\id_{\id_\dd}$. %Rafael Theorem: $G$ is a separable functor if and only if there is a natural transformation $\xi:\id_\dd\to FG$ such that $\xi\circ\epsilon =\id_{FG}$
%cf. \cite{BBW09}, \cite{Balm11}

Explicitly we have the following:
\begin{cor}[cf. {\cite[Lemma 2.10]{Balm15}}]
Let $F\dashv G:\dd\to\cc$ be an adjunction with $G$ separable. Then, the functors $L_{GF}:GF\text{-}\mathrm{Free}_\cc\to\dd$ and $K_{GF}:\dd\to\cc_{GF}$ are both equivalences up to retracts. Moreover, if $\dd$ is idempotent complete, then $G$ is monadic, i.e. $K_{GF}:\dd\to\cc_{GF}$ is an equivalence.
%$L:GF\text{-}\mathrm{Free}_\cc\to\dd$ is an equivalence
\end{cor}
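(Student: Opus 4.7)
The plan is to piece together three results already established in the paper: Corollary \ref{cor:compcorefretr}, Proposition \ref{prop:J:equiv-utr}, and Proposition \ref{prop:idpcom-cutr}, together with a two-out-of-three argument for equivalences after idempotent completion.

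First I would treat $K_{GF}$. Since $G$ is separable, Corollary \ref{cor:compcorefretr}(1) (i.e.\ \cite[Proposition 3.5]{Chen15}) states directly that the monad $(GF, G\epsilon F,\eta)$ is separable and that the comparison functor $K_{GF}:\dd\to\cc_{GF}$ is an equivalence up to retracts. This disposes of one of the two main claims with no further work.

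Next I would handle $L_{GF}$. Recall from the paragraph following diagram \eqref{eq:kleisli-eilenb} that $K_{GF}\circ L_{GF}=J_{GF}$, where $J_{GF}:GF\text{-}\mathrm{Free}_\cc\to \cc_{GF}$ is the canonical inclusion functor of the Kleisli category into the Eilenberg–Moore category. Since the monad $GF$ is separable, Proposition \ref{prop:J:equiv-utr} tells us that $J_{GF}$ is an equivalence up to retracts, i.e.\ $J_{GF}^\natural$ is an equivalence. Passing to idempotent completions in the equality $K_{GF}\circ L_{GF}=J_{GF}$ gives $K_{GF}^\natural\circ L_{GF}^\natural=J_{GF}^\natural$. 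Both $K_{GF}^\natural$ and $J_{GF}^\natural$ are equivalences by the foregoing, so a standard two-out-of-three argument (choose a quasi-inverse $Q$ of $K_{GF}^\natural$ and observe $L_{GF}^\natural\cong Q\circ J_{GF}^\natural$) shows that $L_{GF}^\natural$ is an equivalence, that is, $L_{GF}$ is an equivalence up to retracts.

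Finally, for the monadicity claim, assume $\dd$ is idempotent complete. Then $K_{GF}$ is an equivalence up to retracts whose source category is idempotent complete, so Proposition \ref{prop:idpcom-cutr} upgrades it to an honest equivalence, which by definition means that $G$ is monadic. No step looks like a real obstacle here; the only mild subtlety is making sure the two-out-of-three argument for $L_{GF}$ is invoked in the idempotent completion (where the word ``equivalence'' has its usual meaning) rather than directly, since it is the completions that are equivalences, not the functors themselves.
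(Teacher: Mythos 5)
Your proof is correct, and for the $K_{GF}$ and monadicity claims it coincides with the paper's (which cites Corollary \ref{cor:compcorefretr} and, via Corollary \ref{cor:compsemi}, ultimately Proposition \ref{prop:idpcom-cutr}). Where you genuinely diverge is the treatment of $L_{GF}$: the paper first observes, via Corollary \ref{cor:esep}, that separability of $G$ forces the associated idempotent $e$ to be the identity, so that the quotient functor $H:\dd\to\dd_e$ is an equivalence, and then extracts the claim from the statement of Proposition \ref{prop:KL-HL-equiv-utr} that $H\circ L_{GF}$ is an equivalence up to retracts. You instead bypass the coidentifier entirely: from $K_{GF}\circ L_{GF}=J_{GF}$, Proposition \ref{prop:J:equiv-utr} (separability of the monad) and the already-established fact that $K_{GF}^{\natural}$ is an equivalence, you run a two-out-of-three argument in the idempotent completions, using that $(K_{GF}\circ L_{GF})^{\natural}=K_{GF}^{\natural}\circ L_{GF}^{\natural}$. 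Your route is slightly more self-contained in that it needs only the separable-monad input and the comparison functor, not the semiseparability machinery (associated idempotent, coidentifier) packaged into Proposition \ref{prop:KL-HL-equiv-utr}; the paper's route, on the other hand, makes visible how the corollary sits as the degenerate case $e=\id$ of the more general semiseparable picture. Your closing caveat is also well placed: the two-out-of-three step must indeed be carried out for the completions, since it is $K_{GF}^{\natural}$ and $J_{GF}^{\natural}$, not $K_{GF}$ and $J_{GF}$ themselves, that are equivalences.
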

\proof
Since $G$ is a separable functor, then, by Corollary \ref{cor:esep}, the associated idempotent natural transformation $e:\id_\dd\to\id_\dd$ is the identity $\id_{\id_\dd}$, and hence the quotient functor $H:\dd\to\dd_\id$ is an equivalence. Thus, by Proposition \ref{prop:KL-HL-equiv-utr}, $L_{GF}:GF\text{-}\mathrm{Free}_\cc\to\dd$ results to be an equivalence up to retracts. %As a consequence of these equivalences and by the last assertion of Proposition \ref{prop:KL-HL-equiv-utr}, we have that $L$ and $K_{GF}$ are equivalences if $\cc$ and $\dd$ are idempotent complete.
Concerning $K_{GF}$, it is an equivalence up to retracts, in view of Corollary \ref{cor:compcorefretr}. Furthermore, it is an equivalence if $\dd$ is idempotent complete, by Corollary \ref{cor:compsemi}.
%If $\cc$ is idempotent complete, then it is known that the Eilenberg-Moore category $\cc_{GF}$ is idempotent complete as well. Since by Proposition \ref{prop:CoidEil} the functor $(K_{GF})_e:\dd_e\to\cc_{GF}$ is an equivalence of categories if $\dd$ is idempotent complete, then so is the functor $K_{GF}=(K_{GF})_e\circ H$, i.e. $G$ is monadic.
\endproof

\begin{rmk}
A similar result has been obtained in the setting of idempotent complete triangulated categories in \cite[Theorem 1.6]{DS18} where $G$ is only required to be conservative, which is always satisfied by a separable functor (Remark \ref{rmk:Maschke}).
    \end{rmk}

\subsection{Pre-triangulated categories}\label{sub:pretriang}

Our aim here is to extend to semiseparable functors a result obtained by P. Balmer
for separable functors in the context of pre-triangulated categories. First we need to recall the required
definitions. Following \cite[Definition 1.1]{Balm11}, by a \textbf{suspended
category} $(\cc,\Sigma)$ we mean an additive category $\cc$ endowed with an
autoequivalence $\Sigma :\cc\overset{\sim }{\rightarrow }\cc$%
, called the \textbf{suspension}. As in loc. cit., for simplicity we consider $\Sigma $ as an
isomorphism i.e. $\Sigma ^{-1}\circ \Sigma =\id_{\cc}=\Sigma
\circ \Sigma ^{-1}.$

If $\cc$ and $\dd$ are suspended categories, as in \cite[%
Remark 2.7]{Balm11}, when we say that $F\dashv G:\dd\rightarrow
\cc$ is an \textbf{adjunction of functors commuting with suspension}
we mean that both $F$ and $G$ commute with suspension and we tacitly assume
that the unit $\eta $ and counit $\epsilon $ commute with suspension as
well. In this case the monad $\left( GF,G\epsilon F,\eta \right) $ is
\textbf{stable}, meaning that the functor $GF:\cc\rightarrow
\cc,$ the multiplication $G\epsilon F$ and the unit $\eta $ commute
with suspension, see \cite[Definition 2.1]{Balm11}.

Let $(\cc,\Sigma)$ and $(\cc',\Sigma')$ be suspended categories. By
adapting \cite[Definition 3.7]{Balm11}, if a functor $G:\cc^{\prime
}\rightarrow \cc$ commutes with the suspension, i.e. $G\circ \Sigma
^{\prime }=\Sigma \circ G$, we say that $G$ is \textbf{stably semiseparable}
if it is semiseparable through some $\mathcal{P}^G_{X,Y}:\Hom_{%
\cc}\left( GX,GY\right) \rightarrow \Hom_{\cc%
^{\prime }}\left( X,Y\right) $ that commutes with suspension, i.e. such that the diagram
\begin{equation*}
\xymatrixcolsep{1cm}\xymatrixrowsep{0.8cm}\xymatrix{\Hom_\cc(GX,GY)\ar[d]_{\f^{\Sigma}_{GX,GY}}\ar[rr]^{\mathcal{P}^G_{X,Y}}&&\Hom_{\cc'}(X,Y)\ar[d]^{\f^{\Sigma'}_{X,Y}}\\
\Hom_\cc(\Sigma GX,\Sigma GY)\ar@{=}[r]&\Hom_\cc( G\Sigma' X,G\Sigma'Y)\ar[r]^-{\mathcal{P}^G_{\Sigma ^{\prime }X,\Sigma ^{\prime }Y}}&\Hom_{\cc'}(\Sigma'X,\Sigma'Y)}
%\mathcal{P}^G_{\Sigma ^{\prime }X,\Sigma ^{\prime }Y}\circ \Sigma _{GX,GY}=&\Sigma _{X,Y}^{\prime }\circ \mathcal{P}^G_{X,Y}.
\end{equation*}
is commutative. In order to simplify the notation all suspensions will be denoted by the
same letter $\Sigma $ from now on.

Given a suspended category $\left( \cc,\Sigma \right) $, by a
(candidate) triangle in $\cc$ (with respect to $\Sigma $) we mean a
diagram of the form
\begin{equation*}
\xymatrix{X\ar[r]^-u&Y\ar[r]^-v&Z\ar[r]^-w&\Sigma X}
%X\overset{u}{\rightarrow }Y\overset{v}{\rightarrow }Z\overset{w}{\rightarrow}\Sigma X.
\end{equation*} %
A \textbf{pre-triangulated} category $\cc$ is a suspended category  $%
\left( \cc,\Sigma \right) $ together with a class of triangles (with
respect to $\Sigma $) called \textbf{distinguished triangles} subject to the
axioms listed in \cite[Definition 1.3]{Balm11}. This definition is equivalent
to the one given in \cite[Definition 1.1.2]{Ne01} (see the comment after
\cite[Definition 1.3]{Balm11}): We just point out that the requirement that $\Sigma:\cc\to\cc$ is additive, included in \cite[Definition 1.1.2]{Ne01},  is superfluous as $\Sigma$ is part of an adjunction and, if $F\dashv G:\cc\to\dd$ is an adjunction with $\cc$ and $\dd$ additive, then both $F$ and $G$ are additive, see e.g. \cite[Corollary 1.3]{Pop73}.

A functor between pre-triangulated categories is called \textbf{%
exact} if it commutes with the suspension and preserves distinguished
triangles. % see https://stacks.math.columbia.edu/tag/014V
It is well-known that an exact functor of pre-triangulated categories is
additive\footnote{See e.g. \url{https://stacks.math.columbia.edu/tag/05QY}.}.\\

%We use the definition of additive functor given in \cite[Definition 1.3.1]%
%{BorII94} taking in mind that, in view of \cite[Proposition 1.3.4]{BorII94}%
%, a functor between additive categories is additive if and only if it
%preserves biproducs (and zero object).

In order to prove the main result of this section, namely Theorem \ref{thm:4.1}, we need the following further results concerning the coidentifier, see Subsection \ref{sub:coidentifier}.

\begin{lem}
\label{lem:coidadd}Let $\cc$ be a category and let $e:\id_{%
\cc}\rightarrow \id_{\cc}$ be an idempotent natural
transformation.

1) If $\cc$ is pointed (i.e. it has a zero object) so is the
coidentifier $\cc_{e}$.

2) If $\cc$ is (pre)additive so is the coidentifier $\cc_{e}$
and the functor $H:\cc\rightarrow \cc_{e}$ is an additive
functor.

%3) If $\cc$ is additive so is the coidentifier $\cc_{e}$.
\end{lem}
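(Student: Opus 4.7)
The plan is to work directly from the explicit description of $\cc_e$ recalled in Subsection \ref{sub:coidentifier}, namely $\mathrm{Ob}(\cc_e)=\mathrm{Ob}(\cc)$ and $\Hom_{\cc_e}(A,B)=\Hom_\cc(A,B)/\!\sim$, where $f\sim g$ iff $e_B\circ f=e_B\circ g$, with composition induced by that of $\cc$ and $H$ acting as the identity on objects and as the canonical projection on morphisms.

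For (1), I would take a zero object $0\in\cc$ and show it is still a zero object in $\cc_e$. Since for every $X\in\cc$ the hom-sets $\Hom_\cc(0,X)$ and $\Hom_\cc(X,0)$ are singletons, the quotients $\Hom_{\cc_e}(0,X)$ and $\Hom_{\cc_e}(X,0)$ are singletons as well, so $0$ is a zero object of $\cc_e$. In particular $H$ sends the zero morphism to the zero morphism.

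For (2), assume $\cc$ is preadditive. I would define, for any $A,B\in\cc_e=\cc$, the operation
\begin{equation*}
\overline{f}+\overline{g}:=\overline{f+g},\qquad f,g\in\Hom_\cc(A,B).
\end{equation*}
The key well-definedness check uses that composition in $\cc$ is bilinear: if $f\sim f'$ and $g\sim g'$, then
\begin{equation*}
e_B\circ(f+g)=e_B\circ f+e_B\circ g=e_B\circ f'+e_B\circ g'=e_B\circ(f'+g'),
\end{equation*}
so $f+g\sim f'+g'$. The abelian group axioms on $\Hom_{\cc_e}(A,B)$ are then inherited from $\Hom_\cc(A,B)$, and bilinearity of composition in $\cc_e$ follows from that of $\cc$ by the same projection. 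Moreover $H(f+g)=\overline{f+g}=\overline{f}+\overline{g}=H(f)+H(g)$, hence $H$ is additive. If $\cc$ is moreover additive, take a biproduct diagram $(A\oplus B,i_A,i_B,p_A,p_B)$ in $\cc$; applying the additive functor $H$ yields a diagram in $\cc_e$ satisfying the same defining identities (each identity being of the form $p_{\bullet}\circ i_{\bullet}=\id$, $p_{\bullet}\circ i_{\star}=0$, or $i_A p_A+i_B p_B=\id$), hence $H(A\oplus B)$ is a biproduct of $A,B$ in $\cc_e$, proving that $\cc_e$ is additive.

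No serious obstacle is expected: the only delicate point is the well-definedness of the addition on $\Hom_{\cc_e}$, which hinges on bilinearity of composition together with naturality of $e$. Everything else is a direct consequence of the fact that $H$ is the identity on objects, surjective on morphisms, and turns out to be an additive functor.
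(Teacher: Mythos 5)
Your proof is correct and follows essentially the same route as the paper: the zero object survives because its hom-sets are singletons, the relation $\sim$ is checked to be an additive congruence via $e_B\circ(f+g)=e_B\circ f+e_B\circ g$, and biproducts are transported along the additive, identity-on-objects functor $H$ by preserving the defining equations. (One cosmetic remark: the well-definedness of $+$ uses only bilinearity of composition, not the naturality of $e$, as your own displayed computation already shows.)
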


\begin{proof}
Recall that $\cc_e$ is the quotient category $\cc/\!\sim$ where the congruence relation $\sim$ is defined, for all $f,g:A\to B$ by setting $f\sim g$ if and only if $e_B\circ f=e_B\circ g$.

1) Clearly a zero object in $\cc$ is zero also in $\cc_e$.
\begin{invisible}
If $1$ is a terminal object in $\cc$, then the set $\Hom_{\cc}(C,1)$ is a singleton, for
every object $C\in\cc$. Hence $\Hom_{\cc_{e}}(C,1)$ is a singleton
too as a quotient of $\Hom_{\cc}(C,1)$. Thus $1$ is terminal also in $\cc_{e}$. Similarly an initial object in $\cc$ is initial also in $\cc_e$. A zero object is both a terminal and an initial object.
\end{invisible}

2) If $\cc$ is (pre)additive, for any $A,B\in\cc$ the set $\Hom_{\cc%
}(A,B)$ is an abelian group via a binary operation $+$. Note that $\sim$ is an additive congruence relation. In fact, for all $f,g,f',g':A\to B$, if $f\sim f'$ and $g\sim g'$, then  $e_{B}\circ f=e_{B}\circ f^{\prime }$ and $e_{B}\circ g=e_{B}\circ
g^{\prime }$ so that $e_{B}\circ \left( f+g\right) =e_{B}\circ f+e_{B}\circ
g=e_{B}\circ f^{\prime }+e_{B}\circ g^{\prime }=e_{B}\circ \left( f^{\prime
}+g^{\prime }\right) $ and hence $f+g\sim f'+g'$. As a consequence it is well-known that the quotient is also (pre)additive and the quotient functor $H$ is an additive functor.\qedhere

\begin{invisible}
Assume that $\cc$ is preadditive. If $f,g,f^{\prime
}g^{\prime }\in \Hom_{\cc}(A,B)$ are such that $\overline{f}=%
\overline{f^{\prime }}$ and $\overline{g}=\overline{g^{\prime }},$ then we
get $f\sim f$ and $g\sim g^{\prime }$ so that $f+g\sim f^{\prime
}+g^{\prime } $ i.e. $\overline{f+g}=\overline{f^{\prime
}+g^{\prime }}.$ Hence we can define a binary operation on$\ \Hom_{%
\cc_{e}}(A,B)$ by setting $\overline{f}+\overline{g}:=\overline{f+g}$
which makes $\Hom_{\cc_{e}}(A,B)$ an abelian group as well.
Moreover $\left( \overline{f}+\overline{g}\right) \circ \overline{h}=%
\overline{(f+g)}\circ \overline{h}=\overline{\left( f+g\right) \circ h}=%
\overline{f\circ h+g\circ h}=\overline{f\circ h}+\overline{g\circ h}=%
\overline{f}\circ \overline{h}+\overline{g}\circ \overline{h}$ and similarly
on the other side. Thus $\cc_{e}$ is preadditive too. Since, by
construction, $H\left( f+g\right) =\overline{f+g}=\overline{f}+\overline{g}%
=Hf+Hg,$ we have that the functor $H:\cc\rightarrow \cc_{e}$
is an additive functor.

Assume that $\cc$ is additive. By $1)$ $\cc_e$ has a zero object. We have to check that it also has binary biproducts.
Let $A$ and $B$ be objects in $\cc$. Then we have the binary
biproduct $\left( A\oplus B,p_{A},p_{B},s_{A},s_{B}\right) $ in $\cc$%
. Then $\left( A\oplus B,\overline{p_{A}},\overline{p_{B}},\overline{s_{A}},%
\overline{s_{B}}\right) $ is the binary biproduct of $A$ and $B$ in $%
\cc_{e}$. In fact $\overline{p_{A}}\circ \overline{s_{A}}=\overline{%
p_{A}\circ s_{A}}=\overline{\id_{A}},\overline{p_{B}}\circ \overline{%
s_{B}}=\overline{p_{B}\circ s_{B}}=\overline{\id_{B}},\overline{p_{A}%
}\circ \overline{s_{B}}=\overline{p_{A}\circ s_{B}}=\overline{0},\overline{%
p_{B}}\circ \overline{s_{A}}=\overline{p_{B}\circ s_{A}}=\overline{0}$ and $%
\overline{s_{A}}\circ \overline{p_{A}}+\overline{s_{B}}\circ \overline{p_{B}}%
=\overline{s_{A}\circ p_{A}}+\overline{s_{B}\circ p_{B}}=\overline{%
s_{A}\circ p_{A}+s_{B}\circ p_{B}}=\overline{\id_{A\oplus B}}$. This
proves that $\cc_{e}$ has a zero object and binary biproducts so that $%
\cc_{e}$ is additive.
\end{invisible}
\end{proof}

\begin{lem}
\label{lem:coidSusp}Let $\cc$ be a category and let $e:\id_{%
\cc}\rightarrow \id_{\cc}$ be an idempotent natural
transformation. If $\cc$ has an endofunctor $\Sigma $ such that $%
\Sigma e=e\Sigma $, then the coidentifier $\cc_{e}$ has an
endofunctor $\Sigma _{e}$ such that $H\circ \Sigma =\Sigma _{e}\circ H$, where $H:\cc\to\cc_e$ is the quotient functor.
Moreover, $\Sigma _{e}$ is an additive functor whenever $\Sigma $ is.
\end{lem}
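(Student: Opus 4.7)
The plan is to obtain $\Sigma_e$ as a direct application of the universal property of the coidentifier (Lemma \ref{lem:coidentifier}) to the composite functor $H\circ \Sigma:\cc\to\cc_e$. Recall from Subsection \ref{sub:coidentifier} that the idempotent natural transformation associated to the quotient functor $H:\cc\to\cc_e$ is exactly $e$, which means $He=\id_H$. Using this, together with the hypothesis $\Sigma e = e\Sigma$, one computes
\[
(H\Sigma)\circ e \;=\; H\circ(\Sigma e) \;=\; H\circ(e\Sigma) \;=\; (He)\Sigma \;=\; \id_H\,\Sigma \;=\; \id_{H\Sigma}.
\]
Hence $H\Sigma$ satisfies $(H\Sigma)e=\id_{H\Sigma}$, so Lemma \ref{lem:coidentifier} produces a unique functor $\Sigma_e:\cc_e\to\cc_e$ with $\Sigma_e\circ H = H\circ \Sigma$, as required.

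For the additivity claim, assume $\Sigma$ is additive. By Lemma \ref{lem:coidadd}, $\cc_e$ is (pre)additive and $H$ is an additive functor; in particular the composite $H\Sigma$ is additive. Since $H$ is the identity on objects and the canonical surjection on hom-sets, every morphism of $\cc_e$ has the form $\overline{f}=Hf$ for some $f$ in $\cc$, and $\overline{f}+\overline{g}=\overline{f+g}=H(f+g)$ whenever $f,g$ are parallel. The key step is then the direct computation: given parallel morphisms $\overline{f},\overline{g}:A\to B$ in $\cc_e$,
\[
\Sigma_e(\overline{f}+\overline{g}) \;=\; \Sigma_e H(f+g) \;=\; H\Sigma(f+g) \;=\; H(\Sigma f+\Sigma g) \;=\; H\Sigma f + H\Sigma g \;=\; \Sigma_e \overline{f} + \Sigma_e \overline{g},
\]
and similarly $\Sigma_e$ preserves zero morphisms. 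Thus $\Sigma_e$ is additive.

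I expect no real obstacle here: everything follows mechanically from the universal property and the surjectivity of $H$ on morphisms. The only point requiring a touch of care is recalling the identity $He=\id_H$, which is precisely what makes $e$ the idempotent associated to $H$ and is what lets the hypothesis $\Sigma e = e\Sigma$ be converted into the factorization condition $(H\Sigma)e=\id_{H\Sigma}$.
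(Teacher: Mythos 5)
Your proof is correct and follows essentially the same route as the paper: the identity $H\Sigma e = He\Sigma = \id_{H}\Sigma = \id_{H\Sigma}$ feeds the universal property of the coidentifier (Lemma \ref{lem:coidentifier}) to produce $\Sigma_e$, and additivity is checked on classes $\overline{f}$ exactly as in the paper's computation. No gaps; only note that $(H\Sigma)\circ e$ is really the whiskering $H\Sigma e$, a purely notational point.
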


\begin{proof}
We have $H\Sigma e=He\Sigma =\id_{H}\circ \Sigma =\id%
_{H\Sigma }$ so that, by Lemma \ref{lem:coidentifier}, there is a unique
functor $\Sigma _{e}:\cc_{e}\rightarrow \cc_{e}$ %\bl{(indicarlo $(H \Sigma)_e$ ?)} \rd{[Capisco l'ottica, ma appesantirebbe.]} 
such that $%
H\circ \Sigma =\Sigma _{e}\circ H$. Since $H$ acts as the identity on
objects, we get that $\Sigma _{e}$ acts as $\Sigma $ on objects. Moreover $%
\Sigma _{e}\overline{f}=\Sigma _{e}Hf=H\Sigma f=\overline{\Sigma f}.$
Since $\Sigma _{e}\left( \overline{f}+\overline{g}\right) =\Sigma _{e}\left(
\overline{f+g}\right) =\overline{\Sigma \left( f+g\right) }=\overline{\Sigma
f+\Sigma g}=\overline{\Sigma f}+\overline{\Sigma g}=\Sigma _{e}\overline{f}%
+\Sigma _{e}\overline{g},$ we get that $\Sigma _{e}$ is an additive functor
if so is $\Sigma $.
\end{proof}

\begin{lem}\label{lem:asidpsusp}
Let  $F:\cc\to\dd$ be a stably semiseparable functor. Then, the associated idempotent natural transformation commutes with the suspension.
\end{lem}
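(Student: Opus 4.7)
The plan is to argue by a direct calculation that unfolds the definition of the associated idempotent and then invokes exactly the coherence square that defines stable semiseparability.

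First, recall from Proposition \ref{prop:idempotent} and the subsequent paragraph that the associated idempotent is defined on components by $e_X=\mathcal{P}_{X,X}(\id_{FX})$ for any $\mathcal{P}$ witnessing the semiseparability of $F$. By hypothesis we may choose $\mathcal{P}$ to be \emph{stable}, that is, such that the square displayed in the definition of stably semiseparable functor commutes: for every $X,Y$ in $\cc$ and every $g\in\Hom_\dd(FX,FY)$, one has
\[
\mathcal{P}_{\Sigma X,\Sigma Y}(\Sigma g)=\Sigma\bigl(\mathcal{P}_{X,Y}(g)\bigr).
\]

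Next, I would apply this identity to $g=\id_{FX}$. Since $F$ commutes with suspension we have $\Sigma FX=F\Sigma X$, hence $\Sigma(\id_{FX})=\id_{\Sigma FX}=\id_{F\Sigma X}$. The displayed coherence then gives
\[
\Sigma e_X=\Sigma\bigl(\mathcal{P}_{X,X}(\id_{FX})\bigr)=\mathcal{P}_{\Sigma X,\Sigma X}(\id_{F\Sigma X})=e_{\Sigma X},
\]
which is precisely the equality $\Sigma e=e\Sigma$ claimed in the statement.

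There is really no obstacle here: the whole point is that the coherence square of stable semiseparability was tailored so that the single distinguished element $\id_{FX}$ transports correctly under $\Sigma$. The only thing to keep in mind is that we must select at the outset a stable $\mathcal{P}$ (granted by the hypothesis), because $e$ is a priori defined only up to the choice of $\mathcal{P}$, although Proposition \ref{prop:idempotent} guarantees that its values do not depend on this choice.
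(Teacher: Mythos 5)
Your proof is correct and follows exactly the paper's argument: unfold $e_X=\mathcal{P}_{X,X}(\id_{FX})$ for a stable choice of $\mathcal{P}$, apply the coherence square to $g=\id_{FX}$, and use $\Sigma FX=F\Sigma X$ to identify $\Sigma(\id_{FX})$ with $\id_{F\Sigma X}$. Your closing remark about the independence of $e$ from the choice of $\mathcal{P}$ (via Proposition \ref{prop:idempotent}) is a worthwhile precision that the paper leaves implicit.
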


\begin{proof}
By definition, $F$ is semiseparable through some $\mathcal{P}^F$ such
that $\mathcal{P}^F_{\Sigma X,\Sigma Y}\circ \f^\Sigma _{FX,FY}=\f^\Sigma
_{X,Y}\circ \mathcal{P}^F_{X,Y}$. Consider the associated idempotent natural transformation $e:\id_\cc\to\id_\cc$ which is defined by setting $e_{X}:=\mathcal{P}^F_{X,X}\left( \id_{FX}\right)$ for every $X$ in $\cc$. Then
$\Sigma e_{X}=\f^\Sigma _{X,X}\mathcal{P}^F_{X,X}\left( \id_{FX}\right) =%
\mathcal{P}^F_{\Sigma X,\Sigma X}\f^\Sigma _{FX,FY}\left( \id_{FX}\right)
=\mathcal{P}^F_{\Sigma X,\Sigma X}\Sigma \left( \id_{FX}\right) =%
\mathcal{P}^F_{\Sigma X,\Sigma X}\left( \id_{\Sigma FX}\right) =%
\mathcal{P}^F_{\Sigma X,\Sigma X}\left( \id_{F\Sigma X}\right)
=e_{\Sigma X}$
and hence $\Sigma e=e\Sigma $, i.e. $e$ commutes with the suspension.
\end{proof}

We are now ready to prove our announced semi-analogue of Balmer's \cite[Theorem 4.1]%
{Balm11}.

\begin{thm}
\label{thm:4.1}Let $\cc$ be a pre-triangulated category and let $%
\dd$ be an idempotent complete suspended category. Let $F\dashv G:%
\dd\rightarrow \cc$ be an adjunction of functors commuting
with the suspension. Suppose that the stable monad $GF:\cc\rightarrow
\cc$ is an exact functor and that $G$ is a stably semiseparable functor. Then,
the coidentifier $\dd_{e}$ is idempotent complete and
pre-triangulated with distinguished triangles being exactly the ones whose
image through the functor $G_{e}:\dd_{e}\rightarrow \cc$ (determined by the factorization $G=G_e\circ H$) is
distinguished in $\cc$. Moreover, with respect to this
pre-triangulation, both functors $G_{e}:\dd_{e}\rightarrow \mathcal{C%
}$ and its left adjoint $F_{e}:\cc\rightarrow \dd_{e}$ become exact.
\begin{invisible}
and $G$
becomes additive.
\end{invisible}
\end{thm}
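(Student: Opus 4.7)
My plan is to reduce Theorem \ref{thm:4.1} to Balmer's \cite[Theorem 4.1]{Balm11}, which gives the analogous conclusion when the right adjoint is stably \emph{separable} (with the pre-triangulation living on the source, rather than on a coidentifier). The key idea is to exploit the canonical factorization $G=G_{e}\circ H$ of Theorem \ref{thm:coidentifier}: the factor $G_{e}$ is already separable, and my task is to verify that the new adjunction $F_{e}\dashv G_{e}:\dd_{e}\to\cc$ meets the hypotheses of Balmer's result, after which the conclusion will transfer verbatim.

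First I would equip $\dd_{e}$ with the required ambient structure. Stable semi-separability of $G$ gives, by Lemma \ref{lem:asidpsusp}, the commutation $\Sigma e=e\Sigma$; Lemma \ref{lem:coidSusp} then yields an additive endofunctor $\Sigma_{e}:\dd_{e}\to\dd_{e}$ with $H\circ\Sigma=\Sigma_{e}\circ H$. Applying the same construction to $\Sigma^{-1}$ and using the uniqueness clause of Lemma \ref{lem:coidentifier} shows $\Sigma_{e}$ is an isomorphism. Additivity of $\dd_{e}$ (and of $H$) comes from Lemma \ref{lem:coidadd}, and idempotent completeness of $\dd_{e}$ from Lemma \ref{lem:coididp} combined with the standing assumption on $\dd$.

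Next I would transport the adjunction. By Lemma \ref{lem:coidadj}, setting $F_{e}:=H\circ F$ gives $F_{e}\dashv G_{e}$ with associated monad $G_{e}F_{e}=GF$ (which is therefore exact by hypothesis). Both functors commute with suspensions: from $G_{e}\Sigma_{e}\circ H=G_{e}H\Sigma=G\Sigma=\Sigma G=\Sigma G_{e}\circ H$ and the universal property of $H$ one obtains $G_{e}\Sigma_{e}=\Sigma G_{e}$, and analogously $F_{e}\Sigma=HF\Sigma=H\Sigma F=\Sigma_{e}HF=\Sigma_{e}F_{e}$. By Theorem \ref{thm:coidentifier}, $G_{e}$ is separable through $\p^{G_{e}}_{HX,HY}=\f^{H}_{X,Y}\circ\p^{G}_{X,Y}$; since both $\p^{G}$ (by stable semi-separability of $G$) and $\f^{H}$ (via $H\Sigma=\Sigma_{e}H$) commute with suspension, so does the composite $\p^{G_{e}}$, making $G_{e}$ \emph{stably} separable. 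All the hypotheses of \cite[Theorem 4.1]{Balm11} are now in place for the adjunction $F_{e}\dashv G_{e}$, and its application endows $\dd_{e}$ with a pre-triangulation whose distinguished triangles are those whose image under $G_{e}$ is distinguished in $\cc$, and in which $G_{e}$ is exact. Exactness of $F_{e}$ is then immediate: for $\Delta$ distinguished in $\cc$, the image $G_{e}F_{e}\Delta=GF\Delta$ is distinguished by exactness of $GF$, so $F_{e}\Delta$ is distinguished in $\dd_{e}$ by construction.

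The main obstacle I foresee is the careful verification that $\p^{G_{e}}$ commutes with suspension in the precise sense of \cite[Definition 3.7]{Balm11}; this requires unwinding the natural transformation $\p^{G}$ of Hom-functors and tracking its behaviour after post-composition with $\f^{H}$, but it is essentially formal bookkeeping once the coherences of the preceding step are set up.
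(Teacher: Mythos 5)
Your proposal is correct and follows essentially the same route as the paper's own proof: reduce to Balmer's \cite[Theorem 4.1]{Balm11} applied to the transported adjunction $F_e\dashv G_e$, after equipping $\dd_e$ with its additive, idempotent complete, suspended structure via Lemmas \ref{lem:asidpsusp}, \ref{lem:coidadd}, \ref{lem:coididp}, \ref{lem:coidSusp} and \ref{lem:coidadj}, and checking that $G_e$ is stably separable through $\p^{G_e}=\f^H\circ\p^G$. The only points you leave implicit --- the routine check that the unit and counit of $(F_e,G_e)$ commute with suspension (which is part of the paper's notion of an adjunction commuting with suspension) and the explicit suspension-compatibility computation for $\p^{G_e}$ --- are exactly the bookkeeping the paper writes out, and your extra direct argument for the exactness of $F_e$ is valid though already subsumed in Balmer's conclusion.
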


\begin{proof}
Since $G$ is stably semiseparable, by Lemma \ref{lem:asidpsusp}, the associated idempotent natural transformation $e:\id_\cc\to\id_\cc$ commutes with the suspension, i.e. $e\Sigma=\Sigma e$.
By Lemma \ref%
{lem:coidadd}, the coidentifier $\dd_{e}$ is additive and, by Lemma %
\ref{lem:coididp}, it is idempotent complete. By Lemma \ref{lem:coidSusp}, the coidentifier
$\dd_{e}$ has an endofunctor $\Sigma _{e}$ such that $H\circ \Sigma
=\Sigma _{e}\circ H$. %\rd{\sout{Moreover $\Sigma _{e}$ is an additive functor whenever
%$\Sigma $ is.} [Non serve e poi ogni equivalenza tra categorie additive è additiva]}
From $\Sigma e=e\Sigma $ we deduce $e\Sigma ^{-1}=\Sigma
^{-1}e $ so that we also have an endofunctor $\Sigma _{e}^{-1}$ such that $%
H\circ \Sigma ^{-1}=\Sigma _{e}^{-1}\circ H.$ We compute $\Sigma _{e}\circ
\Sigma _{e}^{-1}\circ H=\Sigma _{e}\circ H\circ \Sigma ^{-1}=H\circ \Sigma
\circ \Sigma ^{-1}=H=\id_{\dd_{e}}\circ H$ and hence $\Sigma
_{e}\circ \Sigma _{e}^{-1}=\id_{\dd_{e}}$ in view of Lemma %
\ref{lem:coidentifier}. Similarly $\Sigma _{e}^{-1}\circ \Sigma _{e}=\mathrm{%
Id}_{\dd_{e}}$, so that $\Sigma _{e}$ is an isomorphism.

Since $G$ is semiseparable, by Theorem \ref{thm:coidentifier} it
factorizes as $G=G_{e}\circ H$ for a unique separable functor $G_{e}:%
\dd_{e}\rightarrow \cc$. Moreover, $G_e$ is separable via $\p^{G_e}$ defined by $\p^{G_e}_{HX,HY}:=\f^H_{X,Y}\circ\p^G_{X,Y}$ for all $X,Y$ in $\dd$.
Since $G$ commutes with the suspension,
we have $G_{e}\circ \Sigma _{e}\circ H=G_{e}\circ H\circ \Sigma =G\circ
\Sigma =\Sigma \circ G=\Sigma \circ G_{e}\circ H$ and hence $G_{e}\circ
\Sigma _{e}=\Sigma \circ G_{e}$, i.e. $G_{e}$ commutes with the suspension as
well. Now consider the composite functor $F_e=H\circ F:\cc\to\dd_e$, which is the left adjoint of $G_e$ with unit $\eta_e$ and counit $\epsilon_e$ given as in Lemma \ref{lem:coidadj}. Then,  $\Sigma _{e}\circ F_{e}=\Sigma _{e}\circ H\circ F=H\circ
\Sigma \circ F=H\circ F\circ \Sigma =F_{e}\circ \Sigma $ so that $F_{e}$
commutes with the suspension too. Note that $\epsilon _{e}\Sigma _{e}H=\epsilon
_{e}H\Sigma =H\epsilon \Sigma =H\Sigma \epsilon =\Sigma _{e}H\epsilon
=\Sigma _{e}\epsilon _{e}H$ so that $\epsilon _{e}\Sigma _{e}=\Sigma
_{e}\epsilon _{e}.$ Moreover $\eta _{e}\Sigma =\eta \Sigma =\Sigma \eta
=\Sigma \eta _{e}$. Thus also the unit and counit of the adjunction $\left(
F_{e},G_{e}\right) $ commute with the suspensions. Hence $F_{e}\dashv G_{e}$
is what we called an adjunction of functors commuting with suspension. By
Lemma \ref{lem:coidadj}, the adjunctions $\left( F_{e},G_{e}\right) $ and $%
\left( F,G\right) $ have the same associated monad. As a consequence, we get
that $G_{e}\circ F_{e}$ is a stable monad and an exact functor by assumption.
We have $\f^{\Sigma_e}_{HX,HY}\p^{G_e}_{HX,HY}=\f^{\Sigma_e}_{HX,HY}\f^H_{X,Y}\p^G_{X,Y}
=\f^{\Sigma_eH}_{X,Y}\p^G_{X,Y}=\f^{H\Sigma}_{X,Y}\p^G_{X,Y}=\f^{H}_{\Sigma X,\Sigma Y}\f^{\Sigma}_{X,Y}\p^G_{X,Y}
=\f^{H}_{\Sigma X,\Sigma Y}\p^G_{\Sigma X,\Sigma Y}\f^{\Sigma}_{GX,GY}
=\p^{G_e}_{H\Sigma X,H\Sigma Y}\f^{\Sigma}_{G_eHX,G_eHY}
=\p^{G_e}_{\Sigma_e HX,\Sigma_e HY}\f^{\Sigma}_{G_eHX,G_eHY}$ for all $X,Y$ in $\dd$. Since $H$ is surjective on objects, this means $\f^{\Sigma_e}_{X,Y}\p^{G_e}_{X,Y}=\p^{G_e}_{\Sigma_e X,\Sigma_e Y}\f^{\Sigma}_{G_eX,G_eY}$ for all $X,Y$ in $\dd_e$ i.e. that $G_e$ is a stably separable functor.

Then we can apply \cite[Theorem 4.1]{Balm11} to the adjunction $F_{e}\dashv
G_{e}:\dd_{e}\rightarrow \cc.$ As a consequence, the
coidentifier $\dd_{e}$ is pre-triangulated with distinguished
triangles $\Delta $ being exactly the ones whose image $G_e(\Delta)$ through the functor $G_{e}:%
\dd_{e}\rightarrow \cc$ is distinguished in $\cc$.
Moreover, with respect to this pre-triangulation, both functors $G_{e}:%
\dd_{e}\rightarrow \cc$ and $F_{e}:\cc\rightarrow
\dd_{e}$ become exact.
\begin{invisible} [Ho nascosto questa parte perché ho già scritto prima che le aggiunzioni tra categorie additive sono fatte di funtori additivi ed è quindi ovvio che $G_{e}:\dd_{e}\rightarrow \cc$ sia additivo.]
Since an exact functor of pre-triangulated categories is
additive, we get that $G_{e}:%
\dd_{e}\rightarrow \cc$ is additive. Since, by Lemma \ref{lem:coidadd}, the functor $H$ is also additive, we deduce that $%
G=G_{e}\circ H$ is additive as well.
\end{invisible}
\end{proof}

\begin{rmk}
\label{rem:EMsuspended}In \cite[Definition 2.4]{Balm11}, it is claimed that,
when $\cc$ is a suspended category and $\top $ an additive stable
monad on it, then the Eilenberg-Moore category $\cc_{\top }$
inherits a structure of suspended category such that $V_{\top }\dashv
U_{\top }:\cc_{\top }\rightarrow \cc$ is an adjunction of
additive functors commuting with suspension. Explicitly the suspension $%
\Sigma _{\top }:\cc_{\top }\rightarrow \cc_{\top }$ is
defined on objects by setting $\Sigma _{\top }\left( C,\mu \right) :=\left(
\Sigma C,\Sigma \mu \right) $ and on morphisms by $\Sigma _{\top }f:=\Sigma
f $.

\begin{invisible}
Balmer does not require $\top $ to be additive in \cite[Definition 2.4]%
{Balm11}, but when he uses the property above $\top $ is automatically
additive.

Let $\left( \top ,m:\top \top \rightarrow \top ,\eta :\id_{\mathcal{C%
}}\rightarrow \top \right) $ be a monad on $\cc$. First note that $%
\cc_{\top }$ is additive. In fact, given $f,g:\left( C,\mu \right)
\rightarrow \left( C^{\prime },\mu ^{\prime }\right) $ we have that
\begin{eqnarray*}
\left( U_{\top }f+U_{\top }g\right) \circ \mu &=&U_{\top }f\circ \mu
+U_{\top }g\circ \mu =\mu ^{\prime }\circ \top U_{\top }f+\mu ^{\prime
}\circ \top U_{\top }g=\mu ^{\prime }\circ \left( \top U_{\top }f+\top
U_{\top }g\right) \\
&=&\mu ^{\prime }\circ \left( \top U_{\top }f+\top U_{\top }g\right) \overset%
{\top \text{ additive}}{=}\mu ^{\prime }\circ \top \left( U_{\top }f+U_{\top
}g\right)
\end{eqnarray*}%
so that there is a unique morphism $f+g:\left( C,\mu \right) \rightarrow
\left( C^{\prime },\mu ^{\prime }\right) $ such that $U_{\top }\left(
f+g\right) =U_{\top }f+U_{\top }g$. Then $U_{\top }\left( g+f\right)
=U_{\top }g+U_{\top }f=U_{\top }f+U_{\top }g=U_{\top }\left( f+g\right) $
and hence, since $U_{\top }$ is faithful, we obtain $g+f=f+g.$

Moreover $U_{\top }\left( \left( f+g\right) +h\right) =U_{\top }\left(
f+g\right) +U_{\top }h=\left( U_{\top }f+U_{\top }g\right) +U_{\top
}h=U_{\top }f+\left( U_{\top }g+U_{\top }h\right) =U_{\top }f+U_{\top
}\left( g+h\right) =U_{\top }\left( f+\left( g+h\right) \right) $ and hence,
since $U_{\top }$ is faithful, we obtain $\left( f+g\right) +h=f+\left(
g+h\right) .$ Note that, given objects $\left( C,\mu \right) $ and $\left(
C^{\prime },\mu ^{\prime }\right) $ in $\cc_{\top }$ and the zero
morphism $0_{C,C^{\prime }}:C\rightarrow C^{\prime },$ then
\begin{equation*}
0_{C,C^{\prime }}\circ \mu =0_{\top C,C^{\prime }}=\mu ^{\prime }\circ
0_{\top C,\top C^{\prime }}\overset{\top \text{ additive}}{=}\mu ^{\prime
}\circ \top \left( 0_{C,C^{\prime }}\right)
\end{equation*}%
and hence there is a unique morphism $0_{\left( C,\mu \right) ,\left(
C^{\prime },\mu ^{\prime }\right) }:\left( C,\mu \right) \rightarrow \left(
C^{\prime },\mu ^{\prime }\right) $ such that $U_{\top }0_{\left( C,\mu
\right) ,\left( C^{\prime },\mu ^{\prime }\right) }=0_{C,C^{\prime }}$. Hence%
\begin{equation*}
U_{\top }\left( f+0_{\left( C,\mu \right) ,\left( C^{\prime },\mu ^{\prime
}\right) }\right) =U_{\top }f+U_{\top }0_{\left( C,\mu \right) ,\left(
C^{\prime },\mu ^{\prime }\right) }=U_{\top }f+0_{C,C^{\prime }}=U_{\top }f
\end{equation*}%
and hence $f+0_{\left( C,\mu \right) ,\left( C^{\prime },\mu ^{\prime
}\right) }=f.$ Therefore $\Hom_{\cc_{\top }}\left( \left(
C,\mu \right) ,\left( C^{\prime },\mu ^{\prime }\right) \right) $ is an
abelian group. Moreover
\begin{eqnarray*}
U_{\top }\left( \left( f+g\right) \circ h\right) &=&U_{\top }\left(
f+g\right) \circ U_{\top }h=\left( U_{\top }f+U_{\top }g\right) \circ
U_{\top }h=U_{\top }f\circ U_{\top }h+U_{\top }g\circ U_{\top }h \\
&=&U_{\top }\left( f\circ h\right) +U_{\top }\left( g\circ h\right) =U_{\top
}\left( f\circ h+g\circ h\right)
\end{eqnarray*}%
so that $\left( f+g\right) \circ h=f\circ h+g\circ h$ and similarly on the
other side. Thus $\cc_{\top }$ is preadditive category and $U_{\top
}:\cc_{\top }\rightarrow \cc$ is an additive functor by
construction. In order to check that $\cc_{\top }$ is additive it
remains to prove it has zero object and binary biproducts. Concerning
biproducts, by \cite[Proposition 1.2.4]{BorII94} it suffices to notice that
$U_{\top }:\cc_{\top }\rightarrow \cc$ since we have the
product of two objects in $\cc$ we also have the same type of product
in $\cc_{\top }$ by \cite[Proposition 4.3.1]{BorII94}. If $0_{%
\cc}$ is a zero object in $\cc$, then, since $V_{\top
}\dashv U_{\top }$, we get that $V_{\top }0_{\cc}$ is an initial
object $\cc_{\top }.$ Since $\cc_{\top }$ is preadditive, by
the proof of \cite[Proposition 1.2.3]{BorII94}, we get that $V_{\top }0_{%
\cc}$ is a zero object in $\cc_{\top }$. Hence $\cc%
_{\top }$ is additive as desired. [FORSE\ LA\ PARTE\ PRECEDENTE\ E'
DIMOSTRATA\ ESPLICITAMENTE\ DA\ QUALCHE\ PARTE]

Let us check that $V_{\top }:\cc\rightarrow \cc_{\top }$ is
an additive functor. Given $f,g:C\rightarrow C^{\prime },$ we compute%
\begin{eqnarray*}
U_{\top }\left( V_{\top }f+V_{\top }g\right) &=&U_{\top }V_{\top }f+U_{\top
}V_{\top }g=U_{\top }V_{\top }f+U_{\top }V_{\top }g \\
&=&\top f+\top g\overset{\top \text{ additive}}{=}\top \left( f+g\right)
=U_{\top }V_{\top }\left( f+g\right)
\end{eqnarray*}%
and hence, since $U_{\top }$ is faithful, we obtain $V_{\top }f+V_{\top
}g=V_{\top }\left( f+g\right) $ as desired.

Let us construct a suspension on $\cc_{\top }$. Given an object $%
\left( C,\mu :\top C\rightarrow C\right) $ in $\cc_{\top },$ we have
that
\begin{eqnarray*}
\mu \circ m_{C} &=&\mu \circ \top \mu \Rightarrow \Sigma \mu \circ \Sigma
m_{C}=\Sigma \mu \circ \Sigma \top \mu \Rightarrow \Sigma \mu \circ
m_{\Sigma C}=\Sigma \mu \circ \top \Sigma \mu \\
\mu \circ \eta _{C} &=&\id_{C}\Rightarrow \Sigma \mu \circ \Sigma
\eta _{C}=\Sigma \id_{C}\Rightarrow \Sigma \mu \circ \eta _{\Sigma
C}=\id_{\Sigma C}
\end{eqnarray*}%
so that $\left( \Sigma C,\Sigma \mu :\Sigma \top C=\top \Sigma C\rightarrow
\top C\right) $ is an object in $\cc_{\top }$ and we can set $\Sigma
_{\top }\left( C,\mu \right) :=\left( \Sigma C,\Sigma \mu \right) .$ Given a
morphism $f:\left( C,\mu \right) \rightarrow \left( C^{\prime },\mu ^{\prime
}\right) $ in $\cc_{\top },$ then%
\begin{equation*}
f\circ \mu =\mu ^{\prime }\circ \top f\Rightarrow \Sigma f\circ \Sigma \mu
=\Sigma \mu ^{\prime }\circ \Sigma \top f\Rightarrow \Sigma f\circ \Sigma
\mu =\Sigma \mu ^{\prime }\circ \top \Sigma f
\end{equation*}%
so that we get a morphism $\Sigma f:\left( \Sigma C,\Sigma \mu \right)
\rightarrow \left( \Sigma C^{\prime },\Sigma \mu ^{\prime }\right) $ in $%
\cc_{\top }$. This defines a functor $\Sigma _{\top }:\cc%
_{\top }\rightarrow \cc_{\top }$. Similarly one defines $\Sigma
_{\top }^{-1}:=\left( \Sigma ^{-1}\right) _{\top }:\cc_{\top
}\rightarrow \cc_{\top }$ so that%
\begin{eqnarray*}
\Sigma _{\top }\Sigma _{\top }^{-1}\left( C,\mu \right) &=&\Sigma _{\top
}\left( \Sigma ^{-1}C,\Sigma ^{-1}\mu \right) =\left( \Sigma \Sigma
^{-1}C,\Sigma \Sigma ^{-1}\mu \right) =\left( C,\mu \right) , \\
\Sigma _{\top }\Sigma _{\top }^{-1}f &=&\Sigma _{\top }\left( \Sigma
^{-1}f\right) =\Sigma \Sigma ^{-1}f=f, \\
\Sigma _{\top }^{-1}\Sigma _{\top }\left( C,\mu \right) &=&\Sigma _{\top
}^{-1}\left( \Sigma C,\Sigma \mu \right) =\left( \Sigma ^{-1}\Sigma C,\Sigma
^{-1}\Sigma \mu \right) =\left( C,\mu \right) , \\
\Sigma _{\top }^{-1}\Sigma _{\top }f &=&\Sigma _{\top }^{-1}\left( \Sigma
f\right) =\Sigma ^{-1}\Sigma f=f
\end{eqnarray*}%
and hence $\Sigma _{\top }\Sigma _{\top }^{-1}=\id_{\cc%
_{\top }}=\Sigma _{\top }^{-1}\Sigma _{\top }$.

Let us check that $U_{\top }:\cc_{\top }\rightarrow \cc$
commutes with suspension i.e. that $U_{\top }\Sigma _{\top }=\Sigma U_{\top
}:$
\begin{eqnarray*}
U_{\top }\Sigma _{\top }\left( C,\mu \right) &=&U_{\top }\left( \Sigma
C,\Sigma \mu \right) =\Sigma C=\Sigma U_{\top }\left( C,\mu \right) , \\
U_{\top }\Sigma _{\top }\left( \left( C,\mu \right) \overset{f}{\rightarrow }%
\left( C^{\prime },\mu ^{\prime }\right) \right) &=&U_{\top }\left( \left(
\Sigma C,\Sigma \mu \right) \overset{\Sigma f}{\rightarrow }\left( \Sigma
C^{\prime },\Sigma \mu ^{\prime }\right) \right) \\
&=&\left( \Sigma C\overset{\Sigma f}{\rightarrow }\Sigma C^{\prime }\right)
=\Sigma \left( C\overset{f}{\rightarrow }C^{\prime }\right) =\Sigma U_{\top
}\left( \left( C,\mu \right) \overset{f}{\rightarrow }\left( C^{\prime },\mu
^{\prime }\right) \right) .
\end{eqnarray*}%
Let us check that $\Sigma _{\top }:\cc_{\top }\rightarrow \cc%
_{\top }$ is additive. Given morphisms $f,g:\left( C,\mu \right) \rightarrow
\left( C^{\prime },\mu ^{\prime }\right) $ in $\cc_{\top },$%
\begin{equation*}
U_{\top }\left( \Sigma _{\top }f+\Sigma _{\top }g\right) =U_{\top }\Sigma
_{\top }f+U_{\top }\Sigma _{\top }g=\Sigma U_{\top }f+\Sigma U_{\top
}g=\Sigma \left( U_{\top }f+U_{\top }g\right) =\Sigma U_{\top }\left(
f+g\right) =U_{\top }\Sigma _{\top }\left( f+g\right)
\end{equation*}%
so that $\Sigma _{\top }f+\Sigma _{\top }g=\Sigma _{\top }\left( f+g\right)
. $

Let us check that $V_{\top }:\cc\rightarrow \cc_{\top }$
commutes with suspension i.e. that $\Sigma _{\top }V_{\top }=V_{\top }\Sigma
:$%
\begin{eqnarray*}
\Sigma _{\top }V_{\top }C &=&\Sigma _{\top }\left( \top C,m_{C}\right)
=\left( \Sigma \top C,\Sigma m_{C}\right) =\left( \top \Sigma C,m_{\Sigma
C}\right) =V_{\top }\Sigma C, \\
\Sigma _{\top }V_{\top }\left( C\overset{f}{\rightarrow }C^{\prime }\right)
&=&\Sigma _{\top }\left( \left( \top C,m_{C}\right) \overset{\top f}{%
\rightarrow }\left( \top C^{\prime },m_{C^{\prime }}\right) \right) =\left(
\left( \Sigma \top C,\Sigma m_{C}\right) \overset{\Sigma \top f}{\rightarrow
}\left( \Sigma \top C^{\prime },\Sigma m_{C^{\prime }}\right) \right) \\
&=&\left( \left( \top \Sigma C,m_{\Sigma C}\right) \overset{\top \Sigma f}{%
\rightarrow }\left( \top \Sigma C^{\prime },m_{\Sigma C^{\prime }}\right)
\right) =V_{\top }\left( \Sigma C\overset{\Sigma f}{\rightarrow }\Sigma
C^{\prime }\right) =V_{\top }\Sigma \left( C\overset{f}{\rightarrow }%
C^{\prime }\right) .
\end{eqnarray*}%
The unit of $\left( V_{\top },U_{\top }\right) $ is $\eta $ which commutes
with suspension. The counit is $\beta :U_{\top }V_{\top }\rightarrow \mathrm{%
Id}$ which is uniquely determined by the equality $U_{\top }\beta _{\left(
C,\mu \right) }=\mu $. Thus%
\begin{equation*}
U_{\top }\Sigma _{\top }\beta _{\left( C,\mu \right) }=\Sigma U_{\top }\beta
_{\left( C,\mu \right) }=\Sigma \mu =U_{\top }\beta _{\left( \Sigma C,\Sigma
\mu \right) }=U_{\top }\beta _{\Sigma _{\top }\left( C,\mu \right) }
\end{equation*}%
and hence, since $U_{\top }$ is faithful, we get $\Sigma _{\top }\beta
_{\left( C,\mu \right) }=\beta _{\Sigma _{\top }\left( C,\mu \right) }$ i.e.
$\Sigma _{\top }\beta =\beta \Sigma _{\top }.$ Thus the counit commutes with
suspension. We have so proved that $V_{\top }\dashv U_{\top }:\cc%
_{\top }\rightarrow \cc$ is an adjunction of functors commuting with
suspension.
\end{invisible}
\end{rmk}

Given a monad $\top $ on a triangulated category $\cc$, in
\cite{DS18} the authors investigate whether the Eilenberg-Moore category $%
\cc_{\top }$ inherits the structure of triangulated category from $%
\cc$. They also claim this seems to rarely occur in Nature, quoting
\cite{Balm11} as a particular occurrence. In the following result $\cc%
_{GF}$ inherits the structure of pre-triangulated category from $\cc$.

\begin{cor}\label{coro:EMpretriang}
Let $\cc$ be a pre-triangulated category and let $\dd$ be an
idempotent complete suspended category. Let $F\dashv G:\dd%
\rightarrow \cc$ be an adjunction of functors commuting with
suspension. Suppose that the stable monad $GF:\cc\rightarrow
\cc$ is an exact functor and that $G$ is a stably semiseparable functor. Then,
the Eilenberg-Moore category $\cc_{GF}$ is idempotent complete and
pre-triangulated with distinguished triangles being exactly the ones whose
image through the forgetful functor $U_{GF}:\cc_{GF}\rightarrow
\cc$ is distinguished in $\cc$. Moreover, with respect to
this pre-triangulation, both the functor $U_{GF}:\cc_{GF}\rightarrow
\cc$ and its left adjoint $V_{GF}:\cc\rightarrow \cc%
_{GF}$ become exact. Furthermore, there is a unique exact equivalence of
categories $\left( K_{GF}\right) _{e}:\dd_{e}\rightarrow \cc%
_{GF}$ such that $\left( K_{GF}\right) _{e}\circ H=K_{GF}$ and $U_{GF}\circ
\left( K_{GF}\right) _{e}=G_{e}$.
\end{cor}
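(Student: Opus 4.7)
The plan is to reduce the statement to Theorem~\ref{thm:4.1} via the functor $(K_{GF})_e:\dd_e\to\cc_{GF}$ supplied by Proposition~\ref{prop:CoidEil}. The hypotheses of Theorem~\ref{thm:4.1} are in force, so $\dd_e$ is idempotent complete and pre-triangulated (with distinguished triangles $\Delta$ characterized by the fact that $G_e(\Delta)$ is distinguished in $\cc$), and both $G_e$ and $F_e=H\circ F$ are exact with respect to the suspension $\Sigma_e$ induced on $\dd_e$. Moreover, because $\dd$ is itself idempotent complete, Proposition~\ref{prop:CoidEil}(1) already yields that $(K_{GF})_e$ is an honest equivalence of categories (not merely an equivalence up to retracts).

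The key steps are as follows. First, transport the pre-triangulation from $\dd_e$ to $\cc_{GF}$ through the equivalence $(K_{GF})_e$, so that $\cc_{GF}$ inherits idempotent completeness and a pre-triangulated structure, and $(K_{GF})_e$ becomes exact by construction. Next, observe that the suspension on $\cc_{GF}$ produced by this transport coincides with the canonical one $\Sigma_{GF}$ described in Remark~\ref{rem:EMsuspended}: indeed, $U_{GF}$ commutes with $\Sigma_{GF}$, and from $U_{GF}\circ(K_{GF})_e=G_e$ together with $G_e\circ\Sigma_e=\Sigma\circ G_e$ and the faithfulness of $U_{GF}$, one deduces that $(K_{GF})_e\circ\Sigma_e=\Sigma_{GF}\circ(K_{GF})_e$. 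Then the distinguished triangles in $\cc_{GF}$ are precisely the images under $(K_{GF})_e$ of the distinguished triangles in $\dd_e$; since a triangle $\Delta'$ in $\cc_{GF}$ is such an image if and only if $U_{GF}(\Delta')=G_e(\Delta)$ is distinguished in $\cc$, we obtain the announced characterization.

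Finally, to see that $U_{GF}$ and $V_{GF}$ become exact, use the two factorizations
\[
U_{GF}=G_e\circ (K_{GF})_e^{-1}\qquad\text{and}\qquad V_{GF}=K_{GF}\circ F=(K_{GF})_e\circ H\circ F=(K_{GF})_e\circ F_e,
\]
the first coming from $U_{GF}\circ(K_{GF})_e=G_e$ and the second from Lemma~\ref{lem:coidadj}. Since $G_e$, $F_e$, $(K_{GF})_e$ and $(K_{GF})_e^{-1}$ are all exact, so are $U_{GF}$ and $V_{GF}$. Uniqueness of $(K_{GF})_e$ with the stated commutative identities is already part of Proposition~\ref{prop:CoidEil}(1), and exactness has just been verified.

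The main obstacle I anticipate is the bookkeeping around the suspensions: one must make sure that the suspension transported from $\dd_e$ along $(K_{GF})_e$ agrees with the suspension $\Sigma_{GF}$ defined intrinsically on $\cc_{GF}$ from the stable monad $GF$, because otherwise the assertion about $U_{GF}$ preserving distinguished triangles in the \emph{canonical} pre-triangulated structure of $\cc_{GF}$ would not be meaningful. Everything else—idempotent completeness, the description of distinguished triangles, and exactness of $U_{GF}$ and $V_{GF}$—then follows formally from the equivalence $(K_{GF})_e$ and Theorem~\ref{thm:4.1}.
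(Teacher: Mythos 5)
Your plan is correct, but it proves the corollary by a genuinely different route than the paper. The paper constructs the pre-triangulation on $\cc_{GF}$ \emph{intrinsically}, by a second application of Balmer's Theorem 4.1, this time to the adjunction $(V_{GF},U_{GF})$: it exhibits the separability section $\sigma=G\gamma F$ of the monad $GF$, checks that $\sigma$ commutes with suspension (so the monad is stably separable, hence $U_{GF}$ is a stably separable functor by Balmer's Proposition 3.11), and only afterwards verifies that $(K_{GF})_e$ is additive, commutes with the suspensions and preserves distinguished triangles. You instead transport the pre-triangulation already built on $\dd_e$ by Theorem \ref{thm:4.1} across the honest equivalence $(K_{GF})_e$ (available because $\dd$ is idempotent complete), which makes the exactness of $(K_{GF})_e$ automatic and avoids re-verifying stable separability and a second invocation of Balmer's theorem; the price is the transport bookkeeping, namely the identification of the transported suspension with the canonical $\Sigma_{GF}$ of Remark \ref{rem:EMsuspended} --- which you rightly flag as the delicate point, and which does hold strictly since $(K_{GF})_e$ sends $D$ to $(GD,G\epsilon_D)$ and $\epsilon$ commutes with suspension --- and the identification of the transported distinguished triangles with those detected by $U_{GF}$. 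The paper's route has the advantage of describing the structure on $\cc_{GF}$ without reference to the coidentifier; yours is shorter and makes the last assertion of the corollary essentially tautological.

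Two small points to tighten when writing this up. First, $(K_{GF})_e$ is essentially surjective but not surjective on objects, so the transported class must be ``triangles isomorphic to images of distinguished triangles of $\dd_e$'', not literally the images; the stated characterization via $U_{GF}$ then follows because distinguished triangles in $\cc$ are closed under isomorphism and every candidate triangle of $\cc_{GF}$ lifts, up to isomorphism, along the suspension-preserving equivalence. Second, $U_{GF}=G_e\circ(K_{GF})_e^{-1}$ holds only up to natural isomorphism (a quasi-inverse is not a strict inverse); but this factorization is not needed, since exactness of $U_{GF}$ is immediate from the very definition of the distinguished triangles of $\cc_{GF}$ as those whose $U_{GF}$-image is distinguished.
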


\begin{proof}
By Proposition \ref{prop:CoidEil}, there is a unique functor $\left(
K_{GF}\right) _{e}:\dd_{e}\rightarrow \cc_{GF}$ such that $%
\left( K_{GF}\right) _{e}\circ H=K_{GF}$ and $U_{GF}\circ \left(
K_{GF}\right) _{e}=G_{e}$. Moreover, since $\dd$ is idempotent
complete, then the functor $\left( K_{GF}\right) _{e}$ is an equivalence of
categories. By Lemma \ref{lem:coididp}, $\dd_{e}$ is idempotent
complete so that also $\cc_{GF}$ becomes idempotent complete.
Note that, since $\cc$ is pre-triangulated, it is suspended. Since $%
F\dashv G:\dd\rightarrow \cc$ is an adjunction of functors
commuting with suspension, the monad $\left( GF,G\epsilon F,\eta \right) $
is stable. Moreover the functor $GF:\cc\rightarrow \cc$ is
additive being an exact functor between pre-triangulated categories.
Thus, by Remark \ref{rem:EMsuspended}, the Eilenberg-Moore category $%
\cc_{GF}$ inherits a structure of suspended category through the suspension $\Sigma_{GF}$ such that $%
V_{GF}\dashv U_{GF}:\cc_{GF}\rightarrow \cc$ is an
adjunction of additive functors commuting with suspension. Also the comparison functor $K_{GF}:\dd\to\cc_{GF}$ commutes with suspension.
Note that the monad $\left( GF,G\epsilon F,\eta \right) $ is separable
in view of Theorem \ref{thm:ssepMonad}. By construction this separability is given by the section $\sigma:=G\gamma F:GF\to GFGF$ where $\gamma:\id\to FG$ is defined by $\gamma_X:=\p_{X,FGX}(\eta_{GX})$.
We noticed it is stable. Thus $\sigma_{\Sigma X}=G\gamma_{F\Sigma X}=G\p_{F\Sigma X,FGF\Sigma X}(\eta_{GF\Sigma X})=G\p_{\Sigma F X,\Sigma FGF X}(\eta_{\Sigma GF X})=G\p_{\Sigma F X,\Sigma FGF X}(\Sigma\eta_{ GF X})=G\Sigma\p_{ F X, FGF X}(\eta_{ GF X})=G\Sigma\gamma_{FX}=\Sigma G\gamma_{FX}=\Sigma\sigma_X$ and hence $\sigma$  commutes with suspension, obtaining that it is a stably separable monad in the sense of \cite[Definition 3.5]{Balm11}. By \cite[%
Proposition 3.11]{Balm11}, this means that $U_{GF}:\cc_{GF}\rightarrow
\cc$ is a stably separable functor.

Then \cite[Theorem 4.1]{Balm11}, applied to the adjunction $\left(
V_{GF},U_{GF}\right) $, yields a pre-triangulation on $\cc_{GF}$ with
distinguished triangles $\Delta $ being exactly the ones such that $%
U_{GF}\left( \Delta \right) $ is distinguished in $\cc$. Moreover,
with respect to this pre-triangulation, both functors $U_{GF}$ and $V_{GF}$
become exact.

Coming back to the equivalence of categories $\left( K_{GF}\right) _{e}:%
\dd_{e}\rightarrow \cc_{GF}$, note that $\Sigma _{GF}\circ
\left( K_{GF}\right) _{e}\circ H=\Sigma _{GF}\circ K_{GF}=K_{GF}\circ \Sigma
=\left( K_{GF}\right) _{e}\circ H\circ \Sigma =\left( K_{GF}\right)
_{e}\circ \Sigma _{e}\circ H$ and hence $\Sigma _{GF}\circ \left(
K_{GF}\right) _{e}=\left( K_{GF}\right) _{e}\circ \Sigma _{e}$, i.e. $\left(
K_{GF}\right) _{e}$ commutes with suspension.

\begin{invisible}
Above we used the equality $\Sigma _{GF}\circ K_{GF}=K_{GF}\circ \Sigma $
that we check here:%
\begin{equation*}
\left( \Sigma _{GF}\circ K_{GF}\right) D=\Sigma _{GF}\left( GD,G\epsilon
_{D}\right) =\left( \Sigma GD,\Sigma G\epsilon _{D}\right) =\left( G\Sigma
D,G\Sigma \epsilon _{D}\right) =\left( G\Sigma D,G\epsilon _{\Sigma
D}\right) =\left( K_{GF}\circ \Sigma \right) D
\end{equation*}%
and $\left( \Sigma _{GF}\circ K_{GF}\right) f=\Sigma _{GF}\left( Gf\right)
=\Sigma Gf=G\Sigma f=\left( K_{GF}\circ \Sigma \right) f.$
\end{invisible}

Since an exact functor of pre-triangulated categories is
additive, the functor $G_{e}$ is additive as it is exact in view of
Theorem \ref{thm:4.1}. Thus, given morphisms $f,g:D\rightarrow D^{\prime }$
in $\dd$, we have
\begin{eqnarray*}
U_{GF}\left( \left( K_{GF}\right) _{e}\left( \overline{f}\right) +\left(
K_{GF}\right) _{e}\left( \overline{g}\right) \right) &=&U_{GF}\left(
K_{GF}\right) _{e}\left( \overline{f}\right) +U_{GF}\left( K_{GF}\right)
_{e}\left( \overline{g}\right) \\
&=&G_{e}\left( \overline{f}\right) +G_{e}\left( \overline{g}\right)
=G_{e}\left( \overline{f}+\overline{g}\right) =U_{GF}\left( \left(
K_{GF}\right) _{e}\left( \overline{f}+\overline{g}\right) \right)
\end{eqnarray*}%
so that $\left( K_{GF}\right) _{e}\left( \overline{f}\right) +\left(
K_{GF}\right) _{e}\left( \overline{g}\right) =\left( K_{GF}\right)
_{e}\left( \overline{f}+\overline{g}\right) $ and hence $\left( K_{GF}\right) _{e}$
is additive.
To check that $\left( K_{GF}\right) _{e}$ is exact, it remains to
prove that it preserves distinguished triangles. Let $\Delta $ be a
distinguished triangle in $\dd_{e}$. Then, by Theorem \ref{thm:4.1},
$G_{e}\left( \Delta \right) $ is distinguished in $\cc$. Since $%
U_{GF}\circ \left( K_{GF}\right) _{e}=G_{e},$ we get that $U_{GF}\left(
\left( K_{GF}\right) _{e}\left( \Delta \right) \right) $ is distinguished in
$\cc$. By definition of pre-triangulation on $\cc_{GF}$ we
obtain that $\left( K_{GF}\right) _{e}\left( \Delta \right) $ is
distinguished in $\cc_{GF}$. Thus $\left( K_{GF}\right) _{e}$ is
exact.
\end{proof}


\begin{thebibliography}{}
%\bibitem{AF92} Anderson F. W., Fuller K. R., \emph{Rings and categories of modules}. Second edition. Graduate Texts in Mathematics, \textbf{13}. Springer-Verlag, New York, 1992.

\bibitem{AB22} Ardizzoni A., Bottegoni L., \emph{Semiseparable functors}, preprint. (\href{https://arxiv.org/abs/2202.12113}{arXiv:2202.12113})

%\bibitem{AHT89} Ad\'amek J., Herrlich H., Tholen, W. \emph{Monadic decompositions}. J. Pure Appl. Algebra \textbf{59} (1989), no. 2, 111–123.


%\bibitem{AA12} Angeleri H\"{u}gel L., Archetti M., \emph{Tilting modules and universal localization}. Forum Math. \textbf{24} (2012), no. 4, 709-731.

%\bibitem{ApT69} Applegate H., Tierney M., \emph{Categories with models}. 1969 Sem. on Triples and Categorical Homology Theory (ETH, z\"{u}rich, 1966/67) pp. 156-244 Springer, Berlin.

%\bibitem{ApT70} Applegate H., Tierney M., \emph{Iterated cotriples}, 1970, Reports of the Midwest Category Seminar. IV pp. 56–99 Lecture Notes in Mathematics, Vol. \textbf{137} Springer, Berlin.

%\bibitem{ABM08} Ardizzoni, A., Brzezi\'nski, T., Menini, C., \emph{Formally smooth bimodules}, J. Pure Appl. Algebra \textbf{212}, 2008, no. 5, 1072-1085.

\bibitem{ACMM06} Ardizzoni A., Caenepeel S., Menini C., Militaru G., \emph{Naturally full functors in nature}. Acta Math. Sin. (Engl. Ser.) \textbf{22} (2006), no. 1, 233--250.

%\bibitem{AM20} Ardizzoni A., Menini C., \emph{Heavily separable functors}, J. Algebra \textbf{543}, 2020, 170-197.

\bibitem{AGM15} Ardizzoni A., G\'omez-Torrecillas J., Menini C., \emph{Monadic decompositions and classical Lie theory}. Appl. Categ. Structures \textbf{23} (2015), no. 1, 93--105.

\bibitem{AGM18} Ardizzoni A., Goyvaerts, I., Menini C., \emph{Restricted Lie
algebras via monadic decomposition}. Algebr. Represent. Theory \textbf{21}
(2018), no. 4, 703--716.

\bibitem{AM16} Ardizzoni A., Menini C., \emph{Milnor-Moore categories and monadic decomposition.} J. Algebra \textbf{448} (2016), 488--563. 

%\bibitem{AG60} Auslander M., Goldman O., \emph{Maximal orders}. Trans. Amer. Math. Soc. \textbf{97} (1960), 1-24.

%\bibitem{AG60:2} Auslander M., Goldman O., \emph{The Brauer group of a commutative ring}. Trans. Amer. Math. Soc. \textbf{97} (1960), 367-409.

\bibitem{Balm11} Balmer P., \emph{Separability and triangulated categories}. Adv. Math. \textbf{226} (2011), no. 5, 4352--4372.

\bibitem{Balm15} Balmer P., \emph{Stacks of group representations}. J. Eur. Math. Soc. (JEMS) \textbf{17} (2015), no. 1, 189--228.

\bibitem{BD72}   Balmer P., Dell'Ambrogio I., \emph{Green equivalences in equivariant mathematics}. Math. Ann. \textbf{379} (2021), no. 3-4, 1315--1342.

%\bibitem{Bar72}   Barr M. \emph{The point of the empty set}. Cahiers Topologie G\'{e}om. Diff\'{e}rentielle \textbf{13} (1972), 357-368, 442.

%\bibitem{BW05} Barr M., Wells C., \emph{Toposes, triples and theories. Corrected reprint of the 1985 original [MR0771116]}, Repr. Theory Appl. Categ. No. 12, 2005.

%\bibitem{Bec03} Beck J.M., \emph{Triples, algebras and cohomology}, Repr. Theory Appl. Categ. No. 2, 2003, 1-59.

\bibitem{Ber07} Berger C., \emph{Iterated wreath product of the simplex category and iterated loop spaces}. Adv. Math. \textbf{213} (2007), no. 1, 230--270.

\bibitem{BBW09} Bohm G., Brzezinski T., Wisbauer R., \emph{Monads and comonads on module categories}, J. Algebra \textbf{322} (2009), no. 5, 1719--1747.

\bibitem{Bor94} Borceux F., \emph{Handbook of categorical algebra. 1. Basic category theory.} Encyclopedia of Mathematics and its Applications, \textbf{50}. Cambridge University Press, Cambridge, 1994.

\bibitem{BorII94} Borceux F., \emph{Handbook of categorical algebra. 2. Categories and structures.} Encyclopedia of Mathematics and its Applications, \textbf{51}. Cambridge University Press, Cambridge, 1994.

%\bibitem{BorDej86} Borceux F., Dejean D., \emph{Cauchy completion in category theory.} Cahiers Topologie G\'eom. Diff\'erentielle Cat\'eg. \textbf{27} (1986), no. 2, 133--146.

\bibitem{BruV07} Brugui\`eres A., Virelizier A., \emph{Hopf monads}, Adv. Math. \textbf{215} (2007), no. 2, 679--733.

%\bibitem{Brz02} Brzezi\'nski T., \emph{The structure of corings: induction functors, Maschke-type theorem, and Frobenius and Galois-type properties}, Algebr. Represent. Theory \textbf{5}, 2002, no. 4, 389-410.

%\bibitem{BW03} Brzezinski T., Wisbauer R., \emph{Corings and comodules}. London Mathematical Society Lecture Note Series, \textbf{309}. Cambridge University Press, Cambridge, 2003.

%\bibitem{CM03} Caenepeel S., Militaru G.: \emph{Maschke functors, semisimple functors and separable functors of the second kind: applications}, J. Pure Appl. Algebra \textbf{178}, 2003, no. 2, 131-157.

\bibitem{CMZ02} Caenepeel S., Militaru G., Zhu S., \emph{Frobenius and separable functors for generalized module categories and nonlinear equations}. Lecture Notes in Mathematics, \textbf{1787}. Springer-Verlag, Berlin, 2002.

%\bibitem{CJKP97} Carboni A., Janelidze G., Kelly G. M., Par\'e R., \emph{On localization and stabilization for factorization systems}. Appl. Categ. Structures 5 (1997), no. 1, 1-58.

%\bibitem{CGN98} Casta\~{n}o Iglesias F., G\'{o}mez Torrecillas J., N\u{a}st\u{a}sescu C., \emph{Separable functors in graded rings}. J. Pure Appl. Algebra \textbf{127} (1998), no. 3, 219-230.

\bibitem{Chen15} Chen X.-W., \emph{A note on separable functors and monads with an application to equivariant derived categories}. Abh. Math. Semin. Univ. Hambg. \textbf{85} (2015), no. 1, 43--52.

%\bibitem{ClW11} Clark J., Wisbauer R., \emph{Idempotent monads and $\star$-functors}. J. Pure Appl. Algebra 215 (2011), no. 2, 145--153.

%\bibitem{Day72} Day B., \emph{On adjoint-functor factorisation}. Category Seminar (Proc. Sem., Sydney, 1972/1973), 1–19.

\bibitem{DS18} Dell'Ambrogio I., Sanders B., \emph{A note on triangulated monads and categories of module spectra}. C. R. Math. Acad. Sci. Paris \textbf{356} (2018), no. 8, 839--842.

\bibitem{DI71} DeMeyer F., Ingraham E., \emph{Separable algebras over commutative rings}.
Lecture Notes in Mathematics, Vol. \textbf{181} Springer-Verlag, Berlin-New York 1971.

%\bibitem{DV03}  Dupont M., Vitale E. M., \emph{Proper factorization systems in 2-categories}. J. Pure Appl. Algebra \textbf{179} (2003), no. 1-2, 65-86.

\bibitem{EM65} Eilenberg S., Moore J., \emph{Adjoint functors and triples}. Illinois J. Math. \textbf{9} (1965), 381--398.

\bibitem{EZ76} Elkins B., Zilber J., \emph{A. Categories of actions and Morita equivalence.} Rocky Mountain J. Math. \textbf{6} (1976), no. 2, 199--225.

\bibitem{FOPTST99} Freyd P. J., O'Hearn P. W., Power A. J., Takeyama M., Street R., Tennent R. D., \emph{Bireflectivity}. Mathematical foundations of programming semantics (New Orleans, LA, 1995). Theoret. Comput. Sci. \textbf{228} (1999), no. 1-2, 49--76.

%\bibitem{Gr13} Grandis, M., \emph{Homological algebra. In strongly non-abelian settings}. World Scientific Publishing Co. Pte. Ltd., Hackensack, NJ, 2013.

%\bibitem{GT02} G\'{o}mez-Torrecillas J., \emph{Separable functors in corings}. Int. J. Math. Math. Sci. \textbf{30} (2002), no. 4, 203-225.

\bibitem{Ha85} Hayashi S., \emph{Adjunction of semifunctors: categorical
structures in nonextensional lambda calculus}. Theoret. Comput. Sci. \textbf{41} (1985), no. 1, 95--104.

%\bibitem{He71} Herrlich, H., Strecker G. E. \emph{Coreflective subcategories}. Trans. Amer. Math. Soc. \textbf{157} (1971), 205-226.

\bibitem{Ho90} Hoofman R. \emph{A Note on Semi-adjunctions}. Technical Report RUU-CS-90-41 (1990). Department of Information and Computing Sciences, Utrecht University. \href{http://www.cs.uu.nl/research/techreps/repo/CS-1990/1990-41.pdf}{RUU-CS-90-41}

\bibitem{Ho93} Hoofman R., \emph{The theory of semi-functors}. Math. Structures Comput. Sci. \textbf{3} (1993), no. 1, 93--128.

\bibitem{HoM95} Hoofman R., Moerdijk, I. \emph{A remark on the theory of semi-functors.} Math. Structures Comput. Sci. \textbf{5} (1995), no. 1, 1--8.

%\bibitem{How95} Howie J.M., \emph{Fundamentals of semigroup theory}, London Mathematical Society Monographs. New Series \textbf{12}. Oxford Science Publications. The Clarendon Press, Oxford University Press, New York, 1995.

%\bibitem{JT04} Janelidze G., Tholen, W., \emph{Facets of descent. III. Monadic descent for rings and algebras}. Appl. Categ. Structures \textbf{12} (2004), no. 5-6, 461–477.

\bibitem{Joh96} Johnstone P. T., \emph{Remarks on quintessential and persistent localizations}. Theory Appl. Categ. \textbf{2} (1996), No. 8, 90--99.

\bibitem{Kah20}  Kahn B., \emph{Zeta and L-functions of varieties and motives}, London Mathematical Society Lecture Note Series \textbf{462}, Cambridge University Press 2020.

\bibitem{Kar78} Karoubi M., \emph{K-theory. An introduction.} Grundlehren der Mathematischen Wissenschaften, Band \textbf{226}. Springer-Verlag, Berlin-New York, 1978.

\bibitem{Kl65} Kleisli H., \emph{Every standard construction is induced by a pair of adjoint functors}, Proc. Amer. Math. Soc. \textbf{16} (1965), 544--546.

%\bibitem{MS82} MacDonald J. L., Stone A., \emph{The tower and regular decomposition}.
%Cahiers Topologie G\'{e}om. Diff\'{e}rentielle \textbf{23} (1982), no. 2, 197--213.


\bibitem{Mac98} Mac Lane S., \emph{Categories for the working mathematician. Second edition}. Graduate Texts in Mathematics, \textbf{5}. Springer-Verlag, New York, 1998.

%\bibitem{Mar66} Maranda J.-M., \emph{On fundamental constructions and adjoint functors}. Canad. Math. Bull. 9 (1966), 581–591.

\bibitem{Mes06} Mesablishvili B. \emph{Monads of effective descent type and comonadicity}. Theory Appl. Categ. \textbf{16} (2006), No. 1, 1--45.

%\bibitem{Mo93} Montgomery S., \emph{Hopf algebras and their actions on rings}. CBMS Regional Conference Series in Mathematics, \textbf{82}. Published for the Conference Board of the Mathematical Sciences, Washington, DC; by the American Mathematical Society, Providence, RI, 1993.

\bibitem{MW13} Mesablishvili B., Wisbauer R., \emph{QF functors and
(co)monads}. J. Algebra \textbf{376} (2013), 101--122.

%\bibitem{MT94}  N\v{a}st\v{a}sescu C., Torrecillas B. \emph{Torsion theories for coalgebras}. J. Pure Appl. Algebra \textbf{97} (1994), no. 2, 203-220.

\bibitem{NVV89} N\v{a}st\v{a}sescu C., Van den Bergh M., Van Oystaeyen F., \emph{Separable functors applied to graded rings}, J. Algebra \textbf{123} (2) (1989), 397--413.

\bibitem{Ne01} Neeman A., \emph{Triangulated categories}. Annals of
Mathematics Studies, \textbf{148}. Princeton University Press, Princeton,
NJ, 2001.

\bibitem{Par71} Par\'{e} R., \emph{On absolute colimits}. J. Algebra \textbf{19} (1971), 80--95.

\bibitem{Pop73} Popescu N., \emph{Abelian categories with applications to rings and modules.} London Mathematical Society Monographs, No. \textbf{3}. Academic Press, London-New York, 1973.

%\bibitem{Raf90} Rafael M.D., \emph{Separable functors revisited}, Comm. Algebra \textbf{18}, 1990, 1445--1459.

\bibitem{Ri16} Ritter M., \emph{On universal properties of additive and preadditive categories}, Thesis, University of Stuttgart (2016).

%\bibitem{Sar21} Saracco P., \emph{Hopf modules, Frobenius functors and (one-sided) Hopf algebras}. J. Pure Appl. Algebra \textbf{225 }(2021), no. 3, 106537, 19 pp.

%\bibitem{Ste75} Stenstr\"{o}m B.o, \emph{Rings of quotients. An introduction to methods of ring theory}. Springer-Verlag, New York-Heidelberg, 1975.

%\bibitem{Su71} Sugano K., \emph{Note on separability of endomorphism rings}, J. Fac. Sci. Hokkaido Univ. Ser. I \textbf{21}, 1970/71, 196-208.

\bibitem{Sun19} Sun C., \emph{A note on equivariantization of additive categories and triangulated categories}. J. Algebra \textbf{534} (2019), 483--530.

%\bibitem{Sw75} Sweedler M., \emph{The predual theorem to the Jacobson-Bourbaki theorem}, Trans. Amer. Math. Soc. \textbf{213}, 1975, 391-406.

% \bibitem{SGA4} Th\'{e}orie des topos et cohomologie \'{e}tale des schémas. Tome 1: Th\'{e}orie des topos. \emph{S\'{e}minaire de G\'{e}ométrie Alg\'{e}brique du Bois-Marie} 1963–1964 (SGA 4). Dirig\'{e} par M. Artin, A. Grothendieck, et J. L. Verdier. Avec la collaboration de N. Bourbaki, P. Deligne et B. Saint-Donat. Lecture Notes in Mathematics, Vol. \textbf{269}. Springer-Verlag, Berlin-New York, 1972.

\bibitem{Zhu03} Zhu B., \emph{Relative approximations and Maschke functors}. Bull. Austral. Math. Soc. \textbf{67} (2003), no. 2, 219--224.

\end{thebibliography}
\end{document}